\newcommand{\veps}{\varepsilon}
\newcommand{\R}{\mathbb{R}}
\newcommand{\Q}{\mathbb{Q}}
\newcommand{\C}{\mathbb{C}}
\newcommand{\N}{\mathbb{N}}
\newcommand{\Z}{\mathbb{Z}}
\newcommand{\Ce}{\mathcal{C}}
\newcommand{\co}{\operatorname{co}}
\newcommand{\Log}{\operatorname{Log}}
\newcommand{\re}{\mathrm{Re}\,}
\newcommand{\im}{\mathrm{Im}\,}
\newcommand{\meas}{\operatorname{meas}\,}
\newtheorem{defin}{Definition}[section]
\newenvironment{definition}{\begin{defin}\rm}{\end{defin}}
\newtheorem{theorem}[defin]{Theorem}
\newtheorem{exa}[defin]{Example}
\newenvironment{example}{\begin{exa}\rm}{\end{exa}}
\newtheorem{lemma}[defin]{Lemma}
\newtheorem{corollary}[defin]{Corollary}
\newenvironment{proof}
{\noindent{\it Proof.}}{\hfill $\Box$\par\vspace{2.5mm}}
\newenvironment{remark}
{\par\vspace{2.5mm}\noindent{\bf Remark.}}{\par\vspace{2.5mm}}
\newtheorem{que}{Question}
\newtheorem{pro}{Problem}
\newenvironment{problem}{\begin{pro}\rm}{\end{pro}}
\numberwithin{equation}{section}
\title{\bf\Large Value distribution of exponential
polynomials and their role in the theories of complex differential
equations and oscillation theory}
\author{Janne Heittokangas\footnote{Corresponding author.}, Katsuya Ishizaki, Kazuya Tohge and Zhi-Tao Wen}
\date{}
\begin{document}
\maketitle

\begin{abstract}
An exponential polynomial is a finite linear sum of terms
$P(z)e^{Q(z)}$, where $P(z)$ and $Q(z)$ are polynomials. The early results
on the value distribution of exponential polynomials can be traced back to
Georg P\'olya's paper published in 1920, while the latest results have come
out in 2021. Despite of over a century of research work, many intriguing problems on value distribution of exponential polynomials still remain unsolved. The role of exponential polynomials and their quotients
in the theories of linear/non-linear differential equations, oscillation
theory and differential-difference equations will also be discussed.
Thirteen open problems are given
to motivate the readers for further research in these topics.

\medskip
\noindent
\textbf{Key Words:}
Complex differential equation, complex oscillation, differential-difference
equation, exponential polynomial,
exponential sum, value distribution.

\medskip
\noindent
\textbf{2020 MSC:} Primary 30D15, 34M03; Secondary 30D35, 34A34, 34M10.
\end{abstract}

\renewcommand{\thefootnote}{}
\footnotetext[1]{Ishizaki was supported by JSPS KAKENHI Grant Number 20K03658.
Wen was supported by the National Natural Science Foundation of China (No.~11971288) and Shantou University SRFT (NTF18029).}

\tableofcontents

\section{Introduction}

An exponential polynomial $f$ is a finite linear sum of terms
$P(z)e^{Q(z)}$, where $P(z)$ and $Q(z)$ are polynomials.
If $\deg(Q)=1$ for each $Q(z)$ in the sum, then $f$ is called an
exponential sum.
Typical examples of exponential sums are the trigonometric functions
$\sin z= \frac{1}{2i}\left(e^{iz}-e^{-iz}\right)$ and $\cos z= \frac{1}{2}\left(e^{iz}+e^{-iz}\right)$, their quotient being $\tan z$. A polynomial can be considered as a
special case of an exponential polynomial, while transcendental exponential
polynomials are entire functions of finite positive integer order of growth.

In this expository paper, we will review basic value distribution results
for exponential sums and exponential polynomials, and discuss their role
in the theories of complex differential and complex difference equations as well
as in the complex oscillation theory.
The early results go back to P\'olya's paper \cite{Poly} published in
1920, while the most recent results are still fresh out of the press.
In other words, exponential polynomials have been of interest for over
a century.

The following topics will be of interest:
\begin{enumerate}
\item
We detect the portions of the complex plane $\C$, where the majority of
the zeros of exponential sums/polynomials are located. We also illustrate
that, in terms of zeros, there is a rather significant difference between these two classes of functions. We give a new direct proof for the known
fact that the zeros of a given exponential polynomial $f$ are regularly distributed, which in turn implies that $f$ is of completely regular growth. Note that zeros extend to $a$-points for any $a\in\C$ because $f$ is an exponential polynomial if and only if $g=f-a$ is.
\item
Given an exponential polynomial $f$, we find the asymptotic growth for the Nevanlinna characteristic $T(r,f)$
and for the integrated counting function $N(r,1/f)$ of zeros of $f$,
with sharp error terms. Recall that $T(r,f)=r/\pi$ for $f(z)=e^z$.
\item
A theorem of Ritt \cite{Ritt} from 1929 claims that if $f$ is a quotient of two
exponential sums with constant multipliers, and if $f$ is entire, then
$f$ is an exponential sum also or reduces to a constant. Assuming that the
exponential sums are allowed to have polynomial multipliers, or replacing
the exponential sums with higher-order exponential polynomials, we will illustrate that the outcome slightly changes. We will also discuss
the asymptotic growth of the Nevanlinna characteristic $T(r,f)$ and of the proximity functions $m(r,f)$ and $m(r,1/(f-c))$, where $f$ is a ratio of
exponential polynomials, and give updated error terms. These
results are new and need to be proved. The proofs expose the reader to standard manipulations
of exponential polynomials, which is also an essential part
of an expository paper. 
\item
It is known that an exponential polynomial is always a solution of
some linear differential equation with polynomial coefficients.
If one of the coefficients, say $A(z)$, is a transcendental exponential polynomial, then some of the solutions must be of infinite order.
However, exponential polynomial solutions $f$ may still exist. In order
for this to happen, the functions $f$ and $A(z)$ need to obey certain
duality property, which will be discussed.
\item
Suppose that $f\not\equiv 0$ satisfies a linear differential equation with entire coefficients some of
which are transcendental. If $f$ has the property that $\log T(r,f)=o(r)$, then $f$
is called \emph{subnormal}. Examples of such solutions typically involve exponential
polynomials. At times a change of variable method can be used to transform these equations
to differential equations with polynomial coefficients and polynomial solutions.
Hermite and Laguerre equations are famous examples of such equations. We will also
consider finite order periodic solutions, which are representable in terms of
Laurent polynomials.
\item
Suppose that the coefficients of a linear differential equation of
arbitrary order $n$ are entire and of completely regular growth.
Based on Petrenko's earlier work,  Gol'dberg and Ostrovskii asked
the following question in 1994:  \emph{If the differential
equation in question possesses a transcendental entire solution $f$ of
finite order of growth, then is $f$ of completely regular growth also?}
This question along with several known examples in terms of exponential polynomials will be discussed.
\item
Non-linear differential equations such as the Riccati equation, the
Malmquist type equations, and the Tumura-Clunie equations will be
discussed. Many of the known examples of solutions of these equations
are either exponential polynomials (entire case) or ratios of
exponential polynomials (meromorphic case).
\item
The 1982 seminal paper by Bank and Laine \cite{Bank-Laine} initiated the study of oscillation theory of linear differential equations in the complex plane. At the early stages the primary interest was on zeros of solutions of equations $f''+A(z)f=0$, where $A(z)$ is an entire function. The basic research questions are to find conditions on $A(z)$ guaranteeing that $f$ has no zeros, or
that $\lambda(f)\geq \rho(A)$, or that $\lambda(f)=\infty$. Here
$\lambda(f)$ and $\rho(A)$ denote the exponent of convergence of the
zeros of $f$ and the order of growth of $A(z)$, respectively. We will discuss
the properties of exponential polynomials as the coefficient
$A(z)$ inducing the aforementioned conclusions.
\item
As the last research topic of this survey, we will look into classifying
exponential polynomial solutions of complex difference and differential-difference equations.
\end{enumerate}

The research topics above obviously require some preliminary background. For the convenience of the reader, four appendices are included at the back
of the paper covering the fundamentals on the following topics: Nevanlinna theory, convex sets, Riemann zeta-function, and complex differential equations. Further, this survey includes
thirteen open problems to motivate
the reader for further research in these topics.

The basic building block for any transcendental exponential polynomial is of
course the exponential function. We therefore finish this introduction
with a discussion on a quote from Walter Rudin's famous book \emph{Real and Complex Analysis}:
	\begin{quote}
	\emph{This is undoubtedly the most important function in mathematics.
	It is defined, for every complex number $z$, by the formula
	$\exp(z)=\sum_{n=0}^\infty \frac{z^n}{n!}$.} -- W.~Rudin.
	\end{quote}
This summation formula shows that $\exp(z)$ is invariant under differentiation.
On the other hand, the definition does not directly imply that $\exp(z)$ has no zeros at all, or that it takes every
non-zero value with single multiplicity.
We need to know Picard's theorem or Nevanlinna's theory for this, or some knowledge of linear differential equations as $\exp(z)$ is a solution of $f'-f=0$.
Even more difficult it is to find the location of the zeros of an exponential polynomial, since these results answer nothing about it.
One needs powerful techniques for this purpose, and we will present some of them in this article.

\section{Exponential sums}\label{ES-sec}

An \emph{exponential sum} is an entire function of the form
    \begin{equation}\label{ES}
    f(z)=\sum_{j=1}^nP_j(z)e^{w_jz},
    \end{equation}
where the \emph{multipliers} $P_j(z)\not\equiv 0$ are polynomials and the \emph{frequencies} $w_j\in\C$ are pairwise distinct.
It is allowed that one of the constants $w_j$ vanishes.
The trigonometric functions $\sin z=\frac{1}{2i}\left(e^{iz}-e^{-iz}\right)$ and $\cos z=\frac{1}{2}\left(e^{iz}+e^{-iz}\right)$ are typical examples of such functions.
Finite partial sums
	$$
	\sum_{n\leq M}\frac{1}{n^z}=\sum_{n\leq M}e^{(-\log n)z}
	$$
of the \emph{Riemann zeta-function} $\zeta(z)$ are also exponential sums.
See Appendix~\ref{zeta-appendix} for discussions on the zeta-function.
If the multipliers $P_j$ are all complex constants, then the function $f$ in \eqref{ES} is the generic solution of a linear differential equation with constant coefficients determined by the characteristic equation
	$$
	(r-w_1)\cdots (r-w_n)=0.
	$$
The connection to differential equations will be discussed more thoroughly in Sections~\ref{ODE-sec} and \ref{EPO-sec} below.
In this section, however, we will focus on studying the zeros of exponential sums, that is, we focus on the value distribution of exponential sums.

\subsection{Exponential sums from an algebraic aspect}

Following D'Aquino, Macintyre and Terzo~\cite{DMT}, we look at exponential
sums of the form
    \begin{equation}\label{ESC}
    f(z)=\sum_{j=1}^n\alpha_j e^{w_jz}
    \end{equation}
from an algebraic aspect by collecting some basic definitions and
results. Here the frequencies $w_j\in \C$ are distinct and the multipliers
$\alpha_j\in \C$ are non-zero, unless $f(z)\equiv 0$.
Let us denote the collection of such functions by ${\cal E}$. Clearly
${\cal E}$ is a differential field under the usual addition and multiplication. Then the {\it units} in ${\cal E}$ are the products of non-zero constants and $e^{wz}$ for $w\in\C$ because they have multiplicative inverses in ${\cal E}$.

\begin{definition}
An element $f\in{\cal E}$ is called {\it irreducible}, if there are no non-units $g,h\in {\cal E}$ such that $f=gh$. If $f\in{\cal E}$ is not  irreducible, it is called {\it reducible}.
\end{definition}

\begin{definition}\label{simple-definition}
The support of $f\in{\cal E}$,
denoted by $\mathrm{supp} (f)$, is the $\Q$-space ($=$ vector space over
the field of rational numbers) generated by the
frequencies $w_1, \ldots, w_n$ of $f$. If $\dim \mathrm{supp}(f)=1$,
we say that $f$ is {\it simple}.
\end{definition}

A simple exponential sum in ${\cal E}$ is, up to a unit, a polynomial in $e^{wz}$ over $\C$ for some $w\in\C$. Thus, $\sin z=\frac{-i}{2}e^{-iz}(e^{2iz}-1)$ and $\cos z=\frac{1}{2}e^{-iz}(e^{2iz}+1)$ are typical simple exponential sums, while finite partial sums of $\zeta(z)$ are irreducible and not simple in general.

A simple exponential sum can be factorized, up to units, into a finite product of factors of the form $1-\beta e^{\mu z}$, where $\beta, \mu\in \C\setminus\{0\}$. If an exponential sum $f$ is factorized by $f(z)=\prod_{j=1}^N (1-\beta_j e^{\mu_jz})$, then the zeros of $f$ are at the points
	$$
	(1/\mu_j)\Log (1/\beta_j) +\frac{2\pi i}{\mu_j}k,\quad k\in\Z,
	$$
belonging on the straight lines through $(\overline{\mu}_j/|\mu_j|^2)\Log (\overline{\beta}_j/|\beta_j|^2)$ and parallel to vectors $i\overline{\mu}_j$ and thus orthogonal to vectors $\overline{\mu}_j$ for each $1\leq j\leq N$.

Since the zeros of $1-\beta e^{\mu z}$ with $\beta \mu\neq 0$ is the arithmetic progression of the form $z=d_0+dk$, $k\in\Z$, it can include infinitely many such subsequences of the form $z=d_0+\ell d k$, $k\in\Z$, as the zeros of $1-\beta^{1/\ell} e^{\frac{\mu}{\ell}z}$ for $\ell\in\N$.
To avoid this nesting, we mention factorization by a finite product in the above. Therefore, a simple exponential sum is not irreducible, but these concepts are not complementary to each other.

\begin{example}\label{simple-definition2}
There are reducible functions $f\in{\cal E}$ which are not simple. Indeed, if
$f(z)=\bigl(1-e^{z}\bigr)\bigl(1-e^{iz}\bigr)$, then $\dim \mathrm{supp}(f)=2$, so that $f\in{\cal E}$ is reducible but not simple.
\end{example}

If $f\in{\cal E}$ is simple, then it has a representation $f(z)=\prod_{j=1}^N (1-\beta_j e^{\mu_jz})$, and all zeros of $f$ are on $N$ parallel lines
that are orthogonal to the vector $\overline{\mu}_1$, for example.
If $f$ is not simple, but still has the product representation, at least two of those lines must intersect. The number of intersecting lines is given by $\dim \mathrm{supp}\bigl(\prod_{j=1}^N(1-\beta_je^{\mu_jz})\bigr)$.
Further note that if $g\in {\cal E}$ divides $f\in {\cal E}$ and if $f$ is simple, then $g$ is also simple, and when $g$ is simple, its positive integer power $g^m$ is also simple, since $\mathrm{supp}(g^m)=\mathrm{supp}(g)$.

We find that any factor of $f\in{\cal E}$ is either simple or irreducible, unless it is a unit in ${\cal E}$.
In fact, the following factorization theorem on ${\cal E}$ is given by
Ritt in 1927.

\begin{theorem} \textnormal{(Ritt \cite{Ritt2})}\label{Ritt-theorem}
 Every function
$$
 1 + \alpha_1 e^{w_1z} + \cdots + \alpha_n e^{w_n z} \in {\cal E}
$$
distinct from unity, can be expressed in one and in only one way as a product
$$
 (S_1S_2 \cdots S_s) (I_1I_2 \cdots I_i)
$$
in which $S_1, \ldots , S_s$ are simple elements in ${\cal E}$ such that the frequencies in any one of them have irrational ratios to the frequencies in any other,
that is, $\mathrm{supp}(S_t)\neq \mathrm{supp}(S_u)$ for $t\neq u$, and in which $I_1, \ldots, I_i$ are irreducible elements in ${\cal E}$.
\end{theorem}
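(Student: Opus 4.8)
The plan is to push the whole question into a Laurent polynomial ring, use classical unique factorization there, and then identify the one phenomenon peculiar to exponential sums — that simple elements split indefinitely — and show it is absorbed, canonically, into the factors $S_1,\dots,S_s$.

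First I would reduce to Laurent polynomials. If $f=gh$ in ${\cal E}$, then expanding with respect to a $\Q$-basis of the $\Q$-space spanned by all frequencies involved, the ``width'' of a product in each coordinate direction is the sum of the widths of the factors (leading and trailing coefficients cannot cancel in an integral domain); hence $\mathrm{supp}(g)\subseteq\mathrm{supp}(f)$ and, in fact, $\mathrm{supp}(f)=\mathrm{supp}(g)+\mathrm{supp}(h)$. Fixing a $\Q$-basis $\gamma_1,\dots,\gamma_d$ of $V:=\mathrm{supp}(f)$ and putting $t_k=e^{\gamma_k z}$, the map $e^{\gamma_k z}\mapsto t_k$ (rational exponents permitted) is a ring isomorphism of the subring of ${\cal E}$ of elements with support in $V$ onto the group algebra $R:=\C[\Q^d]$; it is well defined and injective by linear independence of exponentials, and carries units to units ($\C^{*}\cdot$monomial). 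By the width bound every divisor of $f$ lies in $R$, so it suffices to prove the statement in $R$.

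Next, let $\Gamma$ be the subgroup of $\Q^d$ generated by the frequencies of $f$; it is finitely generated and torsion-free, hence free of rank $m=\dim V$, so $\C[\Gamma]\cong\C[x_1^{\pm1},\dots,x_m^{\pm1}]$ is a UFD in which $f$, having finitely many terms, factors into finitely many irreducibles $\pi_1\cdots\pi_t$. Over $\C$, an irreducible Laurent polynomial with support on a line is, up to a unit, a primitive binomial $1-\beta x^{a}$, which is exactly a simple element; every other $\pi_k$ has $\dim\mathrm{supp}(\pi_k)\geq2$ — call these the genuine factors. I would then invoke the classical fact that an irreducible Laurent polynomial with $\dim\mathrm{supp}\geq2$ stays irreducible under every finite monomial extension of its lattice: since any $R$-factorization of such a $\pi_k$ uses finitely many monomials, hence lies in a finitely generated $\Gamma'\supseteq\Gamma_{\pi_k}$, necessarily of finite index because the frequency-spans agree, these $\pi_k$ remain irreducible in $R$, and, localizing each divisibility relation $\pi_k h=g_1g_2$ into a finitely generated subgroup, they are prime in that UFD and hence prime in $R$.

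Finally, the binomial $\pi_k$ are simple but each splits indefinitely in $R$ (e.g. $1-\beta x^{a}=(1-\sqrt{\beta}\,x^{a/2})(1+\sqrt{\beta}\,x^{a/2})$, the ``nesting''), so I would not factor them further but group them: for each one-dimensional $\Q$-subspace $L\subseteq V$, let $S_L$ be the product of the $\pi_k$ with support $L$. Grading $R$ by $\Q^d/L$ shows $S_L$ equals the greatest common divisor, in the B\'ezout domain $\C[\Q\lambda]\cong\C[\Q]$, of the $(\Q^d/L)$-homogeneous components of $f$ along $L$; so $S_L$ is intrinsic to $f$, unique up to a unit, simple, and a unit unless $L$ contains a frequency of $f$ — whence only finitely many $L$ occur. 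Setting $S=\prod_L S_L$ and $g=f/S=\prod_{\pi_k\ \mathrm{genuine}}\pi_k$ gives a factorization of the required shape. For uniqueness, in any second such factorization $f=(S_1'\cdots S_s')(I_1'\cdots I_i')$ each $I_j'$, being non-simple and prime, divides and is associated to a genuine $\pi_k$, which pins down $\{I_j'\}$; then $\prod_t S_t'=S$, and because the $S_t'$ have pairwise distinct supports and simple elements on different lines are coprime in $R$, reading off from $S$ its maximal simple divisor along each line (the intrinsic $S_L$) forces $\{S_1',\dots,S_s'\}$ to be exactly the non-unit $S_L$'s.

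\medskip
The two points that carry the real content — and where I expect the main obstacle — are: (i) that $\C[\Q]$ is neither Noetherian nor a UFD, so the simple part admits no factorization into irreducibles and must be captured whole, one factor per line, which is precisely what forces the shape $S_1\cdots S_s$ with pairwise incommensurable frequencies; and (ii) the assertion that an irreducible Laurent polynomial with at least two-dimensional support survives finite monomial substitution, so that, even after the fractional exponents of $R=\C[\Q^d]$ are admitted, the non-simple part obeys ordinary unique factorization. Proving (ii) and making the intrinsic description of the $S_L$ in (i) rigorous — together with the attendant GCD-bookkeeping in $\C[\Q^d]$ — is where the work concentrates; the rest is routine manipulation with widths and Newton polytopes, much as in Ritt's original proof.
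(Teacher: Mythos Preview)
The paper does not prove this theorem. It is stated with attribution to Ritt \cite{Ritt2} and used as background for the discussion of factorization and Shapiro's conjecture, but no argument is given or sketched. There is therefore nothing to compare your proposal against.

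On its own terms, your plan is the standard modern route: embed $\cal E$ into the group algebra $\C[\Q^d]$, exploit the UFD structure of finitely generated sublattices $\C[\Z^m]\cong\C[x_1^{\pm1},\dots,x_m^{\pm1}]$, and then separate the binomial (simple) and non-binomial (irreducible) parts. You have correctly located the two non-routine points. Point (ii), that a Laurent-irreducible with at least two-dimensional support remains irreducible under every finite monomial base change, is the genuine arithmetic input; it is true but not a one-liner, and in the literature it is usually extracted from results of Ritt, Gourin, or the later Schinzel-type theorems on reducibility of lacunary polynomials under substitutions $x_j\mapsto y_j^{n_j}$. You should either cite such a result explicitly or supply a self-contained argument, since ``classical fact'' is doing real work here. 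Point (i), your intrinsic description of each $S_L$ as the gcd in the B\'ezout domain $\C[\Q]$ of the $L$-homogeneous components of $f$, is a clean way to make the simple part canonical and to prove uniqueness without ever factoring inside the non-Noetherian ring; this part of the outline is essentially complete. With (ii) properly justified, the rest of your sketch goes through.
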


Any $f\in{\cal E}$ of the form~\eqref{ESC} can be written in the above form uniquely up to order and multiplication by units.
Recall that zeros of $f$ from only one of the factors $S_t$ locates on a single line, while the location of the zeros of a factor $I_j$ could be much more complicated than that of $S_t$.
In Appendix~C, we will make an observation of the case when some special finite partial sums of $\zeta(z)$ can have zeros only on the imaginary axis.
As we note, generally those partial sums are irreducible, so we will rearrange the order of summation so that the resulted partial sums $f_N$ are of the form $\bigl(\prod_{j=1}^N(1-\beta_je^{\mu_jz})\bigr)$ with $\dim \mathrm{supp}(f_N)=1$ and $\beta_j,\mu_j\in\C$
for $j=1,2, \ldots, N$.  We call them partial sums of $\zeta(z)$ only because they tend
to $\zeta(z)$ locally uniformly in $\re (z)>1$ as $N\to\infty$.

In the next subsection we study the zero distribution of exponential
sums $f$ of the form~\eqref{ES}. This includes functions $f\in {\cal E}$ of the form~\eqref{ESC} regardless of $f$ being simple or irreducible.

\subsection{Zeros of exponential sums}\label{zeros-expsum-sec} 

We discuss the number and the location of zeros of exponential sums $f$ in \eqref{ES}.
The early results were obtained by P\'olya and Schwengeler in the 1920's.

Set $W=\{\overline{w}_1,\ldots,\overline{w}_n\}$. The \emph{convex hull} $\co(W)$ of $W$ is
defined as the intersection of all closed convex sets containing $W$. As $W$ consists of
finitely many points, $\co(W)$ is either a line segment or a convex polygon. Let $\Ce=C(\co(W))$ denote the circumference of $\co(W)$. A~deeper
discussion on convex sets can be found in Appendix~\ref{convex-appendix}.

We may suppose that the vertex points of $\co(W)$ are organized in the counterclockwise
direction from $1$ to $s\leq n$, and denote them again by $\overline{w}_1,\ldots ,\overline{w}_s$.
If $s=n$, then all points $\overline{w}_j$ are vertex points of $\co(W)$ to begin with.

Let $R_1=\{z\in\C\setminus\{0\}:\arg(z)=\theta_1^\bot\}$ denote the ray that intersects
orthogonally the line through the points $\overline{w}_{s},\overline{w}_1$. For $j=2,\ldots,s$,
let $R_j=\{z\in\C\setminus\{0\}:\arg(z)=\theta_j^\bot\}$ denote the ray that
intersects orthogonally the line through the points $\overline{w}_{j-1},\overline{w}_j$.
In other words, the rays $R_j$ are parallel to the outer normals of the sides of $\co(W)$. Thus we call $R_j$'s as the \emph{orthogonal rays} for $\co(W)$, see Figure~\ref{Orthogonal-rays-fig}.

We may suppose that the vertex points of $\co(W)$ are chosen so that
	\begin{equation}\label{orthogonal-rays-ordered}
	0\leq \theta_1^\bot<\cdots <\theta_s^\bot<2\pi
	\end{equation}
holds. Finally, let $S_j(\veps)$ denote a sector of opening $\veps$ around the ray $R_j$, where $\veps>0$ is chosen to be
so small that the sectors $S_j(\veps)$ do not overlap.

\begin{figure}
\begin{center}
\begin{tikzpicture}[x=0.6cm, y=0.6cm]
\draw[->,thick] (-1,0) -- (8,0);
\draw[->,thick] (0,-5) -- (0,6);
\draw[-] (0,0) -- (5,5);
\draw[-] (0,0) -- (5,-5);
\draw[-] (3,-1) -- (6,2);
\draw[-] (6,2) -- (3,5);
\draw[-] (3,5) -- (3,-1);
\draw[-,dashed] (1,-3) -- (2.9,-1.1);
\draw[fill=black] (3,-1) circle (1.4pt);
\draw[fill=black] (3,5) circle (1.4pt);
\draw[fill=black] (6,2) circle (1.4pt);
\draw (-2.1,0.5) node[anchor=north west] { $\theta_2^\bot$};
\draw (4.7,-3.9) node[anchor=north west] { $\theta_3^\bot$};
\draw (4.7,5) node[anchor=north west] { $\theta_1^\bot$};
\draw (1.9,5.5) node[anchor=north west] { $\overline{w}_1$};
\draw (3.1,-0.7) node[anchor=north west] { $\overline{w}_2$};
\draw (6.1,2.4) node[anchor=north west] { $\overline{w}_3$};
\end{tikzpicture}
\end{center}
\caption{Orthogonal rays for $\co(W)$.}\label{Orthogonal-rays-fig}
\end{figure}
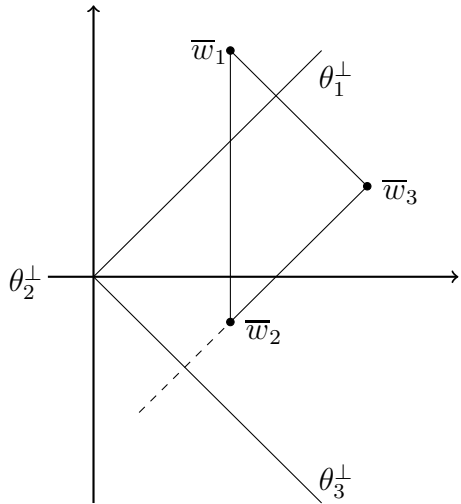

\begin{theorem} \textnormal{(P\'olya \cite{Poly})}\label{Polya-theorem}
Let $f$ be given by \eqref{ES}. Then all zeros
of $f$ lie in the union of the sectors $S_j(\veps)$, with finitely many possible exceptions. Moreover, the number $n(r,S_j)$
of zeros of $f$ in $S_j(\veps)\cap \{|z|\leq r\}$ has the asymptotic growth
	$$
	n(r,S_j)=L_jr/2\pi+O(\log r),
	$$
where $L_j=|\overline{w}_{j}-\overline{w}_{j-1}|$ for $2\leq j\leq s$ and
$L_1=|\overline{w}_{s}-\overline{w}_1|$.
\end{theorem}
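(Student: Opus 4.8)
The plan rests on one observation: for $z=re^{i\theta}$ the $m$-th summand has modulus $|P_m(z)e^{w_mz}|=|P_m(z)|e^{rh_m(\theta)}$ with $h_m(\theta)=\re(w_me^{i\theta})$, so that in the direction $\theta$ the dominant exponential factor is the one maximizing $h_m(\theta)$. The maximum $H(\theta)=\max_mh_m(\theta)$ is exactly the support function of $\co(W)$ in the direction $e^{i\theta}$ (see Appendix~\ref{convex-appendix}), and it is attained at a unique vertex precisely when $e^{i\theta}$ is not an outer normal of a side of $\co(W)$, i.e.\ when $\theta\notin\{\theta_1^\bot,\dots,\theta_s^\bot\}$. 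To prove the first assertion, let $K$ be the complement in $[0,2\pi)$ of the angular ranges of the sectors $S_j(\veps)$; it is a finite union of compact arcs, on each of which a single index $k$ is the unique maximizer, so the gap $g(\theta)=H(\theta)-\max_{m\ne k}h_m(\theta)$ is continuous and positive and hence $g\ge\delta$ on $K$ for some $\delta=\delta(\veps)>0$. Then for $z=re^{i\theta}$ with $\theta\in K$ and $r$ large,
$$
|f(z)|\ge e^{rH(\theta)}\Bigl(|P_k(z)|-\sum_{m\ne k}|P_m(z)|e^{-r\delta}\Bigr)>0,
$$
since $|P_k(z)|$ is bounded below by a positive constant while the subtracted sum is $O(r^{d}e^{-r\delta})\to0$ with $d=\max_m\deg P_m$; thus only finitely many zeros of $f$ lie outside $\bigcup_jS_j(\veps)$.

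For the counting, fix $j$ and apply the argument principle to $f$ over the positively oriented boundary of $G_r=S_j(\veps)\cap\{r_0\le|z|\le r\}$, with $r_0$ large enough to exclude the exceptional zeros. On the inner arc the variation of $\arg f$ is $O(1)$. Neither bounding radius lies along an orthogonal ray, so on each of them a single term $P_m(z)e^{w_mz}$ dominates for large $|z|$; hence $f=P_m(z)e^{w_mz}(1+o(1))$ uniformly and $\arg f=\arg P_m(z)+\im(w_mz)+\arg(1+o(1))$, so the variation of $\arg f$ along that radius is linear in $r$ up to $O(1)$. On the outer arc, which straddles $R_j$, factor out the joint dominant behaviour of the two terms attached to the $j$-th edge: with $\mu=w_j-w_{j-1}$ (so $|\mu|=L_j$ and $\mu$ is orthogonal to $R_j$),
$$
f(z)=e^{w_{j-1}z}\bigl(P_{j-1}(z)+P_j(z)e^{\mu z}+(\text{exponentially smaller terms})\bigr),
$$
and the variation of $\arg f$ over the arc splits into an explicit linear-in-$r$ term from $e^{w_{j-1}z}$ plus the variation of the argument of the perturbed two-term sum $P_{j-1}(z)+P_j(z)e^{\mu z}$. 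The latter is governed by its zeros, which solve $e^{\mu z}=-P_{j-1}(z)/P_j(z)$ and hence form a perturbed arithmetic progression $z_k=\mu^{-1}\!\left(2\pi ik+O(\log|z_k|)\right)$ of asymptotic step $2\pi/|\mu|=2\pi/L_j$ marching out along $R_j$. Assembling the four boundary contributions, the several linear-in-$r$ terms combine — via $\mu\perp R_j$ and $|\mu|=L_j$ — to $L_jr$, the $O(\log|z_k|)$ drift of the progression contributes $O(\log r)$, and the remainder is $O(1)$; dividing by $2\pi$ gives $n(r,S_j)=L_jr/(2\pi)+O(\log r)$. When more than two of the $\overline{w}_m$ lie on the $j$-th edge, the two-term sum is replaced by the collinear sub-sum of all corresponding terms, whose zeros again march out along $R_j$ with the same total density $L_j/(2\pi)$.

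The difficult part is the outer arc. One must show rigorously that off $R_j$ the perturbed two-term (or collinear) sum dominates the exponentially small tail, and must control the winding of that sum as the arc passes its own nearly equally spaced zeros — this is the mechanism simultaneously producing the main term $L_jr$ and the sharp $O(\log r)$ error. Equally delicate is checking that the several linear-in-$r$ boundary contributions, which separately depend on $\veps$ and on the polar angles of the bounding radii, cancel down to the $\veps$-independent quantity $L_jr$; this last cancellation is where the convexity of $\co(W)$, through the fact that each edge is met orthogonally by exactly one ray $R_j$, plays the decisive role.
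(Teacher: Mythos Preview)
The paper does not prove this theorem; it is quoted as a classical result of P\'olya and only its consequences are discussed. So there is no proof in the paper to compare against, and I can only assess your outline on its own merits.

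Your first paragraph is entirely correct and essentially complete: the support function $H(\theta)=\max_m\re(w_me^{i\theta})$ is attained at a unique vertex off the orthogonal directions, compactness gives a uniform gap $\delta(\veps)>0$, and the dominant-term estimate yields $|f(z)|>0$ for large $r$ outside $\bigcup_j S_j(\veps)$.

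The counting argument via the argument principle on $\partial G_r$ is the right strategy and is how P\'olya's proof (and the textbook treatments, e.g.\ Levin) proceed. But as you yourself flag in your final paragraph, the outline is not yet a proof. Two points deserve more than a hand-wave. First, the cancellation of the linear-in-$r$ contributions from the two radii and from the $e^{w_{j-1}z}$ factor on the arc must actually be carried out; it comes down to the identity $\im(w_{j-1}re^{i(\theta_j^\bot+\veps)})-\im(w_{j-1}re^{i(\theta_j^\bot-\veps)})$ etc., and the convex-geometry input is precisely that on one radius the dominant index is $j-1$ and on the other it is $j$. Second, the outer arc passes near zeros of the collinear sub-sum, so $\arg f$ can jump wildly there; the usual fix is to choose $r$ avoiding a small neighbourhood of the moduli of those zeros (a set of $O(\log r)$ total length suffices), or to compare with the explicit winding of the two-term model. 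Until these two steps are written out, the asserted combination ``$L_jr+O(\log r)$'' is a plausible target rather than a proven equality.

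In short: correct plan, honestly presented as incomplete; the missing work is exactly what you identify in your last paragraph.
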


As $\Ce=L_1+\cdots+L_s$, P\'olya's theorem implies that
    \begin{equation}\label{zeros-ES}
    n(r,1/f)=\Ce r/2\pi+O(\log r).
    \end{equation}
Hence the integrated counting function has the asymptotic growth rate
    \begin{equation}\label{integrated-Polya}
    N(r,1/f)= \Ce r/2\pi+O(\log^2 r).
    \end{equation}
Schwengeler \cite{Sch} has proved, under some assumptions, that \eqref{zeros-ES}
can be improved to
    \begin{equation}\label{zeros-ES-improved}
    n(r,1/f)=\Ce r/2\pi+O(1).
    \end{equation}
P\'olya claims this to be true in general, but offers no formal proof \cite{Poly2}.
A formal proof of \eqref{zeros-ES-improved} is due to Dickson \cite{Dick}. Obviously
this improves \eqref{integrated-Polya} into
	$$
	N(r,1/f)= \Ce r/2\pi+O(\log r).
	$$

If the multipliers $P_j$ in \eqref{ES} are constants and if certain conditions are valid,
the sectors $S_j(\veps)$ can be replaced with rectangular strips \cite{WH2,Moreno}. We take a quick look at an elementary example and discuss more about
this case in Section~\ref{zeros-in-strips} below.

\begin{example}\label{finitesum}
Let $f(z)=\prod_{j=1}^N(e^z-j)$. Then the orthogonal rays for $\co(W)=[0,N]$ are at angles $\theta_1^\bot=\pi/2$ and $\theta_2^\bot=3\pi/2$.
The zeros of $f$ are precisely at the points $z_{j,n}=\log j +2\pi ni$, where $n\in\Z$ and $j\in\{1, \ldots, N\}$. The zeros lay in the closed strip $0\leq \re (z) \leq \log N$, in fact, on $N$ rays parallel to the orthogonal rays, that is, to the imaginary axis.
It is easy to find that $L_1=|0-N|=N=L_2$ and $n(r,1/f)=N\dfrac{r}{\pi}+O(1)$.
For the function $g(z)=\prod_{j=1}^N(e^{-z}-1/j)$, the situation is similar to the one above except for $\co(W)=[-N, 0]$, since $g(z)/f(z)=\frac{(-1)^N}{N!}e^{-Nz}$ is a unit in ${\cal E}$.
\end{example}

\begin{remark}
One may wonder which exponential sums have all of their zeros only on a single line.
Those sums must be in the class ${\cal E}$, that is, the multipliers must be all constants as in~\eqref{ESC}.
As mentioned above, we will construct such an example in Appendix~\ref{zeta-appendix} by means of a certain modification of partial sums of the Riemann zeta-function.
In the wide class of exponential sums, where multipliers can be polynomials as in~\eqref{ES}, it is essential to consider sectors or logarithmic strips (defined below) around the orthogonal rays as the possible location of the zeros as in Theorem~\ref{Polya-theorem}.
\end{remark}

Schwengeler \cite{Sch, Stein} has proved that the sectors $S_j(\veps)$ in Theorem~\ref{Polya-theorem} can be replaced with logarithmic strips
(log-strips) of the form
    \begin{equation}\label{l-strip}
	\Lambda(\theta^\bot,c)
	=\left\{re^{i\theta}: r>1,\, |\theta-\theta^\bot|<c\frac{\log r}{r}\right\},
	\end{equation}
where $c>0$ is large enough but fixed, and where $\arg(z)=\theta^\bot$ represents an orthogonal ray.
To find the shape of $\Lambda(\theta^\bot,c)$, we choose $\theta^\bot=0$ for simplicity.
If $z=re^{i\theta}$ is on boundary of $\Lambda(0,c)$, we have
	\begin{equation}\label{l-boundary}
	re^{i\theta}=r\cos\left(c\frac{\log r}{r}\right)\pm ir\sin\left(c\frac{\log r}{r}\right)
    \sim r\pm ic\log r,\quad r\to\infty.
	\end{equation}
Hence $\Lambda(0,c)$ is asymptotically the domain between the curves $y=\pm c\log x$, $x\geq 1$.
The distance of these curves from the critical ray $\arg(z)=0$ tends to infinity as $x\to\infty$.
However, for any $\veps>0$ the log-strip $\Lambda(0,c)$ is essentially contained in the sector $S(\varepsilon)=\{z:|\arg(z)|<\veps\}$, see Figure~\ref{fig0}.
Thus Schwengeler's result improves Theorem~\ref{Polya-theorem}.

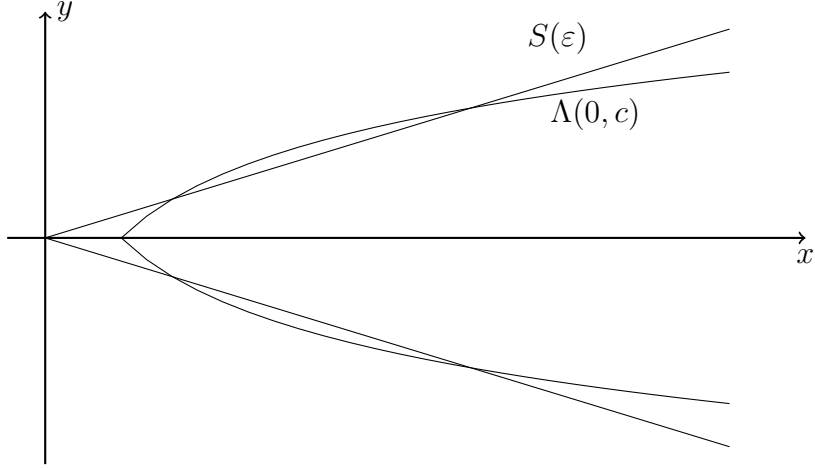
\begin{figure}
    \begin{center}
    \begin{tikzpicture}
    \draw[->,thick](-0.5,0)--(10,0)node[left,below]{$x$};
    \draw[->,thick](0,-3)--(0,3)node[right]{$y$};
    \draw[domain=1:9] plot(\x,{ln(\x)})node[below]{};
     \draw[domain=0:9] plot(\x,\x/3.25)node[above]{};
     \draw[domain=1:9] plot(\x,{-ln(\x)})node[above]{};
     \draw[domain=0:9] plot(\x,-\x/3.25)node[below]{};
     \draw (6.2,3) node[anchor=north west] { $S(\varepsilon)$};
      \draw (6.5,2) node[anchor=north west] { $\Lambda(0,c)$};
     \end{tikzpicture}
    \end{center}
	\begin{quote}
    \caption{The log-strip $\Lambda(0,c)$ is essentially contained in  the sector $S(\varepsilon)$.}\label{fig0}
    \end{quote}
    \end{figure}

\begin{example}
Let $f(z)=e^z-z$. Then the orthogonal rays for $\co(W)=[0,1]$ are at angles $\theta_1^\bot=\pi/2$ and $\theta_2^\bot=3\pi/2$.
By \cite[Example~21]{division}, the zeros of $f$ are precisely on the curves $y=\pm\sqrt{e^{2x}-x^2}$,
which tend asymptotically to the boundary curves $y=\pm e^x$ of the log-strips $\Lambda(\theta_j^\bot, 1)$, $j=1,2$,
in the right half-plane. Hence
the domains in \eqref{l-strip} have the correct order of magnitude.
\end{example}

The remainder of this section is devoted to discussing exponential sums that share infinitely many of their zeros.

\begin{theorem} \textnormal{(Ritt \cite{Ritt})}\label{Ritt-thm}
Assume that $g(z)$ and $h(z)$ are exponential sums in ${\cal E}$ of the form
	$$
	g(z)=\sum_{j=1}^ma_je^{\mu_jz}\quad\textrm{and}\quad
	h(z)=\sum_{j=1}^nb_je^{\nu_jz},
	$$
where $a_j,\mu_j,b_j,\nu_j\in\C$. If $f(z)=g(z)/h(z)$ is entire, then $f$ too is an exponential sum in ${\cal E}$ or reduces to a constant.
\end{theorem}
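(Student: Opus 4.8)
The plan is to use the factorization theory of $\mathcal{E}$ (Ritt's Theorem~\ref{Ritt-theorem}) together with the zero-location information from P\'olya's theorem. First I would reduce to the case where $g$ and $h$ are normalized so that their constant terms equal $1$ (after factoring out a unit $e^{wz}$ from each, which does not affect whether the quotient is in $\mathcal{E}$), and where $\gcd(g,h)=1$ in the sense that $g$ and $h$ share no common factor from the Ritt factorization. Writing $g = (S_1\cdots S_s)(I_1\cdots I_i)$ and $h = (S_1'\cdots S_{s'}')(I_1'\cdots I_{i'}')$ in their unique Ritt factorizations, cancelling common factors, the hypothesis that $f=g/h$ is entire forces every zero of the reduced denominator $h$ to be a zero of the reduced numerator $g$ of at least the same multiplicity. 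The goal is to show this forces the reduced $h$ to be a unit, i.e. $h$ divides $g$ in $\mathcal{E}$.

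The key step is to show that if a nontrivial (non-unit) $h\in\mathcal{E}$ has the property that all its zeros are among the zeros of some $g\in\mathcal{E}$ with matching or larger multiplicities, and $h$ shares no Ritt factor with $g$, then a contradiction arises. For this I would argue factor by factor. A simple factor $S_k'$ of $h$ has all its zeros on a single line $\ell$ orthogonal to some $\overline{\mu}$, forming arithmetic progressions; by Theorem~\ref{Polya-theorem} applied to $g$, the zeros of $g$ near that line are controlled, and one shows that the only exponential sums in $\mathcal{E}$ whose zeros contain a full arithmetic progression $d_0+d\mathbb{Z}$ on a line are those divisible by the corresponding factor $1-\beta e^{\mu z}$ (using that $1-\beta e^{\mu z}$ is, up to a unit, determined by any one of its zeros together with $\mu$, and $\mu$ is determined by the direction of the line and the density $|\mu|/2\pi$ of zeros on it via P\'olya's density formula). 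Hence $g$ would be divisible by that factor, contradicting coprimality — unless $h$ has no simple factors. For an irreducible factor $I_k'$ of $h$: its zero set, although geometrically more complicated, still has positive density along its orthogonal rays, and by the analogous divisibility/uniqueness property for the Ritt factorization (uniqueness in Theorem~\ref{Ritt-theorem}), containment of its zeros (with multiplicity) in those of $g$ again forces $I_k'$ to appear in the Ritt factorization of $g$, again a contradiction. Therefore the reduced $h$ is a unit, so $h\mid g$, and $f=g/h\in\mathcal{E}$ up to a unit, i.e.\ $f$ is an exponential sum in $\mathcal{E}$ or a constant.

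The main obstacle I anticipate is making rigorous the claim that containment of zero sets (with multiplicity) implies divisibility in $\mathcal{E}$, especially for the irreducible factors, where we lack an explicit product representation. The clean route is to avoid per-factor geometry and instead argue globally: since $f=g/h$ is entire of finite order with $\rho(f)\le\max(\rho(g),\rho(h))\le 1$, and $T(r,f)\le T(r,g)+T(r,1/h)+O(1)$ while by \eqref{zeros-ES} $T(r,1/h)=N(r,1/h)+m(r,1/h)+O(1)$ with $N(r,1/h)$ linear, one gets that $f$ is entire of order $\le 1$ and exponential type. Then $f$ has a Hadamard-type representation, and one must identify its zero set. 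Here the hard analytic input is that $f'/f$ is a sum of elementary terms (from $g'/g - h'/h$), each a logarithmic derivative of a Ritt factor, and that the entire function $f$ with $f'/f$ of this restricted shape must itself be a quotient of finite products of the factors $1-\beta e^{\mu z}$ and exponentials — this is essentially a rerun of Ritt's original argument, which is where the real work lies. I would expect to spend most of the proof verifying that the partial fraction / logarithmic-derivative structure of $g'/g-h'/h$, combined with $f$ entire, pins down $f$ as a product $e^{wz}\prod(1-\beta_k e^{\mu_k z})$.
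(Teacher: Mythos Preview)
The paper does not give its own proof of Theorem~\ref{Ritt-thm}; it simply attributes the result to Ritt and notes that an alternative proof is due to Lax. So there is no in-paper argument to compare against, and your proposal must stand on its own.

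Your first approach has a genuine gap. After cancelling common Ritt factors from $g$ and $h$, you want to argue that if every zero of the reduced $h$ (with multiplicity) is a zero of the reduced $g$, then $h$ must be a unit. For an irreducible factor $I_k'$ of $h$ you invoke ``uniqueness in Theorem~\ref{Ritt-theorem}'' to conclude that $I_k'$ must appear in the factorization of $g$. But uniqueness of factorization says only that two factorizations \emph{of the same element} agree; it says nothing about zero-containment forcing divisibility. In fact, the question of whether two exponential sums with infinitely many common zeros must share a common factor is precisely Shapiro's conjecture, which the paper itself records as open in the irreducible case. Your hypothesis is stronger (all zeros of $h$ lie in those of $g$, with multiplicity), so you are not literally assuming Shapiro, but the step you need is of the same character and is not a formal consequence of Theorem~\ref{Ritt-theorem}. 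Even the simple-factor case is not free: knowing that an arithmetic progression of zeros of $1-\beta e^{\mu z}$ lies in the zero set of $g$ tells you that $g/(1-\beta e^{\mu z})$ is entire, but whether that quotient lies in $\mathcal{E}$ is exactly an instance of the theorem you are proving.

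Your second route, via $f'/f = g'/g - h'/h$ and the structure of logarithmic derivatives of Ritt factors, is much closer to how Ritt and Lax actually argue, and you correctly identify it as ``where the real work lies.'' But as written it is only a pointer to that work, not a proof: you would need to show concretely that an entire function of order at most one whose logarithmic derivative has this special form must itself be a finite product of factors $e^{wz}$ and $1-\beta_k e^{\mu_k z}$. That analytic step is the heart of the matter, and your proposal does not carry it out.
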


An alternative proof for Ritt's theorem is due to Lax \cite{Lax}. Rahman \cite{Rahman}
has proved a generalization of Ritt's theorem covering the case of polynomial
coefficients $a_j(z)$ and $b_j(z)$. In Rahman's generalization the multipliers
of the quotient $f$ are rational functions with removable singularities. For
example, if $g(z)=e^z-1$ and $h(z)=ze^z$, then $f(z)=g(z)/h(z)$ is an entire
exponential polynomial but with rational multipliers.

Ritt's theorem leads us to ask whether two exponential sums in ${\cal E}$ with infinitely many zeros in common are both multiples of some third exponential polynomial? According to the literature \cite{DMT,PT},
this question was posed by H.~L.~Montgomery at the 1974 \emph{J\'anos
Bolyai Society Colloquim on Number Theory}, and he attributed it to H.~S.~Shapiro, who in turn had stated it in the form of a conjecture:

\bigskip
\noindent
\textbf{Shapiro's conjecture.} (Shapiro \cite{Shapiro}) \emph{If two exponential sums in ${\cal E}$ have infinitely many zeros in common, they are both multiples of  some third (entire transcendental) exponential sum~in~${\cal E}$.}

\bigskip
Several ideas relevant to settling Shapiro's conjecture are surveyed in the above mentioned papers \cite{PT} by van der Poorten and Tijdeman and~\cite{DMT} by D'Aquino, Macintyre and Terzo.
Recalling Ritt's factorization theorem (Theorem~\ref{Ritt-theorem}) as well as the fact that a common zero of two elements in ${\cal E}$ is a zero of a common factor in ${\cal E}$ of them, one can concentrate on the following two cases when considering Shapiro's conjecture:

\begin{itemize}
\item[(1)] At least one of the two exponential sums is simple.

\item[(2)] Both of the two exponential sums are irreducible.
\end{itemize}

In Case~(1), the conjecture has been proved by van der Poorten and Tijdeman \cite{PT} in 1975, while in 2014 D'Aquino, Macintyre and Terzo \cite{DMT} have a positive solution for Case~(2), assuming Schanuel's conjecture concerning the transcendence degree of complex numbers and their exponents over $\Q$.
Hence, despite of the considerable interest, Shapiro's conjecture remains in doubt.


\begin{example}
Related to Shapiro's conjecture, let us consider exponential sums in
${\cal E}$. For example, if $1-e^{w z}$ and $1-e^{\mu z}$ with $w\mu\neq 0$ have a common zero which is not at the origin, then the ratio $w/\mu$ must be a rational number, say $p/q$. Then they have, in fact, infinitely many common zeros and a common factor $1-e^{\nu z}$ with $\nu=w/p=\mu/q$.

Next, consider the functions $f(z)=1+e^{wz}-2e^{2\mu z}$ and $g(z)=1+e^{\mu z}-2e^{2w z}$ when $w/\mu$ is irrational. Of course, neither $f$ nor $g$
is simple because $\dim \mathrm{supp}(f)=2=\dim \mathrm{supp}(g)$.
Further, both functions are irreducible in~${\cal E}$ since otherwise a possible factorization of them would have the form
$$
(1-ae^{\alpha z})(1-be^{\beta z})=1-ae^{\alpha z}-be^{\beta z}+abe^{(\alpha+\beta)z},
$$
so that there could happen a degeneration of either $\alpha-\beta$ or $\alpha+\beta$, which is impossible.
Can these irreducible elements $f, g \in{\cal E}$ have any common zero other than $z=0$?
The answer is no.
Indeed, the system of linear equations $1+x-2y=0$ and $1+y-2x=0$ has the unique solution $x=y=1$, so that a common zero of $f(z)$ and $g(z)$ is a common zero of $1-e^{wz}$ and $1-e^{\mu z}$.
As we have seen above, they have the unique common zero at the origin only,
and so do $f(z)$ and $g(z)$.

It is not difficult to construct two exponential sums
	$$
	\prod_{j=1}^N(1-a_je^{w_j z})\quad\textrm{and}\quad
	\prod_{j=1}^N(1-b_je^{\mu_j z})
	$$
having exactly $N$ common zeros by taking mutually distinct $w_j$ and $\mu_j$ such that $w_k/\mu_{\ell}\in \R\setminus\Q$ for any pair $(k,\ell)\in\{1, \ldots,  N\}^2$, and by choosing constants $a_j, b_j\in \C\setminus\{0\}$ suitably.
\end{example}

The previous example does not contribute anything to proving Shapiro's conjecture, but it may help the reader to imagine why the two exponential
sums need to have infinitely many zeros in common, and why a complete
proof of Case~(2) is still missing.


\subsection{Zeros in critical strips}\label{zeros-in-strips}

We consider the zeros of exponential sums in ${\cal E}$ of the form
    \begin{equation}\label{exp.eq}
    g(z)=F_1e^{\lambda_1z}+\cdots+F_me^{\lambda_mz},
    \end{equation}
where the frequencies $\lambda_j$ are on one line and $F_j\in\C\setminus\{0\}$.
By appealing to a rotation, we may suppose that $\lambda_j$'s are all real and ordered such that $\lambda_1<\lambda_2<\cdots<\lambda_m$.
Taking the zero-free term $F_1e^{\lambda_1z}$, that is, a unit in ${\cal E}$, as a common factor, we may write $g(z)=F_1e^{\lambda_1z}h(z)$,
where $h(z)$ is another exponential sum sharing its zeros with $g(z)$.
Therefore, without loss of generality, we may suppose that the exponential sum under consideration is in the normalized form
    \begin{equation}\label{normal.eq}
    f(z)=1+H_1e^{w_1z}+\cdots+H_ne^{w_nz}
    \end{equation}
to begin with, where $0<w_1<\cdots<w_m$, $H_j\in\C\setminus\{0\}$, and $n\leq m-1$.
We suppose that $n\geq 2$, since in the case $n=1$ the zeros of $f$ are all on one vertical line.

The point of the normalization in \eqref{normal.eq} is that the zeros of $f$ are in vertical strips $S$, as we will soon see.
If, in addition, the frequencies $w_1, \ldots, w_n$ are linearly independent over the rational numbers, then the vertical projections of the zeros of $f$ on $\R$ form dense subsets of the intervals induced by the intersections of the strips $S$ with $\R$.
A precise statement is Theorem~\ref{MSV-thm} below, which is based on the findings of Moreno \cite{Moreno}.

\begin{theorem}\textnormal{(Mora-Sepulcre-Vidal \cite{SG})}\label{MSV-thm}
Let $f$ be an exponential sum of the normalized form \eqref{normal.eq}, whose frequencies $0<w_1<\cdots<w_n$ are linearly independent over the rational numbers. Then an open interval $(\sigma_0, \sigma_1)$ is contained in
	$$
    R_f:=\overline{\{\textnormal{Re}\,(z): f(z)=0\}}
    $$
if and only if the $n+1$ inequalities
    \begin{equation*}
    1\leq\sum_{j=1}^n|H_j|e^{w_j\sigma},
    \quad |H_k|e^{w_k\sigma}\leq 1+\sum_{j=1, j\neq k}^n|H_j|e^{w_j\sigma},\quad k=1,\ldots,n,
    \end{equation*}
are satisfied for any $\sigma\in(\sigma_0, \sigma_1)$.
\end{theorem}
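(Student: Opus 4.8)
The plan is to characterize the closed set $R_f = \overline{\{\re(z):f(z)=0\}}$ by a two-sided argument, treating each vertical line $\re(z)=\sigma$ separately and exploiting the Kronecker/Weyl equidistribution that comes from the $\Q$-linear independence of $w_1,\ldots,w_n$. Write $f(\sigma+it) = 1 + \sum_{j=1}^n H_j e^{w_j\sigma}e^{iw_j t}$ and set $c_j(\sigma)=|H_j|e^{w_j\sigma}>0$. For fixed $\sigma$, as $t$ ranges over $\R$, the point $(w_1t,\ldots,w_nt) \bmod 2\pi$ is, by Weyl's theorem applied to the rationally independent tuple $(w_1,\ldots,w_n)$, equidistributed on the torus $\mathbb{T}^n$. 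Hence the closure of the set of values $f(\sigma+it)$ for $t\in\R$ equals the image of the continuous map $(\phi_1,\ldots,\phi_n)\mapsto 1+\sum_j c_j(\sigma)e^{i\phi_j}$ over $\mathbb{T}^n$, which is exactly the annulus-type region
$$
\Bigl\{w\in\C:\ \bigl|\,1 - \textstyle\sum_{j} c_j(\sigma)\,\bigr|\ \text{type bounds}\ \Bigr\}
$$
more precisely the set of all $w$ expressible as $1+\sum_j z_j$ with $|z_j|=c_j(\sigma)$. A standard fact about sums of circles (a reachability/triangle-inequality computation) is that $0$ lies in this set if and only if no single term dominates the sum of the others and the total is at least the constant term, i.e.\ if and only if $1\le\sum_j c_j(\sigma)$ and $c_k(\sigma)\le 1+\sum_{j\ne k}c_j(\sigma)$ for each $k$. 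So $f$ has a zero arbitrarily close to the line $\re(z)=\sigma$ precisely when these $n+1$ inequalities hold at $\sigma$.

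From this pointwise dichotomy I would assemble the interval statement. Define
$$
E_f=\{\sigma\in\R:\ 1\le\textstyle\sum_j |H_j|e^{w_j\sigma}\ \text{and}\ |H_k|e^{w_k\sigma}\le 1+\textstyle\sum_{j\ne k}|H_j|e^{w_j\sigma},\ k=1,\dots,n\}.
$$
Each defining inequality is of the form ``a sum of exponentials $\le$ another sum of exponentials''; such sets are closed, and one checks (using strict convexity/monotonicity properties of sums $\sum a_j e^{b_j\sigma}$ with distinct $b_j$) that $E_f$ is in fact a closed interval $[a,b]$, possibly empty, a point, a half-line, or all of $\R$. For the ``only if'' direction: if $(\sigma_0,\sigma_1)\subseteq R_f$, then for every $\sigma$ in that interval there are zeros $\zeta_m\to$ with $\re(\zeta_m)\to\sigma$; continuity of $f$ and the pointwise reachability description above force the $n+1$ inequalities at every such $\sigma$, hence $(\sigma_0,\sigma_1)\subseteq E_f$. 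For the ``if'' direction: if the inequalities hold throughout $(\sigma_0,\sigma_1)$, fix any $\sigma$ in it; by the Weyl-equidistribution argument the point $-1$ is in the closure of $\{\sum_j H_j e^{w_j\sigma}e^{iw_j t}:t\in\R\}$, so $f(\sigma+it)$ comes arbitrarily close to $0$; a normal-families or Rouch\'e/Hurwitz argument (the zeros of $f$ in a fixed vertical strip are a discrete set accumulating only at $\infty$ in the imaginary direction, and small perturbations of $f$ along translates $f(z+ik)$ stay close on compacta) upgrades ``values near $0$'' to ``genuine zeros with real part near $\sigma$'', giving $\sigma\in R_f$.

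The main technical obstacle is this last upgrade: equidistribution only says $f$ takes values \emph{near} $0$ on the line $\re(z)=\sigma$, not that it vanishes exactly on or near that line. The clean way around it is to use the translation structure: the functions $f_k(z):=f(z+i t_k)$, where $t_k$ is chosen so that $(w_1 t_k,\ldots,w_n t_k)\to(\theta_1,\ldots,\theta_n)$ realizes any prescribed phase vector, converge locally uniformly on $\C$ to a limit function $\tilde f(z)=1+\sum_j H_j e^{i\theta_j} e^{w_j z}$, again an exponential sum in $\mathcal E$ with the same frequencies; choosing the phases so that $\tilde f(\sigma)=0$ and noting $\tilde f\not\equiv 0$, Hurwitz's theorem produces zeros of $f_k$ (hence of $f$) converging to $\sigma$, so $\sigma\in R_f$. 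One must also rule out the degenerate possibility $\tilde f \equiv 0$, which cannot happen since the exponentials $e^{w_j z}$ are linearly independent and the constant term is $1$. A secondary, purely real-variable obstacle is verifying that $E_f$ is an interval; this reduces to showing that if the ``domination'' inequality $c_k(\sigma)\le 1+\sum_{j\ne k}c_j(\sigma)$ and the ``coverage'' inequality $1\le\sum_j c_j(\sigma)$ each hold at two values $\sigma'<\sigma''$ then they hold in between, which follows from elementary log-convexity of the functions involved (a sum of exponentials with distinct exponents has at most finitely many sign changes against another such sum), and I would either cite Moreno \cite{Moreno} for the bookkeeping or include the short convexity argument.
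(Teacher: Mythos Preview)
The paper does not prove this theorem; it is quoted from Mora--Sepulcre--Vidal \cite{SG} (building on Moreno \cite{Moreno}) as a black box, so there is no in-paper proof to compare against. Your sketch is essentially the standard argument behind the cited result and is correct in outline: Kronecker's theorem (the $\Q$-linear independence of $w_1,\dots,w_n$ makes the flow $t\mapsto(w_1t,\dots,w_nt)$ dense on $\mathbb{T}^n$), the polygon-closing criterion for $0\in\{1+\sum c_j(\sigma)e^{i\phi_j}\}$, and the Hurwitz step via the translates $f(z+it_k)\to\tilde f$ are exactly the right ingredients. Your identification of the Hurwitz upgrade as the crux, and your handling of it, are both fine.

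Two small cleanups. First, the ``only if'' direction is simpler than you make it: if $z_m$ are zeros with $\re(z_m)\to\sigma$, the triangle inequality applied to $0=1+\sum H_je^{w_jz_m}$ gives the $n+1$ inequalities at $\re(z_m)$ directly, and passing to the limit is immediate by continuity of $\sigma\mapsto c_j(\sigma)$. No reachability description is needed there. Second, your discussion of whether $E_f$ is an interval is unnecessary for the theorem as stated: the pointwise equivalence ``$\sigma\in R_f$ iff the $n+1$ inequalities hold at $\sigma$'' (which you effectively prove, since both sides define closed sets) already yields the interval statement in both directions. Your convexity/sign-change remark for that extra claim is also shaky as written, so it is best dropped.
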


Note that the exponential sum $f$ in Theorem~\ref{MSV-thm} is not simple, see Definition~\ref{simple-definition}. In particular, the zeros of simple exponential polynomials are on finitely many parallel
lines, see the discussion after Example~\ref{simple-definition2}.

The inequalities in Theorem~\ref{MSV-thm} are of further interest.
Indeed, suppose that $z=x+iy$ satisfies
    \begin{equation}\label{mineq.eq}
    |H_k|e^{w_kx}> \sum_{j\neq k}^n|H_j|e^{w_jx},\quad k=0,1,\ldots,n,
    \end{equation}
where $H_0=1$ and $w_0=0$. Then
    \begin{equation}\label{zerofree.eq}
    |f(z)|\geq |H_k|e^{w_kx}-\sum_{j\neq k}^n|H_j|e^{w_jx}>0,
    \end{equation}
and we find that $f$ has no zeros in the region
    $$
    G:=\{ z=x+iy: x~\text{satisfies}~\eqref{mineq.eq},\, y\in\R\}.
    $$
We call $G$ as a \emph{zero-free region} of $f$.
The zero-free region of $f$ to the extreme left is determined by the
term~$1$, and the one to the extreme right
by $H_ne^{w_nz}$.
They both have an unbounded width, while the rest of the zero-free regions, if any, have a bounded width. Note that $f$ has $n+1$ terms, and hence $f$ has at most $n+1$ zero-free regions.

\begin{example}
\textnormal{(\cite{WH2})}
Let $f(z)=1+e^z+e^{2z}$. Then the two sets
	$$
	\left\{z: \textnormal{Re}\,(z)<\log\frac{\sqrt{5}-1}{2}\right\}
	\quad\textnormal{and}\quad
	\left\{z: \textnormal{Re}\,(z)>\log\frac{\sqrt{5}+1}{2}\right\}
	$$
are the only zero-free regions of $f$ even though $f$ has three terms.
Note that $R_f=\{0\}$, since $f(z)=0$ if and only if $z=\pm \frac{2}{3}\pi i +2m\pi i$, $m\in\mathbb{Z}$.
The system of inequalities in Theorem~\ref{MSV-thm} are satisfied for any $\sigma\in\mathbb{R}$ with
$$
e^{\sigma}\in \left[\frac{\sqrt{5}-1}{2}, \frac{\sqrt{5}+1}{2}\right], \quad \text{or} \quad
\sigma\in \left[\log \frac{\sqrt{5}-1}{2}, \log \frac{\sqrt{5}+1}{2}\right](\ni 0).
$$
\end{example}

In each of the zero-free regions, precisely one inequality in \eqref{mineq.eq} holds.
The exponential term $H_je^{w_jz}$, whose modulus is strictly greater than the sum of the others, is called the \emph{dominating term} in this region.
The boundary $L_k$ of a zero-free region of $f$ with $H_ke^{w_kz}$ being the dominant exponential term is determined by
    \begin{equation*}
    L_k=\left\{z=x+iy: |H_k|e^{w_kx}= \sum_{j\neq k}^n|H_j|e^{w_jx},\, y\in\R\right\}.
    \end{equation*}
The area between two consecutive zero-free regions of $f$
with dominating terms $H_je^{w_jz}$ and $H_ke^{w_kz}$ for $j\neq k$ is called a \emph{critical strip} of $f$, and is denoted by $\Lambda(j,k)$.
Each critical strip is a closed set containing its vertical boundary lines.
All zeros of $f$ lie in these critical strips, with no exceptions.
It is possible that the zeros of $f$ lie on the boundary $L_k$ of a zero-free region, as is illustrated in the following example.

\begin{example}\label{critical-strips-width}
\textnormal{(\cite{WH2})}
The zeros of the exponential sum
	$$
	f(z)=6-5e^z+e^{2z}=(e^z-2)(e^z-3)
	$$
lie on two lines $\textnormal{Re}\,(z)=\log 2$ and $\textnormal{Re}\,(z)=\log 3$.
The zero-free regions of $f$ are $\{z: \textnormal{Re}\,(z)>\log 6\}$, $\{z: \log 2<\textnormal{Re}\,(z)<\log 3\}$ and
$\{z: \textnormal{Re}\,(z)<0\}$. Thus the critical strips of $f$ are $\Lambda(1,2):=\{z:\log 3\leq \textnormal{Re}\,(z)\leq\log 6\}$
and $\Lambda(0,1):=\{z: 0\leq \textnormal{Re}\,(z)\leq\log 2\}$.
The lines $\textnormal{Re}\,(z)=\log 2$ and $\textnormal{Re}\,(z)=\log 3$ are boundary lines of $\Lambda(0,1)$
and $\Lambda(1,2)$, respectively, see Figure~\ref{domain.png}.
    \begin{figure}[ht]\label{zero}
     \begin{center}
     \begin{tikzpicture}
    \draw[->,thick](2,0)--(11,0)node[left,below]{$x$};
    \draw[-](4,-3.5)--(4,3);
    \draw (3.75,3.6) node[anchor=north west] { $\Lambda(0,1)$};
    \draw[-](5,3)--(5,-3.5);
    \draw[-](8,-3.5)--(8,3);
    \draw (7.75,3.6) node[anchor=north west] { $\Lambda(1,2)$};
    \draw[-](9,3)--(9,-3.5);
    \draw[-,dashed](9,2.5)--(10,2);
    \draw[-,dashed](9,1.5)--(10,1);
    \draw[-,dashed](9,0.5)--(10,0);
    \draw[-,dashed](9,-0.5)--(10,-1);
    \draw[-,dashed](9,-1.5)--(10,-2);
    \draw[-,dashed](9,-2.5)--(10,-3);
    \draw[-,dashed](3,2.5)--(4,2);
    \draw[-,dashed](3,1.5)--(4,1);
    \draw[-,dashed](3,0.5)--(4,0);
    \draw[-,dashed](3,-0.5)--(4,-1);
    \draw[-,dashed](3,-1.5)--(4,-2);
    \draw[-,dashed](3,-2.5)--(4,-3);
    \draw[-,dashed](5,2.5)--(8,2);
    \draw[-,dashed](5,1.5)--(8,1);
    \draw[-,dashed](5,0.5)--(8,0);
    \draw[-,dashed](5,-0.5)--(8,-1);
    \draw[-,dashed](5,-1.5)--(8,-2);
    \draw[-,dashed](5,-2.5)--(8,-3);
    \draw[-,dashed](10,4)--(10.5,3.8);
    \draw[-,dashed](10,3.8)--(10.5,3.6)node[right]{zero-free region};
    \draw[-,dashed](10,3.6)--(10.5,3.4);
    \end{tikzpicture}
     \end{center}
    \caption{Zero-free regions and critical strips.}\label{domain.png}
    \end{figure}
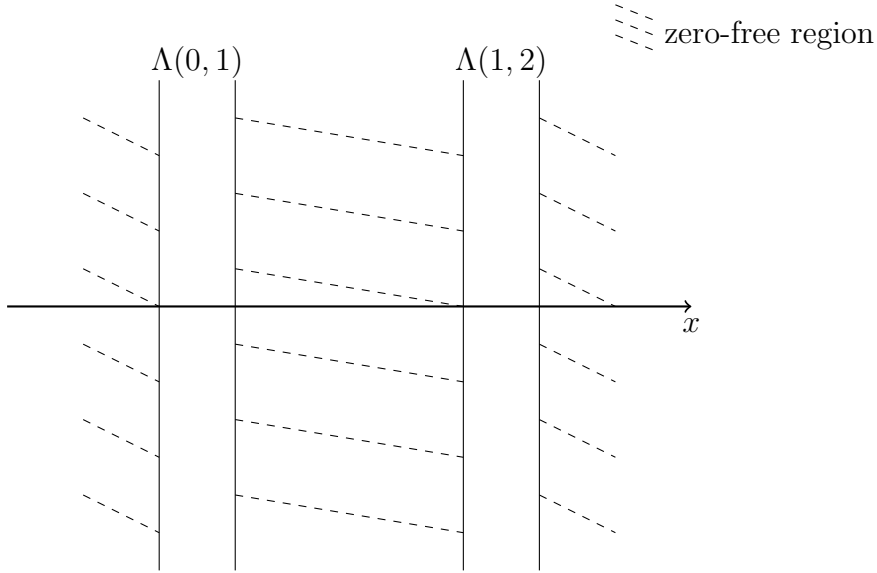
\end{example}

Langer \cite{Langer} considered the zeros of exponential sums $f$ of the form \eqref{normal.eq} in rectangles.
The vertical sides of these rectangles coincide with the boundary lines of one large vertical strip that contains all zeros of $f$.
It is proved in \cite{Langer} that the number $n(r)$ of zeros of $f$ in such a rectangle of height $r$ is subject to the bound
    \begin{equation*}
    \bigg|n(r)-\frac{w_n}{2\pi}r\bigg|\leq n.
    \end{equation*}
The following result reveals the number of zeros of $f$ in each individual critical strip.

\begin{theorem}\label{strips-theorem}
\textnormal{(\cite{WH2})}
Let $f$ be an exponential sum of the form \eqref{normal.eq}, where the multipliers are constants and the frequencies satisfy $0<w_1<\cdots<w_n$.
Then all zeros of $f$ are in finitely many critical strips $\Lambda(j,k)$.
Moreover, let $R$ be any rectangle cut from a critical strip $\Lambda(j,k)$ by two horizontal lines $\textnormal{Im}\,(z)=y_1$ and $\textnormal{Im}\,(z)\, =y_2$ with $y_2-y_1=r>0$.
Then the number $n(r,\Lambda_{jk})$ of zeros of $f$ inside $R$ satisfies
    \begin{equation}\label{n.eq}
    n(r,\Lambda_{jk})=\frac{|w_j-w_k|}{2\pi}r+O(1).
    \end{equation}
\end{theorem}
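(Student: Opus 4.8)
The plan is to prove \eqref{n.eq} by the argument principle applied to $f$ over a slight enlargement of the rectangle $R$, evaluating the change of argument one side at a time. First recall the structure of the zero-free regions. Put $H_0=1$, $w_0=0$ and $\psi_\ell(x)=\sum_{m\neq\ell}(|H_m|/|H_\ell|)\,e^{(w_m-w_\ell)x}$; each $\psi_\ell$ is a sum of exponentials, hence convex, so $I_\ell:=\{x:\psi_\ell(x)<1\}$ is an interval and the zero-free region in which $H_\ell e^{w_\ell z}$ dominates is the vertical strip over $I_\ell$. A short manipulation of the inequality in \eqref{mineq.eq} shows that the nonempty $I_\ell$ are pairwise disjoint and occur from left to right in order of increasing $w_\ell$, with $I_0$ a left half-line and $I_n$ a right half-line; in particular only finitely many critical strips occur and each is a bounded vertical strip, which is the first assertion. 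Relabelling if necessary so that $w_j<w_k$, the region $I_j$ then adjoins $\Lambda(j,k)$ on the left and $I_k$ adjoins it on the right; write $\re z=a$ and $\re z=b$ for the boundary lines of $\Lambda(j,k)$.

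Fix a small $\delta>0$ so that the vertical lines $\re z=a-\delta$ and $\re z=b+\delta$ lie strictly inside $I_j$ and $I_k$, and replace $[y_1,y_2]$ by a slightly larger interval $[y_1',y_2']$ whose endpoints determine horizontal lines free of zeros of $f$ --- possible since, by Langer's bound recalled above, $f$ has only $O(1)$ zeros in any horizontal strip of bounded height. Let $R'$ be the rectangle bounded by these four lines. Since $R'\setminus R$ consists of two thin vertical strips lying inside zero-free regions (hence free of zeros) together with two horizontal strips of bounded height (hence containing $O(1)$ zeros), the number $n_{R'}(f)$ of zeros of $f$ in $R'$ satisfies $n_{R'}(f)=n(r,\Lambda_{jk})+O(1)$; and since $f\neq0$ on $\partial R'$, the argument principle gives $n_{R'}(f)=\frac{1}{2\pi}\Delta_{\partial R'}\arg f$. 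It therefore suffices to prove $\Delta_{\partial R'}\arg f=(w_k-w_j)\,r+O(1)$.

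On the right vertical side, which lies in $I_k$, write $f(z)=H_ke^{w_kz}\bigl(1+g(z)\bigr)$ with $|g(z)|\le\kappa<1$ uniformly in $\im z$; then $1+g$ stays in the disk $\{|\zeta-1|<1\}\subset\{\re\zeta>0\}$, so $\arg(1+g)$ changes by $O(1)$, while $\arg(H_ke^{w_kz})$ increases by exactly $w_k(y_2'-y_1')=w_kr+O(1)$ as $z$ runs upward; this side contributes $w_kr+O(1)$. Traversed downward, the left vertical side (inside $I_j$) contributes $-w_jr+O(1)$ in the same way. On a horizontal side $\im z=y_0$, $a-\delta\le\re z\le b+\delta$, the function $\re f(x+iy_0)=\sum_{\ell=0}^{n}\re\bigl(H_\ell e^{iw_\ell y_0}\bigr)\,e^{w_\ell x}$ is a real exponential sum in $x$ with the $n+1$ distinct real frequencies $w_0<\cdots<w_n$; its $e^{w_0x}$-coefficient is $\re(H_0)=1\neq0$, so it is not identically zero, and a real exponential sum with $n+1$ distinct real frequencies has at most $n$ real zeros (divide by $e^{w_0x}$, differentiate, and induct via Rolle's theorem). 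Hence the curve $x\mapsto f(x+iy_0)$ meets the imaginary axis at most $n$ times, so on at most $n+1$ subintervals it lies in one of the closed half-planes $\{\re\zeta\ge0\}$, $\{\re\zeta\le0\}$ while avoiding the origin; on each such piece $\arg f$ varies within an interval of length $\pi$, so the net change of $\arg f$ along a horizontal side is $O(1)$. Summing the four contributions gives $\Delta_{\partial R'}\arg f=(w_k-w_j)r+O(1)$, which is \eqref{n.eq}.

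The one genuinely delicate step is the estimate on the horizontal sides: a priori the restriction of $f$ to a horizontal segment can wind around the origin, and one must see that the number of windings is bounded independently of the height $r$ of $R$. This is exactly where the constancy of the multipliers is used, since it makes $\re f$ restrict to an honest real exponential sum on each horizontal line, so that the Descartes/Laguerre-type bound applies; with polynomial multipliers this breaks down, consistently with Theorem~\ref{Polya-theorem} only placing the zeros in sectors or log-strips in that generality. Everything else is bookkeeping --- verifying that the enlargement alters the count by $O(1)$, that the bound $|g|\le\kappa<1$ is uniform along the vertical sides, and that the first assertion follows from the convexity of the $\psi_\ell$.
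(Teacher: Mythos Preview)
Your proof is correct. The present survey does not prove Theorem~\ref{strips-theorem} but merely cites it from \cite{WH2}; the argument-principle approach you give --- with the vertical sides of the contour placed in the adjacent zero-free regions so that the dominant term governs the change of argument, and the horizontal contribution bounded via the Descartes--Rolle zero count for real exponential sums in one variable --- is the classical method that originates with Langer~\cite{Langer}.
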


The asymptotic equation \eqref{n.eq} can be used to show that the zeros of $f$ are asymptotically separated.
Indeed, if $R_1$ and $R_2$ are two matching rectangles of heights $r$ and $r+1$, respectively, then the asymptotic number of zeros
in the rectangle $R_2\setminus R_1$ of height one is asymptotically bounded.

Theorem~\ref{MSV-thm} leads us to asking whether some vertical lines in the critical strips of $f$ could avoid the zeros of $f$?
The following example shows that in fact almost every vertical line has this property in a strong sense.

\begin{example}
\textnormal{(\cite{WH2})}
Let $f$ be as in Theorem~\ref{strips-theorem}, and let $\{z_n\}$ denote the sequence of zeros of $f$ listed according to multiplicities and
ordered with respect to increasing modulus.
Let $\{D_n\}$ be the collection of Euclidean discs
    $$
    D_n:=\left\{z: |z-z_n|<r_n\right\},
    \quad r_n:=(1+|z_n|)^{-1}\log^{-2}(e+|z_n|).
    $$
Note that $\sum_n r_n<\infty$ because the exponent
of convergence of $\{z_n\}$ is equal to one.
It is proved in \cite{WH2} that the set $C\subset\R$ of values~$c$ for which the vertical line $\textnormal{Re}\,(z)=c$ meets infinitely
many discs $D_n$ has linear measure zero.
In other words, almost every vertical line meets at most finitely many discs $D_n$ around the zeros of $f$.
This does not violate the conclusion of Theorem~\ref{MSV-thm} because the radii of these discs tend to zero.
\end{example}

The left and the right half-planes are the zero-free regions of $g(z)=e^z-1$,
while the only critical strip of $g$ is just one vertical line, namely the imaginary axis.
Meanwhile, the critical strips of $f$ in Example~\ref{critical-strips-width} have positive width,
and all the zeros of $f$ lie on some of the boundary lines of the critical strips. Common to these functions is that their frequencies are linearly dependent over the rational numbers, as is the case with
	$$
	h(z)=(e^{(\sqrt{2}+1)z}-1)(e^z-2).
	$$
However, it can be verified by means of computer software that
the vertical boundary lines of the sole critical strip $\Lambda$ of $h$ are
$x\approx -0.316402$ and $x\approx 0.914126$. Meanwhile,
all zeros of $h$ are  lying
on the vertical lines $x=0$ and $x=\log 2\approx 0.693147$ both of which are in the interior of $\Lambda$. This leads us to the following question.

\begin{problem}
Let $f$ be a normalized exponential sum of the form \eqref{normal.eq} having a critical
strip $\Lambda$ of positive width, and suppose that the frequencies $w_1,w_2,\ldots,w_m$,
$m\geq 2$, of $f$ are linearly independent over the rational numbers.
Is it true that every vertical line in the interior
of $\Lambda$ meets at most finitely many zeros of $f$?
\end{problem}

\section{Exponential polynomials}\label{EP-sec}

An \emph{exponential polynomial} is an entire function $f$ of the form
    \begin{equation}\label{EP}
    f(z)=\sum_{j=1}^mP_j(z)e^{Q_j(z)},
    \end{equation}
where $P_j(z)$, $Q_j(z)$ are polynomials for $1 \leq j \leq m$, that is,
$f$ is a linear combination of exponential functions $e^{Q_j(z)}$ over the ring $\C[z]$ of polynomials.
We assume that the polynomials $Q_j(z)$ are pairwise different and normalized such that $Q_j(0)=0$.
In this case the terms $P_j(z)e^{Q_j(z)}$ in~\eqref{EP} are linearly independent over the field of rational functions, and the order of  $f$ is defined by the maximum degree of the $Q_j(z)$.
Observe that polynomials and exponential sums are special cases of exponential polynomials.
In particular, an exponential polynomial of order zero is an ordinary polynomial in~$z$, and an exponential polynomial of order one is an exponential sum. In this section we will focus on the zero distribution of exponential polynomials and on the asymptotic
growth of $T(r,f)$, $N(r,1/f)$, $m(r,f)$ and $m(r,1/(f-c))$, where $f$ is either an exponential polynomial or a quotient of two exponential polynomials.

\subsection{Asymptotic growth of $T(r,f)$ and $N(r,1/f)$}\label{TN-subsection}

If $f$ in \eqref{EP} is transcendental, then it can be written in the normalized form
    \begin{equation}\label{normalized-f}
    f(z)=F_0(z)+F_1(z)e^{w_1z^q}+\cdots+F_n(z)e^{w_nz^q},
    \end{equation}
where $q=\max\{\deg (Q_j)\}\geq 1$ is the order of $f$, the frequencies $w_j$ of $f$ are pairwise distinct non-zero constants, the multipliers $F_j(z)$ are exponential polynomials of order $\leq q-1$ such that $F_j(z)\not\equiv 0$ for $1\leq j\leq n$, and $n\leq m$.
Those terms $P_j(z)e^{Q_j(z)}$ with $\deg(Q_j)\leq q-1$, if any, are included in the term $F_0(z)$.

For convenience, we set $w_0=0$. As in Section~\ref{ES-sec}, we denote  $W=\{\overline{w}_1,\ldots,\overline{w}_n\}$
and $\Ce=C(\co(W))$. In addition, we set $W_0=W\cup\{0\}$ and $\Ce_0=C(\co(W_0))$.

The asymptotic growth for the Nevanlinna characteristic function of a given transcendental exponential polynomial were found by Steinmetz in \cite[Satz~1]{Stein} with $o(r^q)$ as the error term.
We state the following refinement of \cite[Satz~1]{Stein} that has
improved error terms along with multipliers of sub-maximal growth.

\begin{theorem} \textnormal{(\cite{WH})} \label{Stein-thm1}
Let $f$ be given by \eqref{normalized-f}, where the multipliers
$F_j(z)$ are exponential polynomials of order $\rho(F_j)\leq q-p$ for some integer
$1\leq p\leq q$. Then
    \begin{equation}\label{characteristic1}
    T(r,f)=\Ce_0r^q/2\pi+O(r^{q-p}+\log r).
    \end{equation}
If $F_0(z)\not\equiv 0$, then
    \begin{equation}\label{proximity0}
    m(r,1/f)=O(r^{q-p}+\log r),
    \end{equation}
while if $F_0(z)\equiv 0$, then
    \begin{equation}\label{counting1}
    N(r,1/f)=\Ce r^q/2\pi+O(r^{q-p}+\log r).
    \end{equation}
If $F_0(z)\not\equiv 0$ is a polynomial, then \eqref{proximity0} can be replaced with
   \begin{equation}\label{proximity2}
    m(r,1/f)=O(\log r).
    \end{equation}
\end{theorem}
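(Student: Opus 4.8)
The plan is to reduce the asymptotic estimates for $T(r,f)$, $m(r,1/f)$ and $N(r,1/f)$ to a careful analysis of which term in the sum \eqref{normalized-f} dominates on a given ray, and then integrate the resulting pointwise estimates over a circle of radius $r$. First I would recall P\'olya's geometric picture: for each $\theta$, write $\re(\overline{w}_j z^q) = |z|^q \cdot \re(\overline{w}_j e^{iq\theta})$, and observe that the quantity $h(\theta)=\max_{0\le j\le n}\re(\overline{w}_j e^{iq\theta})$ is, up to the substitution $z^q\mapsto z$, the support function of $\co(W_0)$. Away from the finitely many critical directions where the maximum is attained by two different indices, exactly one term $F_j(z)e^{w_jz^q}$ dominates all the others by a factor that grows like $\exp(c|z|^q)$ for some $c>0$; in such a sector one gets $\log|f(z)| = \re(w_jz^q) + O(\log r)$ once the multipliers are controlled, using $\rho(F_j)\le q-p$ to bound $\log|F_j(z)|=O(r^{q-p}+\log r)$ (a polynomial multiplier gives the sharper $O(\log r)$). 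Integrating $\log^+|f(re^{i\theta})|$ over $\theta$ then yields $m(r,f)=\Ce_0 r^q/2\pi + O(r^{q-p}+\log r)$, where the appearance of $\Ce_0$ rather than $\Ce$ comes from including the index $j=0$ (the term $F_0$) in the maximum; since $f$ is entire, $T(r,f)=m(r,f)+O(1)$, giving \eqref{characteristic1}.

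For \eqref{proximity0} and \eqref{proximity2}, I would apply the same pointwise analysis to $1/f$: wherever a single term dominates, $\log|1/f(z)| = -\re(w_jz^q)+O(r^{q-p}+\log r)$, so $\log^+|1/f|$ is non-negligible only on those sectors where the dominating index is $j=0$, i.e.\ where $F_0$ dominates. When $F_0\not\equiv 0$ these sectors are exactly the directions where $0$ is an exposed point of $\co(W_0)$ contributing to the support function, and on them $\log^+|1/f| = \log^+|1/F_0| + O(r^{q-p})$; integrating and invoking $m(r,1/F_0)$, which is itself $O(r^{q-p}+\log r)$ by induction on the order (the multipliers $F_j$ being exponential polynomials of strictly smaller order), gives \eqref{proximity0}. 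If $F_0$ is a polynomial, then $\log^+|1/F_0|$ contributes only an $O(\log r)$ term — this is a standard Jensen-type estimate, $m(r,1/F_0)=O(\log r)$ for a polynomial $F_0$ — which upgrades the bound to \eqref{proximity2}.

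Finally, \eqref{counting1} follows from the first two: when $F_0\equiv 0$ we may factor $f(z) = e^{w_1 z^q}\widetilde f(z)$ for a suitable relabelling, or more directly apply the first main theorem $T(r,f) = N(r,1/f) + m(r,1/f) + O(1)$ together with \eqref{characteristic1}, noting that $F_0\equiv 0$ forces $\Ce_0=\Ce$ (the origin is now interior to, or on an edge of, $\co(W)$ in the degenerate sense that does not change the circumference in the way that matters — more precisely, removing the constraint that $0\in W_0$ replaces $\Ce_0$ by $\Ce$ exactly when $F_0$ vanishes) and that $m(r,1/f)=O(r^{q-p}+\log r)$ still holds by the argument of the previous paragraph applied with the dominating index ranging only over $1\le j\le n$. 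The main obstacle I anticipate is the bookkeeping at the critical directions: the sectors $S_j(\veps)$ around them have angular width that one must let shrink like $\veps \asymp (\log r)/r^q$ (a log-strip, in the language of Schwengeler's refinement) so that their contribution to the integral is genuinely $O(\log r)$ rather than $O(r^q\veps)$, and one must simultaneously check that inside these thin strips $\log^+|f|$ and $\log^+|1/f|$ do not blow up faster than the boundary estimates — this requires a lower bound for $|f|$ near, but not on, its zero set, which is precisely where P\'olya's theorem (Theorem~\ref{Polya-theorem}) on the location and density of the zeros, together with a minimum-modulus estimate on circles avoiding the zeros, does the work.
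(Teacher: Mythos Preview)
Your overall strategy is correct and matches the method behind this result (the paper itself does not reprove Theorem~\ref{Stein-thm1} but cites \cite{WH} and rehearses the technique in Section~\ref{Stein-ratio-proof-sec}): between consecutive critical rays a single term dominates, the identity \eqref{kqh} converts the integral of $\log^+|f|$ into the circumference integral of Theorem~\ref{circumference-thm}, the log-strips contribute only $O(r^{q-p}\log r)$ because their angular width is $O((\log r)/r^p)$, and minimum-modulus estimates outside small discs around the zeros handle $\log^+|1/f|$.

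There is, however, one step that fails. In deriving \eqref{counting1} you assert that when $F_0\equiv 0$ the bound $m(r,1/f)=O(r^{q-p}+\log r)$ ``still holds by the argument of the previous paragraph applied with the dominating index ranging only over $1\le j\le n$''; relatedly you suggest that $F_0\equiv 0$ forces $\Ce_0=\Ce$. Both claims are wrong. If the origin lies outside $\co(W)$ then on the sectors where $\max_{1\le j\le n}\re(w_je^{iq\theta})<0$ every term of $f$ decays exponentially, so $\log^+|1/f(re^{i\theta})|\sim -\max_j\re(w_je^{iq\theta})\,r^q$ there, and integrating gives $m(r,1/f)=(\Ce_0-\Ce)r^q/2\pi+O(r^{q-p}+\log r)$, which is of exact order $r^q$ whenever $\Ce_0>\Ce$. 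The factoring idea you mention first is the correct route: write $f(z)=e^{w_1z^q}\widetilde f(z)$, so that $\widetilde f$ has conjugated frequency set $W-\overline{w}_1$ (a translate of $W$, hence with circumference $\Ce$) and a nonvanishing ``constant'' multiplier $\widetilde F_0=F_1$; since $0\in W-\overline{w}_1$, the quantities $\Ce$ and $\Ce_0$ for $\widetilde f$ coincide, and applying \eqref{characteristic1} and \eqref{proximity0} to $\widetilde f$ gives
\[
N(r,1/f)=N(r,1/\widetilde f)=T(r,\widetilde f)-m(r,1/\widetilde f)+O(1)=\Ce r^q/2\pi+O(r^{q-p}+\log r).
\]
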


\begin{example}
\textnormal{(Steinmetz \cite{Stein})}
If $f(z)=(1-3e^{iz})e^{z^2}-ze^{-iz^2}$, then
	\begin{eqnarray*}
	T(r,f) &=& \frac{2+\sqrt{2}}{2\pi} r^2+o(r^2),\\
	N(r,1/f) &=& \frac{\sqrt{2}}{\pi} r^2+o(r^2),\\
	m(r,1/(f-c)) &=& O(\log r),\quad c\neq 0.
	\end{eqnarray*}
Thus the Nevanlinna deficiency (see Appendix~\ref{Nevanlinna-appendix})
for the zeros of $f$ is $\delta(0,f)=3-2\sqrt{2}$.
\end{example}

From \eqref{characteristic1} and \eqref{counting1}, we see that the value zero is a Nevanlinna deficient value for $f$ precisely when $\Ce<\Ce_0$. In addition, we conclude that $f$ can have at most one finite deficient value. The inequality $\Ce<\Ce_0$ comes down to the question whether the origin is included in $\co(W)$ or not. The situation is visualized
in Figure~\ref{hull} below.

	\begin{figure}[h]
     \begin{center}
     \begin{tikzpicture}

     \draw[thick,->,thick] (-0.25,0) -- (4,0);
     \draw[thick,->,thick] (0,-0.25) -- (0,4);
     \draw[fill=black] (0,0) circle (1.4pt);
	 \draw[fill=black] (3,1) circle (1.4pt);	
	 \draw[fill=black] (3,2) circle (1.4pt);
	 \draw[fill=black] (1,3) circle (1.4pt);
	 \draw[fill=black] (1.5,1.5) circle (1.4pt);
	 \draw[-] (1.5,1.5) -- (3,1);
	 \draw[-] (3,1) -- (3,2);
	 \draw[-] (3,2) -- (1,3);
	 \draw[-] (1,3) -- (1.5,1.5);
	
	 \draw[thick,->] (5.75,0) -- (10,0);
     \draw[thick,->] (6,-0.25) -- (6,4);
     \draw[fill=black] (6,0) circle (1.4pt);
	 \draw[fill=black] (9,1) circle (1.4pt);	
	 \draw[fill=black] (9,2) circle (1.4pt);
	 \draw[fill=black] (7,3) circle (1.4pt);
	 \draw[fill=black] (7.5,1.5) circle (1.4pt);
     \draw[-] (6,0) -- (9,1);
	 \draw[-] (9,1) -- (9,2);
	 \draw[-] (9,2) -- (7,3);
	 \draw[-] (7,3) -- (6,0);
	
	 \end{tikzpicture}
     \end{center}
    \caption{The circumferences of convex hulls $\co(W)$ and $\co(W_0)$.}\label{hull}
    \end{figure}
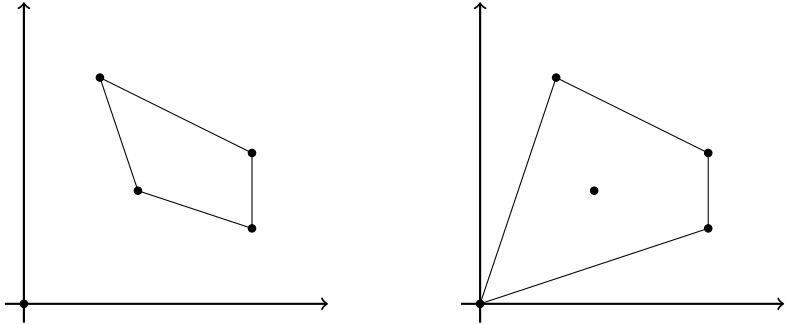

As an easy consequence of \eqref{counting1}, the non-integrated counting function of the
zeros of $f$ has the asymptotic growth
	\begin{equation}\label{n-asymptotic}
	n(r,1/f)=q\Ce r^q/2\pi+o(r^q),
	\end{equation}
see \cite[Corollary~3.3]{WH} for more details. The error term $o(r^q)$
in \eqref{n-asymptotic} can be improved, see the discussion in
Section~\ref{zero-distribution} below.

\begin{remark}
The constant $\Ce_0$ and the set $W_0$ are needed in Theorem~\ref{Stein-thm1} only. From this
point onwards it is convenient to use the same notation as in Section~\ref{ES-sec}. In particular,
the set $W=\{\overline{w}_1,\ldots,\overline{w}_n\}$ contains the conjugated frequencies
of a normalized form exponential polynomial
    \begin{equation}\label{normalized-f2}
    f(z)=F_1(z)e^{w_1z^q}+\cdots+F_n(z)e^{w_nz^q},
    \end{equation}
where the constants $w_j$ are pairwise distinct and one of them may or may not be zero. Moreover,
the points $\overline{w}_1,\ldots ,\overline{w}_s (s\leq n)$ are the vertices of $\co(W)$, and
the rays $\arg(z)=\theta^\bot_j$, $j=1,\ldots ,s$, correspond to the orthogonal
rays for $\co(W)$, and satisfy \eqref{orthogonal-rays-ordered}.
\end{remark}

\subsection{Zeros of exponential polynomials}\label{zero-distribution}

The zero distribution of exponential polynomials is more complicated than that of exponential sums.
For example, if $f(z)=e^{2\pi iz^2}-1$, then the orthogonal rays for the convex hull $\co (W)=[0,-2\pi i]$ are the positive and the negative real axes. However, the zeros of $f$, namely $z_n=\pm\sqrt{n}$ for $n\in\Z$, are precisely on all four coordinate half-axes.
Thus orthogonal rays alone no longer suffice.

From \cite[Lemma~3.2]{division}, we find that each orthogonal ray $\arg(z)=\theta^\bot$ induces $q$ \emph{critical rays} $\arg(z)=\theta_k^*$, where
    \begin{equation}\label{critical-directions}
    \theta_k^*=(\theta^\bot+2k\pi)/q,\quad k=0,\ldots,q-1.
    \end{equation}
Some orthogonal rays and critical rays may coincide.
For example, the critical rays related to the function
$f(z)=e^{2\pi iz^2}-1$ discussed above are at angles $0,\pi/2,\pi,3\pi/2$. In general, the number
of critical rays is $\leq s q$, where $s$ is the number of
vertex points of $\co(W)$ and $q$ is the order of $f$. The critical rays reduce to orthogonal rays in the special case $q=1$. In terms of zero distribution, the critical rays take the role of orthogonal rays, as we will see next. In Appendix~\ref{convex-appendix} below we will show that
the critical rays can be found in terms of the supporting function and the indicator function.

Given $f$ and $W$, we define the modified logarithmic strips
	$$
	\Lambda_p(\theta^*,c)=\left\{z=re^{i\theta} : r>1,\ |\arg(z)-\theta^*|<c\frac{\log r}{r^p} \right\},
	$$
where $c>0$ is large enough but fixed, $p\leq q$ is an integer and $\arg(z)=\theta^*$
represents a critical ray. Similarly as
in \eqref{l-boundary}, we find that $\Lambda_p(0,c)$ is asymptotically the domain between the
curves $y=\pm c\log x/x^{p-1}$, $x\geq 1$. Differing from the situation
in \eqref{l-strip}, the domains $\Lambda_p(\theta^*,c)$
curve asymptotically towards the critical rays  $\arg(z)=\theta^*$ when $p\geq 2$, see Figure~\ref{fig1}.

\begin{figure}[ht]
    \begin{center}
    \begin{tikzpicture}
    \draw[->,thick](-0.5,0)--(10,0)node[left,below]{$x$};
    \draw[->,thick](0,-3)--(0,3)node[right]{$y$};
    \draw[domain=1:9] plot(\x,{ln(\x)})node[below]{$y=\log x$};
     \draw[domain=1:9] plot(\x,{(ln(\x))/\x})node[above]{$y=\frac{\log x}{x}$};
     \draw[domain=1:9] plot(\x,{-ln(\x)})node[above]{$y=-\log x$};
     \draw[domain=1:9] plot(\x,{-(ln(\x))/\x})node[below]{$y=-\frac{\log x}{x}$};
     \draw (0.8,0) node[anchor=north west] {$1$};
    \end{tikzpicture}
    \end{center}
	\begin{quote}
    \caption{The domains $\Lambda_1(0,1)$ and $\Lambda_2(0,1)$.}\label{fig1}
    \end{quote}
    \end{figure}
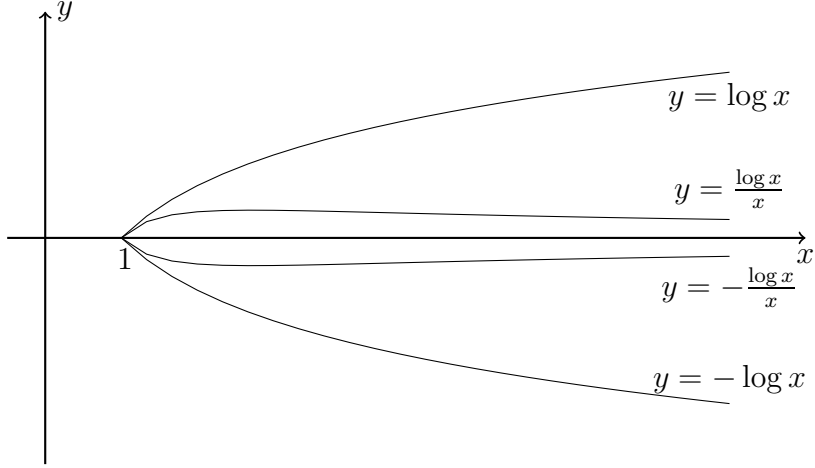

In the following result $N_{\Lambda_p}(r)$ denotes the integrated counting function
of the zeros of $f$ in $|z|\leq r$ lying outside of the finite union of domains $\Lambda_p(\theta^*,c)$.

\begin{theorem} \textnormal{(\cite{division})} \label{zeros-thm}
Let $f$ be an exponential polynomial of the form \eqref{normalized-f2} for $q\geq 1$, where the multipliers $F_j(z)$
are exponential polynomials of order $\rho(F_j)\leq q-p$ for some integer $1\leq p\leq q$.
Then
	$$
    N_{\Lambda_p}(r)=\left\{\begin{array}{rl}
    O\left(r^{q-p}\right),\ & \textnormal{if}\ q-p\geq 1,\\
    O\left(\log r\right),\ & \textnormal{if}\ p=q.
    \end{array}\right.
    $$
\end{theorem}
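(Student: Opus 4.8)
The plan is to localize the analysis to a single critical ray and then count zeros by a Rouché/argument-principle estimate on long thin boxes. First I would fix one orthogonal ray $\arg(z)=\theta^\bot$ of $\co(W)$, say $\theta^\bot=0$ after a rotation, so that the corresponding side of the polygon has supporting line $\re(\overline w\,\cdot)=\text{const}$ attained by exactly the two vertices $\overline w_{j-1},\overline w_j$ (or, in the case of a single dominating frequency, by exactly one of them). For $z=re^{i\theta}$ with $\theta$ near one of the $q$ critical directions $\theta^*_k=2k\pi/q$ and $r$ large, I would substitute and examine the real parts $\re(w_\ell z^q)$: the exponent of $z^q$ rotates the picture so that near $\theta^*_k$ the terms whose frequencies are the two relevant vertices of $\co(W)$ dominate all the others by an exponentially large factor as soon as $|\theta-\theta^*_k|$ exceeds a threshold of order $\log r / r^p$. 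Concretely, if $|\theta-\theta^*_k|\geq c\log r/r^p$ with $c$ large, then for some single index $\ell$ one has $|F_\ell(z)e^{w_\ell z^q}|$ strictly exceeding the sum of the moduli of all the remaining terms, using that $|F_j(z)|=O(r^{q-p})$ while the gap in the real parts of the exponents is $\asymp r^{q-1}\cdot|\theta-\theta^*_k|\gtrsim r^{q-1-p}\log r$, which beats the polynomial factors. This is exactly the computation behind the shape of $\Lambda_p(\theta^*,c)$ in Figure~\ref{fig1}, and it shows $f$ has no zeros outside $\bigcup_{\theta^*}\Lambda_p(\theta^*,c)$ once $|z|$ is large.

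Next I would convert this zero-free statement into the stated bound on $N_{\Lambda_p}(r)$. The key point is that the only zeros contributing to $N_{\Lambda_p}(r)$ are those with $|z|$ bounded (of which there are $O(1)$, contributing $O(\log r)$ to the integrated counting function), plus possibly zeros on a bounded number of exceptional rays where an orthogonal ray and a critical ray coincide and the "dominant term" mechanism degenerates. Following \cite{division}, I would handle these by the same device used for P\'olya's theorem: along each critical ray, split into the non-degenerate directions (handled above) and treat the transition across the boundary of $\Lambda_p(\theta^*,c)$ by Jensen's formula applied to an auxiliary function obtained after dividing out the dominant exponential term. One gets $n(r,1/f)$ restricted to the complement of the strips is $O(r^{q-p})$ when $q-p\geq 1$ and $O(1)$ when $p=q$, because the complementary region is where a genuine exponential sum with frequencies on a line (in the variable $z^q$) of much smaller "length" governs the count, and its zero-counting rate is proportional to $r^{q-p}$ — precisely zero growth rate when $p=q$ since then the multipliers are polynomials. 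Integrating, $N_{\Lambda_p}(r)=O(r^{q-p})$ respectively $O(\log r)$.

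The main obstacle, I expect, is the bookkeeping at critical rays that coincide with orthogonal rays of $\co(W)$ — the situation illustrated by $f(z)=e^{2\pi i z^2}-1$, where zeros genuinely appear along all four half-axes even though only two are orthogonal rays. There, near such a ray, \emph{three or more} terms can have comparable modulus on one side of the ray, so the simple two-term Rouché comparison fails and one must instead recognize the local model as a lower-order exponential polynomial (in $z^q$) and invoke Theorem~\ref{Stein-thm1} — applied to the multipliers $F_j(z)$, whose order is $\leq q-p$ — to control its $N(r,1/\cdot)$. Making the reduction to this lower-order model rigorous, and checking that the strip $\Lambda_p(\theta^*,c)$ is wide enough to absorb all of its zeros, is the technical heart of the argument; everything else is the exponential-polynomial manipulation already rehearsed in the proof of Theorem~\ref{Stein-thm1}. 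For the full details I refer to \cite{division}.
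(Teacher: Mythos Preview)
Your overall strategy---isolate a single dominant exponential term outside each log-strip and conclude $f\neq 0$ there---is the correct one and is essentially how \cite{division} proceeds (the present paper only cites the result). But the two quantitative estimates in your first paragraph are both wrong, and they do not cancel. The multipliers $F_j$ are exponential polynomials of order $\leq q-p$, so the relevant bound is $\log|F_j(z)|=O(r^{q-p})$ (respectively $O(\log r)$ when $p=q$), not $|F_j(z)|=O(r^{q-p})$; these are \emph{not} ``polynomial factors'' unless $p=q$. And since $\re(w_\ell z^q)=r^q\re(w_\ell e^{iq\theta})$, the gap between the real parts of two competing exponents is $\asymp r^{q}|\theta-\theta^*_k|$, not $r^{q-1}|\theta-\theta^*_k|$. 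With both corrections, $|\theta-\theta^*_k|\geq c\log r/r^p$ gives a gap $\gtrsim cr^{q-p}\log r$, and for $c$ large this dominates $\log|F_j|=O(r^{q-p})$; that comparison is precisely what fixes the width of $\Lambda_p$.

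Your diagnosis of the ``main obstacle'' is also off. The example $f(z)=e^{2\pi i z^2}-1$ poses no difficulty: by \eqref{critical-directions} each of its two orthogonal rays induces $q=2$ critical rays, so all four coordinate half-axes \emph{are} critical rays and every zero of $f$ sits on one. The genuine source of zeros outside the strips when $p<q$ is the zeros of the multipliers themselves. Outside the strips one has $f(z)=F_{j_0}(z)e^{w_{j_0}z^q}(1+\varepsilon(z))$ with $|\varepsilon|$ small, but only away from the zeros of $F_{j_0}$ (compare \eqref{reps} and the discs \eqref{discs-dn} in Section~\ref{Stein-ratio-proof-sec}); near a zero of $F_{j_0}$ the subdominant terms can force $f=0$. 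Hence $N_{\Lambda_p}(r)$ is controlled by $\sum_j N(r,1/F_j)=O(r^{q-p})$ (or $O(\log r)$ when $p=q$, since then each $F_j$ is an ordinary polynomial). This is exactly what Example~\ref{Stein-example} illustrates---the stray zeros there come from the order-one factor $e^{(1+i)\pi z}-1$, not from any confluence of critical and orthogonal rays.
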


The case $p=1$ in Theorem~\ref{zeros-thm} is due to Steinmetz \cite[Satz~4]{Stein}.
Roughly speaking, the result says that the majority of zeros of $f$ are in the $\Lambda_p$-domains around the critical rays. The assumption $\rho(F_j)\leq q-p$ for $p=1$ is valid
by the definition of exponential polynomials. In the case $p=q$ the multipliers $F_j(z)$
reduce to polynomials, in which case $f$ has at most finitely
many zeros outside of the domains $\Lambda_p$. This corresponds to the situation with
exponential sums.

\begin{example}\label{Stein-example}
\textnormal{(Steinmetz \cite{Stein})}
An exponential polynomial may have infinitely many zeros outside of the
log-strips $\Lambda_1(\theta^*,c)$. Indeed, if
	$$
	f(z)=\left(e^{2\pi i z^2}-1\right)\left(e^{(1+i)\pi z}-1\right),
	$$
then the orthogonal rays for $\co(W)=[0,-2\pi i]$ are $\arg(z)=0$ and $\arg(z)=\pi$.
The function $f$ has infinitely many zeros $z_n=n+ni$, $n\in\Z$, lying
outside of the log-strips around the critical rays
$\arg(z)=0, \pi/2, \pi, 3\pi/2$, with at most finitely many exceptions. The integrated counting function $N(r)$ of the
points $z_n$ satisfies $N(r)\asymp r$, and hence this example illustrates the
sharpness of Theorem~\ref{zeros-thm} in the case when $q=2$ and $p=1$.
\end{example}

We note that an exponential polynomial $f$ of the form \eqref{EP} satisfies a linear differential
equation of order
    $$
    k\leq \sum_{j=1}^m (1+\deg(P_j))q^{m-j},\quad q=\max_j\{\deg(Q_j)\},
    $$
with polynomial coefficients \cite{VPT}. Hence the multiplicity
of zeros of $f$ cannot exceed $k-1$, see the discussion related to \eqref{zero-of-multiplicity} below. Next we give a sufficient condition
for the majority of zeros of $f$ to be simple.

\begin{theorem}\label{simple-zeros}
\textnormal{(\cite{division})}
Let $f$ be an exponential polynomial of the form \eqref{normalized-f2}, where the multipliers $F_j(z)$
are exponential polynomials of order $\rho(F_j)\leq q-p$ for some integer $1\leq p\leq q$.
Suppose that the points $\overline{w}_1,\ldots,\overline{w}_n$ are either vertex points or
interior points of the convex hull $\co (W)$. Then the majority of zeros of $f$ are simple
in the sense that
    $$
    N_{\geq 2}(r,1/f)=\left\{\begin{array}{rl}
    O\left(r^{q-p}\right),\ & \textnormal{if}\ q-p\geq 1,\\
    O\left(\log r\right),\ & \textnormal{if}\ p=q.
    \end{array}\right.
    $$
\end{theorem}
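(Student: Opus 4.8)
The plan is to count multiple zeros of $f$ via a standard device: a zero of $f$ of multiplicity $\geq 2$ is a common zero of $f$ and $f'$, hence a zero of the ``Wronskian-type'' auxiliary function
$$
g := f'F_1 - fF_1',
$$
or, more symmetrically, of any of the functions $fF_j'-f'F_j$. The point is that $g$ is again an exponential polynomial whose exponential part is governed by the same frequencies $w_1z^q,\ldots,w_nz^q$, so we can apply Theorem~\ref{Stein-thm1} (in the refined form with error $O(r^{q-p}+\log r)$) to both $f$ and $g$ and compare. Concretely, differentiating \eqref{normalized-f2} gives $f' = \sum_{j=1}^n (F_j' + qw_jz^{q-1}F_j)e^{w_jz^q}$, so $g = f'F_1 - fF_1' = \sum_{j=1}^n G_j(z)e^{w_jz^q}$ with $G_j = (F_j'+qw_jz^{q-1}F_j)F_1 - F_jF_1'$. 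Each $G_j$ is an exponential polynomial of order $\leq q-p$ when $\rho(F_j)\leq q-p$ (the new factor $qw_jz^{q-1}$ is a polynomial and does not raise the exponential order), so $g$ has exactly the normalized shape \eqref{normalized-f2} with the same set $W$ — provided $g\not\equiv 0$ and provided no $G_j$ vanishes identically. If $g\equiv 0$ then $f'/f = F_1'/F_1$, forcing $f$ to be (up to a polynomial-order factor) a single exponential term, a degenerate case with $n=1$ handled separately; so assume $g\not\equiv 0$.

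The first key step is then the estimate $N(r,1/g) = \Ce_g\, r^q/2\pi + O(r^{q-p}+\log r)$ from Theorem~\ref{Stein-thm1}, where $\Ce_g = C(\co(W_g))$ and $W_g\subseteq W$ is the set of conjugated frequencies $\overline{w}_j$ with $G_j\not\equiv 0$. Now a zero of $f$ of multiplicity $m\geq 2$ is a zero of $g$ of multiplicity $\geq m-1$, so
$$
N_{\geq 2}(r,1/f) \leq N(r,1/g) - N_{\text{common, with a shift}} \ \leq\ N(r,1/g),
$$
but a crude bound is not enough: we need that the \emph{bulk} of the zeros of $g$ actually come from the simple zeros of $f$, where $f=0,f'\neq 0$ but still $g=0$ because $g$ carries a factor from $F_1$-combinations. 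The cleaner route is: every simple zero of $f$ is a zero of $g$ (since $g=f'F_1-fF_1'$ and $f=0$ gives $g=f'F_1$, which vanishes only if $f'=0$ or $F_1=0$), so $g$ already ``contains'' essentially all the zeros of $f$ counted once, giving $N(r,1/g) \geq N(r,1/f) - N(r,1/F_1) + N_{\geq 2}(r,1/f) + O(1)$, roughly, and combined with $N(r,1/f)=\Ce\,r^q/2\pi + O(r^{q-p}+\log r)$ from \eqref{counting1}, $N(r,1/F_1)=O(r^{q-p})$ (as $F_1$ has order $\leq q-p$), and $N(r,1/g)=\Ce_g\,r^q/2\pi+O(r^{q-p}+\log r)$, we get
$$
N_{\geq 2}(r,1/f) \ \leq\ \frac{(\Ce_g - \Ce)\,r^q}{2\pi} + O(r^{q-p}+\log r).
$$
So the whole theorem reduces to the purely convex-geometric claim $\Ce_g \leq \Ce$, i.e. $C(\co(W_g)) \leq C(\co(W))$, which is automatic since $W_g\subseteq W$ and perimeter is monotone under taking convex hulls of subsets — \emph{but} we need it with no positive gap, i.e. we must rule out $\Ce_g = \Ce$ doing nothing while hiding a genuine $r^q$-term. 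This is exactly where the hypothesis that each $\overline{w}_j$ is a vertex or interior point of $\co(W)$ enters.

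The main obstacle, and the heart of the argument, is therefore showing that under the vertex/interior hypothesis one in fact has the stronger conclusion that the coefficient of $r^q$ above is zero — equivalently, that $N(r,1/g)$ and $N(r,1/f)$ have the same leading term, so that $N_{\geq 2}(r,1/f)$ absorbs only into the error. The hypothesis forbids $\overline{w}_j$ from lying in the relative interior of an \emph{edge} of $\co(W)$; such edge-interior frequencies are precisely the ones that could be ``annihilated'' in passing from $f$ to $g$ without shrinking the convex hull's perimeter, yet whose disappearance would otherwise have to be compensated by extra multiple zeros. Ruling them out forces $W_g$ to still have convex hull with the same vertex set, hence $\Ce_g=\Ce$ with the leading terms genuinely matching term-by-term along each critical ray (here one should really run the comparison ray-by-ray, using Theorem~\ref{zeros-thm} to localize zeros of both $f$ and $g$ into the same $\Lambda_p$-strips around the same critical rays, and then compare the per-ray counts $L_j r^q/2\pi$). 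I expect the delicate point to be verifying that no $G_j$ with $\overline{w}_j$ a vertex of $\co(W)$ can vanish identically: this requires checking that $(F_j'+qw_jz^{q-1}F_j)F_1 - F_jF_1'\not\equiv 0$, which follows because $F_j,F_1\not\equiv 0$ and the orders/degrees on the two sides cannot cancel — a short computation comparing the dominant terms, but one that must be done carefully since $F_1$ itself may be a nontrivial exponential polynomial rather than a polynomial. Once the vertices of $\co(W_g)$ are pinned down to coincide with those of $\co(W)$, the equality of leading terms and hence the stated bound on $N_{\geq 2}(r,1/f)$ follow by subtraction.
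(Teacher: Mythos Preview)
Your key inequality is false. You claim that ``every simple zero of $f$ is a zero of $g$'' where $g=f'F_1-fF_1'$, but your own parenthetical says the opposite: at a simple zero $z_0$ of $f$ (so $f(z_0)=0$, $f'(z_0)\neq 0$) you get $g(z_0)=f'(z_0)F_1(z_0)$, which is \emph{nonzero} whenever $F_1(z_0)\neq 0$. So simple zeros of $f$ are generically \emph{not} zeros of $g$, and the inequality
\[
N(r,1/g)\ \geq\ N(r,1/f)-N(r,1/F_1)+N_{\geq 2}(r,1/f)
\]
simply does not hold. (Take $f(z)=e^{z}-1$, $F_1=1$: then $g=f'=e^{z}$ has no zeros at all, while $N(r,1/f)\sim r/\pi$.) Since the whole subtraction argument rests on this inequality, the proof collapses. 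The single global auxiliary function $g$ has its own $\asymp r^q$ zeros unrelated to multiple zeros of $f$, so comparing $\Ce_g$ with $\Ce$ cannot isolate $N_{\geq 2}(r,1/f)$.

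The argument in the reference works strip-by-strip with an auxiliary function \emph{adapted to each strip}. In the domain $\Lambda_p(\theta^*,c)$ attached to an edge $[\overline w_j,\overline w_k]$ of $\co(W)$ one takes
\[
\phi_k(z)=F_k(z)f'(z)-\bigl(F_k'(z)+qw_kz^{q-1}F_k(z)\bigr)f(z),
\]
which kills the coefficient of $e^{w_kz^q}$. Every multiple zero of $f$ is a zero of $\phi_k$. The crucial point is that $\phi_k$ is an exponential polynomial with conjugated frequency set $W\setminus\{\overline w_k\}$; because the hypothesis forbids any $\overline w_l$ from lying in the relative interior of the edge $[\overline w_j,\overline w_k]$, removing the vertex $\overline w_k$ destroys that edge, and $\theta^*$ is \emph{not} a critical direction for $\phi_k$. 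Theorem~\ref{zeros-thm} applied to $\phi_k$ then gives that $\phi_k$ has only $O(r^{q-p})$ (resp.\ $O(\log r)$) zeros in $\Lambda_p(\theta^*,c)$, hence the same bound for multiple zeros of $f$ there. Summing over the finitely many strips and invoking Theorem~\ref{zeros-thm} once more for the zeros of $f$ outside all strips finishes the proof. Your closing remark about running the comparison ``ray-by-ray'' is the right instinct, but the auxiliary function must change from strip to strip so that each $\phi_k$ loses the critical ray in question; a single global $g$ cannot do this.
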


Alternatively expressed, the points $\overline{w}_1,\ldots,\overline{w}_n$ in
Theorem~\ref{simple-zeros} that are on the boundary
of $\co(W)$ must be vertex points of $\co(W)$. See \cite[Example~3.3]{division}
for different possibilities that may occur if some or all of the conjugated frequencies
are vertex points of $\co(W)$.

\begin{example}
Modifying \cite[Example~3.3(2)]{division}, we find that infinitely many multiple zeros are possible. Indeed, all integers are double zeros of
    $$
    f(z)=\left(e^{2\pi iz^{q-p}}-1\right)\left(e^{2\pi iz^q}-1\right),\quad q-p\geq 1.
    $$
We have $N_{\geq 2}(r,1/f)\asymp r^{q-p}$, while $N(r,1/f)=(2+o(1))r^q$. This illustrates
the sharpness of Theorem~\ref{simple-zeros}.
\end{example}

The following is a generalization of Theorem~\ref{Polya-theorem} by
P\'olya to the higher order case.

\begin{theorem}\textnormal{(\cite{WH})}\label{generalized-polya}
Let $f$ be an exponential polynomial of the form \eqref{normalized-f2}, where the multipliers $F_j(z)$ are exponential polynomials of order $\rho(F_j)\leq q-p$ for some integer $1\leq p\leq q$. Then the number $n_{\lambda_p}(r)$ of zeros of $f$ in
$\{|z|\leq r\}$ and outside of the finitely many domains $\Lambda_p(\theta^*,c)$ satisfies
		$$
    n_{\lambda_p}(r)=\left\{\begin{array}{rl}
    O\left(r^{q-p}\right),\ & \textnormal{if}\ q-p\geq 1,\\
    O\left(\log r\right),\ & \textnormal{if}\ p=q.
    \end{array}\right.
    $$
Moreover, let $\overline{w}_j,\overline{w}_k$ be two consecutive vertex points of
$\co(W)$, and let $\arg(z)=\theta^\bot$ denote the orthogonal ray corresponding to
the interval $[\overline{w}_j,\overline{w}_k]$. Let $\varepsilon>0$, and let
$\arg(z)=\theta^*$ represent any of the $q$ critical rays induced by $\theta^\bot$.
Then the number of zeros $n(r,\Lambda_p(\theta^*,c))$ of $f$ in $\Lambda_p(\theta^*,c)\cap \{|z|\leq r\}$ has the asymptotic growth
	\begin{equation}\label{Lpn}
	n(r,\Lambda_p(\theta^*,c))
	= \frac{|w_j-w_k|}{2\pi}r^q+O\left(r^{q-p}\log^{3+\varepsilon}r\right).
	\end{equation}
\end{theorem}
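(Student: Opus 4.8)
The claim has two parts, and the first comes for free. Since whether a given zero of $f$ lies inside one of the finitely many domains $\Lambda_p(\theta^*,c)$ does not depend on $r$, the counting function $r\mapsto n_{\lambda_p}(r)$ is non-decreasing, so
\[
n_{\lambda_p}(r)\log 2\le\int_r^{2r}n_{\lambda_p}(t)\,\frac{dt}{t}\le N_{\Lambda_p}(2r),
\]
and the asserted bounds on $n_{\lambda_p}(r)$ follow at once from Theorem~\ref{zeros-thm}.

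For \eqref{Lpn} I would run the argument principle on a thin curvilinear wedge straddling the critical ray $\arg z=\theta^*$. Let $\theta^\bot$ be the orthogonal ray for the side $[\overline w_j,\overline w_k]$ of $\co(W)$, so $q\theta^*\equiv\theta^\bot\pmod{2\pi}$. Two elementary facts drive the computation: first, because $[\overline w_j,\overline w_k]$ is orthogonal to $e^{i\theta^\bot}$, one has $\re\big((w_j-w_k)e^{i\theta^\bot}\big)=0$ and $\big|\im\big((w_j-w_k)e^{i\theta^\bot}\big)\big|=|w_j-w_k|$; second, writing $\log\big|e^{w_\ell z^q}\big|=|z|^q\,\re\big(w_\ell e^{iq\arg z}\big)=|z|^q\langle\overline w_\ell,e^{iq\arg z}\rangle$, on the critical ray the maximum over $\ell$ of this support functional is attained exactly by the $\overline w_\ell$ lying on the closed segment $[\overline w_j,\overline w_k]$, and every other frequency is beaten there by a fixed positive gap $\delta_0$. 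Fix a large $r_0$ and choose angles $\theta_-<\theta^*<\theta_+$ strictly between $\theta^*$ and the two neighbouring critical rays, close enough to $\theta^*$ that on the ray $\arg z=\theta_-$ the term $F_j(z)e^{w_jz^q}$ dominates the rest of $f$ with a margin $\gtrsim|z|^q$ in the exponent, and symmetrically $F_k(z)e^{w_kz^q}$ dominates on $\arg z=\theta_+$; off a union of discs around the zeros of the $F_\ell$'s, whose total contribution to any counting function is $O(r^{q-p}+\log r)$ by Theorem~\ref{Stein-thm1}, this domination is genuine. Put $\Omega(r)=\{r_0<|z|<r,\ \theta_-<\arg z<\theta_+\}$, after a harmless perturbation $r\mapsto r'$ with $r-r'=O(r^{1-q})$ so that $f$ has no zeros on $\partial\Omega(r')$. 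Since the $\Lambda_p$-domain around $\theta^*$ is eventually contained in this wedge and the remaining zeros of $f$ in the wedge number $O(r^{q-p}+\log r)$ by the first part, we obtain $n\big(r,\Lambda_p(\theta^*,c)\big)=\tfrac1{2\pi}\Delta_{\partial\Omega(r)}\arg f+O(r^{q-p}+\log r)$.

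It remains to follow $\arg f$ around $\partial\Omega(r)$. On the inner arc the change is $O(1)$. On the two radial sides the domination gives $f=F_je^{w_jz^q}(1+o(1))$, resp.\ $f=F_ke^{w_kz^q}(1+o(1))$ (with small detours around the exceptional discs, costing $O(r^{q-p}+\log r)$ in all), so there $\Delta\arg f=\Delta\arg F_{j}+\Delta\,\im(w_jz^q)+O(r^{q-p}\log^{3+\varepsilon}r)$ and likewise with $k$; the $\Delta\,\im(w_jz^q)$ piece equals $r^q\,\im(w_je^{iq\theta_-})+O(1)$, and $\Delta\arg F_j=O(r^{q-p}\log^{3+\varepsilon}r)$ since $F_j$ has order $\le q-p$. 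On the outer arc $|z|=r$ split the range at $\theta^*\pm d_r$ with $d_r\asymp c\log r/r^p$: on the two outer pieces $F_j$ resp.\ $F_k$ still dominates, contributing a bounded-argument multiplier factor plus $r^q\big(\im(w_je^{i\theta^\bot})-\im(w_je^{iq\theta_-})\big)$ resp.\ $r^q\big(\im(w_ke^{iq\theta_+})-\im(w_ke^{i\theta^\bot})\big)$, up to $O(r^{q-p}\log^{3+\varepsilon}r)$; on the middle piece, writing $f=F_je^{w_jz^q}(1+\psi)$ with $\psi$ collecting the co-dominant and subordinate terms, $\arg(1+\psi)$ can spiral at most $O(r^qd_r)=O(r^{q-p}\log r)$ times as $\arg z$ sweeps the arc of width $2d_r$, and $\Delta\,\im(w_jz^q)=O(r^qd_r)$ there too. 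Summing all pieces, every $\theta_\pm$-dependent term and every $\Delta\arg F_j$, $\Delta\arg F_k$ term cancels or is absorbed, leaving
\[
2\pi\,n\big(r,\Lambda_p(\theta^*,c)\big)=r^q\,\im(w_je^{i\theta^\bot})-r^q\,\im(w_ke^{i\theta^\bot})+O\big(r^{q-p}\log^{3+\varepsilon}r\big)=\pm|w_j-w_k|\,r^q+O\big(r^{q-p}\log^{3+\varepsilon}r\big);
\]
the sign is $+$ because the left-hand side is non-negative, which is \eqref{Lpn}. (As a consistency check, summing \eqref{Lpn} over the $\le sq$ critical rays recovers $n(r,1/f)=q\Ce r^q/2\pi+o(r^q)$ from \eqref{n-asymptotic}.)

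The crux is the sharp control of the two sources of logarithmic loss: the argument variation of the multipliers $F_j,F_k$ along the contour near their zeros, and the spiralling of the dominant quotient $1+\psi$ across the competition sub-arc of $|z|=r$. Both are brought down to $O(r^{q-p}\log^{3+\varepsilon}r)$ only by invoking minimum-modulus/Cartan-type estimates for exponential polynomials of order $\le q-p$ together with a careful choice of the radius $r$ and of the exceptional discs; this is exactly where the error term in \eqref{Lpn} originates. A secondary nuisance is the degenerate configuration in which several critical rays cluster near $\theta^*$ or several frequencies $\overline w_\ell$ lie on the open segment $(\overline w_j,\overline w_k)$: the wedge can then no longer be taken macroscopically wide, but since only the two vertices $\overline w_j,\overline w_k$ enter the main term, placing $\theta_\pm$ just outside the competition region of $\theta^*$ lets the same computation go through unchanged.
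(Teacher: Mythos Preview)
The paper is a survey and does \emph{not} prove Theorem~\ref{generalized-polya}; it is quoted from \cite{WH} without argument. So there is no in-paper proof against which to compare your attempt, and your sketch has to be judged on its own.

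Your derivation of the first assertion from Theorem~\ref{zeros-thm} via $n_{\lambda_p}(r)\log 2\le N_{\Lambda_p}(2r)$ is correct and complete; this is exactly how the paper itself later uses Theorem~\ref{zeros-thm} in the proof of Lemma~\ref{regularly-distributed-lemma}.

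For \eqref{Lpn} your plan---run the argument principle on a wedge about the critical ray, exploit that a unique vertex term dominates on each radial edge, and show that the $\theta_\pm$-dependent pieces cancel, leaving $r^q\,\im\big((w_j-w_k)e^{i\theta^\bot}\big)=\pm|w_j-w_k|r^q$---is precisely the method of \cite{WH}, which in turn generalises P\'olya's original argument for $q=1$. The algebra of the main term is right. What remains genuinely delicate, and what you correctly flag as ``the crux'', is the bookkeeping that produces the specific power of $\log r$: controlling $\Delta\arg F_j$ and $\Delta\arg F_k$ along the contour (these are exponential polynomials of order $\le q-p$, so one needs the refined pointwise estimates of \cite[Lemma~7.2, Corollary~4.2]{WH} outside small exceptional discs), and bounding the spiralling of $\arg f$ across the competition arc of angular width $O(r^{-p}\log r)$. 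Your heuristic $|\partial_\theta\arg f|\lesssim r^{q}$ giving $O(r^{q-p}\log r)$ on that arc is morally correct, but making it rigorous requires choosing $r$ outside an exceptional set so that the arc avoids the zeros of $f$ to within prescribed distance; it is this avoidance step, together with the analogous one for the multipliers, that is responsible for the extra $\log^{2+\varepsilon}r$ in the stated error. Your sketch does not supply these estimates, and they are not short---this is exactly the content of \cite{WH}. As you note, when $p=q$ the multipliers are polynomials and the argument collapses to the classical P\'olya--Schwengeler case with error $O(\log r)$.

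One small point: your claim that ``$F_j$ dominates on $\arg z=\theta_-$'' presupposes that the wedge $[\theta_-,\theta_+]$ contains no other critical ray, which you do assume but which forces $\theta_\pm$ to lie strictly between $\theta^*$ and its neighbours; when several conjugated frequencies lie on the open segment $(\overline w_j,\overline w_k)$ there are additional co-dominant terms at $\theta^*$ itself, but---as you observe---they do not affect the main term since only the two vertices enter $\im\big((w_j-w_k)e^{i\theta^\bot}\big)$.
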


Since $\overline{w}_j,\overline{w}_k$ are two consecutive vertex points of
$\co(W)$, we have $\sum |w_j-w_k|=\Ce$, where the summation is taken over all
sides of $\co(W)$. Thus \eqref{Lpn} and the fact that each orthogonal ray
induces $q$ critical rays give raise to
	\begin{equation}\label{n-asymptotic2}
	n(r,1/f)= \frac{q\Ce}{2\pi}r^q
	+O\left(r^{q-p}\log^{3+\varepsilon}r\right).
	\end{equation}
This in turn implies
	$$
	N(r,1/f)= \frac{\Ce}{2\pi}r^q
	+S(r),
	$$
where the error term $S(r)=O\left(r^{q-p}\log^{3+\varepsilon}r\right)$ for $q-p\geq 1$ and $S(r)=O\left(\log^{4+\varepsilon}r\right)$ for $q=p$ is slightly weaker than $O(r^{q-p}+\log r)$ obtained in \cite[Theorem~3.2]{WH}, see also Theorem~\ref{Stein-thm1}. The error term in \eqref{n-asymptotic2} improves that in \eqref{n-asymptotic}.

\begin{problem}
What is the best possible error term in \eqref{Lpn}?
\end{problem}

Theorem~\ref{Polya-theorem} by P\'olya shows that if $q=1$, then
$O(\log r)$ suffices for the error term in \eqref{Lpn}. From Dickson's
result \eqref{zeros-ES-improved} it seems most likely that $O(\log r)$
can be further improved to $O(1)$. However, from Example~\ref{Stein-example}, we find that the error term cannot be less than $O(r^{q-p})$ in general when
$q-p\geq 1$. On the other
hand, we are not aware of any examples showing that the general error
term would have to be bigger than $O(r^{q-p}+\log r)$, where the logarithmic
term is meaningful only if $p=q$.

\subsection{Regularly distributed zeros}

It follows from \cite[Lemma~1.3]{GOP} and \cite[Theorem~1.2.1]{Ronkin} that
the zeros of an exponential polynomial are regularly distributed.
Definition~\ref{def2} below explains what this means. The proof of the above
claim relies on a complicated reasoning in \cite{Ronkin}. In Lemma~\ref{regularly-distributed-lemma} below we will give a direct proof that
relies on results discussed earlier in this section.

\begin{definition}\textnormal{(Levin \cite[pp.~89--92]{Levin1}, Ronkin \cite[pp.~6--7]{Ronkin})}\label{def2}
Let $Z=\{z_n\}$ be a sequence of points in $\C$ of exponent of convergence $\lambda\in(0,\infty)$.
Suppose that $Z$ is organized according to increasing modulus, and that the counting function
$n_Z(r)$ has at most a finite type with respect to $\lambda$. Set
    $$
    S(r,\theta_1,\theta_2)=\{z\in\C : |z|<r,\ \theta_1<\arg(z)<\theta_2\},
    $$
and let $n_Z(r,\theta_1,\theta_2)$ be the number of points $z_n$ in $S(r,\theta_1,\theta_2)$ counting multiplicities.
We say that the sequence $Z$ has \emph{angular density} with respect to $\lambda$ if for all $\theta_1,\theta_2$, with
at most countably many possible exceptions, the limit
    \begin{equation}\label{thelimit}
    \lim_{r\to\infty}r^{-\lambda}n_Z(r,\theta_1,\theta_2)
    \end{equation}
exists. If $Z$ has angular density, and either (1) $\lambda\not\in\N$ or (2) $\lambda\in\N$ and, in
addition, the limit
    \begin{equation}\label{thesum}
    \lim_{r\to\infty}\sum_{0<|z_n|<r}z_n^{-\lambda}
    \end{equation}
exists, then $Z$ is called \emph{regularly distributed}.
\end{definition}

Regarding Case (2) in Definition~\ref{def2}, we have two possibilities: either $\lambda=p$ or $\lambda=p+1$,
where $p$ is the genus of $Z$, see Appendix A. If $\lambda=p+1$, then the limit in \eqref{thesum} automatically exists
because $\sum_{n=1}^\infty |z_n|^{-(p+1)}<\infty$ by the definition of genus. We note that the series
in \eqref{thesum} does not
necessarily converge absolutely when $r\to\infty$ even if the counting function $n(r)$ would have a
nice asymptotic growth rate such as $n(r)\asymp r^\lambda$. Indeed,
applying Riemann-Stieltjes integration and integration by parts, we obtain
    $$
    \sum_{0<|z_n|\leq r}|z_n|^{-\lambda}\asymp \int_1^r t^{-\lambda}dn(t)\asymp \int_1^rt^{-1}\, dt=\log r.
    $$
In other words, for the limit in \eqref{thesum} to exist in the case $\lambda=p$, some cancellation must occur. This justifies the terminology ``regularly distributed''. For example, if the $z_n$'s are the zeros of $\sin z$, then $\lambda=p=1$, and the limit in \eqref{thesum} reduces essentially to an alternating harmonic series, which  converges.

\begin{lemma}\label{regularly-distributed-lemma}
If an exponential polynomial $f$ has an infinite zero-sequence $Z$,
then $Z$ is regularly distributed.
\end{lemma}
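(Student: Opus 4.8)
The plan is to verify directly the two requirements of Definition~\ref{def2} for the zero-sequence $Z=\{z_n\}$ of $f$, leaning on the localization of the zeros into the logarithmic strips $\Lambda_p(\theta^*,c)$ around the critical rays together with the sharp counting asymptotics of Theorems~\ref{zeros-thm} and~\ref{generalized-polya}. First I would put $f$ in the normalized form~\eqref{normalized-f2}. Since $Z$ is infinite, at least two of the frequencies are distinct, so the convex hull $\co(W)$ has positive circumference $\Ce$, and Theorem~\ref{generalized-polya} (applied with $p=1$, which is always permissible by the definition of exponential polynomials) gives $n(r,1/f)=\tfrac{q\Ce}{2\pi}r^q+O(r^{q-1}\log^{3+\veps}r)$. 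Hence $n_Z(r)\asymp r^q$, so $Z$ has exponent of convergence $\lambda=q\in\N$, has finite type with respect to $\lambda$, and has genus $q$. Thus $Z$ fits the setting of Definition~\ref{def2} and we are in its Case~(2): besides angular density we must also show that $\lim_{r\to\infty}\sum_{0<|z_n|<r}z_n^{-q}$ exists.

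Angular density is the routine part. By Theorems~\ref{zeros-thm} and~\ref{generalized-polya}, at most $o(r^q)$ of the zeros in $\{|z|\le r\}$ lie outside the finite union of strips $\Lambda_p(\theta^*,c)$, and each such strip satisfies $|\arg z-\theta^*|\to0$ as $|z|\to\infty$. Hence, for $\theta_1<\theta_2$ with neither equal to a critical ray, each strip is eventually contained in the sector $\{\theta_1<\arg z<\theta_2\}$ or eventually disjoint from it, so $n_Z(r,\theta_1,\theta_2)=\sum_{\theta_1<\theta^*<\theta_2}n(r,\Lambda_p(\theta^*,c))+o(r^q)$; dividing by $r^q$ and using the per-strip asymptotics of Theorem~\ref{generalized-polya} shows the limit in~\eqref{thelimit} exists. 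The only excluded pairs $(\theta_1,\theta_2)$ are those involving one of the finitely many critical rays, which is allowed by Definition~\ref{def2}.

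For the sum I would use the same split. The zeros outside every strip have counting function $o(r^q)$ — in fact $O(r^{q-1})$ when $q\ge2$ and $O(\log r)$ when $q=1$ — so their contribution to $\sum z_n^{-q}$ converges absolutely. Inside a strip around $\theta^*$, since $q\in\N$ the quantity $e^{iq\arg z_n}$ is well defined and $z_n^{-q}=e^{-iq\theta^*}|z_n|^{-q}+O(\log|z_n|\cdot|z_n|^{-q-1})$, where the error is summable because $n_Z(t)\asymp t^q$; and writing $n(t,\Lambda_p(\theta^*,c))=\tfrac{L(\theta^*)}{2\pi}t^q+E(t)$ with $E(t)=O(t^{q-1}\log^{3+\veps}t)$, partial summation gives $\sum_{0<|z_n|<r,\ z_n\in\Lambda_p(\theta^*,c)}|z_n|^{-q}=\tfrac{q\,L(\theta^*)}{2\pi}\log r+C_{\theta^*}+o(1)$. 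Summing over the finitely many critical rays, the logarithmic parts combine into $\left(\tfrac{q}{2\pi}\sum_{\theta^*}L(\theta^*)e^{-iq\theta^*}\right)\log r$. By~\eqref{critical-directions}, the $q$ critical rays induced by the orthogonal ray $\arg z=\theta^\bot$ of a side $[\overline{w}_j,\overline{w}_k]$ of $\co(W)$ all satisfy $e^{-iq\theta^*}=e^{-i\theta^\bot}$, so this coefficient equals $\tfrac{q^2}{2\pi}\sum_{\text{sides}}|w_j-w_k|e^{-i\theta^\bot}$, which is $0$ because the edge vectors of the closed polygon $\co(W)$ sum to zero (equivalently, the side-length-weighted outer normals sum to zero). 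Therefore the $\log r$ terms cancel, $\lim_{r\to\infty}\sum_{0<|z_n|<r}z_n^{-q}$ exists, and together with angular density this shows that $Z$ is regularly distributed.

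The step that carries the argument — and the main obstacle — is exactly this cancellation: a single critical ray contributes a genuinely divergent $\tfrac{q\,L}{2\pi}\log r$ to $\sum_{|z_n|<r}z_n^{-q}$, and only the closed-polygon identity $\sum_{\text{sides}}|w_j-w_k|e^{-i\theta^\bot}=0$ removes it. Making this airtight depends on the sharp error terms of Theorems~\ref{zeros-thm} and~\ref{generalized-polya} in two places: to ensure that the angle-straightening error $\sum\log|z_n|\,|z_n|^{-q-1}$ and the partial-summation remainder $\int^\infty t^{-q-1}E(t)\,dt$ both converge — both of which come down to the log-strips narrowing at rate $\log r/r$ (the case $p=1$) and $E(t)=O(t^{q-1}\log^{3+\veps}t)$.
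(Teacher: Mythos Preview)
Your argument for the existence of $\lim_{r\to\infty}\sum_{0<|z_n|<r}z_n^{-q}$ takes a genuinely different route from the paper. The paper bypasses the strip decomposition entirely for this step: it invokes an identity from Gol'dberg--Ostrovskii,
\[
\frac{d^q}{dz^q}\log f(z)\Big|_{z=0}=\frac{2q!}{2\pi R^q}\int_0^{2\pi}\log|f(Re^{i\theta})|e^{-iq\theta}\,d\theta+(q-1)!\sum_{|z_n|<R}\Bigl(\frac{\bar z_n^{\,q}}{R^{2q}}-\frac{1}{z_n^q}\Bigr),
\]
and then bounds $\bigl|\sum_{|z_n|<R}z_n^{-q}\bigr|$ directly by $O(T(R,f)/R^q)+O(n(R)/R^q)=O(1)$, needing only the coarse facts $T(r,f)=O(r^q)$ and $n(r)=O(r^q)$. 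Your approach is more hands-on: you use the sharp per-strip asymptotics of Theorem~\ref{generalized-polya} and the closed-polygon identity $\sum_{\text{sides}}L_je^{-i\theta_j^\bot}=0$ (rotated edge vectors of a closed polygon sum to zero) to make the $\log r$ divergences coming from the different critical rays cancel. The trade-off: the paper's argument is shorter and needs only growth information about $f$, while yours is more elementary (no external integral formula) but leans on the finer error bound $E(t)=O(t^{q-1}\log^{3+\veps}t)$ proved elsewhere in the paper. Both are valid; the angular-density halves of the two proofs are essentially identical.

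One small gap to patch: the implication ``$Z$ infinite $\Rightarrow$ at least two distinct frequencies in the normalized form~\eqref{normalized-f2}'' is false --- take $f(z)=(e^z-1)e^{z^2}$, which in its order-$2$ normalized form has a single exponential term but infinitely many zeros. What you actually need is the reduction the paper makes explicit: if the normalized form has only one term, $f=F_1e^{w_1z^q}$ with $\rho(F_1)\le q-1$, then $Z$ is the zero-sequence of the lower-order exponential polynomial $F_1$, and induction on the order $q$ brings you to the non-degenerate case $\Ce>0$ where your argument applies.
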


\begin{proof}
From the oscillation point of view, we may suppose that $\lambda(f)=\rho(f)=q\geq 1$, for otherwise $f$ reduces to the form $f(z)=F(z)e^{wz^q}$, where $F(z)$ is an exponential polynomial of order $\leq q-1$, and where the exponential term $e^{wz^q}$ merely
determines the growth of $f$ but plays no role in oscillation. Let $\{z_n\}$ denote
the zero-sequence of $f$, listed according to multiplicities and organized with respect to increasing modulus. Without loss of generality, we may suppose that $z_n\neq 0$, that is, $f(0)\neq 0$.

Let $\theta_1<\theta_2$. Since countably
many exceptions for $\theta_1,\theta_2$ are allowed, we may suppose that
$\theta_1,\theta_2\neq\theta^*$ for all
critical angles $\theta^*$. This leaves us two cases to consider.\\[-20pt]
\begin{itemize}
\item[(1)] Suppose that $S(\theta_1,\theta_2)$ doesn't contain any of the critical rays $\arg(z)=\theta^*$.
Let $n_{\Lambda_1}(r)$ denote the non-integrated counting function of the
zeros of $f$ in $|z|\leq r$ lying outside of the finite union of domains $\Lambda_1(\theta^*,c)$. Then
    $$
    n_Z(r,\theta_1,\theta_2)\leq n_{\Lambda_1}(r)\leq \frac{1}{\log 2}\int_r^{2r}\frac{n_{\Lambda_1}(t)}{t}\, dt
    \leq \frac{1}{\log 2}N_{\Lambda_1}(2r)=O\left(r^{q-1}+\log r\right)
    $$
by Theorem~\ref{zeros-thm}, so that the limit in \eqref{thelimit} exists and is equal to zero.\\[-20pt]
\item[(2)] Suppose that $S(\theta_1,\theta_2)$ contains at least one of the critical rays $\arg(z)=\theta^*$.
It is clear that $S(\theta_1,\theta_2)$ essentially contains $\Lambda_1(\theta^*,c)$ for each
$\theta^*\in (\theta_1,\theta_2)$.  Hence the limit in \eqref{thelimit} exists by
Theorems~\ref{zeros-thm} and \ref{generalized-polya}.\\[-20pt]
\end{itemize}
We conclude that the sequence $Z$ has angular density.

Since $\lambda=\lambda(f)=q\in\N$, it remains to check that the limit in \eqref{thesum} exists. By
\eqref{n-asymptotic}, we have $n(r)\asymp r^q$. Hence the integral $\int_0^\infty t^{-q-1}n(t)\, dt$
diverges, which implies that the sum $\sum_{|z_n|>0}|z_n|^{-q}$ diverges \cite[Lemma~2.5.5]{Boas}.
On the other hand, by Riemann-Stieltjes integration and integration by parts, we have
    $$
    \sum_{|z_n|\geq e}\frac{1}{|z_n|^q\log^\alpha |z_n|}=\int_e^{\infty}\frac{dn(t)}{t^q\log^\alpha t}
    \leq (q+\alpha)\int_e^\infty\frac{n(t)}{t^{q+1}\log^\alpha t}<\infty,
    $$
where $\alpha>1$ is fixed. Thus the sequence $Z$ has genus $p=q$.

From \cite[p.~7]{GO}, if $|z|=r<R$,
	$$
	\frac{d^q}{dz^q}\log f(z)\Big|_{z=0}=\frac{2q!}{2\pi R^q}\int_0^{2\pi}
	\log |f(Re^{i\theta})|e^{-iq\theta}\, d\theta+(q-1)!\sum_{|z_n|<R}
	\left(\frac{\bar{z}_n^q}{R^{2q}}-\frac{1}{z_n^q}\right).
	$$
Since $|\log \alpha|=\log^+\alpha+\log^+\frac{1}{\alpha}$ for $\alpha>0$, we have
by \eqref{characteristic1},
	\begin{eqnarray*}
	\bigg|\sum_{|z_n|<R}\frac{1}{z_n^q}\bigg|
	&\leq & \frac{2q}{R^q}\left(m(r,f)+m(r,1/f)\right)+
	\sum_{|z_n|<R}\left|\frac{\bar{z}_n^q}{R^{2q}}\right|+O(1)\\
	&\leq & \frac{4q}{R^q}T(r,f)+\frac{n(R)}{R^q}+O(1)=O(1),
	\end{eqnarray*}
and the assertion follows.
\end{proof}

\subsection{Quotients of exponential polynomials} \label{section3.4} 

For a quick motivation to quotients of exponential polynomials, we recall
an open problem by Hayman (see Problem~2.27 in \cite{Hayman,HL}):
\begin{quote}
\emph{Let $\phi_1,\ldots, \phi_n$ be entire functions of the form
	\begin{equation}\label{phi}
	\phi=\sum e^{f_\nu} \big/ \sum e^{g_\nu},
	\end{equation}
where $f_\nu,g_\nu$ are entire functions.
Does there exist an entire function $f$, not of the form \eqref{phi},
satisfying an algebraic equation $f^n+\phi_1f^{n-1}+\cdots +\phi_{n}=0$?}
\end{quote}
In general, functions of the form \eqref{phi} are not entire.
As an example, Hayman notes that $f(z)=\frac{\sin \pi z^2}{\sin \pi z}$ is
entire but not of the form $\sum e^{f_{\nu}}$, although it is a ratio of such functions.

Another problem related to Hayman's problem will be discussed in
Section~\ref{d-roots-sec} below.
Here we proceed to generalize Theorem~\ref{Ritt-thm} by Ritt to exponential polynomials of arbitrary order $q\geq 1$
instead. The simple example
	$$
	(e^{2\pi iz^q}-1)/(e^{2\pi iz}-1)e^{z^q},\quad q\geq 2,
	$$
shows that the coefficients of the entire quotient may be quotients of exponential polynomials themselves, but of lower order and
with removable singularities. Before stating the generalization of Ritt's theorem, we define two function classes for $p\in\N\cup\{0\}$:
	\begin{eqnarray*}
	P_p&=&\{f: f\, \textrm{is an exponential polynomial and}\, \rho(f)\leq p\},\\
	R_p&=&\{f/g: f,g\in P_p,\, g\not\equiv 0\}.
	\end{eqnarray*}
Then $P_0$ and $R_0$ consist of polynomials and rational functions, respectively.

\begin{theorem}\textnormal{(\cite{division})}\label{lax-rahman}
Let  $n,m,q\in\N$, and let $p\leq q-1$ be a non-negative integer. Set	
	$$
	g(z)=\sum_{j=1}^ma_j(z)e^{\mu_jz^q}\quad\textrm{and}\quad
	h(z)=\sum_{j=1}^nb_j(z)e^{\nu_jz^q},
	$$
where $a_j,b_j\in P_p$ are non-vanishing. If $f(z)=g(z)/h(z)$ is entire, then $f$ too is an
exponential polynomial or reduces to a constant, but with coefficients in $R_p$.
\end{theorem}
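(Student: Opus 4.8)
The plan is to adapt Lax's proof \cite{Lax} of Ritt's theorem (Theorem~\ref{Ritt-thm}), replacing the real axis there by the critical rays of Section~\ref{zero-distribution} and carrying along the poles that the emerging coefficients may acquire. It is convenient to work inside the class $\A$ of all finite sums $\sum_l c_l(z)e^{\gamma_l z^q}$ with pairwise distinct $\gamma_l\in\C$ and coefficients $c_l\in R_p$; this is a ring (because $P_p\cdot P_p\subseteq P_p$ and $e^{\gamma z^q}e^{\gamma'z^q}=e^{(\gamma+\gamma')z^q}$), it contains $g$ and $h$, and the theorem asserts that a quotient $\psi_1/\psi_2$ with $\psi_1,\psi_2\in\A$ that happens to be entire already lies in $\A$. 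First I would dispose of the degenerate cases: if $g$ or $h$ fails to be transcendental of order exactly $q$, then after amalgamating equal frequencies it lies in $P_p$, and one reduces to Rahman's generalization of Ritt's theorem (quoted after Theorem~\ref{Ritt-thm}) together with an outer induction on $q$; a short separate argument --- an exponential polynomial genuinely of order $q$ with more than one $e^{\nu z^q}$-term has $n(r)\asymp r^q$ zeros by \eqref{n-asymptotic}, while an element of $P_p$ has only $o(r^q)$ --- rules out a low-order numerator over a genuinely order-$q$ denominator. So assume $g,h$ are transcendental of order $q$, normalized with $\mu_1,\dots,\mu_m$ distinct, $\nu_1,\dots,\nu_n$ distinct, and all $a_j,b_j\in P_p\setminus\{0\}$. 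Since $T(r,f)\le T(r,g)+T(r,h)+O(1)$ and $\rho(g)=\rho(h)=q$, the quotient $f$ is entire with $\rho(f)\le q$, and since $f$ is entire the divisor of $h$ is dominated by that of $g$, so $N(r,1/f)=N(r,1/g)-N(r,1/h)$ is a nonnegative function of growth $O(r^q)$ by Theorem~\ref{Stein-thm1}.

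The core is a single \emph{peeling step}. Fix a direction $\phi_0$ that is generic with respect to all the $\mu_j$ and $\nu_k$ --- concretely, $\phi_0$ should not be orthogonal to any of the countably many nonzero $\Z$-combinations of these numbers, so that $\phi_0$ is a strict-vertex direction for every convex hull occurring below. Let $\overline{\mu}_{j_0}$ (resp.\ $\overline{\nu}_{k_0}$) be the unique maximizer of $\mathrm{Re}(w\,e^{-i\phi_0})$ over $w\in\{\overline{\mu}_1,\dots,\overline{\mu}_m\}$ (resp.\ over $\{\overline{\nu}_1,\dots,\overline{\nu}_n\}$), and take a critical ray $\arg(z)=\theta^*$ with $q\theta^*\equiv\phi_0$ as in \eqref{critical-directions}. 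Along a thin sector about this ray the single term $a_{j_0}(z)e^{\mu_{j_0}z^q}$ exponentially dominates all other terms of $g$, and $b_{k_0}(z)e^{\nu_{k_0}z^q}$ likewise dominates $h$; this is precisely the mechanism behind Theorems~\ref{Stein-thm1} and \ref{zeros-thm}, and it is here that $p\le q-1$ is essential, the multipliers being of order $\le p$ and hence unable to offset an order-$q$ exponential gap. Consequently $f(z)=g(z)/h(z)=\dfrac{a_{j_0}(z)}{b_{k_0}(z)}\,e^{(\mu_{j_0}-\nu_{k_0})z^q}\bigl(1+o(1)\bigr)$ there, which dictates peeling off the term $c(z)e^{\delta z^q}$ with $c=a_{j_0}/b_{k_0}\in R_p$ and $\delta=\mu_{j_0}-\nu_{k_0}$.

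Set $\widetilde g:=g-c\,e^{\delta z^q}h\in\A$. Its frequencies lie in $\{\mu_1,\dots,\mu_m\}\cup\{\delta+\nu_1,\dots,\delta+\nu_n\}$, and the choice of $\delta$ makes the coefficient at the frequency $\mu_{j_0}=\delta+\nu_{k_0}$ vanish. Since $\overline{\mu}_{j_0}$ and $\overline{\nu}_{k_0}$ were the strict $\phi_0$-maximizers, every remaining frequency $\gamma$ of $\widetilde g$ satisfies $\mathrm{Re}(\gamma e^{i\phi_0})<\mathrm{Re}(\mu_{j_0}e^{i\phi_0})$, so the ``$\phi_0$-height'' of the numerator has strictly dropped; note that $k_0$ does not change as the process is iterated, since the denominator $h$ is fixed. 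Moreover $f-c\,e^{\delta z^q}=\widetilde g/h$ is entire off the zeros of $b_{k_0}$, a fixed finite set. One then applies the induction hypothesis to $\widetilde g/h$, adds back $c\,e^{\delta z^q}$, collects terms (the assembled top-frequencies being pairwise distinct, the $\mu_j$ and $\nu_k$ being so), and uses the entirety of $f$ to force the finitely many poles produced along the way to cancel out in the final sum, giving $f\in\A$ (or $f$ constant) as desired. When the numerator is reduced to a single term the conclusion is immediate; and for constant multipliers ($p=0$) no poles ever appear, recovering the arguments of Ritt and Lax.

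The main obstacle is converting ``$\phi_0$-height strictly decreases'' into a genuine termination statement, since a peeling step can introduce frequencies $\delta+\nu_k$ outside the original set and, when $n\ge 3$, the numerator's term count can temporarily grow. I would look for a complexity measure that strictly drops at every step --- a natural candidate is the lexicographic pair formed by (i)~the number of frequencies of the numerator attaining its $\phi_0$-maximal height and (ii)~the number of distinct frequencies, possibly after first reducing the numerator's support along the fixed direction $\phi_0$ --- together with a check that all frequencies occurring remain confined to a half-plane determined by $\phi_0$; this verification is where I expect the real work. A secondary, purely bookkeeping point is to confirm that the iterated quotients stay in the fraction field of $\A$ and that their only singularities are poles at zeros of the (finitely many, in fact single) dominant multiplier $b_{k_0}$, removable in the assembled sum. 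An alternative, less hands-on route would replace the peeling by an appeal to the complete regular growth of $g$ and $h$ (Lemma~\ref{regularly-distributed-lemma} and the surrounding discussion) together with a Ronkin-type converse recognizing an entire function with such an indicator and such a zero set as an exponential polynomial; I expect the peeling argument to be the more transparent and self-contained of the two.
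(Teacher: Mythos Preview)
The survey paper does not itself prove this theorem; it merely quotes the statement from \cite{division}. So there is no in-paper argument to compare against, and I will comment on your plan directly.

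Your Lax-style peeling is the natural route and almost certainly the one taken in \cite{division}, but two points need repair.

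First, a slip that matters for the logic: you call the zero set of $b_{k_0}$ ``a fixed finite set'' and then say you will ``apply the induction hypothesis to $\widetilde g/h$''. For $p\ge 1$ the multiplier $b_{k_0}\in P_p$ may be transcendental with infinitely many zeros, so $\widetilde f=\widetilde g/h$ is genuinely meromorphic, and no induction with hypothesis ``the quotient is entire'' is available. The fix is to abandon induction altogether and run the peeling as a purely formal algebraic iteration inside $\A$: set $\widetilde g^{(0)}=g$ and $\widetilde g^{(s+1)}=\widetilde g^{(s)}-c_{s+1}e^{\delta_{s+1}z^q}h$ with $c_{s+1}\in R_p$ the ratio of the $\phi_0$-top coefficient of $\widetilde g^{(s)}$ by $b_{k_0}$. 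If the process halts with $\widetilde g^{(N)}=0$, then $f=F_N:=\sum_{s\le N}c_s e^{\delta_s z^q}\in\A$ and you are done; no entireness of intermediate objects is needed, and the conclusion ``coefficients in $R_p$'' is exactly what falls out.

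Second, termination is, as you say, the real content, and your proposed complexity measures do not close it. The $\phi_0$-height is strictly decreasing but not a~priori bounded below: witness $g=1$, $h=1-e^{-z}$, where the formal peeling gives the non-terminating geometric series $\sum_{s\ge 0}e^{-sz}$. What distinguishes the entire case is a \emph{lower} bound on the extracted frequencies. One way to obtain it is to observe that every frequency ever produced in the numerator lies in the finitely generated set $\{\mu_j\}+\sum_{k\ne k_0}\Z_{\ge 0}(\nu_k-\nu_{k_0})$, all of whose generators have strictly negative $\phi_0$-projection; so a two-sided bound on the projections confines the process to a finite set and forces a halt. The lower bound itself has to come from the entireness of $f$ --- for instance by running the same asymptotic comparison along the opposite critical ray (direction $-\phi_0$), where $f\sim(a_{j_0'}/b_{k_0'})e^{(\mu_{j_0'}-\nu_{k_0'})z^q}$, and arguing that no $\delta_s$ can fall below $\mu_{j_0'}-\nu_{k_0'}$. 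This is where the analytic input (and the hypothesis $p\le q-1$, which makes the coefficients negligible against the order-$q$ exponentials in both directions) is actually spent. Your alternative via completely regular growth would also work and is closer in spirit to the zero-distribution machinery of \cite{division}, but either way the missing step is precisely this lower bound.
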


We note that the exponential polynomials $e^{2\pi i z^2}-1$ and $e^{2\pi i z} -1$ share infinitely many zeros, namely the integers, and yet their ratio is not an exponential polynomial.
Hence a higher order analogue of Shapiro's conjecture would have to involve more than just ``infinitely many zeros''.
Note that if an exponential sum $f$ has infinitely many zeros, then the exponent of convergence of those zeros is equal to one, same as the order of $f$.

One possible formulation for a higher order analogue could be as follows:
\begin{quote}
\emph{Suppose that two transcendental exponential polynomials $f$ and $g$ of orders $p$ and $q$ share a sequence of zeros, whose exponent of convergence is $\min\{p,q\}$. Then $f$ and $g$ are both multiples of some third exponential polynomial.}
\end{quote}
Yet another formulation relies on the following definition.

\begin{definition}
Let $f$ and $g$ be entire functions. Then $N(r,f,g)$ denotes the integrated counting function of the common zeros of $f$ and $g$. Every common zero $z_0$ with $|z_0|\leq r$ of multiplicity $\mu>0$
for $f$ and of multiplicity $\nu>0$ for $g$ is calculated $\min\{\mu,\nu\}$
times.
\end{definition}

\medskip\noindent
\textbf{Generalized Shapiro's conjecture.} \emph{Suppose that $f$ and $g$
are exponential exponential polynomials of the form
	$$
	f(z)=\sum_{j=0}^m F_j(z)e^{w_jz^{q}}\quad\text{and}\quad
	g(z)=\sum_{j=0}^k G_j(z)e^{\lambda_jz^{q}},
	$$
where $F_j(z)$ and $G_j(z)$ are exponential polynomials of order $\max\{\rho(F_j), \rho(G_j)\}\leq q-p$ for some integer $1\leq p\leq q$. If $f$ and $g$ are not multiples of the same transcendental exponential polynomial of order $q$, then $N(r,f,g)=O(r^{q-p}+\log r)$.}

\begin{remark}
Related to the generalized Shapiro's conjecture, Theorem~\ref{simple-zeros} shows that an exponential polynomial $f$ and its derivative $f'$ are not
multiples of the same transcendental exponential polynomial of the same order, provided that the conjugates $\overline{w}_1,\ldots,\overline{w}_m$ of the frequencies of $f$ are either vertex points or interior points of $\co (W)$.
Note that a simple exponential sum $(1+\alpha e^{wz})^n$ in ${\cal E}$ can have an exponential sum as the reciprocal of its logarithmic derivative, that is, $\frac{1}{nw}(1+\alpha^{-1}e^{-wz})$ for $n\in\N$ and $\alpha, w\in\C\setminus\{0\}$, and they are dual to each other.
When $n\geq 2$, the points $-\overline{jw}$, $1\leq j \leq n-1$, are all  boundary points of $\co(W)=[0, -\overline{nw}]$, but none of them are vertex points.
\end{remark}

Let $f=g/h$, where
	$$
	g(z)=\sum_{j=0}^n G_j(z)e^{w_jz^q}\quad\textrm{and}\quad
	h(z)=\sum_{j=0}^n H_j(z)e^{w_jz^q},
	$$
and where $w_0=0$ and $w_j\neq w_k$ for $j\neq k$.
We allow that some of the multipliers $G_j(z)$ or $H_j(z)$ may vanish identically (more than one may vanish),
but we suppose that the matching multipliers $G_j(z)$ and $H_j(z)$ do not both vanish
identically for any $j$. Let $W_g$ and $W_h$ denote the sets of conjugated
frequencies of $g$ and $h$, and define $W_f=W_g\cup W_h$. Then
$W_f=\{\overline{w}_0,\overline{w}_1,\ldots,\overline{w}_n\}$.

If both $g$ and $h$ have all the frequencies $w_j$, then $f$ is
said to be in the \emph{normal form} \cite{GM}. Following the
reasoning in \cite{GM}, we will show in
Section~\ref{Stein-ratio-proof-sec} that, for suitable constants
$a,b\in\C$, the numerator and the denominator of the quotient
	\begin{equation}\label{all-frequencies}
	S(z)=\frac{1}{f(z)-a}-b
	=\frac{h(z)-bg(z)+abh(z)}{g(z)-ah(z)}
	=\frac{\sum_{j=0}^n \tilde G_j(z)e^{w_jz^q}}{
	\sum_{j=0}^n \tilde H_j(z)e^{w_jz^q}}
	\end{equation}
have all the frequencies $w_j$. In other words, any quotient of
exponential polynomials can be transformed to a quotient that is
in the normal form.

The asymptotic growth of $T(r,f)$, $N(r,f)$, $m(r,f)$ and $m(r,1/(f-c))$
for the quotient $f=g/h$ were found by Steinmetz in \cite[Satz~1]{Stein2}, with $o(r^q)$
representing the error term.  The proof of \cite[Satz~1]{Stein2} relies on a theorem of
Ahlfors-Frostman \cite[p.~276]{Nevanlinna}, according to which
	$$
	N\left(r,\frac{1}{f-a}\right)\sim T(r,f),\quad r\to\infty,
	$$
for a given transcendental meromorphic function $f$ and for all values $a$ outside of a
possible exceptional set
of capacity zero. As $T(r,f)\asymp r^q$, the use of this result leads to the aforementioned
error term $o(r^q)$. Theorem~\ref{Stein-ratio-thm} below is a refinement of
\cite[Satz~1]{Stein2} having improved error terms even in the
standard case $p=1$.  The proof is independent of the Ahlfors-Frostman
theorem.

\begin{theorem}\label{Stein-ratio-thm}
Let $f=g/h$ be defined as above, where the multipliers $G_j(z)$ and $H_j(z)$ are exponential polynomials of order $\leq q-p$ for some integer
$1\leq p\leq q$. Then
\begin{itemize}
\item[\textnormal{(a)}] $T(r,f)=\frac{C(\co(W_f))}{2\pi} r^q-N(r,g,h)+O(r^{q-p}\log^2 r)$,
\item[\textnormal{(b)}]
	$m(r,f)=\frac{C(\co(W_f))-C(\co(W_h))}{2\pi}r^q
	+O(r^{q-p}\log^2 r),$
\item[\textnormal{(c)}]
	$m\left(r,1/(f-c)\right)
	=\frac{C(\co(W_f))-C(\co(W_{g-ch}))}{2\pi}r^q
	+O(r^{q-p}\log^2 r),$ $c\in\C$,
\item[\textnormal{(d)}] $f$ has at most $s\leq n+1$ deficient values,
where $s$ is the number of vertex points of $\co(W_f)$. Moreover,
if $a\in\C$ (resp.~$a=\infty$) is a deficient value of $f$, then there exist a vertex point $\overline{w}_j$ of $\co(W_f)$ such that the multipliers $G_j(z)$ and $H_j(z)$ of $e^{w_jz^q}$ (resp.~the
multiplier $H_j(z)$ of $e^{w_jz^q}$) satisfy $G_j(z)\equiv a H_j(z)$
(resp.~$H_j(z)\equiv 0$).
\end{itemize}
\end{theorem}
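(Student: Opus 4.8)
The plan is to prove (a) first, to read (b) off from it, to obtain (c) by applying (a)--(b) to the quotient $1/(f-c)=h/(g-ch)$, and finally to deduce (d). The one analytic input I would use is the dominant-term estimate already underlying Theorems~\ref{Stein-thm1} and~\ref{zeros-thm}: for an exponential polynomial $P=\sum P_j(z)e^{w_jz^q}$ with multipliers of order $\leq q-p$ one has, off a finite union of log-strips $\Lambda_p(\theta^*,c)$ of total angular width $O(r^{-p}\log r)$, the estimate $\log|P(re^{i\theta})|=H_{\co(W_P)}(q\theta)\,r^q+O(r^{q-p}\log^2 r)$, where $H_K$ denotes the supporting function of $K$; note that this quantity depends only on $\co(W_P)$. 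The first move is the normal-form reduction of \cite{GM} sketched around \eqref{all-frequencies}: choose nonzero $a,b\in\C$ outside a finite exceptional set so that both $\tilde g=(1+ab)h-bg$ and $\tilde h=g-ah$ carry all the frequencies $w_0,\dots,w_n$. A short computation shows that $\tilde g,\tilde h$ have exactly the same common zeros, with the same $\min$-multiplicities, as $g,h$, so $N(r,\tilde g,\tilde h)=N(r,g,h)$; moreover $W_S=\{\overline w_0,\dots,\overline w_n\}=W_f$ and $T(r,S)=T(r,f)+O(1)$ for $S=1/(f-a)-b$. Hence it suffices to prove (a) when $g$ and $h$ already have all the frequencies, in which case $\co(W_g)=\co(W_h)=\co(W_f)$ and $0\in W_g\cap W_h$.

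In this normal-form case $g$ and $h$ have the same supporting function, so off their log-strips $\log|g/h|=O(r^{q-p}\log^2 r)$ and therefore $\int_{\mathrm{good}}\log^+|g/h|\,d\theta=O(r^{q-p}\log^2 r)$. On the exceptional set I would bound $\log^+|g/h|\leq\log^+|g|+\log^+|1/h|$: the first term contributes $O(r^{-p}\log r)\cdot O(r^q)=O(r^{q-p}\log r)$, and since $0\in W_h$, Theorem~\ref{Stein-thm1} gives $\int_0^{2\pi}\log^+|1/h|\,d\theta=2\pi m(r,1/h)=O(r^{q-p}+\log r)$. Thus $m(r,f)=O(r^{q-p}\log^2 r)$. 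Since $N(r,f)=N(r,1/h)-N(r,g,h)$ and, by Theorem~\ref{Stein-thm1}, $N(r,1/h)=C(\co(W_h))r^q/2\pi+O(r^{q-p}+\log r)=C(\co(W_f))r^q/2\pi+O(\cdots)$ here, we get $T(r,f)=m(r,f)+N(r,f)=C(\co(W_f))r^q/2\pi-N(r,g,h)+O(r^{q-p}\log^2 r)$; transferring back through $S$ proves (a) for arbitrary $f$.

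For (b), now with the \emph{original} denominator $h$, the two alternatives of Theorem~\ref{Stein-thm1} (according to whether $0\in W_h$) both yield $N(r,1/h)=C(\co(W_h))r^q/2\pi+O(r^{q-p}+\log r)$, whence $m(r,f)=T(r,f)-N(r,f)=[C(\co(W_f))-C(\co(W_h))]r^q/2\pi+O(r^{q-p}\log^2 r)$. For (c) one applies the now-established (a), together with the same $N$-estimate, to the quotient $h/(g-ch)$: here $W_h\cup W_{g-ch}=W_f$ because for each $j$ at least one of $H_j,G_j$ is not identically zero, and $N(r,h,g-ch)=N(r,g,h)$ since at any common zero $\min\{\mathrm{ord}\,h,\mathrm{ord}(g-ch)\}=\min\{\mathrm{ord}\,h,\mathrm{ord}\,g\}$. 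This gives $m(r,1/(f-c))=T(r,1/(f-c))-N(r,1/(f-c))=[C(\co(W_f))-C(\co(W_{g-ch}))]r^q/2\pi+O(r^{q-p}\log^2 r)$.

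For (d) we may assume $f$ is transcendental of order $q$, so $T(r,f)\asymp r^q$ (if $T(r,f)=o(r^q)$ then by (b)--(c) every hull $\co(W_h),\co(W_{g-ch})$ already equals $\co(W_f)$ and (d) is vacuous). A value $a\in\C$ is deficient iff the leading coefficient of $m(r,1/(f-a))$ in (c) is positive, i.e.\ iff $C(\co(W_{g-ah}))<C(\co(W_f))$, i.e.\ (by monotonicity and strictness of the perimeter under passing from a polygon to a proper subpolygon) iff some vertex $\overline w_j$ of $\co(W_f)$ is absent from $W_{g-ah}$, i.e.\ iff $G_j\equiv aH_j$ for such a vertex; likewise $a=\infty$ is deficient iff $H_j\equiv 0$ for some vertex $\overline w_j$. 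A fixed vertex $\overline w_j$ can account for at most one deficient value, since $G_j\equiv aH_j\equiv a'H_j$ with $a\neq a'$ (or $H_j\equiv0$ together with $G_j\equiv aH_j$) would force $G_j\equiv H_j\equiv0$, contrary to the standing hypothesis; hence $f$ has at most $s\leq n+1$ deficient values. The main obstacle is precisely the estimate $m(r,f)=O(r^{q-p}\log^2 r)$ in the normal form --- controlling $\int\log^+|g/h|$ over the log-strips, where $g$ and $h$ can be genuinely small. Reducing first to the normal form is what puts the frequency $0$ into the denominator and makes the crude bound $\int_0^{2\pi}\log^+|1/h|\leq 2\pi m(r,1/h)=O(r^{q-p}+\log r)$ of Theorem~\ref{Stein-thm1} applicable (for the original $h$ this quantity can be of order $r^q$); the multiplicity bookkeeping under $f\mapsto 1/(f-a)-b$ is the secondary technical nuisance.
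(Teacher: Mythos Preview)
Your overall plan---reduce to the normal form via $S=1/(f-a)-b$, bound $m(r,S)$ there, recover $T$ from $m+N$, then read off (b)--(d)---matches the paper's structure, and your bookkeeping ($N(r,\tilde g,\tilde h)=N(r,g,h)$ via the invertible $2\times2$ map $(g,h)\mapsto(\tilde g,\tilde h)$, $W_h\cup W_{g-ch}=W_f$, the vertex count in (d)) is clean and correct.

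The gap is in your ``analytic input.'' The two-sided pointwise estimate $\log|P(re^{i\theta})|=H_{\co(W_P)}(q\theta)\,r^q+O(r^{q-p}\log^2 r)$ off the log-strips is not true as written: the \emph{lower} bound fails inside small discs around the zeros of $P$ and of its dominant multiplier $P_{j_0}$. Such zeros do occur outside the log-strips (only $O(r^{q-p})$ of them in $\{|z|\le r\}$, but for a fixed radius they may sit right on the circle $|z|=r$). Hence your claim ``off their log-strips $\log|g/h|=O(r^{q-p}\log^2 r)$'' is unjustified, and you have misidentified the main difficulty as lying on the log-strips. This is exactly the point the paper works hardest on: it first obtains $m(r,f)=O(r^{q-p}\log^2 r)$ only for $r\notin I$, where $I$ is the radial projection of those small discs, and then develops Lemmas~\ref{convex-lemma}--\ref{exceptionalset-lemma} (together with a Whittaker ``nebula'' count bounding how many of the intervals can overlap) specifically to remove $I$ by passing through the monotone quantity $T(r,f)$.

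Your idea of invoking $m(r,1/h)=O(r^{q-p}+\log r)$ from Theorem~\ref{Stein-thm1}---available precisely because the normal-form reduction forces $0\in W_h$---is a genuinely different move that the paper does \emph{not} make; the paper instead bounds $\log^+|1/h|$ pointwise via a minimum-modulus estimate, which is what produces the exceptional set. Your bound disposes of the $\int\log^+|1/h|$ piece globally, with no exceptional $r$; what remains unhandled in your write-up is the $\int\log^+|g|$ piece over the bad angular set off the log-strips. One way to close the gap in your spirit is to abandon the pointwise input altogether: use only the unconditional upper bound $\log|g|\le k(q\theta)r^q+O(r^{q-p}+\log r)$ and control $\int_0^{2\pi}\bigl(k(q\theta)r^q-\log|h|\bigr)^+\,d\theta$ via Jensen's formula combined with $N(r,1/h)=C(\co(W_h))r^q/2\pi+O(r^{q-p}+\log r)$. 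That route would bypass the exceptional-set machinery entirely---but as the argument stands, it has a hole at precisely the step the paper isolates as the crux.
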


The proof of Theorem~\ref{Stein-ratio-thm} is postponed to
Section~\ref{Stein-ratio-proof-sec}.
The example in \cite[p.~463]{Stein2} shows that $f$ can have $n+1$ deficient values, so that the assertion in Part (d) is sharp in the case when $s=n+1$. Note that the original result \cite[Satz~1]{GM} does not say anything about the vertex points. The assertion in Part (a) is meaningful, provided that
	\begin{equation}\label{meaningful}
	2\pi\limsup_{r\to\infty}\frac{N(r,g,h)}{r^q}<C(\co(W_f)).
	\end{equation}
The following refinement of \cite[Satz~1]{GM} shows that,
under certain conditions, $N(r,g,h)$ is a small function compared
to $f$, in which case \eqref{meaningful} clearly holds.	

\begin{theorem}\label{common-zeros-thm}
Suppose that the ratio $f=g/h$ is in the normal form, where the multipliers $G_j(z)$ and $H_j(z)$ are exponential polynomials of order $\leq q-p$ for some integer $1\leq p\leq q$. If the points $\overline{w}_0,\ldots,\overline{w}_n$ are either vertex points or interior points of $\co (W_f)$, and if $G_0H_{n}-G_{n}H_0\not\equiv 0$ and $G_jH_{j+1}-G_{j+1}H_j\not\equiv 0$ for every $j=1,\ldots, n-1$, then
	$$
	N(r,g,h)=\left\{\begin{array}{rl}
    O\left(r^{q-p}\right),\ & \textnormal{if}\ q-p\geq 1,\\
    O\left(\log r\right),\ & \textnormal{if}\ p=q.
    \end{array}\right.
	$$
\end{theorem}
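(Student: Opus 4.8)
The plan is to localize the common zeros into the finitely many $\Lambda_p$-domains attached to the critical rays of $g$, and then, on each such domain, to replace $g$ and $h$ by a single linear combination that kills one of the two exponential terms dominating there; the determinant hypotheses will be exactly what prevents that combination from degenerating, so that its zeros — and hence the common zeros — are governed only by the sub-maximal order $q-p$.

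First I would dispose of the common zeros lying \emph{outside} the domains $\Lambda_p(\theta^*,c)$. Every common zero of $g$ and $h$ is in particular a zero of $g=\sum_{l=0}^n G_l(z)e^{w_lz^q}$, whose multipliers have order $\leq q-p$, so Theorem~\ref{zeros-thm} applied to $g$ shows that the number of such exceptional common zeros, counted with multiplicity (harmless, since zeros of an exponential polynomial have uniformly bounded multiplicity because $g$ satisfies a linear differential equation with polynomial coefficients), is $O(r^{q-p})$ if $q-p\geq 1$ and $O(\log r)$ if $p=q$.

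Next, fix a side $[\overline{w}_j,\overline{w}_k]$ of $\co(W_f)$, let $\arg(z)=\theta^\bot$ be its outer normal ray, and let $\arg(z)=\theta^*$ be any of the $q$ critical rays it induces. On $\Lambda_p(\theta^*,c)$ one has $z^q\sim|z|^q e^{i\theta^\bot}$, hence $\re\bigl(w_lz^q\bigr)=|z|^q\langle\overline{w}_l,e^{i\theta^\bot}\rangle+O\bigl(|z|^{q-p}\log|z|\bigr)$. Since $e^{i\theta^\bot}$ is the outer normal to the side $[\overline{w}_j,\overline{w}_k]$ and, by hypothesis, every $\overline{w}_l$ with $l\ne j,k$ is either a vertex of $\co(W_f)$ not on that side or an interior point of $\co(W_f)$, we get the strict inequality $\langle\overline{w}_l,e^{i\theta^\bot}\rangle<\langle\overline{w}_j,e^{i\theta^\bot}\rangle=\langle\overline{w}_k,e^{i\theta^\bot}\rangle$ for all $l\ne j,k$, so there is a $\delta>0$ with $|e^{w_jz^q}|\geq e^{\delta|z|^q}|e^{w_lz^q}|$ on $\Lambda_p(\theta^*,c)$ for $|z|$ large and $l\ne j,k$. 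Now set
$$
u(z):=H_k(z)g(z)-G_k(z)h(z)=\sum_{l\ne k}\bigl(G_l(z)H_k(z)-G_k(z)H_l(z)\bigr)e^{w_lz^q},
$$
which vanishes at every common zero of $g$ and $h$ (with at least the common multiplicity) and whose coefficient of $e^{w_jz^q}$ is $D_{jk}:=G_jH_k-G_kH_j$, an exponential polynomial of order $\leq q-p$ that is $\not\equiv0$ by the determinant hypothesis on the pair $\{j,k\}$. The remaining terms of $u$ carry the frequencies $w_l$, $l\ne j,k$, and multipliers of order $\leq q-p$, so the domination estimate gives, on $\Lambda_p(\theta^*,c)$ for $|z|$ large,
$$
|u(z)|\ \geq\ |D_{jk}(z)|\,|e^{w_jz^q}|-e^{-\tfrac{\delta}{2}|z|^q}\,|e^{w_jz^q}|,
$$
whence a common zero $z$ of $g$ and $h$ in $\Lambda_p(\theta^*,c)$ forces $|D_{jk}(z)|\leq e^{-\tfrac{\delta}{2}|z|^q}$. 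As $D_{jk}$ has order $\leq q-p<q$, the minimum modulus theorem for exponential polynomials confines such $z$ to small discs (radii tending to $0$) about the zeros of $D_{jk}$, of which there are $O(r^{q-p})$ (if $q-p\geq1$) or $O(1)$ (if $p=q$) in $|z|\leq r+1$; combined with the bound on zero multiplicities, this yields $O(r^{q-p})$, resp.\ $O(\log r)$, common zeros of $g$ and $h$ inside $\Lambda_p(\theta^*,c)$.

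Summing over the finitely many sides of $\co(W_f)$ and the finitely many critical rays, and adding the exceptional contribution from the second paragraph, gives the asserted bound on $N(r,g,h)$. The main obstacle is precisely the two local facts combined in the third paragraph: the exponential domination of $G_je^{w_jz^q}$ and $G_ke^{w_kz^q}$ over the remaining terms throughout the whole (curving, for $p\geq2$) domain $\Lambda_p(\theta^*,c)$ — this is exactly where the ``vertex or interior point'' hypothesis is indispensable — and the passage from the pointwise estimate $|D_{jk}(z)|\leq e^{-\delta|z|^q/2}$ to a genuine zero count, which needs the minimum-modulus theorem for exponential polynomials together with uniform control of zero multiplicities.
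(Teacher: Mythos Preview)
Your approach is correct and is precisely what the paper has in mind: the paper omits the proof entirely, saying only that it is ``a simple modification of the proof of Theorem~\ref{simple-zeros}'' (or of the original Gackstatter--Meyer argument), and your argument is exactly that modification --- localize the common zeros to the $\Lambda_p$-domains via Theorem~\ref{zeros-thm}, then on each domain form the combination $H_k g-G_k h$ to cancel one of the two dominating exponential terms and invoke the determinant hypothesis so that the surviving coefficient $D_{jk}=G_jH_k-G_kH_j$ is $\not\equiv 0$. One small point worth tightening: you appeal to ``the determinant hypothesis on the pair $\{j,k\}$'' for each side of $\co(W_f)$, so you should make explicit that the indices are arranged so that the pairs listed in the hypothesis are exactly the pairs of consecutive vertices of $\co(W_f)$ (this is the intended reading, and is how the analogous hypothesis is used in the simple-zeros proof).
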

	
The proof of Theorem~\ref{common-zeros-thm} is a simple modification
of the proof of Theorem~\ref{simple-zeros} (see \cite[Theorem~3.1]{division}), or
of the original proof of \cite[Satz~1]{GM}. The details are omitted.

\begin{problem}
Could the error term $O(r^{q-p}\log^2 r)$ in
Theorem~\ref{Stein-ratio-thm} be replaced with a smaller error
term such as $O(r^{q-p}+\log r)$?
\end{problem}

\subsection{Proof of the quotient theorem}\label{Stein-ratio-proof-sec}

In this section we prove Theorem~\ref{Stein-ratio-thm}. This lengthy proof serves three purposes:
\begin{itemize}
\item[(1)] The Ahlfors-Frostman theorem used in proving
\cite[Satz~1]{GM} is known to be complicated. Our reasoning
is based on much simpler deductions.
\item[(2)] The proof is partially based on a reasoning in
\cite{WH}, which has errors. In the proof we will point out
what these errors are, and correct them with a new auxiliary
result, see Lemma~\ref{exceptionalset-lemma} below.
\item[(3)] In addition to Lemma~\ref{exceptionalset-lemma}, the proof contains rather standard technical
manipulations of exponential polynomials, and seeing them
will help the reader to get an intuitive idea of the methods behind the theorems on exponential polynomials.   
\end{itemize}

The proof of Part~(d) can essentially be found in \cite{Stein2}, but it is included
here for the convenience of the reader as it is brief.

\bigskip
\noindent
\textbf{Proof of (d).}
For $a\in\C$, write	
	\begin{equation}\label{all-frequencies-numerator}
	f(z)-a=\frac{g(z)-ah(z)}{h(z)}
	=\frac{\sum_{j=0}^n (G_j(z)-aH_j(z))e^{w_jz^q}}{\sum_{j=0}^n H_j(z)e^{w_jz^q}}.
	\end{equation}
If $a$ is a deficient value for $f$, then we must have $C(\co(W_{g-ah}))<C(\co(W_f))$. The only way the circumference of $\co(W_f)$
can get smaller is when one of its vertex points $\overline{w}_j$
is removed.
Thus there exists at least one but at most $n+1$ indices $j$ for which $G_j(z)-aH_j(z)\equiv 0$. However, for any constant $a_0\neq a$, we then have $G_j(z)-a_0H_j(z)\not\equiv 0$.
Similarly, if the value at infinity is a deficient value for $f$, then
$C(\co(W_{h}))<C(\co(W_f))$, and consequently $H_j(z)\equiv 0$ for at least one but for at
most $n+1$ indices $j$. \hfill$\Box$

\bigskip
\noindent
\textbf{Preparations for the proof of (b).}
Suppose first that both the numerator $g$ and the denominator
$h$ have all the frequencies $w_0,\ldots,w_n$.
Then $C(\co(W_f))=C(\co(W_g))=C(\co(W_h))$. Let $\arg(z)=\theta^*_k$
denote the critical rays for either of $g$ or $h$, where
$k=0,\ldots,\ell$, $\ell\leq sq$, and $s$ is the number of vertex
points of $\co(W_f)$. Define
	$$
	E_k(r)=\big\{\theta\in [0,2\pi] : z=re^{i\theta}\in
	\Lambda_p(\theta_k^*,c)\big\},\quad k=0,\ldots,\ell,
	$$
and $E(r)=\cup_{k=0}^\ell E_k(r)$. Let $\{z_n\}$ denote the sequence
consisting of the zeros of $g$ and $h$ and of their multipliers  $G_0(z),\ldots,G_n(z)$ and $H_0(z),\ldots,H_n(z)$ that do not vanish identically. Then $\{z_n\}$ has exponent of convergence~$q$,
and we may suppose that the points $z_n$ are ordered with respect to increasing modulus. For $\kappa (x)=x^q\log^2(x+e)$, let $F$ denote the collection of
Euclidean discs
	\begin{equation}\label{discs-dn}
	D_n=\{z:|z-z_n|\leq 1/\kappa (|z_n|)\},
	\end{equation}
and let $I$ be the circular projection of $F$ on the positive real axis.
Then by Riemann-Stieltjes integration, it can easily be verified
that $I$ has finite linear measure, see \cite[Remark~1]{WH}.
If $z\not\in F$ lies
between two consecutive log-strips $\Lambda_p(\theta_k^*,c)$,
determined by $\co(W_f)$, then, by \cite[Lemma~7.2]{WH}, there
exists a unique integer $j_0\in\{0,\ldots,n\}$ such that
	\begin{equation}\label{reps}
	\begin{split}
	g(z) &= G_{j_0}(z)e^{w_{j_0}z^q}(1+\veps_{q-p}(r)),\\
	h(z) &= H_{j_0}(z)e^{w_{j_0}z^q}(1+\veps_{q-p}(r)),
	\end{split}	
	\end{equation}	
where $|e_{q-p}(r)|\leq B\exp(-r^{q-p}\log r)$, $|z|=r$ and $B>0$ is a constant. Thus, for such values of $z$, \cite[Corollary~4.2]{WH}
applied to $H_{j_0}(z)$ yields
	$$
	|f(z)|\leq \left|\frac{G_{j_0}(z)}{H_{j_0}(z)}
	\right|(1+|\veps_{q-p}(r)|)
	=\exp\left(O\left(r^{q-p}\log r\right)\right).
	$$
Now, applying \cite[Corollary~4.2]{WH} to $h$, we obtain
	\begin{eqnarray*}
	m(r,f)&=&\frac{1}{2\pi}\int_{E(r)}
	\log^+|f(re^{i\theta})|\, d\theta
	+\frac{1}{2\pi}\int_{[0,2\pi]\setminus E(r)}
	\log^+|f(re^{i\theta})|\, d\theta\\
	&=& O\left(r^{q}\log r\right)\int_{E(r)}d\theta
	+O\left(r^{q-p}\log r\right)\\
	&=& O\left(r^{q-p}\log^2 r\right),\quad r\not\in I.
	\end{eqnarray*}

Suppose then that the denominator $h$ has all the frequencies, but
the numerator $g$ does not. For a suitable constant $a\in\C$,
the function
	\begin{equation}\label{normal-T}
	T(z)=f(z)-a=\frac{g(z)-ah(z)}{h(z)}
	\end{equation}
is a ratio of two exponential polynomials and has all the frequencies both in the numerator and in the denominator, see \eqref{all-frequencies-numerator}. Thus, from the discussion above,
	$$
	m(r,T)=O\left(r^{q-p}\log^2 r\right),\quad r\not\in I,
	$$
and consequently
	$$
	m(r,f)=O\left(r^{q-p}\log^2 r\right),\quad r\not\in I.
	$$

It remains to assume that the numerator $g$ has all the frequencies, but the denominator $h$ has not. We may suppose that the missing
frequencies of $h$ are vertex points of $\co(W_g)$, for otherwise
this case reduces to the first case discussed above. Further, we
may suppose that the origin is an interior point of $\co(W_g)$
because we may multiply both $g$ and $h$ by a suitable exponential
term that has no contribution to $m(r,f)$. Now the log-strips for
$g$ and for $h$ may be different, but nevertheless they cover
only a small portion of the complex plane. The unique dominating
exponential terms still exist for $g$ and $h$, but they may not be
equal as opposed to the situation in \eqref{reps}.

Let $h_g(\theta)$ and $h_h(\theta)$ denote the Phragm\'en-Lindel\"of
indicator functions of $g$ and $h$, and let $k_g(\theta)$ and
$k_h(\theta)$ denote the supporting functions of $\co(W_g)$
and $\co(W_h)$, respectively. See Appendices \ref{Nevanlinna-appendix}
and \ref{convex-appendix} for the definitions of these functions.
Since the origin
is an interior point of $\co(W_g)$, the supporting functions
and hence the indicator functions are all non-negative by \eqref{kqh}
below. Following the proof of \cite[Theorem~3.1]{WH} (or the original proof in \cite[Satz~1]{Stein}),
using the $2\pi$-periodicity of the supporting functions, the identity \eqref{kqh} as well as Theorem~\ref{circumference-thm} below, we obtain
	\begin{eqnarray*}
	m(r,f) &=& \frac{r^q}{2\pi}\int_0^{2\pi}
	\big(h_g(\theta)-h_h(\theta)\big)\, d\theta
	+O\left(r^{q-p}\log^2 r\right)\\
	&=& \frac{r^q}{2\pi}\int_0^{2\pi}
	\big(k_g(q\theta)-k_h(q\theta)\big)\, d\theta
	+O\left(r^{q-p}\log^2 r\right)\\
	&=& \frac{r^q}{2\pi}\int_0^{2\pi}
	\big(k_g(\varphi)-k_h(\varphi)\big)\, d\varphi
	+O\left(r^{q-p}\log^2 r\right)\\
	&=&\frac{C(\co(W_f))-C(\co(W_h))}{2\pi}r^q
	+O(r^{q-p}\log^2 r),\quad r\not\in I.
	\end{eqnarray*}

We have now proved that in all possible cases,
	\begin{equation}\label{I}
	m(r,f)=\frac{C(\co(W_f))-C(\co(W_h))}{2\pi}r^q
	+O(r^{q-p}\log^2 r),\quad r\not\in I,
	\end{equation}
where $I=\cup_{n=1}^\infty I_n\subset [0,\infty)$ is an exceptional set of finite linear measure, and the sets $I_n$ are closed intervals of
the form $I_n=\big[r_n-1/\kappa (r_n),r_n+1/\kappa (r_n)\big]$ for $r_n=|z_n|$. Standard methods do not apply to remove the set $I$ because $m(r,f)$ is not in general non-decreasing. The key idea is to move on to
considering $T(r,f)$, which is non-decreasing, and then come back
to considering $m(r,f)$. In this direction, we point out that \cite[Lemma~4.3]{WH} has a mistake in the proof, and consequently
the use of this lemma leads to an error term of magnitude $o(r^q)$.
Therefore we need a new approach for dealing with exceptional sets.

\medskip
\begin{remark}
Suppose that the numerator $g$ of $f=g/h$ does not have all the
frequencies. Then the reasoning in \eqref{all-frequencies-numerator}
and \eqref{normal-T} shows that, for a suitable $a\in\C$, the numerator of $f-a=\frac{g-ah}{h}$ has all the frequencies. Consequently, the denominator of $\frac{1}{f-a}=\frac{h}{g-ah}$ has all the frequencies. If the numerator $h$ of the quotient $\frac{1}{f-a}$ does not have
all the frequencies, we may use this process again
and subtract from it a suitable constant $b\in\C$.
This shows that the quotient $S$ in \eqref{all-frequencies}
is in the normal form.
\end{remark}

\medskip
\noindent
\textbf{A new approach on exceptional sets.}
The rest of the proof of Theorem~\ref{Stein-ratio-thm} requires two
lemmas. The first one is outlined without a proof in \cite[p.~412]{CGHR}, but we include the short and easy proof here for convenience.

\begin{lemma}\label{convex-lemma}
\textnormal{(\cite{CGHR})}
Let $\varphi:(0,\infty)\to (0,\infty)$ be differentiable and
eventually convex. Then there exists a constant $R>0$ such that
	$$
	\varphi(x)\leq 2\varphi(x+\varphi(x)),\quad x\geq R.
	$$
\end{lemma}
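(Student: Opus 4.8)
The plan is to prove Lemma~\ref{convex-lemma} directly by exploiting convexity through the tangent line (supporting line) at the point $x$. Since $\varphi$ is eventually convex, fix $R>0$ such that $\varphi$ is convex and differentiable on $[R,\infty)$. For $x\geq R$, convexity gives that the graph of $\varphi$ lies above its tangent line at $x$, so for every $t\geq R$ we have $\varphi(t)\geq \varphi(x)+\varphi'(x)(t-x)$. The idea is to evaluate this at $t=x+\varphi(x)$ and control the error term $\varphi'(x)\varphi(x)$.

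The key case distinction is on the sign and size of $\varphi'(x)$. If $\varphi'(x)\geq 0$, then the tangent-line bound at $t=x+\varphi(x)$ already yields $\varphi(x+\varphi(x))\geq \varphi(x)+\varphi'(x)\varphi(x)\geq \varphi(x)$, which is even stronger than what is claimed (the factor $2$ is not needed). The delicate case is $\varphi'(x)<0$: here $\varphi$ is still decreasing at $x$, and we must rule out that it decreases so steeply that $\varphi(x+\varphi(x))$ drops below $\tfrac12\varphi(x)$. The crucial observation is that a convex, positive function cannot have too steep a negative slope relative to its value: since $\varphi$ stays positive on all of $[R,\infty)$, convexity forces $\varphi(x)+\varphi'(x)\cdot s>0$ for all $s$ with $x+s$ in the domain where the tangent line is a valid lower bound — in particular letting $s\to\infty$ is impossible unless $\varphi'(x)\geq 0$. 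More carefully, for any fixed large $M$ we have $0<\varphi(x+M)$ but also (tangent bound, valid since $x+M\geq R$) we need $\varphi(x+M)\geq \varphi(x)+M\varphi'(x)$; this does not immediately give a contradiction, so instead I would argue: by convexity the difference quotient $\tfrac{\varphi(x+h)-\varphi(x)}{h}$ is nondecreasing in $h$, hence bounded below by $\varphi'(x)$ and, crucially, the function $\varphi$ being positive and convex on a half-line implies it is eventually nondecreasing; so there is $R'\geq R$ with $\varphi'(x)\geq 0$ for $x\geq R'$ (or $\varphi$ is eventually constant). Taking $R$ to be this $R'$ disposes of the negative-slope case entirely, and the previous paragraph finishes the proof.

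Actually, the cleanest route avoids the derivative sign analysis: I would instead note that an eventually convex function $\varphi:(0,\infty)\to(0,\infty)$ is, beyond some point, either (i) nondecreasing, or (ii) strictly decreasing and convex hence has a finite limit $L\geq 0$ as $x\to\infty$. In case (i), $\varphi(x+\varphi(x))\geq \varphi(x)$ trivially. In case (ii), strict convexity with a horizontal asymptote means $\varphi$ decreases very slowly for large $x$: for any $\eta>0$ one can find $R$ so that $\varphi(x)-\varphi(x+\varphi(x))<\eta$ for $x\geq R$, because convexity bounds the drop over an interval of length $\varphi(x)$ by $\varphi(x)\cdot|\varphi'(x)|$ and $|\varphi'(x)|\to 0$; combined with $\varphi(x)\geq L$ staying bounded, for $x$ large we get $\varphi(x+\varphi(x))\geq \varphi(x)-o(1)\cdot\varphi(x)\geq \tfrac12\varphi(x)$.

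The main obstacle is making the case-(ii) estimate rigorous without assuming more regularity than differentiability: one must justify that $|\varphi'(x)|\varphi(x)\to 0$, which follows because $\varphi'(x)\to 0$ (monotone derivative of a convergent convex function) while $\varphi(x)\to L<\infty$ is bounded. Since $|\varphi'(x)|\varphi(x)\to 0$, for all sufficiently large $x$ we have $|\varphi'(x)|\varphi(x)\leq \tfrac12\varphi(x)$ — wait, that needs $|\varphi'(x)|\leq \tfrac12$, which indeed holds for large $x$ — and then the tangent-line lower bound $\varphi(x+\varphi(x))\geq \varphi(x)+\varphi'(x)\varphi(x)\geq \varphi(x)-\tfrac12\varphi(x)=\tfrac12\varphi(x)$ gives exactly $\varphi(x)\leq 2\varphi(x+\varphi(x))$. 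So the whole proof reduces to: tangent-line inequality from convexity, plus the elementary fact that a differentiable convex function bounded below has a derivative tending to a nonnegative limit, and if that limit is $0$ and the function is bounded, the product $\varphi'\cdot\varphi$ is negligible. I expect the write-up to be about fifteen lines.
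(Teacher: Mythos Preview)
Your final argument is correct and is essentially the paper's proof: the tangent-line inequality $\varphi(x+\varphi(x))\geq \varphi(x)+\varphi'(x)\varphi(x)$ reduces everything to showing $\varphi'(x)\geq -\tfrac12$ for large $x$, which follows from convexity (so $\varphi'$ is increasing) together with positivity of $\varphi$. The paper avoids your case distinction (i)/(ii) by arguing this bound directly by contradiction---if $\varphi'(x_n)<-\tfrac12$ along a sequence $x_n\to\infty$, then monotonicity of $\varphi'$ forces $\varphi'\leq -\tfrac12$ everywhere and hence $\varphi\to-\infty$---so you can tighten your write-up considerably.
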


\begin{proof}
From eventual convexity and differentiability, there exists a constant
$R_0>0$ such that
	$$
	\varphi(x+\varphi(x))\geq
	\varphi(x)+\varphi'(x)\varphi(x),\quad x>R_0.
	$$
Hence it suffices to prove that there exists an $R>R_0$
such that $\varphi'(x)\geq-1/2$ for all $x\geq R$. Suppose on the contrary to this claim that there exists a strictly
increasing sequence $\{x_n\}$ of positive real numbers such that $\varphi'(x_n)< -1/2$ for all $n$.
Since $\varphi'(x)$ is increasing by convexity, we have $\varphi'(x)\leq -1/2$ for all $x\geq x_1$.
But then $\varphi(x)\to-\infty$, which contradicts the fact that $\varphi(x)>0$ for all $x>0$.
\end{proof}

The exceptional set $I$ appearing in \eqref{I} will be dealt
with the following lemma.

\begin{lemma}\label{exceptionalset-lemma}
Let $\{r_n\}\subset [1,\infty)$ be a strictly increasing sequence tending to infinity.
Suppose that $\kappa:(0,\infty)\to [1,\infty)$ is any non-decreasing and differentiable
function such that $1/\kappa (x)$ is eventually convex and that
$\sum_{n=1}^\infty \frac{1}{\kappa (r_n)}<\infty$. Denote
	$$
	I=\bigcup_{n=1}^\infty I_n,
	$$
where $I_n=[r_n-1/\kappa (r_n),r_n+1/\kappa (r_n)]$. Suppose that $u,v:(0,\infty)\to (0,\infty)$ are non-decreasing functions such that
	$$
	u(r)\leq v(r),\quad r\not\in I.
	$$
Suppose that at most a finite uniform number $K\in\N$ of consecutive
intervals $I_n$ can overlap. (That is, if the intervals $I_n$
form clusters, there are at most $K$ intervals in each cluster.)
Let $\alpha=\alpha(r)=1+\frac{5K}{r\kappa (r)}$. Then there exists a constant
$R>0$ such that
	$$
	u(r)\leq v(\alpha r),\quad r\geq R.
	$$
\end{lemma}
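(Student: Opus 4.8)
The plan is to reduce the whole statement to a single ``escape'' step: for every sufficiently large $r\in I$ one wants to produce a point $r'\notin I$ with $r<r'\le\alpha r$. Granting this, the lemma follows at once, since $u,v$ are non-decreasing and $u\le v$ off $I$, so $u(r)\le u(r')\le v(r')\le v(\alpha r)$; and for $r\notin I$ there is nothing to prove, because $\alpha\ge1$ gives $u(r)\le v(r)\le v(\alpha r)$. Thus all the work lies in the escape step, and the natural interval to search in is $J:=[r,\alpha r]$, which by the definition of $\alpha$ equals exactly $[\,r,\,r+5K/\kappa(r)\,]$, of length $5K/\kappa(r)$.

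First I would extract the one consequence of the convexity hypothesis that is used. Applying Lemma~\ref{convex-lemma} to $\varphi=1/\kappa$ (positive, differentiable since $\kappa\ge1$, and eventually convex by assumption) yields a constant $R_1>0$ with $1/\kappa(x)\le 2/\kappa\!\left(x+1/\kappa(x)\right)$, i.e. $\kappa\!\left(x+1/\kappa(x)\right)\le 2\kappa(x)$, for all $x\ge R_1$; I set $R=R_1+1$. From this I would derive a comparison estimate: if $I_n$ meets $J$ and $r\ge R$, then $\kappa(r_n)\ge\tfrac12\kappa(r)$. Indeed, if $r_n\ge r$ this is immediate from the monotonicity of $\kappa$; if $r_n<r$, then $I_n\cap J\neq\emptyset$ forces $r\le r_n+1/\kappa(r_n)$, and since $1/\kappa(r_n)\le1$ we get $r_n\ge r-1\ge R_1$, so the displayed inequality at $x=r_n$ gives $\kappa(r)\le\kappa\!\left(r_n+1/\kappa(r_n)\right)\le2\kappa(r_n)$. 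Hence every $I_n$ meeting $J$ has length $|I_n|=2/\kappa(r_n)\le4/\kappa(r)$.

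The escape step I would then run by contradiction: assume $r\in I$, $r\ge R$, and $J\subseteq I$. Then the subfamily $\{I_n:I_n\cap J\neq\emptyset\}$ covers the connected set $J$, so its union is connected, hence contained in a single connected component (cluster) of $I$; by the standing hypothesis that cluster contains at most $K$ of the intervals $I_n$. Summing lengths, $|J|\le\sum_{I_n\cap J\neq\emptyset}|I_n|\le K\cdot 4/\kappa(r)=4K/\kappa(r)$, contradicting $|J|=5K/\kappa(r)$. So $J\not\subseteq I$, and any $r'\in J\setminus I$ works: $r\in I$ forces $r'>r$, while $r'\le\alpha r$, so $u(r)\le u(r')\le v(r')\le v(\alpha r)$, which is the assertion.

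The main obstacle is making the cluster-counting reduction airtight: once $J$ is assumed to lie in $I$, one must argue that all $I_n$ hitting $J$ belong to one cluster (so that the cap $K$ applies). The slightly generous constant $5K$ — rather than the $2K$ one would naively expect — is dictated by the factor $2$ lost in the convexity comparison $\kappa(r_n)\ge\tfrac12\kappa(r)$ together with the need for strict room so that $J\setminus I$ is nonempty; the threshold $R=R_1+1$ is forced by needing $r_n\ge R_1$ for every $I_n$ meeting $J$. (The hypothesis $\sum_n1/\kappa(r_n)<\infty$ is not actually needed here; it only guarantees that $I$ has finite linear measure, which matters when the lemma is invoked.)
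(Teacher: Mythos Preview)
Your proof is correct and follows essentially the same strategy as the paper's: both invoke Lemma~\ref{convex-lemma} to obtain $\kappa(x+1/\kappa(x))\le 2\kappa(x)$, then show that the interval $[r,\alpha r]$ cannot lie entirely in $I$ by comparing its length $5K/\kappa(r)$ against the total length of the at most $K$ intervals $I_n$ in the relevant cluster, and finally conclude via monotonicity. Your execution is in fact tidier than the paper's: where the paper runs an induction on the number $M+1$ of overlapping intervals (re-deriving the measure comparison at each step), you bound every $|I_n|$ meeting $J$ uniformly by $4/\kappa(r)$ in one stroke via the estimate $\kappa(r_n)\ge\tfrac12\kappa(r)$, and then apply the cluster cap $K$ once; your explicit choice $R=R_1+1$ to ensure $r_n\ge R_1$ when $r_n<r$ is also a point the paper leaves implicit.
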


\begin{proof}
The function $\varphi(x)=1/\kappa (x)$ satisfies the assumptions of Lemma~\ref{convex-lemma}, so
there exists a constant $R>0$ such that
	\begin{equation}\label{less2}
	\kappa\left(x+\frac{1}{\kappa (x)}\right)\leq 2\kappa (x),\quad x\geq R.
	\end{equation}

Let $J=[0,\infty)\setminus I$. We claim that $[r,\alpha r]\cap J\neq\emptyset$ for all $r\geq R$. Suppose on the contrary to this claim that there exists
a strictly increasing sequence $\{t_n\}\subset [R,\infty)$
tending to infinity such that $[t_n,\alpha t_n]\subset I$ for all $n\in\N$,
where $\alpha=\alpha(t_n)$.

Suppose first that $[t_n,\alpha t_n]\subset I_m$ for some $m$, that is,
	$$
	r_m-\frac{1}{\kappa (r_m)}\leq t_n<\alpha t_n\leq r_m+\frac{1}{\kappa (r_m)}.
	$$
But now, using \eqref{less2},
	\begin{eqnarray*}
	\meas([t_n,\alpha t_n])&=&t_n(\alpha-1)\geq 5\kappa (t_n)^{-1}\geq 5k\left(r_m+\frac{1}{\kappa (r_m)}\right)^{-1}\\
	&\geq& \frac52\kappa (r_m)^{-1}=\frac54\meas(I_m),
	\end{eqnarray*}
which is a contradiction.	

Suppose that $[t_n,\alpha t_n]\subset I_m\cup I_{m+1}$ for some $m$, where $I_m\cap I_{m+1}\neq\emptyset$.
Further we may suppose that
	$$
	r_m+\frac{1}{\kappa (r_m)}< r_{m+1}+\frac{1}{\kappa (r_{m+1})},
	$$
for otherwise $[t_n,\alpha t_n]\subset I_m$, which is a contradiction by the discussion above.
For the same reason, the case $t_n\not\in I_m$ also results in a contradiction.
Hence $t_n\in I_m$, and we have
	$$
	r_m-\frac{1}{\kappa (r_m)}\leq t_n \leq r_m+\frac{1}{\kappa (r_m)}< r_{m+1}+\frac{1}{\kappa (r_{m+1})}.
	$$
But now $K\geq 2$, and using \eqref{less2} twice gives us
	\begin{eqnarray*}
	\meas([t_n,\alpha t_n])&=&t_n(\alpha-1)\geq 2\cdot 5\kappa (t_n)^{-1}\\
	&\geq& 5\kappa\left(r_m+\frac{1}{\kappa (r_m)}\right)^{-1}+5\kappa\left(r_{m+1}+\frac{1}{\kappa (r_{m+1})}\right)^{-1}\\
	&\geq& \frac52\kappa (r_m)^{-1}+\frac52\kappa (r_{m+1})^{-1}
	\geq \frac54 \meas(I_m\cup I_{m+1}),
	\end{eqnarray*}
which is a contradiction.	

Suppose that we have covered the cases where $[t_n,\alpha t_n]$ is a subset of at most $K$ consecutive intervals, the conclusion being a contradiction in each case for $1\leq K \leq M$  for a fixed integer $M\geq 2$. Now suppose that $[t_n,\alpha t_n]\subset I_m\cup I_{m+1}\cup\cdots\cup I_{m+M}$ for some index $m \in\N$, where $I_j\cap I_{j+1}\neq\emptyset$ for every $j$ involved.
We may suppose that
	$$
	r_m+\frac{1}{\kappa (r_m)}< r_{m+1}+\frac{1}{\kappa (r_{m+1})}<\cdots <r_{m+M}+\frac{1}{\kappa (r_{m+M})},
	$$
for otherwise $[t_n,\alpha t_n]$ belongs to a union of $\leq M$ intervals, which is a contradiction.
The same conclusion holds if $t_n\not\in I_m$. Hence $t_n\in I_m$, and we have, similarly as above,
	\begin{eqnarray*}
	\meas([t_n,\alpha t_n])&=&t_n(\alpha-1)\geq (M+1)\cdot 5\kappa (t_n)^{-1}\\
	&\geq &\frac52\kappa (r_m)^{-1}+\frac52\kappa (r_{m+1})^{-1}+\cdots+\frac52\kappa (r_{m+M})^{-1}\\
	&\geq& \frac54\meas(I_m\cup I_{m+1}\cdots\cup I_{m+M}),
	\end{eqnarray*}
which is a contradiction. This covers the case for $K=M+1$,
and thus for any~$K\in \N$.	

Finally, let $r\geq R$, and take $t\in [r,\alpha r]\cap J$. Then
	$$
	u(r)\leq u(t)\leq v(t)\leq v(\alpha r)=v(r+5K/\kappa (r))
	$$
by the monotonicity of $u$ and $v$.
\end{proof}

We will now continue the proof of Theorem~\ref{Stein-ratio-thm}.

\bigskip
\noindent
\textbf{Proof of (a).}	
From Theorem~\ref{Stein-thm1} and from the definition of $N(r,g,h)$,
	\begin{equation}\label{common-zeros}
	\begin{split}
	N(r,f) &= N\left(r,\frac{g}{h}\right)
	=N\left(r,\frac1h\right)-N(r,g,h)\\
	&=\frac{C(\co(W_h))}{2\pi} r^q-N(r,g,h)+O(r^{q-p}+\log r).
	\end{split}
	\end{equation}
From \eqref{I}, we find that
	\begin{equation}\label{TI}
	T(r,f)=\frac{C(\co(W_f))}{2\pi} r^q-N(r,g,h)
	+O(r^{q-p}\log^2 r),	\quad r\not\in I,
	\end{equation}
where $I=\cup_{n=1}^\infty I_n\subset [0,\infty)$ is an exceptional set of finite linear measure consisting of closed intervals $I_n=\big[r_n-1/\kappa (r_n),r_n+1/\kappa (r_n)\big]$ for $\kappa (x)=x^q\log^2(x+e)$
and $r_n=|z_n|$.

Recall that the sequence $\{z_n\}$ consists of the zeros of $g$
and $h$ and those of their multipliers, is ordered with respect to increasing modulus, and has exponent of convergence~$q$. In
fact, if $n_{\{z_n\}}(r)$ denotes the number of points $z_n$ in
$|z|\leq r $, we infer from \eqref{n-asymptotic} that
	\begin{equation}\label{n-asymptotic3}
	n_{\{z_n\}}(r)=C_0r^q+o(r^q)
	\end{equation}
for some constant $C_0>0$.

We now use a reasoning due to Whittaker \cite{Whittaker}: Associate with the disc
$D_1$ in \eqref{discs-dn} any discs $D_n$ which touch or intersect
$D_1$, add to the group any discs which touch or intersect these new discs, and so on. The group so formed will consist of a finite number of discs, because none of their centres can be more than the distance
	$$
	\frac{1}{\kappa (r_1)}+\sum_{n=2}^\infty\frac{2}{\kappa (r_n)}\leq 2H
	$$
away from $z_1$, where
	$$
	H=\sum_{n=1}^\infty \frac{1}{\kappa (r_n)}.
	$$
Indeed, if a closed disc $\overline{D}(z_1,2H)$ of radius $2H$ centred at $z_1$ would contain infinitely many points $z_n$, then it would
contain an accumulation point of the points $z_n$, which is impossible.
Next, take the first point $z_n$ not in the first group and form a second group, and so on. Following Whittaker, these groups of discs are called ``nebulae''. The diameter $d(N)$ of the nebula $N$ is at most $2H$ and tends to zero as the distance $l$ of $N$ from the origin tends to infinity, because
	\begin{eqnarray*}
	d(N)&\leq& \sum_{|z_n|\geq l}\frac{2}{\kappa (r_n)}
	=2\int_{l}^\infty\frac{dn_{\{z_n\}}(t)}{\kappa (t)}
	\leq 2\int_{l}^\infty
	\frac{n_{\{z_n\}}(t)\kappa'(t)}{\kappa (t)^2}\, dt\\
	&\leq& 2(2+q)\int_{l}^\infty
	\frac{n_{\{z_n\}}(t)\, dt}{t^{q+1}\log^2(t+e)}
	\leq \frac{C_1}{\log l}\to 0,\quad l\to \infty,
	\end{eqnarray*}
where $C_1=2(2+q)(C_0+1)>0$ is a uniform constant independent on $n$
and $C_0>0$ is the constant from \eqref{n-asymptotic3}. Since
$\{z_n\}$ is a union of zero sequences of finitely many exponential
polynomials, we may use the reasoning in \cite[p.~408]{VPT}
to conclude that the number of points $z_n$ in each nebula is
uniformly bounded, say by a constant $K>0$.

From \eqref{TI}, there are now two possibilities depending on
the growth of $N(r,f,g)$:
\begin{itemize}
\item[(I)] If $T(r,f)=O(r^{q-p}\log^2 r)$ for $r\not\in I$,
then the assertion in Part~(a) follows by standard
methods, see \cite[Lemma~1.1.1]{Laine}.
\item[(II)]
If $T(r,f)=C_2r^q+O(r^{q-p}\log^2 r)$ for some $C_2>0$ and for $r\not\in I$, then the assertion in Part~(a) follows by Lemma~\ref{exceptionalset-lemma}. Indeed, we have
	$$
	T(r,f)\leq C_2\left(r+\frac{5K}{\kappa (r)}\right)^q
	+O(r^{q-p}\log^2 r)
	=C_2r^q+O(r^{q-p}\log^2 r)
	$$
and	
	\begin{eqnarray*}
	C_2r^q+O(r^{q-p}\log^2 r) &=&
	C_2\left(r-\frac{2}{r^q}\right)^q+O(r^{q-p}\log^2 r)\\
	&=& T\left(r-\frac{2}{r^q},f\right)
	\leq T\left(r-\frac{2}{r^q}+\frac{5K}{\kappa (r-2/r^q)},f\right)\\
	&\leq& T\left(r-\frac{2}{r^q}+\frac{1}{(r-2/r^q)^q},f\right)
	\leq T(r,f)
	\end{eqnarray*}
for all $r$ large enough.
\end{itemize}
This completes the proof of Part~(a). \hfill$\Box$

\bigskip
\noindent
\textbf{Proof of (b) and (c).}
Since $m(r,f)=T(r,f)-N(r,f)$, the assertion in Part~(b) follows
from \eqref{common-zeros} and Part~(a). Further, since
$\frac{1}{f-c}=\frac{h}{g-ch}$, the assertion in Part~(c) follows
from Part~(b). \hfill$\Box$

\begin{remark}
As already indicated above, \cite[Lemma~4.3]{WH} has a mistake in the proof, and consequently the use of this lemma in proving
\cite[Theorem~3.1]{WH} leads to an error term of magnitude $o(r^q)$.
This can be avoided by using Lemma~\ref{exceptionalset-lemma} instead,
together with the fact that, for any given exponential polynomial, the number of discs in each nebula is uniformly bounded.
\end{remark}

\subsection{The $d$-th roots of exponential polynomials}\label{d-roots-sec} 

Motivated by the Green-Griffiths-Lang conjecture for projective spaces with moving targets, Guo, Sun and Wang proved the following result on an exponential polynomial which is a $d$-th power of an entire function.

\begin{theorem}
\textnormal{(Guo-Sun-Wang \cite{GSW0})}\label{Guo-Sun-Wang}
Let $f$ be an exponential polynomial of order~$q$ of the form \eqref{EP}.
Assume that $f = g^d$ for some integer $d \geq 2$ and some entire function $g$.
Then $g$ is also an exponential polynomial of order~$q$.
In addition, if the multipliers $P_j(z)$ of $f$ are constant functions for all $1 \leq i \leq m$, then $g$ is an exponential polynomial of order~$q$ with constant multipliers as well.
\end{theorem}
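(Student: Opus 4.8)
The plan is to split the claim into three parts: (a) $\rho(g)=q$; (b) $g$ is an exponential polynomial; (c) if every multiplier of $f$ is constant, so is every multiplier of $g$. Part (a) is immediate from $\rho(g)=\rho(g^{d})=\rho(f)=q$, so $g$ is entire of order $q$, and the whole content lies in (b) and (c), which I would treat separately.

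For (b) the first step is Hadamard bookkeeping. Every zero of $f$ has multiplicity divisible by $d$, and the zero sequence $\{z_{n}\}$ of $g$ is that of $f$ with all multiplicities divided by $d$; thus $n_{\{z_{n}\}}(r)=\frac1d\,n(r,1/f)$, which by \eqref{n-asymptotic} has exponent of convergence $q$, and, exactly as in the proof of Lemma~\ref{regularly-distributed-lemma} where $\sum|z_{n}|^{-q}$ is shown to diverge for the zeros of $f$, the sequence $\{z_{n}\}$ again has genus $q$. Writing Hadamard factorizations $f=z^{\mu_{0}}e^{Q_{f}}\Pi_{f}$ and $g=\zeta z^{\nu_{0}}e^{Q_{g}}\Pi_{g}$ with $\Pi_{f},\Pi_{g}$ the genus-$q$ canonical products over the zeros of $f$ and of $g$, $\deg Q_{f},\deg Q_{g}\le q$ and $\zeta\in\C$, the identity $f=g^{d}$ forces $\mu_{0}=d\nu_{0}$, $\Pi_{f}=\Pi_{g}^{d}$ and $Q_{f}\equiv dQ_{g}$ up to a constant. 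Since $e^{Q_{g}}$ is an exponential polynomial and products of exponential polynomials are exponential polynomials, $g$ is an exponential polynomial if and only if $z^{\nu_{0}}\Pi_{g}$ is, i.e.\ if and only if the canonical product over the zeros of $g$ is, up to the monomial $z^{\nu_{0}}$, an exponential polynomial.

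The crux of (b) is therefore to recover $\Pi_{g}$ as an exponential polynomial from the distribution of the $z_{n}$, and the point is that this distribution is exactly $1/d$ times that of the zeros of the exponential polynomial $f$: by Theorems~\ref{zeros-thm} and \ref{generalized-polya} the $z_{n}$ lie, apart from a set of lower order, in the finitely many log-strips $\Lambda_{p}(\theta^{*},c)$ about the critical rays of $f$, with the polygonal angular density attached to $\co(W)$ (scaled by $1/d$); so $\{z_{n}\}$ is regularly distributed of exponential-polynomial type. I would then invoke the converse direction of the structure theory of functions of completely regular growth (Levin; Ronkin \cite{Ronkin}) — the same machinery cited before Lemma~\ref{regularly-distributed-lemma} for the forward implication — to conclude that an order-$q$, finite-type entire function whose zeros have this angular density and whose indicator is the corresponding polygonal one must be, up to a factor $z^{k}e^{(\text{polynomial of degree}\le q)}$, an exponential polynomial; hence $z^{\nu_{0}}\Pi_{g}$, and so $g$, is an exponential polynomial of order $q$. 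The two remaining cases are easier and go by induction on $q$: if $f$ has only finitely many zeros it is already $(\text{polynomial})^{d}e^{Q_{f}}$ with $d$ dividing the root multiplicities, and if $\lambda(f)<q$ then $f=F(z)e^{wz^{q}}$ with $F$ an exponential polynomial of order $<q$ that is itself a $d$-th power. I expect this converse step — literally manufacturing a global exponential polynomial from the zero data — to be the main obstacle; a more hands-on alternative is to use the sector-by-sector representation $f(z)=F_{j_{0}}(z)e^{w_{j_{0}}z^{q}}(1+o(1))$ of \cite{WH,Stein}, take a branch of $f^{1/d}$ on each sector, and glue, the delicate point there being the gluing into a single entire function of the required shape.

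For (c), suppose all multipliers of $f$ in \eqref{EP} are non-zero constants and, using (b), write $g=\sum_{k=1}^{m'}P_{k}(z)e^{S_{k}(z)}$ in the form \eqref{EP} with the $P_{k}$ polynomials $\not\equiv0$ and the $S_{k}$ pairwise distinct of degree $\le q$. Assume some $P_{k}$ is non-constant, set $\delta=\max_{k}\deg P_{k}\ge1$ and $A=\{k:\deg P_{k}=\delta\}$, fix a translation-invariant total (lexicographic) order $\preceq$ on the space of polynomials of degree $\le q$, via the coefficient vectors with $\C$ ordered lexicographically as $\R^{2}$, and choose $k^{*}\in A$ with $S_{k^{*}}$ the $\preceq$-largest on $A$. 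Expanding $f=g^{d}$ and extracting the coefficient of $e^{\,dS_{k^{*}}}$: a $d$-tuple $(k_{1},\dots,k_{d})$ with some $k_{l}\notin A$ contributes a polynomial of degree $<d\delta$, while for tuples with all $k_{l}\in A$ the relation $\sum_{l}(S_{k_{l}}-S_{k^{*}})=0$ with each $S_{k_{l}}-S_{k^{*}}\preceq0$ forces, by the defining property of a lexicographic order, $S_{k_{l}}=S_{k^{*}}$ and hence $k_{l}=k^{*}$ for every $l$. Thus the coefficient of $e^{\,dS_{k^{*}}}$ in $f$ equals $P_{k^{*}}(z)^{d}$ plus terms of degree $<d\delta$, a polynomial of degree exactly $d\delta\ge2$, i.e.\ a non-constant multiplier of $f$ — contradicting the hypothesis. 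Hence every $P_{k}$ is constant, and $g$ is an exponential polynomial of order $q$ with constant multipliers.
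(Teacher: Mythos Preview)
The paper does not prove this result; it is quoted from \cite{GSW0} and then used to motivate Problem~\ref{Problem3}, so there is no in-paper argument to compare against, and the question is simply whether your reasoning is correct on its own.

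Part (a) is immediate, and your argument for (c) is sound and rather elegant: once (b) is granted, the translation-invariant lexicographic order makes the additive group of polynomials of degree $\le q$ into an ordered abelian group, so if $S_{k_l}-S_{k^*}\preceq 0$ for every $l$ and these differences sum to $0$ they must all vanish; hence the only tuple in $A^d$ whose exponents sum to $dS_{k^*}$ is the constant tuple, the multiplier of $e^{dS_{k^*}}$ in $f=g^d$ has degree exactly $d\delta\ge d\ge 2$, and you get the desired contradiction.

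Part (b), however, has a genuine gap. You reduce to showing that the canonical product $\Pi_g$ is an exponential polynomial, and propose to deduce this from the fact that the zeros of $g$ inherit (scaled by $1/d$) the polygonal angular density and indicator of the exponential polynomial $f$. But the Levin--Ronkin theory you invoke only gives the equivalence \emph{regularly distributed zeros} $\Longleftrightarrow$ \emph{completely regular growth}; it says nothing that isolates exponential polynomials inside the far larger class of c.r.g.\ functions with a given order and a given polygonal indicator. What would actually close the argument is the existence of an exponential polynomial $h$ whose zero divisor agrees \emph{exactly} (locations and multiplicities) with that of $g$, since then $g/h$ is zero-free of order $\le q$ and hence $g=h\,e^{P}$ for a polynomial $P$ of degree $\le q$. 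Producing such an $h$, however, is precisely the content of the theorem, and angular-density information is far too coarse to supply it. You yourself flag this step as ``the main obstacle'', and the alternative you sketch --- taking $f^{1/d}$ sector by sector via the representation $f(z)=F_{j_0}(z)e^{w_{j_0}z^{q}}(1+o(1))$ and gluing --- simply relocates the entire difficulty to the gluing, which you do not carry out. As written, (b) is a proof that $g$ has completely regular growth, not that $g$ is an exponential polynomial.
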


We note that if an exponential polynomial is decomposed as a product of two entire functions, the entire components are not necessary exponential polynomials. For example, the exponential polynomial $f=(e^z-1)^2$ can be decomposed as a product of two entire functions $g=(e^z-1)/z$ and $h=z(e^z-1)$, but $g$ is not an exponential polynomial because the coefficients do not
belong to $\C[z]$. Thus $g$ being a single entire function and not a product
of them is essential in Theorem~\ref{Guo-Sun-Wang}.

We see that Theorem~\ref{Guo-Sun-Wang} is an affirmative partial answer
to Hayman's problem discussed in Section~\ref{section3.4}. This leads us
to state a new open problem as follows.

\begin{problem} \label{Problem3}
Let $\phi_1, \ldots , \phi_d$ be exponential polynomials of the form~\eqref{EP} and of order at most $q$.
Assume that an entire function $g$ satisfies an algebraic equation of the form
\begin{equation} \label{algebraic}
g^d+\phi_1g^{d-1}+\cdots+\phi_{d-1}g+\phi_d=0.
\end{equation}
Is $g$ an exponential polynomial of order at most $q$?
\end{problem}

If $d=2$, this is actually the case, since an application of Theorem~\ref{Guo-Sun-Wang} to
	$$
	g^2+\phi_1g+\phi_2=\Bigl(g+\frac{1}{2}\phi_1\Bigr)^2
	+\phi_2-\frac{1}{4}\phi_1^2=0
	$$
implies that $g+\frac{1}{2}\phi_1$ is also an exponential polynomial of order equal to the order of $\phi_2-\frac{1}{4}\phi_1^2$.
Differing from Theorem~\ref{Guo-Sun-Wang}, we have to take into account a possible degeneration in the order even if all the $\phi_j$
are of the same order~$q$.

Finally, let us consider the possibility of a factorization of an exponential polynomial into a product $g_1^{d_1}g_2^{d_2}$ of two entire functions $g_1, g_2$ for some integers $d_1, d_2$.  In the particular case when $d_1=d_2$, Theorem~\ref{Guo-Sun-Wang} says that the product $g_1g_2$ should be an exponential polynomial, while if $d_1+d_2=0$, it says that the ratio $g_1/g_2$ should be so. These observations are related to Theorem~\ref{lax-rahman} and to Generalized Shapiro's conjecture in Section~\ref{section3.4}.

\section{Exponential polynomials and ODE's}\label{ODE-sec}

It is well-known that if the coefficients $A_0(z),\ldots,A_{n-1}(z)$ are entire, then all solutions of
    \begin{equation}\label{lde}
      f^{(n)}+A_{n-1}(z)f^{(n-1)}+\cdots +A_1(z)f'+A_0(z)f=0
     \end{equation}
are entire as well.
A theorem by Wittich~\cite{Laine, Wittich} states that all solutions of \eqref{lde} are of finite order of growth if and only if the coefficients are polynomials.
If $A_k(z)$ is the last transcendental coefficient so that $A_{k+1}(z),\ldots,A_{n-1}(z)$ are polynomials, then a theorem by Frei \cite{Frei, Laine} states that \eqref{lde} possesses at most $k$ linearly independent solutions of finite order.
The possible orders of solutions of~\eqref{lde} with polynomial coefficients are
discussed in Appendix~\ref{DE-appendix}.
In this section we discuss solutions of~\eqref{lde} having completely regular growth.
Exponential polynomials turn out to be typical examples of such solutions.
Thus, in view of the previous section, we will discuss Problem~\ref{Problem3}
for linear differential equations instead of the algebraic
equations~\eqref{algebraic}. We then proceed to discuss the role
of exponential polynomials in the theory of non-linear differential equations
such as the Riccati equations, Malmquist type equations,
Tumura-Clunie type equations, and binomial equations.

\subsection{Solutions of completely regular growth}\label{crg-solutions-sec} 

Following Levin \cite[pp.~139--140]{Levin1} or Ronkin \cite[p.~6]{Ronkin},
we say that an entire function $f$ is of \emph{completely regular growth} (c.r.g.)
if it has at most finite type with respect to its order $\rho=\rho(f)\in (0,\infty)$,
and if there exists a sequence of Euclidean discs $D(z_k,r_k)$ satisfying
	\begin{equation}\label{r.eq}
	\sum_{|z_k|\leq r}r_k=o(r)
	\end{equation}
such that
	\begin{equation}\label{crg}
	\log |f(re^{i\theta})|= (h_f(\theta)+o(1))r^{\rho},
	\quad re^{i\theta}\not\in\bigcup_k D(z_k,r_k),
	\end{equation}
as $r\to\infty$ uniformly in $\theta$. A set $E\subset\C$ which can be covered by a sequence of discs $D(z_k,r_k)$ satisfying \eqref{r.eq} is known as a $C_0$-set. The circular projection of a $C_0$-set on the positive real axis has zero upper linear density.
Recall that the upper and lower linear
densities of a set $E\subset [0,\infty)$ are given, respectively, by
    $$
    \overline{\operatorname{dens}}(E)=\limsup_{r\to\infty}\frac{\int_1^r \chi_E(t)\, dt}{r-1}
    \quad\textrm{and}\quad
    \underline{\operatorname{dens}}(E)=\liminf_{r\to\infty}\frac{\int_1^r \chi_E(t)\, dt}{r-1},
    $$
where $\chi_E$ denotes the characteristic function of the set $E$.

\begin{example}
\textnormal{(\cite{GOP})}
If $f(z)=e^z$, then $\log |f(re^{i\theta})|=r\cos\theta$, while if $f$ is either
of the trigonometric functions $\sin z=(e^{iz}-e^{-iz})/(2i)$ or $\cos z=(e^{iz}+e^{-iz})/2$, then $\log |f(re^{i\theta})|=r|\sin\theta|+o(1)$.
In general, exponential polynomials (of all orders) are of c.r.g.
\end{example}

The following result was proved independently by Petrenko and Steinmetz.

\begin{theorem}\label{crg-thm}
\textnormal{(Petrenko \cite[p.~110]{Petrenko}, Steinmetz \cite{Stein3})}
All transcendental solutions of \eqref{lde} with polynomial coefficients are of c.r.g.
\end{theorem}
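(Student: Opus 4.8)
The plan is to combine the classical asymptotic integration theory for linear ODE's with polynomial coefficients with the Levin--Pfluger characterization of completely regular growth in terms of regularly distributed zeros (Definition~\ref{def2}), thereby reducing Theorem~\ref{crg-thm} to a zero-distribution statement of the same type as the one proved for exponential polynomials in Lemma~\ref{regularly-distributed-lemma}.

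First I would record the structural facts. A transcendental solution $f$ of~\eqref{lde} with polynomial coefficients has finite positive order $\rho=\rho(f)$ and finite type with respect to $\rho$ (Wittich \cite{Wittich, Laine}); in particular $T(r,f)=O(r^q)$ with $q=\rho$ and $n(r,1/f)=O(r^\rho)$. Write $f(z)=z^m e^{P(z)}\Pi(z)$, where $\deg P\le\rho$ and $\Pi$ is the canonical product over the zeros of $f$, of genus $\le\rho$. If $f$ has only finitely many zeros, or more generally if $\lambda(f)<\rho(f)$, then $\Pi$ has order $<\rho$, so the minimum modulus theorem gives $\log|\Pi(z)|=o(r^\rho)$ outside a $C_0$-set while $\log|z^m e^{P(z)}|=(h_f(\theta)+o(1))r^\rho$ with no exceptional set (here necessarily $\deg P=\rho$), and $f$ is already of c.r.g.; hence I may assume $\lambda(f)=\rho(f)=\rho$ and that the zero sequence $Z$ of $f$ is infinite. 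Since $z^m$ and $e^{P}$ are of c.r.g.\ of order $\le\rho$, and both the indicator function and the class of $C_0$-sets are stable under products and quotients, $f$ is of c.r.g.\ if and only if $\Pi$ is. By the Levin--Pfluger theorem \cite{Levin1}, the canonical product $\Pi$ is of c.r.g.\ precisely when $Z$ is regularly distributed in the sense of Definition~\ref{def2}. Thus the theorem reduces to showing that $Z$ is regularly distributed.

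To prove this I would invoke the classical asymptotic integration of \eqref{lde} near $z=\infty$: there is an integer $\mu\ge1$ and a covering of $\{|z|>r_0\}$ by finitely many closed sectors of suitable opening, in each of which \eqref{lde} has a fundamental system $u_1,\ldots,u_n$ with $u_k(z)=z^{\beta_k}e^{Q_k(z)}(1+o(1))$ as $z\to\infty$, the $Q_k$ being polynomials in $z^{1/\mu}$ without constant term whose top-degree parts produce the order $\rho$. Expanding $f=\sum_k c_k u_k$ in each such sector with constants $c_k$ not all zero, the quantities $r^{-\rho}\re Q_k(re^{i\theta})$ with $c_k\ne0$ admit trigonometric limits $\psi_k(\theta)$, and $h_f(\theta)=\max\{\psi_k(\theta):c_k\ne0\}$ consistently across the overlaps. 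There are only finitely many \emph{critical rays} $\arg z=\theta_j^{*}$, namely those along which this maximum is attained by more than one index. Exactly as in the first case of the proof of Lemma~\ref{regularly-distributed-lemma}, in any closed sector disjoint from the critical rays a single term strictly dominates, so $\log|f(z)|=\re Q_{k_0}(z)+O(\log|z|)$; hence $f$ has only $o(r^\rho)$ zeros there and the limit \eqref{thelimit} is $0$. In a thin sector around a critical ray $\theta_j^{*}$, the zeros of $f$ coincide, apart from the exceptional zeros of the common dominant factor, with those of a genuine exponential sum in $z^{1/\mu}$ built from the competing terms; the analogues of Theorems~\ref{zeros-thm} and~\ref{generalized-polya} for this local model give $n(r)=\tfrac{c_j}{2\pi}r^\rho+O(r^{\rho-\delta})$ for the zeros clustering at $\theta_j^{*}$, so the limit \eqref{thelimit} exists for every sector. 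Thus $Z$ has angular density. If $\rho\notin\N$ nothing more is required; if $\rho\in\N$, the existence of the limit in~\eqref{thesum} follows verbatim from the last part of the proof of Lemma~\ref{regularly-distributed-lemma}, using the Gol'dberg--Ostrovskii formula for $\frac{d^\rho}{dz^\rho}\log f(z)\big|_{z=0}$ together with $T(r,f)=O(r^\rho)$. Hence $Z$ is regularly distributed, and by the reduction above $f$ is of c.r.g.

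The main obstacle is the local analysis near the critical rays, that is, establishing the precise count $n(r)\sim\tfrac{c_j}{2\pi}r^\rho$ for the zeros accumulating there; this is where the $(1+o(1))$ correction factors in the asymptotic solutions and the possible ramification $z^{1/\mu}$ genuinely go beyond the exponential-polynomial situation, and it is essentially the content of the original arguments of Petrenko and Steinmetz, everything else being bookkeeping with the indicator and with $C_0$-sets. Alternatively one can bypass the zero count and argue directly for the two-sided bound $\log|f(re^{i\theta})|=(h_f(\theta)+o(1))r^\rho$ outside a $C_0$-set: the upper bound is immediate from the triangle inequality and the continuity of $h_f$, and the only real work, again concentrated near the critical rays, is a minimum modulus estimate for the competing exponential sum pushed through outside a $C_0$-set, using the power-law error terms supplied by asymptotic integration.
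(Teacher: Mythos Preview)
The paper does not give its own proof of Theorem~\ref{crg-thm}; it merely states the result and cites Petrenko \cite[p.~110]{Petrenko} and Steinmetz \cite{Stein3}. So there is nothing in the paper to compare your argument against directly.

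That said, your outline is broadly aligned with how this result is actually proved in the cited sources: Steinmetz's argument rests on the classical asymptotic expansions of solutions in sectors (the Hukuhara--Turrittin/Strodt-type theory), which is exactly what you invoke, and the reduction via Levin--Pfluger to regular distribution of zeros is the standard route. Your handling of the case $\lambda(f)<\rho(f)$ and of the limit \eqref{thesum} via the Gol'dberg--Ostrovskii identity is correct and mirrors the paper's Lemma~\ref{regularly-distributed-lemma}.

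The honest gap you flag is the real one: the quantitative zero count near the critical rays when the asymptotic expansions carry $(1+o(1))$ factors and possible ramification $z^{1/\mu}$ is not a formality, and your appeal to ``the analogues of Theorems~\ref{zeros-thm} and~\ref{generalized-polya} for this local model'' is a placeholder rather than an argument. Those theorems are proved in the paper only for genuine exponential polynomials with polynomial or lower-order exponential-polynomial multipliers, not for asymptotic series with error terms; transferring them requires either a Rouch\'e-type comparison with the dominant exponential sum or a direct minimum-modulus estimate, and that step is precisely what Petrenko and Steinmetz carry out. As written, your proposal is a correct reduction together with a correct identification of where the work lies, but not yet a proof.
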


Conversely to Theorem~\ref{crg-thm}, we recall from \cite{VPT} that any exponential polynomial is a solution of some equation of the form \eqref{lde} with polynomial
coefficients. Not all  transcendental solutions of \eqref{lde} with polynomial
coefficients are exponential polynomials. For
example, the Airy integral (of order 3/2) is a well-known solution of $f''-zf=0$.

The result in Theorem~\ref{crg-thm} motivated Gol'dberg and Ostrovskii to pose the following question \cite[p.~300]{Havin}:
	\begin{quote}
	\emph{If the coefficients $A_0(z),\ldots,A_{n-1}(z)$ 
	of \eqref{lde} are of c.r.g., and if \eqref{lde} possesses a
	transcendental entire solution $f$ of finite order of growth, 
	then is it true that $f$ is of c.r.g.?}
	\end{quote}
In a recent manuscript, Bergweiler has answered this question in the negative.

\begin{theorem}\label{GOP-thm}
\textnormal{(Bergweiler \cite{Berg})}
Given $0<\sigma<\rho<1$, there exist entire functions $A(z)$ and $B(z)$ of order $\rho$
and of c.r.g.~and an entire function $f$ of order $\sigma$ and of lower order $0$ such that
	\begin{equation}\label{ldeAB}
	f''+A(z)f'+B(z)f=0.
	\end{equation}
\end{theorem}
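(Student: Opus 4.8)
\textbf{Plan of proof.}
The guiding observation is that a transcendental entire function of completely regular growth of finite positive order and positive (in particular normal) type has its order equal to its lower order; consequently any entire $f$ with order $\sigma$ and lower order $0$ automatically fails to be of c.r.g., which is precisely the phenomenon we want to exhibit. So the plan is to keep $A$, $B$ of c.r.g.\ of order $\rho$ while arranging the solution to be a \emph{zero-free} function $f=e^{u}$ whose exponent $u$ is custom-built. Substituting $f=e^{u}$ into \eqref{ldeAB} and dividing by $e^{u}$ turns the equation into
	$$
	u''+(u')^{2}+A\,u'+B=0,
	$$
so that, reading this backwards, \emph{for any entire function $A$} the function $B:=-u''-(u')^{2}-A\,u'$ is entire and $f=e^{u}$ solves \eqref{ldeAB}. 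Thus the construction collapses to: choose $u$, choose $A$, read off $B$.

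First I would fix $A$ to be a canonical product of order $\rho$ (of genus $0$, since $0<\rho<1$) whose zeros $\{a_{k}\}$ have the regular asymptotic distribution $n(r,1/A)\sim\delta r^{\rho}$ with the arguments $\arg a_{k}$ equidistributed modulo $2\pi$. By a classical theorem of Lindel\"of (see \cite{Levin1}) such a product is of c.r.g.\ of order $\rho$ with the \emph{constant positive} indicator $h_{A}(\theta)\equiv\tau:=\pi\delta/\sin(\pi\rho)$, the exceptional $C_{0}$-set being a union of arbitrarily small discs about the $a_{k}$. Now suppose $u$ has order $\sigma$; then $\rho(u')=\sigma<\rho$, so $\log|u'(z)|\le\log M(r,u')=O(r^{\sigma+\veps})=o(r^{\rho})$ everywhere on $|z|=r$, and, by the minimum-modulus theorem, also $\log|u'(z)|\ge -o(r^{\rho})$ outside a further $C_{0}$-set of tiny discs about the zeros of $u'$; the same bounds hold for $u''+(u')^{2}$. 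Hence, off the union $E$ of these $C_{0}$-sets and for $r$ large, $|A(z)u'(z)|\ge\exp\!\big((\tau/2)r^{\rho}\big)$ strictly dominates $|u''+(u')^{2}|\le\exp(o(r^{\rho}))$, so there is no cancellation in $B=-(A u'+u''+(u')^{2})$ and
	$$
	\log|B(z)|=\log|A(z)|+\log|u'(z)|+o(1)=(\tau+o(1))\,r^{\rho},\qquad z\notin E .
	$$
Therefore $B$ is of c.r.g.\ of order $\rho$ with indicator $h_{B}\equiv\tau$; since this estimate holds for a set of radii of full density and $\log M(r,B)=O(r^{\rho})$, we get $\rho(B)=\rho$, and $\rho(A)=\rho$ by the same token. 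This settles all assertions once $u$ is available.

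\textbf{Where the real work lies.}
The substantive step is producing the exponent $u$: it must be entire, of order exactly $\sigma$ and of \emph{normal} type (so that $f=e^{u}$, whose maximum modulus equals $\exp(\max_{|z|=r}\re u)$, genuinely has order $\sigma$ and, via Borel--Carath\'eodory, positive type -- whence $f$ is not of c.r.g.), and simultaneously of lower order $0$. I would obtain $u$ as a lacunary canonical product whose positive zeros are deposited in widely separated clusters: at scale $R_{j}$ put about $R_{j}^{\sigma}$ zeros, with $R_{j+1}$ astronomically larger than $R_{j}$ (say $R_{j+1}\ge\exp(R_{j}^{\sigma})$). For $r\asymp R_{j}$ one gets $\log M(r,u)\asymp R_{j}^{\sigma}\asymp r^{\sigma}$, which forces order $\ge\sigma$ and normal type; while for radii $r$ deep inside a gap, say $\log r=R_{j}^{\sigma/2}$, the counting function is still only $\approx R_{j}^{\sigma}$, so $\log M(r,u)\approx R_{j}^{\sigma}\log r=o(r^{\veps})$ along this sequence for every $\veps>0$, giving lower order $0$. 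Verifying the precise asymptotics of $\log M(r,u)$ in and between the clusters (and checking, as already noted, that the cheap minimum-modulus estimate for $\log|u'|$ off a $C_{0}$-set is in force) is the portion that demands care; everything else above is routine manipulation of indicators and counting functions of exactly the kind used earlier in this section.
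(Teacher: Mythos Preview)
The paper does not supply its own proof of this theorem; it is quoted as a result of Bergweiler and the reader is referred to the preprint \cite{Berg}. So there is no in-paper argument to compare against, and your proposal has to stand on its own feet.

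Unfortunately it does not. The fatal error is the very first move: you take $f=e^{u}$ with $u$ a \emph{transcendental} entire function of order $\sigma\in(0,1)$ and assert that $f$ then has order $\sigma$. This is false. For $f=e^{u}$ one has $\log M(r,f)=\max_{|z|=r}\re u(z)=:A(r)$, so
\[
\rho(f)=\limsup_{r\to\infty}\frac{\log A(r)}{\log r},
\]
not $\limsup\frac{\log\log A(r)}{\log r}$. If $u$ is transcendental, then by Borel--Carath\'eodory $A(r)$ and $M(r,u)$ have the same growth scale, and $M(r,u)$ grows faster than any power of $r$; hence $\rho(f)=\infty$. Conversely, if $f=e^{u}$ is to have order $\sigma<1$, then $A(r)\le r^{\sigma+\veps}$ for large $r$, Borel--Carath\'eodory forces $M(r,u)=O(r^{\sigma+\veps})$ with $\sigma+\veps<1$, and $u$ is a constant. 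In short, a zero-free entire function of order $<1$ is a nonzero constant, so the ansatz $f=e^{u}$ cannot produce the required $f$.

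This is not a cosmetic issue that the rest of the argument survives. An entire $f$ of order $\sigma\in(0,1)$ and lower order $0$ necessarily has infinitely many zeros (by Hadamard factorization in genus~$0$). The logarithmic derivative $f'/f$ therefore has poles, and your formula $B=-u''-(u')^{2}-A u'=-(f''+Af')/f$ only yields an \emph{entire} $B$ if $f''+Af'$ vanishes at every zero of $f$ with the right multiplicity. That is a genuine constraint linking $A$ to the zero set of $f$; you cannot pick $A$ freely as a Lindel\"of canonical product and then ``read off'' an entire $B$. The subsequent indicator computation for $B$ (domination of $Au'$ over $u''+(u')^{2}$ off a $C_{0}$-set) is therefore moot: the object $B$ you define is not entire to begin with. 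A correct construction must build $f$ with zeros and simultaneously control $A$ and $B$; this is exactly the difficulty Bergweiler's argument addresses.
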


\begin{problem}
Is a result analogous to Theorem~\ref{GOP-thm} valid for
integer orders $\sigma$ and $\rho$? Alternatively, we
may strengthen the assumption and ask: If the coefficients $A_0(z),\ldots,A_{n-1}(z)$ of \eqref{lde} are exponential polynomials, and if \eqref{lde} possesses a transcendental entire solution $f$ of finite order of growth, then is it true that $f$ is of c.r.g.?
\end{problem}

An interested reader may find several examples of
equations \eqref{lde} that have exponential polynomials
as coefficients and solutions in \cite{dual2, GOP, WGH}.
 
If some of the coefficients of \eqref{lde} are not of c.r.g., then $f$ is not necessarily of c.r.g. Indeed, Gol'dberg has shown \cite[p.~300]{Havin} that if $f$ is any entire function with zeros of multiplicity $\leq n-1$, then $f$ is a solution of some differential equation of the form \eqref{lde} with
entire coefficients. Conversely, it is well-known that the zeros of non-trivial entire solutions $f$
of \eqref{lde} are of multiplicity $\leq n-1$. To prove this, suppose that
$f$ has a zero of multiplicity $k\geq n$ at $z_0\in\C$. Write \eqref{lde}
in the form
	\begin{equation}\label{zero-of-multiplicity}
	-A_0(z)=\frac{f^{(n)}}{f}+A_{n-1}(z)\frac{f^{(n-1)}}{f}
	+\cdots+A_1(z)\frac{f'}{f}.
	\end{equation}
Then the right-hand side of \eqref{zero-of-multiplicity} has a pole of multiplicity
$n$ at $z_0$, which is a contradiction because $A_0(z)$ is entire.

Theorem~\ref{indicators-thm} below gives a condition for the
solutions of the second order differential equation	\eqref{ldeAB} with entire coefficients to be of infinite order. This result shows that if $B(z)$ dominates the growth of
$A(z)$ on just one ray, the solutions cannot be of c.r.g.

\begin{theorem}\label{indicators-thm}
\textnormal{(\cite{GOP})}
Let $A(z)$ and $B(z)$ be entire such that $\rho(A)=\rho(B)\in (0,\infty)$, and assume that
both are of finite type. If \eqref{ldeAB} possesses a solution $f\not\equiv 0$ of finite order, then
    \begin{equation}\label{indicators}
    h_B(\theta)\leq \max\{0,h_A(\theta)\},\quad \theta\in [-\pi,\pi).
    \end{equation}
In particular, if there exists a $\theta_0\in [-\pi,\pi)$
such that $\max\{0,h_A(\theta_0)\}<h_B(\theta_0)$, then all non-trivial solutions of \eqref{ldeAB} are of infinite order.
\end{theorem}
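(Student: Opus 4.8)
The plan is to solve \eqref{ldeAB} for $B(z)$ and estimate the logarithmic derivatives of $f$ ray by ray. Writing the equation as
$$-B(z)=\frac{f''(z)}{f(z)}+A(z)\,\frac{f'(z)}{f(z)},$$
we see that $B$ is controlled, off the zeros of $f$, by $A$ together with the logarithmic derivatives $f'/f$ and $f''/f$. Since $f\not\equiv 0$ is assumed to be of finite order $\sigma=\rho(f)$, the Gundersen-type estimates for logarithmic derivatives of finite-order functions furnish a set $E\subset[1,\infty)$ of finite logarithmic measure and a constant $c=c(\sigma)>0$ such that
$$\left|\frac{f'(z)}{f(z)}\right|\le |z|^{c}\quad\text{and}\quad\left|\frac{f''(z)}{f(z)}\right|\le |z|^{c},\qquad |z|\notin E,$$
uniformly in $\arg z$. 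Hence, for $z=re^{i\theta}$ with $r\notin E$ and $r$ large, $|B(re^{i\theta})|\le r^{c}\bigl(1+|A(re^{i\theta})|\bigr)$, and so
$$\log|B(re^{i\theta})|\le c\log r+\log^+|A(re^{i\theta})|+O(1),\qquad r\notin E.$$

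Dividing by $r^{\rho}$, where $\rho=\rho(A)=\rho(B)\in(0,\infty)$ is the common order, and using $\rho>0$ so that $c\log r/r^{\rho}\to 0$, we obtain along the radii avoiding $E$
$$\limsup_{\substack{r\to\infty\\ r\notin E}}\frac{\log|B(re^{i\theta})|}{r^{\rho}}\le\max\Bigl\{0,\ \limsup_{\substack{r\to\infty\\ r\notin E}}\frac{\log|A(re^{i\theta})|}{r^{\rho}}\Bigr\}\le\max\{0,h_A(\theta)\}.$$
The $0$ inside the maximum accounts for the case $h_A(\theta)<0$: then, for $\varepsilon>0$ with $h_A(\theta)+\varepsilon<0$, the quantity $|A(re^{i\theta})|\,r^{c}$ is eventually at most $1$, so $\log|B(re^{i\theta})|\le c\log r+O(1)$ and the left-hand limsup is $\le 0$; and if $h_A(\theta)\ge 0$ the right-hand limsup is $\le h_A(\theta)$.

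The main difficulty is to upgrade this inequality, which a priori holds only along the complement of $E$, to the genuine indicator inequality $h_B(\theta)\le\max\{0,h_A(\theta)\}$ for every $\theta$. The plan is to exploit the regularity of indicator functions: both $h_A$ and $h_B$ are $\rho$-trigonometrically convex, hence continuous, while $E$ has finite logarithmic measure and therefore zero upper density. Arguing by contradiction, if $h_B(\theta_0)>\max\{0,h_A(\theta_0)\}$ for some $\theta_0$, then by continuity $h_B(\theta)>\max\{0,h_A(\theta)\}+2\varepsilon$ on an arc $|\theta-\theta_0|<\delta$; one then shows, using that $B$ has finite type and that the radii in $E$ are thin, that there exist radii $r\notin E$ and arguments $\varphi$ in this arc with $\log|B(re^{i\varphi})|\ge(\max\{0,h_A(\varphi)\}+\varepsilon)r^{\rho}$, contradicting the bound above. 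I expect this transfer step — escaping the exceptional set of radii while retaining near-extremal growth of $B$ on the arc — to carry the technical weight of the proof; everything else is bookkeeping.

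Finally, the ``in particular'' assertion is immediate from the contrapositive: if there is a $\theta_0$ with $\max\{0,h_A(\theta_0)\}<h_B(\theta_0)$, then \eqref{indicators} fails, so \eqref{ldeAB} admits no solution $f\not\equiv 0$ of finite order, i.e.\ all nontrivial solutions are of infinite order.
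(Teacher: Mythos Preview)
Your overall strategy---solving for $B$ and invoking Gundersen's logarithmic-derivative estimates for the finite-order solution $f$---is exactly the right one and is what the proof in \cite{GOP} does. The gap is in the version of Gundersen's theorem you chose. By taking the variant with an exceptional set of \emph{radii} $E\subset[1,\infty)$ of finite logarithmic measure, you are forced into the ``transfer step'' you flag: upgrading a bound valid only for $r\notin E$ to the genuine indicator $h_B(\theta)=\limsup_{r\to\infty}\log|B(re^{i\theta})|/r^{\rho}$. Your contradiction sketch (finding $r\notin E$ and $\varphi$ in the arc where $\log|B|$ is near-extremal) is not obviously correct as stated: the sequence of radii along which $B$ realises its indicator at a given $\varphi$ could, a priori, lie entirely inside $E$, and nothing you have written rules this out.

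The clean fix---and almost certainly the argument in \cite{GOP}---is to use the \emph{angular} form of Gundersen's estimate (\cite{Gundersen}): there is a set $E_0\subset[0,2\pi)$ of linear measure zero such that for every $\theta\notin E_0$ one has $|f^{(k)}(re^{i\theta})/f(re^{i\theta})|\le r^{k(\rho(f)-1)+\varepsilon}$ for all $r>R(\theta)$. Then your inequality $\log|B(re^{i\theta})|\le c\log r+\log^{+}|A(re^{i\theta})|+O(1)$ holds for \emph{all} large $r$ along each such ray, giving $h_B(\theta)\le\max\{0,h_A(\theta)\}$ directly for $\theta\notin E_0$. Since both $A$ and $B$ have finite type, both indicators are $\rho$-trigonometrically convex and hence continuous, so the inequality extends from the co-measure-zero set to every $\theta$. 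With this choice the ``transfer step'' disappears entirely. (Alternatively, if you insist on the radial exceptional set, you can rescue the argument via the sub-mean-value inequality for the subharmonic function $\log|B|$ over discs $D(re^{i\theta},\epsilon r)$: the portion with modulus in $E$ has vanishing relative area since $E$ has finite logarithmic measure, and the finite-type bound $\log|B|\le(\tau_B+\varepsilon)|z|^{\rho}$ controls that portion; letting $r\to\infty$ then $\epsilon\to 0$ recovers the indicator inequality. But the angular route is simpler.)
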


If $\rho(A)<\rho(B)$, then all solutions $f\not\equiv 0$ of
\eqref{ldeAB} are of infinite order of growth \cite[Corollary~1]{Gundersen2},
while if $\rho(A)>\rho(B)$, then $f$  satisfies $\rho(f)\geq\rho(A)$ \cite[Theorem~2]{Kwon}.

\subsection{Periodic and subnormal solutions}\label{periodic-subnormal-sec}

Recall that any periodic entire function $f$ of period~$c\in\C\setminus\{0\}$ can be represented as a composition $g\left(\exp\Bigl(\frac{2\pi i}{c}z\Bigr)\right)$ of an analytic function $g(w)=\sum_{n=-\infty}^{+\infty}a_n w^n$ in the punctured plane $0<|w|<\infty$ and of the exponential function $e^{(2\pi i/c)z}$.  This result can be found, for example, in Ahlfors' book \cite{Ahlfors}, where such an $f$ is called a simply periodic function.

If, in addition to $c$-periodicity, we assume that $f$ is of finite order, especially if neither of $w=0$ and $w=\infty$ is an essential singular point of $g(w)$, then $g(w)$ becomes a so-called Laurent polynomial
	$$
	g(w)=\sum_{n=s}^{t}a_n w^n,\quad s, t\in \Z,\ s\leq t,
	$$
and the order of $f(z)$ attains the smallest value, that is, one.
This form of periodic functions typically appear both as solutions and as coefficients of differential equations, which we will observe in this section.

It is useful to write a Laurent polynomial $g(w)$ in the form $g(w)=g_1(w)+g_2(1/w)$ for two polynomials
	$$
	g_1(w)=\sum_{n=0}^{t}a_n w^n\quad
	\textrm{and}\quad g_2(w)=\sum_{m=1}^{|s|}a_{-m} w^m,
	$$
where $a_n=\frac{1}{2\pi i} \int_{|w|=r} g(w)w^{-n-1}dw$ for any $r>0$.
Then $f$ is determined completely as the sum $g_1\bigl(e^{(2\pi i/c)z}\bigr)+g_2\bigl(e^{-(2\pi i/c)z}\bigr)$, which is a simple element of ${\cal E}$.
This exponential sum~$f$ is of regular growth in the sense of Nevanlinna and
clearly of c.r.g.

One of the classical questions on linear differential equations
is whether or when the periodicity of the coefficients runs in the family of its solutions. For example, in 1983, Bank and Laine~\cite{Bank-LaineJRAM}
gave a representation of solutions of simply periodic second order linear differential equations, which accelerated related studies as found in~\cite{HILL, Shimomura2002}. Note that the case when the coefficients are doubly periodic
is very different since as elliptic functions they are not entire functions anymore and have nothing to do with exponential polynomials.
In fact, take two complex numbers $\alpha$, $\beta$ with $\frac{\alpha}{\beta} \in \R\setminus \Q$, and a Laurent polynomial $Q(w_1,w_2)$ in two variables, that is,
	$$
	Q(w_1,w_2)=\sum_{n_1=s_1}^{t_1}
	\sum_{n_2=s_2}^{t_2}w_1^{n_1}w_2^{n_2},\quad
	s_j,t_j\in \Z,\ s_j\leq t_j,\ j=1,2.
	$$
Then $Q(e^{\alpha z}, e^{\beta z})$ belongs to~${\cal E}$ but it is not simple or irreducible in general. Moreover, $Q(e^{\alpha z}, e^{\beta z})$ is never doubly periodic, or even simply periodic in general.

The discussions above lead us to be concerned with finite-order solutions of~\eqref{ldeAB} when $A(z)$ and $B(z)$ are finite-order entire functions of the same period, say~$c$. That is, $A(z)$ and $B(z)$ are of the form $g_1\bigl(e^{(2\pi i/c)z}\bigr)+g_2\bigl(e^{-(2\pi i/c)z}\bigr)$ for two polynomials $g_1$ and $g_2$. As one of the simplest cases, we take $g_1(w)\equiv 0$ and $g_2(w)=w$ for $A(z)$ and $g_1(w)\equiv \alpha$ and $g_2(w)\equiv 0$ for $B(z)$ in~\eqref{ldeAB}.

\begin{example}\label{Frei-example}
\textnormal{(Frei \cite{Frei2})}
The differential equation
	\begin{equation}\label{Frei-eqn}
	f''+e^{-z}f'+\alpha f=0,\quad \alpha\in\C\setminus\{0\},
	\end{equation}
has a subnormal solution $f$ (that is, $f\not\equiv 0$ and $\log T(r,f)=o(r)$) if and only if
$\alpha=-m^2$ for a positive integer $m$. This solution $f$ is a polynomial in $e^z$ of degree $m$, that is, an exponential sum of the form
	\begin{equation}\label{Frei-f}
	f(z)=1+C_1e^z+\cdots+C_me^{mz}, \quad C_j\in \mathbb{C}.
	\end{equation}
\end{example}

In Frei's example, the coefficients are periodic functions with period
$c=2\pi i$. When the existence of the solution $f$ is known,
the coefficients $C_j$ can be determined by a direct computation \cite{dual2}.
One may wonder, however, whether there is well-grounded information why the solution $f$ of~\eqref{Frei-eqn} is obtained as a polynomial in~$e^z$
for the special parameter $\alpha=-m^2$ only.
This uncertainty ought to be cleared out in the next examples.

\begin{example}\label{Fuchsian-Hermite-Laguerre-etc}
The change of a variable $x=e^z$ gives us the differential operators
	$$
	\frac{d}{dx}=e^{-z}\frac{d}{dz} \quad\textnormal{and}\quad
	\frac{d^2}{dx^2}=-e^{-2z}\frac{d}{dz}+e^{-2z}\frac{d^2}{dz^2}.
	$$
This enables us to transform Frei's equation \eqref{Frei-eqn} to the equation
	\begin{equation}\label{Fuchsian}
	x^2g''(x)+(x+1)g'(x)+\alpha g(x)=0
	\end{equation}
of Fuchsian type.
But this equation is special in the sense that it permits regular singular points at the origin and at the point at infinity.
Because of this fact, equation \eqref{Fuchsian} admits a polynomial solution $g(x)=\sum_{n=0}^{m}a_nx^n$, where the coefficients $a_n$ are given by the recurrence equation
	$$
	a_{n+1}=-\frac{n^2+\alpha}{n+1}a_n, \quad n=0, 1, \ldots, m-1,
	$$
together with $\alpha=-m^2$.
Taking $a_0=1$, we have the solution $f(z)=g(e^z)$ in the previous example with $C_n=a_n$.
\end{example}

\begin{example}
The discussion in the previous example leads us to look for more second order linear differential equations with polynomial coefficients and (Laurent) polynomial solutions. Monographs such as~\cite{Beal-Wong} or \cite{Kristensson}
contain detailed definitions and results on second-order equations of Fuchsian type, that is, hypergeometric differential equations
	$$
	z(z-1)u''(z)+[(a+b+1)z-c]u'(z)+abu(z)=0
	$$
with at most three regular singular points and with parameters $a,b, c$.
In order for Laurent polynomial solutions to occur, two of the regular singular points $z=0, 1, \infty$ need to be confluent into $0$ and $\infty$.

As polynomial solutions of such confluent hypergeometric differential equations, Hermite polynomials $H_n$
and Laguerre polynomials $L_n^{(\alpha)}$
are known to satisfy the second-order differential equations
	\begin{equation}\label{Hermite}
	H_n''(x)-2xH_n'(x)+2nH_n(x)=0
	\end{equation}
and
	\begin{equation}\label{Laguerre}
	x[L_n^{(\alpha)}]''(x)+(\alpha+1-x)[L_n^{(\alpha)}]'(x)
	+nL_n^{(\alpha)}(x)=0,
	\end{equation}
respectively, see \cite{Beal-Wong}.
Again by the change of a variable $x=e^z$, we may transform the Hermite differential equation \eqref{Hermite} to
	\begin{equation} \label{hermite}
	f''(z)-(2e^{2z}+1)f'(z)+2ne^{2z}f(z)=0
	\end{equation}
with $f(z)=H_n(e^z)$, and the Laguerre differential equation
\eqref{Laguerre} to
	\begin{equation} \label{laguerre}
	f''(z)-(e^{z}-\alpha)f'(z)+ne^{z}f(z)=0
	\end{equation}
with $f(z)=L_n^{(\alpha)}(e^z)$.
Note that both of the coefficient functions in \eqref{hermite} as well as in \eqref{laguerre} contain the exponential function, while only one of the coefficient functions in Frei's equation \eqref{Frei-eqn} contains the exponential function.
\end{example}

We have seen how well the change of variables work for Hermite or Laguerre equations in producing second-order linear differential equations with periodic coefficients. The discussion goes parallel with $\alpha \in \C\setminus\{0\}$ for the change of variables $x=e^{\alpha z}$.
Other changes of variable would not bring our desired second-order linear differential equations having exponential polynomials for coefficients and solutions.

The general method originates in Wittich's paper~\cite{Wittich1967}
published in 1967, where, in relation to Frei's equation,  subnormal solutions of
	$$
	w''+p(e^z)w'+q(e^z)w=0
	$$
are considered for $p(t), q(t)\in\C[t]$. By $t=e^z$ and $v(t)=w(z)$, this equation transforms into
	\begin{equation}\label{Wittich-eqn}
	t^2v''+t\bigl(1+p(t)\bigr)v'+q(t)v=0.
	\end{equation}
Then the main result in \cite{Wittich1967} claims that the existence of a subnormal solution of the former equation is equivalent to the existence of a solution $v(t)=t^{\sigma}P(t)$ of the latter equation with $P(t)\in\C[t]$ and with $\sigma$ satisfying $\sigma^2+p(0)\sigma+q(0)=0$.

Note that the modified Frei's equation \eqref{Fuchsian} corresponds to modified Wittich's equation \eqref{Wittich-eqn} with $p(x)=1/x$, so that the formula for $\sigma$ is obtained after the substitution $z=-\zeta$ and $f(z)=g(\zeta)$ in~\eqref{Frei-eqn} to obtain $g''(\zeta)-e^{\zeta}g'(\zeta)+\alpha g(\zeta)=0$.
In this case, one has $p(0)=0$, $q(0)=\alpha$ and $\sigma^2=-\alpha$.

Shimomura~\cite{Shimomura2002} uses Wittich's method to obtain oscillation results for $n$-th order linear differential equations  of the form
	$$
	w^{(n)}+R_{n-1}(e^z)w^{(n-1)}+\cdots+R_1(e^z)w'+R_0(e^z)w=0,
	$$
where the coefficients $R_j(t)$ are rational functions for $0\leq j\leq n-1$. In other words, the coefficients are meromorphic periodic functions. It is very important that each $R_j(t)$ is permitted to have poles other than $z=0, \infty$ so that the situation is not restricted to Laurent polynomials only.

Next, we look at concrete examples of subnormal solutions for third-order linear differential equations. These examples form the starting point for the
discussions on dual exponential polynomials in Section~\ref{dual-sec} below.


\begin{example}\label{third-order-example}
\textnormal{(\cite{WGH})} (a) The function $f(z) = 16 - 27e^{-2z} + 27e^{-3z}$ satisfies
	$$
	f''' + 9^{-1}(9 + 9e^z + 4e^{2z})f'' - 5f' + 3f = 0.
	$$
The coefficient $9^{-1}(9 + 9e^z + 4e^{2z})$ and the solution $f$ both have two transcendental terms.

(b) The function $f(z)=\exp\left(z^2/2 + z\right) + z + 1$ is a second order solution of
    $$
    f''' + \left(\exp\left(-z^2/2 - z\right) - z - 1\right)f''
    - f' - (z + 1)f = 0.
    $$
\end{example}

We proceed to briefly discuss subnormal solutions to non-homogeneous linear
differential equations. The paper \cite{GE} by Gundersen and Steinbart
initiated the research in this direction, and is cited by many further
papers. Not surprisingly, the subnormal solutions turn out to be
exponential polynomials. The paper \cite{GE} contains seven interesting
examples of such solutions. We settle for recalling one of the main
results in \cite{GE}.

\begin{theorem}
\textnormal{(Gundersen-Steinbart \cite{GE})}
Suppose that $f$ is a subnormal solution of
	$$
	f''+P_1(e^z)f'+P_2(e^{z})f=Q_1(e^z)+Q_2(e^{-z}),
	$$
where $P_1,P_2,Q_1,Q_2\in \C[z]$ and $\deg(P_2)>\deg(P_1)$. Then there
exist $S_1,S_2\in \C[z]$ such that
	$$
	f(z)=S_1(e^z)+S_2(e^{-z}).
	$$
\end{theorem}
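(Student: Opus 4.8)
The plan is to transport the equation to the $t$-plane via the substitution $t=e^z$ and analyse the resulting ordinary differential equation at its two singular points $t=0$ and $t=\infty$. The first step is to show that $f$ is $2\pi i$-periodic. Put $g(z)=f(z+2\pi i)-f(z)$. Since $P_1(e^z)$, $P_2(e^z)$ and $Q_1(e^z)+Q_2(e^{-z})$ are all $2\pi i$-periodic, $g$ solves the homogeneous equation $g''+P_1(e^z)g'+P_2(e^z)g=0$, and $g$ is subnormal because $f$ is (the characteristic function of $f(z+2\pi i)$ compares with that of $f$ up to a bounded shift). By Wittich's criterion recalled around \eqref{Wittich-eqn}, a subnormal solution of this homogeneous equation would correspond to a solution $v(t)=t^\sigma P(t)$ with $P\in\C[t]$ of $t^2v''+t(1+P_1(t))v'+P_2(t)v=0$. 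Comparing the top-order terms as generalized monomials $t^{\sigma+k}$: the contributions of $t^2v''$ and of $t(1+P_1(t))v'$ have top order $\sigma+\deg P$ and $\sigma+\deg P+\deg P_1$, whereas $P_2(t)v$ has top order $\sigma+\deg P+\deg P_2$; since $\deg P_2>\deg P_1\geq 0$, the monomial $t^{\sigma+\deg P+\deg P_2}$ occurs with coefficient equal to the product of the leading coefficients of $P$ and $P_2$, which is nonzero, and there is nothing to cancel it. Hence $v\equiv 0$, so $g\equiv 0$ and $f(z+2\pi i)=f(z)$.

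By the representation of simply periodic entire functions, we may write $f(z)=\phi(e^z)$, where $\phi$ is analytic on $\C^*=\C\setminus\{0\}$. Substituting $t=e^z$ turns the equation into the linear ODE
$$t^2\phi''(t)+t\bigl(1+P_1(t)\bigr)\phi'(t)+P_2(t)\phi(t)=Q_1(t)+Q_2(1/t),$$
whose only singularities lie at $t=0$ and $t=\infty$. It then suffices to show that $\phi$ is a Laurent polynomial, and since $\phi$ is analytic on $\C^*$ this amounts to showing that $\phi$ is meromorphic both at $t=0$ and at $t=\infty$. At $t=0$ the homogeneous part is regular-singular, with indicial equation $\sigma^2+P_1(0)\sigma+P_2(0)=0$, and the forcing term $Q_1(t)+Q_2(1/t)$ has a pole of finite order. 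Frobenius theory then writes a neighbourhood-of-$0$ solution as a meromorphic particular solution of bounded pole order plus a linear combination of the two Frobenius solutions $t^{\sigma_1}(\cdots)$, $t^{\sigma_2}(\cdots)$ together with possible $\log t$-terms. Because $\phi$ is single-valued around $0$ — this is exactly what the periodicity of $f$ bought us — every multivalued contribution must cancel, so $\phi$ is meromorphic at $t=0$.

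The remaining point, which is the crux, is meromorphy at $t=\infty$. After the further substitution $s=1/t$ the coefficient of $\phi$ acquires a pole of order $\deg P_2\geq 1$, so $t=\infty$ is an irregular singular point. Suppose $\phi$ had an essential singularity there. Then $\phi$ fails to have at most polynomial growth near $\infty$, so by the structure of solutions at an irregular singularity (Hukuhara–Turrittin normal forms; equivalently, a transcendental entire solution of a linear ODE with polynomial coefficients is of completely regular growth of positive rational order, cf.\ Theorem~\ref{crg-thm}) there is a ray $\arg t=\theta_0$ and constants $c>0$, $\gamma>0$ with $\log|\phi(re^{i\theta_0})|\geq c\,r^{\gamma}$ for all large $r$. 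Taking $z=x+i\theta_0$ with $x\to\infty$ gives $\log|f(z)|=\log|\phi(e^{x}e^{i\theta_0})|\geq c\,e^{\gamma x}$, so $\log\log M(r,f)$ grows at least linearly in $r$, whence $\log T(r,f)\neq o(r)$, contradicting subnormality. Therefore $\phi$ has at most polynomial growth near $\infty$, and being single-valued there it is meromorphic at $\infty$.

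Combining the last two paragraphs, $\phi$ is analytic on $\C^*$ and meromorphic at both $0$ and $\infty$, hence a Laurent polynomial $\phi(w)=\sum_{n=-N_0}^{N_1}a_n w^n$. Consequently $f(z)=\sum_{n=-N_0}^{N_1}a_n e^{nz}=S_1(e^z)+S_2(e^{-z})$ with $S_1(w)=\sum_{n=0}^{N_1}a_n w^n\in\C[w]$ and $S_2(w)=\sum_{m=1}^{N_0}a_{-m}w^m\in\C[w]$, as claimed. I expect the main obstacle to be Step~4: turning ``essential singularity of $\phi$ at the irregular point $\infty$'' into ``infinite order of $f$'' rigorously. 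This can be done either by invoking the formal classification of solutions at an irregular singular point, or by an ad hoc dominant-balance (WKB) argument on the transferred equation, the essential output in both cases being a growth bound of the shape $\log|\phi(re^{i\theta_0})|\geq c\,r^{\gamma}$, $\gamma>0$, in some direction; the periodicity step and the regular-singular analysis at $0$ are comparatively routine.
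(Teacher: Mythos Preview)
The paper does not prove this theorem; it is merely quoted from Gundersen--Steinbart~\cite{GE} as background, so there is no in-paper argument to compare against. Your outline --- force $2\pi i$-periodicity via Wittich's criterion and the hypothesis $\deg P_2>\deg P_1$, pass to $\phi(t)=f(\log t)$ on $\C^*$, and prove $\phi$ is meromorphic at the two singular points $0$ (regular) and $\infty$ (irregular) --- is the classical route and is essentially how the result is obtained in \cite{GE} and in Wittich's earlier work.

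One concrete point to tighten: your appeal to Theorem~\ref{crg-thm} at $t=\infty$ is not direct, since that result is about \emph{entire} solutions of \emph{homogeneous} equations with polynomial coefficients, whereas your $\phi$ is only analytic on $\C^*$ and solves a non-homogeneous equation. The clean fix is to use your regular-singular step at $t=0$ first, write $\phi(t)=t^{-N}\psi(t)$ with $\psi$ entire, observe that $\psi$ then satisfies a non-homogeneous linear ODE with polynomial coefficients and polynomial forcing, and differentiate enough times to kill the forcing. Now Theorem~\ref{crg-thm} applies to $\psi$; if $\psi$ is transcendental it has completely regular growth of positive order, so $h_\psi(\theta)>0$ on some arc, and pulling back to a horizontal strip in the $z$-plane yields $\log M(r,f)\geq c\,e^{\gamma r}$ as you wanted. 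Your single-ray formulation (``there is a ray $\arg t=\theta_0$'') is not quite enough by itself to bound $T(r,f)$ from below, but the inequality $\log M(r,f)\leq 3T(2r,f)$ that you implicitly use does the job. The Frobenius step at $t=0$ is fine in spirit; just note that in the resonant case a meromorphic particular solution need not exist on its own, but the single-valued total $\phi$ is still forced to be meromorphic.
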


We move on to considering non-homogeneous equations of the form
	\begin{equation}\label{GSW-eqn}
	f^{(n)} + P_{n-1}(z)f^{(n-1)} + \cdots + P_0(z)f = h(z)e^{Q(z)},
	\end{equation}
where $n \in \N$, the coefficients $P_0(z), P_1(z), \ldots , P_{n-1}(z)$ are polynomials, $P_0(z)\not\equiv 0$, $h(z) \not\equiv 0$ is entire,
$Q(z)$ is a non-constant polynomial, and $\lambda(h) = \rho(h) < \deg (Q)$ holds. In~\cite{GSW2} Gundersen, Steinbart and Wang give a condition on the leading coefficient of the polynomial $Q(z)$ yielding the maximum number of linearly independent solutions $f$ of \eqref{GSW-eqn} satisfying $\lambda (f) < \rho(f)$, and determine those equations for which the maximum number of such solutions is attained. To illustrate the sharpness of their main results,
several examples of exponential polynomial solutions of
\eqref{GSW-eqn} with $h$ being a polynomial or an exponential polynomial
are given in \cite{GSW2}.

The paper \cite{GSW2} is motivated by Gao's paper~\cite{Gao},
published in 1989, which concerns with differential equations of the form 	
	\begin{equation}\label{Gao-eqn}
	f''+a_0(z)f=P_1(z)e^{P_0(z)}
	\end{equation}
for polynomials $a_0, P_0, P_1\not\equiv 0$ under the assumption
that $\deg (P_0)<1+\frac{1}{2}\deg (a_0)$. It is proved in \cite{Gao}
that if $\deg (P_1) < \deg (a_0)$, then every solution $f$ of \eqref{Gao-eqn} satisfies
	\begin{equation} \label{Gao}
	\lambda(f)=\rho(f)=1+\frac{1}{2}\deg (a_0),
	\end{equation}
while if $\deg (P_1) \geq \deg (a_0)$, then either $f$ satisfies \eqref{Gao} or $f$ is of the form $f = Qe^{P_0}$, where $Q$ is a polynomial.
Six examples of exponential polynomial solutions of \eqref{Gao-eqn}
are given in \cite{Gao}. Gao's results are refined in~\cite{GSW3} along
with many further interesting examples.

It is natural to consider a more general equation, where
the right-hand side of \eqref{GSW-eqn} is replaced with a transcendental entire function $H(z)$ of finite order, that is,
	\begin{equation}\label{GSW-gen}
	f^{(n)} + P_{n-1}(z)f^{(n-1)} + \cdots + P_0(z)f =H(z).
	\end{equation}	
All solutions $f\not\equiv 0$ of \eqref{GSW-gen} are clearly entire and of finite order $\rho(f)\geq \rho(H)$ \cite{GSW3}. Further, it is proved in \cite[Theorem~2.6]{GSW3} that
	$$
	\rho(f)-\lambda(f)\leq \rho(H)-\lambda(H).
	$$
In particular, if $\rho(H)=\lambda(H)$ (that is, the value zero
is not a Borel exceptional value for~$H$), then $\rho(f)=\lambda(f)$.

\begin{theorem}\label{def-value-thm}
We have $\delta(0,f)\leq \delta(0,H)$ for any solution $f\not\equiv 0$ of \eqref{GSW-gen}. In particular, if
$\delta(0,H)=0$ (that is, the value zero is not a deficient
value for $H$), then $\delta(0,f)=0$. If $f$ and $H(z)$ are both
exponential polynomials, then
	\begin{equation}\label{perimeters1}
	C(\co(W_H))/C(\co(W_{0H})) \leq C(\co(W_f))/C(\co(W_{0f})).
	\end{equation}
\end{theorem}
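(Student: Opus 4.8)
The plan is to reduce everything to the identity obtained by dividing \eqref{GSW-gen} by $f$,
$$
\frac{H(z)}{f(z)}=\frac{f^{(n)}}{f}+P_{n-1}(z)\frac{f^{(n-1)}}{f}+\cdots+P_1(z)\frac{f'}{f}+P_0(z),
$$
together with the fact, recalled just before the theorem, that every solution $f\not\equiv0$ of \eqref{GSW-gen} is entire and of finite order. First I would note that $m(r,H/f)=O(\log r)$: each $P_j$ is a polynomial, so $m(r,P_j)=O(\log r)$, while for a finite-order function the lemma on the logarithmic derivative gives $m(r,f^{(j)}/f)=O(\log r)$ with no exceptional set, and there are only $n+1$ terms on the right.

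The crux is then that $H$ cannot grow essentially faster than $f$. Writing $H=f\cdot(H/f)$ and using $N(r,H)=0$ gives
$$
T(r,H)=m(r,H)\le m(r,f)+m(r,H/f)=T(r,f)+O(\log r);
$$
note that the naive triangle inequality applied to the $n+1$ summands of $H$ would only give $T(r,H)\le(n+1)T(r,f)+O(\log r)$, which is not good enough for what follows. On the other hand, from $\tfrac1f=\tfrac1H\cdot\tfrac Hf$ we get $m(r,1/f)\le m(r,1/H)+m(r,H/f)=m(r,1/H)+O(\log r)$. Since $H$ is transcendental of finite order, $T(r,H)/\log r\to\infty$, hence $T(r,f)/\log r\to\infty$; dividing the last inequality by $T(r,f)$ and using $T(r,f)\ge T(r,H)-O(\log r)$ and $m(r,1/H)\le T(r,1/H)=T(r,H)+O(1)$, we obtain
$$
\frac{m(r,1/f)}{T(r,f)}\le\frac{m(r,1/H)}{T(r,H)}+o(1),\qquad r\to\infty.
$$
Taking $\liminf$ over $r$ yields $\delta(0,f)\le\delta(0,H)$, and the consequence that $\delta(0,H)=0$ forces $\delta(0,f)=0$ is immediate since $\delta(0,f)\ge0$.

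For the perimeter inequality I would plug Theorem~\ref{Stein-thm1} into the deficiency formula. For a transcendental exponential polynomial $g$ of order $q$ one has $T(r,g)=C(\co(W_{0g}))r^q/2\pi+o(r^q)$ and $N(r,1/g)=C(\co(W_g))r^q/2\pi+o(r^q)$ (taking $p=1$ in Theorem~\ref{Stein-thm1}); when $g$ has a nonvanishing constant term the two perimeters coincide, $m(r,1/g)=o(r^q)$, and the second relation still holds. Hence
$$
\delta(0,g)=1-\limsup_{r\to\infty}\frac{N(r,1/g)}{T(r,g)}=1-\frac{C(\co(W_g))}{C(\co(W_{0g}))},
$$
the factor $r^q$ cancelling, so the order plays no role. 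Since $\rho(f)\ge\rho(H)\ge1$, both $f$ and $H$ are transcendental exponential polynomials, so this applies to each; substituting into $\delta(0,f)\le\delta(0,H)$ and rearranging gives exactly \eqref{perimeters1}.

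I do not expect a serious obstacle here; the one point that must not be skipped is the finite-order hypothesis on $f$, which is what makes $m(r,f^{(j)}/f)=O(\log r)$ hold for all large $r$ without an exceptional set. Without it the two division steps above would produce only $o(T(r,f))$-type error terms, and that would be too weak to compare the deficiencies $\delta(0,f)$ and $\delta(0,H)$.
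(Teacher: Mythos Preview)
Your derivation of $\delta(0,f)\le\delta(0,H)$ is correct and is exactly the paper's argument: both divide \eqref{GSW-gen} by $f$, use the finite-order logarithmic-derivative estimate to get $m(r,H/f)=O(\log r)$, and deduce the two inequalities $T(r,H)\le T(r,f)+O(\log r)$ and $m(r,1/f)\le m(r,1/H)+O(\log r)$; your extra line making the $o(1)$ comparison explicit before taking the $\liminf$ is a harmless elaboration.

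There is, however, a genuine error in your perimeter step. You assert that for every transcendental exponential polynomial $g$ one has $N(r,1/g)=C(\co(W_g))r^q/2\pi+o(r^q)$, and you justify the case $F_0\not\equiv 0$ by claiming that then ``the two perimeters coincide''. That claim is false: in the paper's notation $W_g$ consists of the conjugates of the \emph{nonzero} leading frequencies only, and $W_{0g}=W_g\cup\{0\}$, regardless of whether $F_0\equiv 0$. For $g(z)=1+e^z$ one has $W_g=\{1\}$ with $C(\co(W_g))=0$ and $W_{0g}=\{0,1\}$ with $C(\co(W_{0g}))=2$, while $\delta(0,g)=0$, not $1-0/2=1$. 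In fact, when $F_0\not\equiv 0$ Theorem~\ref{Stein-thm1} gives $m(r,1/g)=o(r^q)$, hence $N(r,1/g)=C(\co(W_{0g}))r^q/2\pi+o(r^q)$ and $\limsup N(r,1/g)/T(r,g)=1$, not $C(\co(W_g))/C(\co(W_{0g}))$. So your identity $\delta(0,g)=1-C(\co(W_g))/C(\co(W_{0g}))$ holds only when $F_0\equiv 0$, and substituting it blindly into $\delta(0,f)\le\delta(0,H)$ does not yield \eqref{perimeters1} in general. The paper's own proof of \eqref{perimeters1} is a one-line appeal to Theorem~\ref{Stein-thm1} from the deficiency inequality and implicitly treats the same clean case; your attempt to extend the counting-function formula to the $F_0\not\equiv 0$ situation is precisely where the argument breaks down.
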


\begin{proof}
Since $f$ is of finite order, we have
	\begin{eqnarray*}
	T(r,H)&=&m(r,H)=m\left(r,\left(\frac{f^{(n)}}{f}
	+P_{n-1}\frac{f^{(n-1)}}{f}+\cdots +P_0\right)\cdot f\right)\\
	&\leq & m(r,f)+O(\log r)=T(r,f)+O(\log r)
	\end{eqnarray*}
and
	\begin{eqnarray*}
	m\left(r,\frac{1}{f}\right)
	&=&m\left(r,\left(\frac{f^{(n)}}{f}
	+P_{n-1}\frac{f^{(n-1)}}{f}+\cdots +P_0\right)\cdot\frac{1}{H}\right)\\
	&\leq & m\left(r,\frac{1}{H}\right)+O(\log r).
	\end{eqnarray*}
Since $H(z)$ is transcendental, it follows from \eqref{GSW-gen}
that $f$ is transcendental as well. This gives us $T(r,f)/\log r\to\infty$ and $T(r,H)/\log r\to \infty$ as $r\to\infty$. Thus
the first assertion follows from the two inequalities above. This
implies
	$$
	\limsup_{r\to\infty}\frac{N(r,1/H)}{T(r,H)}\leq
	\limsup_{r\to\infty}\frac{N(r,1/f)}{T(r,f)}.
	$$
The second assertion follows from this and Theorem~\ref{Stein-thm1}.
\end{proof}

\begin{example}
Regarding Theorem~\ref{def-value-thm}, if $P(z)\not\equiv 0,1$ is
any polynomial, then the function $f(z)=e^{2z}+e^z$ is a solution of
	$$
	f''-\left(P(z)+2\right)f'+2P(z)f=\left(P(z)-1\right)e^z,
	$$
and in this case we have a strict inequality $\delta(0,f)=1/2<1=\delta(0,H)$. A strict inequality may hold even if
$H(z)$ has infinitely many zeros. Indeed, 	
the function $f(z)=e^{2z^2}+e^{z^2}+e^z$ solves
	$$
	f''-4zf'+(4z-1)f=(4z+3)e^{2z^2}-(4z^2-4z-1)e^{z^2},
	$$
and in this case $\delta(0,f)=0<1/2=\delta(0,H)$. Finally,
the function $f(z)=e^{2z^2}+e^{z^2}$ solves
	$$
	f''+f'-(4/3)(z+1)f=(2/3)(6z^2+z+1)(e^{z^2}+4e^{2z^2}),
	$$
and in this case the equality $\delta(0,f)=1/2=\delta(0,H)$ holds.
These examples also illustrate that either an equation or a strict inequality in \eqref{perimeters1} may hold.
\end{example}

Finally we note that, in some cases, the change of variable method works for non-homogeneous equations also. For example, the following result is proved in \cite{CC}:
{\it Let $\lambda\in\C\setminus\{0\}$, and let $H(z)$ be a
non-vanishing polynomial. Then the differential equation $4y''(t) - \lambda^2 y(t) = H(t)$ has a special solution $y_0(t)$, which is a non-vanishing polynomial.} The usual change of a variable $t=e^z$ shows that $f(z)=y_0(e^z)$ is a special solution of
	$$
	f''(z)-f'(z)-\frac{\lambda^2}{4}e^{2z}f(z)=\frac{1}{4}e^{2z}H(e^z).
	$$

\subsection{Dual exponential polynomials}\label{dual-sec} 

As already mentioned in Section~\ref{crg-solutions-sec}, there are many examples of linear differential equations with coefficients and solutions being exponential polynomials \cite{dual2, GOP, WGH}. In this subsection we will take a closer look at the papers \cite{dual2, WGH}.

Examples \ref{Frei-example} and \ref{third-order-example} above, as
well as many further examples in~\cite{dual2, GOP, WGH} of the same nature, all illustrate a certain duality behavior between the solution~$f$ and one of the coefficients. This leads to the following definition.

\begin{definition}
\textnormal{(\cite{dual2, WGH})}
Let an exponential polynomial $f$ be given in the normalized form \eqref{normalized-f}.
If the non-zero frequencies ${w}_1,\ldots,{w}_m$ of $f$ all lie on a fixed ray $\arg(w)=\theta$, then $f$ is called a
\emph{one-sided exponential polynomial}.
If $g$ is another
one-sided exponential polynomial of the same order $q$ as $f$ such that the non-zero frequencies of $g$ all lie on the opposite ray $\arg(w)=\theta+\pi$, then $f$ and $g$ are called \emph{dual exponential polynomials}.
\end{definition}

Examples \ref{Frei-example} and~\ref{third-order-example} contain three examples on the duality
between a solution and one of the coefficient functions, both being
exponential polynomials. The concept of duality was introduced to describe this
phenomenon for linear differential equations.

\begin{remark}
In the original references~\cite{dual2, WGH}, the term \emph{simple exponential polynomial} was used instead of \emph{one-sided exponential polynomial}.  But here in this survey we have already used the term \emph{simple} for exponential sums in ${\cal E}$ in a different meaning.
To avoid any confusion, we thus give up on the term \emph{simple} in defining dual exponential polynomials.
\end{remark}

Next we state a general result about dual exponential polynomials, which
is motivated by Example~\ref{third-order-example} and many other examples
of the same nature.

\begin{theorem}\textnormal{(\cite{WGH})}\label{general.theorem}
Suppose that $f$ is an exponential polynomial solution of \eqref{lde}, where for precisely one index $\mu\in\{1,\cdots,n-1\}$, $a_\mu(z)$ is a transcendental exponential polynomial such that for all $j\neq\mu$, $a_j(z)$ is an exponential polynomial satisfying
$\rho(a_j)<\rho(a_\mu)$.
Then either $f$ is a polynomial of degree $\leq \mu-1$ or $f$ and $a_\mu(z)$ are dual exponential polynomials of order $q\in\N$.
In the latter case, $f$ has the normalized representation
	\begin{equation}\label{fsolutioncd.eq}
	f(z)=S(z)+F_1(z)e^{w_1z^q}+\cdots +F_m(z)e^{w_mz^q},
	\end{equation}
where the non-zero constants $w_j$ have the same argument, $m\in\N$, and $S(z)$ is a polynomial satisfying $S(z)\not\equiv 0$ and $\deg(S)\leq \mu-1$.

In particular, if $\rho(a_\mu f^{(\mu)})<q$ and $a_j(z)$ are polynomials for $j\neq \mu$, then
	\begin{equation}\label{sp.eq}
	f(z)=S(z)+Q(z)e^{P(z)}\quad\text{and}\quad a_\mu(z)=R(z)e^{-P(z)},
	\end{equation}
where $P(z), Q(z), R(z)$ are polynomials and $\deg(P)=q$.
\end{theorem}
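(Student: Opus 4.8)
The plan is to handle the polynomial case by a one‑line order comparison and then extract the structure of a transcendental solution by substituting normalized forms into \eqref{lde} and tracking which exponential frequencies can occur. First, if $f$ is a polynomial, then in \eqref{lde} the term $a_\mu f^{(\mu)}$ is transcendental of order $\rho(a_\mu)$ as soon as $f^{(\mu)}\not\equiv 0$, whereas $f^{(n)}+\sum_{j\ne\mu}a_jf^{(j)}$ has order at most $\max_{j\ne\mu}\rho(a_j)\le\rho(a_\mu)-1$; since their sum is $0$ we must have $f^{(\mu)}\equiv 0$, i.e.\ $\deg f\le\mu-1$. So assume $f$ is transcendental of order $q=\rho(f)\ge 1$ and write it in the normalized form \eqref{normalized-f}. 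The key lemma I would prove is: \emph{if $\psi=Ge^{E}$ solves \eqref{lde}, where $E$ is a polynomial of degree $q$ and $G$ is an exponential polynomial, then $\rho(G)\ge\rho(a_\mu)$.} Dividing \eqref{lde} by $e^{E}$ gives $\sum_i a_iH_i=0$ with $H_i:=e^{-E}\psi^{(i)}$ an exponential polynomial whose exponents are among those of $G$ (so $\rho(H_i)\le\rho(G)$) and with $H_\mu\not\equiv 0$ (its leading multiplier is $G\cdot(E')^{\mu}$, $E'\not\equiv 0$); solving $a_\mu=-H_\mu^{-1}\bigl(H_n+\sum_{i\ne\mu,n}a_iH_i\bigr)$ and using that exponential polynomials have integer order gives $\rho(a_\mu)\le\max\{\rho(G),\rho(a_\mu)-1\}$, hence $\rho(G)\ge\rho(a_\mu)$.

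Next I would show $\rho(a_\mu)=q$. The bound $\rho(a_\mu)\le q$ is immediate: if $\rho(a_\mu)>q$ then $a_\mu f^{(\mu)}=-(f^{(n)}+\sum_{j\ne\mu}a_jf^{(j)})$ has order $\rho(a_\mu)$ on the left but at most $\max\{q,\rho(a_\mu)-1\}=\rho(a_\mu)-1$ on the right. Suppose $\rho(a_\mu)=t<q$. Then every coefficient of \eqref{lde} has order $<q$, so multiplying $f^{(i)}$ by $a_i$ does not change the leading ($z^{q}$) coefficient of any exponent; collecting \eqref{lde} according to these leading coefficients shows that each block $g_k=F_ke^{w_kz^{q}}$ is itself a solution of \eqref{lde}, whence $\rho(F_k)\ge t$ by the lemma. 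Passing to the normalized form of the transcendental multiplier $F_k$ and repeating the blockwise splitting produces new solutions $F_{k,l}e^{w_kz^{q}+u_lz^{\rho(F_k)}}$ with $t\le\rho(F_{k,l})\le\rho(F_k)-1$; since the orders of the successive multipliers strictly decrease but stay $\ge t\ge 1$, after finitely many steps one reaches a solution $G^{*}e^{E^{*}}$ of \eqref{lde} with $\rho(G^{*})=t$ exactly. Running the frequency analysis once more on this solution — now genuinely using the possible interference between the order‑$t$ frequencies of $G^{*}$ and those of $a_\mu$ — one chooses an extremal frequency block to force first that these two frequency sets lie on opposite rays through the origin, then that $G^{*}$ consists of a single exponential term $G^{*}_1e^{v_1z^{t}}$ with $\rho(G^{*}_1)\le t-1$; applying the lemma to $G^{*}_1e^{E^{*}+v_1z^{t}}$ then contradicts $\rho(a_\mu)=t>t-1$. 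Hence $\rho(a_\mu)=q$.

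With $\rho(a_\mu)=q$ I would conclude by the same frequency bookkeeping applied to \eqref{lde} directly. Writing $a_\mu=\mathcal A_0+\sum_l\mathcal A_le^{v_lz^{q}}$ in normalized form, every term of \eqref{lde} except $a_\mu f^{(\mu)}$ produces only the frequencies $0,w_1,\dots,w_m$, while $a_\mu f^{(\mu)}$ also produces the frequencies $v_l$ and $v_l+w_k$; for a frequency block $v_l+w_k$ which is a vertex of $\co\bigl(\{\overline v_l\}+\{0,\overline w_1,\dots,\overline w_m\}\bigr)$ with a unique such representation and is distinct from every $0,\overline w_k,\overline v_l$, the corresponding multiplier of \eqref{lde} equals $\mathcal A_l$ times the (nonzero) multiplier of $e^{w_kz^{q}}$ in $f^{(\mu)}$, a contradiction unless both conjugated frequency sets sit on a common line through the origin and on opposite rays — i.e.\ $f$ and $a_\mu$ are dual of order $q$, which gives the representation \eqref{fsolutioncd.eq}. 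Reading off the frequency‑$0$ block shows in addition that $F_0=S$ has order $0$ (a polynomial), that $a_\mu S^{(\mu)}$ reappears there with a nonzero frequency unless $S^{(\mu)}\equiv 0$ (so $\deg S\le\mu-1$), and that $S\not\equiv 0$ because otherwise \eqref{lde} would force $f^{(n)}+\sum_{j\ne\mu}a_jf^{(j)}$ to vanish, incompatibly with $m\ge 1$. Finally, under the extra hypotheses $\rho(a_\mu f^{(\mu)})<q$ and $a_j\in\C[z]$ for $j\ne\mu$, the order‑$q$ part of $a_\mu f^{(\mu)}$ must vanish, which (distinct $w_k$, distinct $v_l$) is possible only with a single frequency $w_1$ and $v_1=-w_1$; the resulting identities then give $f=S+Qe^{P}$ and $a_\mu=Re^{-P}$ with $P,Q,R\in\C[z]$ and $\deg P=q$, which is \eqref{sp.eq}.

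The step I expect to be hardest is exactly the frequency combinatorics once the orders of $a_\mu$ and of $f$'s multipliers coincide: the clean ``split by the leading exponent coefficient'' trick breaks down there because $a_\mu$ then carries $z^{q}$‑frequencies of its own, and the extremal‑vertex argument must be carried out carefully to survive degenerate Minkowski sums in which a vertex admits several representations. By contrast, the polynomial case, the bound $\rho(a_\mu)\le q$, the growth lemma on $\rho(G)$, and the reduction of the ``In particular'' assertion are comparatively routine once the main structural facts are in hand.
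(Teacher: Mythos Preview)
This survey paper does not contain a proof of Theorem~\ref{general.theorem}; the result is quoted from \cite{WGH} and only the statement (together with the remark on weakening the hypothesis via Niino's theorem) appears here. So there is no ``paper's own proof'' to compare against.

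That said, your outline follows the natural strategy one would expect for this kind of result, and it is essentially the approach of \cite{WGH}: reduce the polynomial case by an order comparison, use a Borel/Niino-type linear independence argument to split \eqref{lde} blockwise according to the leading frequencies, and then analyse the Minkowski sum of the frequency sets of $f$ and $a_\mu$ to force duality. Your identification of the delicate step --- the extremal-vertex argument when the orders coincide and the frequency sets can interfere --- is accurate; this is exactly where \cite{WGH} spends its effort (their Lemma~2). One point to tighten: your justification that $S\not\equiv 0$ (``otherwise $f^{(n)}+\sum_{j\ne\mu}a_jf^{(j)}$ would vanish'') is too quick as stated. If $S\equiv 0$ the frequency-$0$ block of \eqref{lde} need not force the entire non-$a_\mu$ part to vanish; rather one should look at the extremal frequency of $a_\mu f^{(\mu)}$ on the \emph{negative} ray (the one coming from $\mathcal A_l\cdot(\text{constant block of }f^{(\mu)})$ when $F_0\equiv 0$ is absent, or more precisely from $v_l$ alone), which then has nothing to cancel against. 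The conclusion is correct but the bookkeeping needs to be routed through the right extremal block.
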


\begin{remark}
By a careful examination of the proof of \cite[Theorem~2]{WGH}, we find
that the assumption
	``$a_j(z)$ is an exponential polynomial
	satisfying $\rho(a_j)<\rho(a_\mu)$ for all $j\neq\mu$''
can be weakened to
	``$a_j(z)$ is an entire function
	satisfying $T(r,a_j)=S(r,a_\mu)$ for all $j\neq\mu$''. Indeed, one needs to use Niino's theorem \cite[Theorem~1.53]{YY}
	instead of Borel's theorem when proving \cite[Lemma~2]{WGH}.
\end{remark}

We proceed to state a stronger version of Theorem~\ref{general.theorem} in the second order case under weaker assumptions.
For this purpose, we need two definitions.

\begin{definition}\label{commensurable}
\textnormal{(Langer \cite[p.~214]{Langer})}
Let $f$ in \eqref{normalized-f} be a
one-sided exponential polynomial.
If there exists a constant $w\in\C\setminus\{0\}$ such that $w_j/w$ is a positive integer for every $j=1,2,\ldots,m$, then the (non-zero) frequencies of $f$ are said to be \emph{commensurable}, and $w$ is called a \emph{common factor}.
\end{definition}

As observed in \cite{dual2}, $f(z)=e^{\pi z}+3e^{2\pi z}+ze^{3\pi z}$ and $g(z)=e^{4iz}+e^{6iz}$ are
one-sided exponential polynomials, both of their frequencies are commensurable, and examples of common factors are $\pi,\pi/2$ for $f$ and $i,2i$ for $g$.
Thus a common factor is not unique.
In fact, $g\in{\cal E}$ is a simple exponential sum since $\dim\mathrm{supp}(g)=1$, while $f(z)$ can be treated similarly.
Hence, $f\in{\cal E}$ given by~\eqref{normalized-f} is simple, if its frequencies $w_j$ $(1\leq j\leq m)$ are commensurable.

\begin{definition}\label{s-commensurable}
\textnormal{(\cite{dual2})}
Suppose that $f$ and $g$ are dual exponential polynomials with commensurable frequencies
$\{w_j\}$ $(j>0)$ and $\{\lambda_i\}$ $(i>0)$, respectively, sharing the same common factor $w$ but with opposite signs.  If the points $w_j+\lambda_i$
are on one ray including the origin for all $i,j>0$, then
$f$ and $g$ are called \emph{strongly dual exponential polynomials}.
\end{definition}

As observed in \cite{dual2}, the functions $f(z)=1+ze^z+2e^{3z}$ and $g(z)=1-e^{-z}$ are strongly dual exponential polynomials, while $f(z)$ and $h(z)=g(z)+2z^2e^{-2z}$ are not.
We are now ready to state an improvement of Theorem~\ref{general.theorem} in the second order case.

\begin{theorem}\label{dual-thm}
\textnormal{(\cite{dual2, WGH})}
Suppose that $f$ and $A(z)$ in \eqref{ldeAB} are transcendental exponential polynomials, and that $B(z)$ is an entire function satisfying $T(r,B)=o(T(r,A))$.
Then the following assertions~hold.
\begin{itemize}
\item[\textnormal{(a)}]
$f$ and $A(z)$ are dual exponential polynomials of order $q\in\N$, $f$ has the normalized representation
	\begin{equation*}
	f(z)=c+F_1(z)e^{w_1z^q}+\cdots +F_m(z)e^{w_mz^q},\quad m\in\N,\ c\in\C\setminus\{0\},
	\end{equation*}
and $B(z)$ is an exponential polynomial of order $\rho(B)\leq q-1$.
\item[\textnormal{(b)}]
The frequencies of $f$ are commensurable if and only if the frequencies of $A(z)$ are commensurable.
In both cases, $f$ and $A(z)$ are strongly dual exponential \mbox{polynomials.}
\item[\textnormal{(c)}]
If $\rho(Af')<q$, then $q=1$ and
    \begin{equation}\label{q=1}
   A(z)=a e^{-wz}, \quad B(z)=-w^2 \quad\text{and} \quad f(z)=c\left(1+\frac{w}{a}e^{wz}\right),
    \end{equation}
where $w=w_1$ and $a \in\C\setminus\{0\}$.
\end{itemize}
\end{theorem}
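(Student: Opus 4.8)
The plan is to prove the three parts in order: part (a) by reducing, after one preparatory step, to Theorem~\ref{general.theorem}; parts (b) and (c) by a closer analysis of the exponential structure of \eqref{ldeAB}, along the lines of \cite{dual2}. For part (a), put $q=\rho(f)\ge1$. The first step is the bound $\rho(A)\le q$: rewriting \eqref{ldeAB} as $-Af'=f''+Bf$ and using the lemma on logarithmic derivatives,
\[
T(r,A)=m(r,A)\le m(r,f'')+m(r,Bf)+m(r,1/f')+O(1)=O(T(r,f))+T(r,B),
\]
so $T(r,B)=o(T(r,A))$ forces $T(r,A)=O(T(r,f))$ and hence $\rho(A)\le q$. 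Consequently $f$ and $A$ can be written in normalized form with exponents of the form $vz^q$ and multipliers in $P_{q-1}$, so the numerator $f''+Af'$ of the entire quotient $B=-(f''+Af')/f$ is again an exponential polynomial of that same shape, and so is $f$; hence Theorem~\ref{lax-rahman} forces $B$ to be an exponential polynomial. Being an exponential polynomial with $T(r,B)=o(T(r,A))$, $B$ satisfies $\rho(B)<\rho(A)$. Now Theorem~\ref{general.theorem} applies with $n=2$, $\mu=1$, $a_1=A$, $a_0=B$: since $f$ is transcendental it is not a polynomial of degree $\le\mu-1=0$, so $f$ and $A$ are dual exponential polynomials of order $q$ (in particular $\rho(A)=q$), $f$ has the stated normalized representation with $S(z)=c$ a nonzero constant, and $\rho(B)<\rho(A)=q$ gives $\rho(B)\le q-1$.

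For part (b), by (a) the nonzero frequencies $w_1,\dots,w_m$ of $f$ lie on a ray $\arg w=\theta$ and those $u_1,\dots,u_r$ of $A$ on the opposite ray $\arg w=\theta+\pi$. Substituting the normalized forms into \eqref{ldeAB} and collecting terms according to the value of the exponent, the exponents occurring are the $w_j$ (from $f''$ and $Bf$, since $\rho(B)\le q-1$) together with the sums $u_k+w_j$ (from $Af'$), all collinear with the origin. Comparing coefficients of the successive extreme exponentials, as in \cite{dual2}, one sees that every nonzero sum $u_k+w_j$ which is not itself one of the frequencies of $f$ must be cancelled against another such sum; tracking these forced coincidences transfers rational dependence between ratios of the $w_j$ and ratios of the $u_k$ in both directions, so that the frequencies of $f$ are commensurable if and only if those of $A$ are. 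When they are, with common factor $w$, the same coefficient comparison shows that every sum $w_j+\lambda_i$ (notation of Definition~\ref{s-commensurable}) lies on a single ray through the origin, i.e.\ $f$ and $A$ are strongly dual.

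For part (c), substitute $f=c+\sum_{j=1}^m F_j(z)e^{w_jz^q}$ from (a) into \eqref{ldeAB}; since $(c)'=0$, writing $G_j,H_j$ for the multipliers of $f',f''$, the equation becomes
\[
\sum_{j=1}^m\big(H_j+BF_j\big)e^{w_jz^q}+Bc+Af'=0 .
\]
If $\rho(Af')<q$, then, as $Bc$ also has order $\le q-1$, the exponential sum $\sum_j(H_j+BF_j)e^{w_jz^q}$ has order $<q$, so by linear independence of $e^{w_1z^q},\dots,e^{w_mz^q}$ over entire functions of order $<q$ we get $H_j+BF_j\equiv0$ for all $j$ and $Af'\equiv-Bc$. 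Writing $A=A_0+\sum_{k=1}^r A_ke^{u_kz^q}$ in normalized form (ordered so that $|u_1|\le\cdots\le|u_r|$), compare extreme exponentials in the identity $Af'\equiv-Bc$, whose right side has order $\le q-1$ while its left side is a product of two order-$q$ exponential polynomials with frequencies on opposite rays: the largest exponent forces $A_0\equiv0$; then the new largest exponent $u_1+w_m$ and the smallest exponent $u_r+w_1$ each carry an isolated nonvanishing coefficient and hence must vanish, and since $w_1\le w_m$ while $|u_1|\le|u_r|$ this collapses to $m=r=1$ and $u_1=-w_1$. Thus $f=c+F_1e^{w_1z^q}$, $A=A_1e^{-w_1z^q}$, $f'=G_1e^{w_1z^q}$ with $G_1=F_1'+qw_1z^{q-1}F_1$, and the split equation reads $H_1+BF_1\equiv0$, $A_1G_1+Bc\equiv0$, whence $B=-H_1/F_1$ and $A_1=-cB/G_1$. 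A short zero-counting argument based on these two relations and the explicit forms of $G_1,H_1$ shows that $F_1$ cannot vanish and $G_1$ has only finitely many zeros, so integrating the first-order linear equation $F_1'+qw_1z^{q-1}F_1=G_1$ and using $\rho(F_1)\le q-1$ forces $F_1=e^{P}$ with $\deg P\le q-1$. Then, with $Q_1=w_1z^q+P$, one computes $G_1=Q_1'e^{P}$, $H_1=(Q_1''+(Q_1')^2)e^{P}$, so $B=-(Q_1''+(Q_1')^2)$ and $A_1=c(Q_1''+(Q_1')^2)/(Q_1'e^{P})$; entireness of $A_1$ forces $Q_1'\mid Q_1''$, which, since $\deg Q_1''=q-2<q-1=\deg Q_1'$, is possible only if $Q_1''\equiv0$, i.e.\ $q=1$. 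For $q=1$ this gives $Q_1'=w_1$, $F_1$ a nonzero constant, $B=-w_1^2$, and with $w=w_1$ and $a=cw/F_1$ we read off $A=ae^{-wz}$, $B=-w^2$, $f=c(1+(w/a)e^{wz})$.

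The main obstacle is the exponential bookkeeping. In (a) the delicate step is verifying that $B=-(f''+Af')/f$ genuinely falls within the hypotheses of Theorem~\ref{lax-rahman}, which is what forces $B$ to be an exponential polynomial and reduces the remainder of (a) to Theorem~\ref{general.theorem}. In (b) the combinatorial analysis --- exactly which combined frequencies $u_k+w_j$ survive in \eqref{ldeAB}, and how the resulting coincidences force rational dependence to migrate between the two frequency sets --- is the heart of the matter, and here I would follow \cite{dual2} closely. In (c) the crux is the extreme-frequency collapse to $m=r=1$ together with the verification that the multiplier $F_1$ is zero-free; once these are in place, $q=1$ and the explicit forms follow from a brief polynomial computation.
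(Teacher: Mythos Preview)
Your outline for part~(a) has a genuine gap. When you invoke Theorem~\ref{lax-rahman} to conclude that $B=-(f''+Af')/f$ is ``an exponential polynomial,'' note that the conclusion of that theorem only gives coefficients in $R_{q-1}$, i.e.\ quotients of lower-order exponential polynomials with removable singularities --- not polynomial multipliers. The example $(e^z-1)/z$ discussed just before Theorem~\ref{lax-rahman} shows this distinction is real: the quotient is entire but is \emph{not} an exponential polynomial in the paper's sense. So your chain ``lax--rahman $\Rightarrow$ $B$ exponential polynomial $\Rightarrow$ $\rho(B)<\rho(A)$ $\Rightarrow$ Theorem~\ref{general.theorem}'' breaks at the first arrow. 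The fix is to reorder: apply Theorem~\ref{general.theorem} \emph{first}, via the Remark that follows it (which only needs $B$ entire with $T(r,B)=S(r,A)$, exactly your hypothesis), obtaining the duality, $\rho(A)=q$, and the form $f=c+\sum F_je^{w_jz^q}$ with $c\ne0$. Only then argue about $B$; at that point the specific dual structure (all $w_j$ on one ray, all $u_k$ on the opposite ray, constant term $c\ne0$) is available, and one must still supply a genuine argument that $B$ is a strict exponential polynomial of order $\le q-1$ --- this is the content proved in \cite{dual2} and is not a one-line consequence of Theorem~\ref{lax-rahman}.

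Part~(b) is not a proof but a description of what a proof would look like. The assertion that ``tracking these forced coincidences transfers rational dependence'' is precisely the nontrivial combinatorics of \cite{dual2}; you have not carried it out. For an expository survey this may be acceptable, but as a proof it is incomplete.

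Part~(c) is essentially correct. Two minor points: the remark about ``$G_1$ has only finitely many zeros'' and ``integrating the first-order linear equation'' is a detour --- once you have shown $F_1$ is zero-free (your zero-multiplicity count at a putative zero of $F_1$ is right: $F_1$ of order $k$ forces $H_1$ of order exactly $k-2$, contradicting $k-2\ge k$), Hadamard immediately gives $F_1=e^P$ with $\deg P\le q-1$, and the rest of your polynomial divisibility argument $Q_1'\mid Q_1''$ forcing $q=1$ is clean.
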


Regarding the assumption $\rho(Af')<\rho(f)$ in Theorem~\ref{dual-thm}(c), we recall Example~\ref{Frei-example}, which shows that $q=1$ may hold even if $\rho(Af')=\rho(f)$. Next we give another condition implying $\rho(f)=\rho(A)=q=1$.

\begin{theorem}\label{one-term}
\textnormal{(\cite{dual2})}
Suppose that $f(z)=F_0(z)+F_1(z)e^{wz^q}$ is a solution of \eqref{ldeAB}, where $A(z)$ is an exponential polynomial and $B(z)$ is an entire function satisfying $T(r,B)=o(T(r,A))$.
Then $q=1$, and there are constants $c,b\in\C\setminus\{0\}$ and a non-trivial polynomial $P(z)$ such that
	$$
	f(z)=c+be^{wz},\ A(z)=\frac{b}{c}P(z)-w+P(z)e^{-wz}\
	\text{and}\ B(z)=-\frac{wb}{c}P(z).
	$$
\end{theorem}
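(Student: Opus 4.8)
The plan is to read off the structural skeleton of the triple $(f,A,B)$ from Theorem~\ref{dual-thm}, and then to force $q=1$ together with the explicit formulas by substituting the one-term ansatz into \eqref{ldeAB} and comparing exponential terms. Since $F_1\not\equiv0$ and $q\geq1$, the solution $f$ is transcendental; if $A$ happened to be a polynomial, then $T(r,B)=o(\log r)$ would make $B$ constant, a polynomial-coefficient situation which is handled directly and yields at most the degenerate instance $f=c+be^{wz}$, $A\equiv-w$, $B\equiv0$. So I may assume $A$ transcendental, and Theorem~\ref{dual-thm}(a) then gives that $f$ and $A$ are dual exponential polynomials of a common order $q$, that $B$ is an exponential polynomial of order $\leq q-1$, and that the order-$\leq q-1$ part of the normalized representation of $f$ is a non-zero constant. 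By uniqueness of the normalized representation, comparing with $f=F_0+F_1e^{wz^q}$ forces $F_0\equiv c$ with $c\neq0$, so $f(z)=c+F_1(z)e^{wz^q}$.

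I then substitute into \eqref{ldeAB}: writing $f'=G_1e^{wz^q}$ and $f''=G_2e^{wz^q}$ with $G_1=F_1'+qwz^{q-1}F_1$ and $G_2=G_1'+qwz^{q-1}G_1$, one obtains the identity $(G_2+AG_1+BF_1)e^{wz^q}+Bc=0$, where $G_1\not\equiv0$ since otherwise $F_1$ would be a constant multiple of $e^{-wz^q}$, of order $q>q-1$. By duality every order-$q$ frequency of $A$ lies on the ray opposite to $w$; writing $A=A_0+\sum_{\lambda}A_{\lambda}e^{\lambda z^q}$ with $A_0$ of order $\leq q-1$ and the sum over those frequencies, and grouping the identity by frequency, the multiplier of $e^{(\lambda+w)z^q}$ for $\lambda\neq-w$ equals $A_\lambda G_1$ and must vanish; as $G_1\not\equiv0$ this is impossible. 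Since $\rho(A)=q$, this leaves $A=A_0+A_1e^{-wz^q}$ with $A_1\not\equiv0$, and the identity splits into $G_2+A_0G_1+BF_1\equiv0$ and $A_1G_1+Bc\equiv0$.

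The crux is to extract $q=1$. Eliminating $B=-A_1G_1/c$ yields $G_2=G_1\bigl(A_1F_1/c-A_0\bigr)$, so $G_2/G_1$ is an entire exponential polynomial; since $G_2/G_1=G_1'/G_1+qwz^{q-1}$, the logarithmic derivative $G_1'/G_1$ is entire, whence $G_1$ has no zeros, and being of finite order, $G_1=e^{H}$ for a polynomial $H$ with $\deg H\leq q-1$. From $G_1=e^{-wz^q}\bigl(e^{wz^q}F_1\bigr)'$ we get $\bigl(e^{wz^q}F_1\bigr)'=e^{H+wz^q}$. Expanding $e^{wz^q}F_1$ as an exponential polynomial each of whose exponents has the form (polynomial of degree $\leq q-1$)$\,+\,wz^q$, differentiating termwise, and using linear independence over $\C[z]$ (together with the fact that $T'/T$ cannot be a non-trivial polynomial for a polynomial $T$), one is forced into $F_1=Te^{H}$ for a polynomial $T\not\equiv0$ with $T'+TH'+qwz^{q-1}T\equiv1$. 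Here $qwz^{q-1}T$ has degree $(q-1)+\deg T$, strictly larger than the degrees of $T'$ and $TH'$, so for $q\geq2$ the left side is non-constant, a contradiction; hence $q=1$.

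With $q=1$ (so $H$ is constant) the relation becomes $T'+wT=1$, giving $T\equiv1/w$, so $F_1$ is a non-zero constant; writing $f(z)=c+be^{wz}$ and substituting once more into \eqref{ldeAB}, comparison of the constant part and the $e^{wz}$-part produces $B=-\tfrac{wb}{c}P$ and $A=\tfrac{b}{c}P-w+Pe^{-wz}$ with $P:=-\tfrac{c}{wb}B\in\C[z]$, as claimed. I expect the step $q=1$ to be the main obstacle: it requires keeping careful track of which exponential terms can cancel in the substituted equation (so as to isolate $A=A_0+A_1e^{-wz^q}$) and then recognizing that the resulting relations force $G_1$ to be zero-free — it is this rigidity, rather than any single estimate, that makes the final degree comparison decisive.
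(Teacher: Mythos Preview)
The survey does not include a proof of this result; it is cited from \cite{dual2}, so there is no in-paper argument to compare against. Your proof is correct. The structural input from Theorem~\ref{dual-thm}(a) (giving $F_0\equiv c\neq0$, $\rho(A)=q$, $\rho(B)\leq q-1$) followed by substitution and Borel-type separation of the exponentials $e^{\mu z^q}$ over functions of order $\leq q-1$ is the natural route, and your key step---deducing that $G_1$ is zero-free from $G_2/G_1=G_1'/G_1+qwz^{q-1}$ being entire, writing $G_1=e^H$ with $\deg H\leq q-1$, then reading off $F_1=Te^H$ and the polynomial identity $T'+TH'+qwz^{q-1}T\equiv1$ whose left side has degree $(q-1)+\deg T\geq1$ for $q\geq2$---is a clean way to force $q=1$. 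One cosmetic remark: the opening aside about polynomial $A$ is dispensable, since the stated conclusion (non-trivial $P$, hence transcendental $A$) already shows that transcendentality of $A$ is implicit in the hypothesis; you may simply assume it and invoke Theorem~\ref{dual-thm} directly.
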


\begin{problem}
Under the assumptions of Theorem~\ref{dual-thm}, is it always true that $q=1$ and $B(z)$ is a polynomial?
\end{problem}

As observed in~\cite{dual2}, this question is very fragile in the sense that the desired conclusion is not valid if a minor modification in the assumptions of Theorem~\ref{dual-thm} is performed.
Indeed, the function $f(z)=e^{z^2}+1$ satisfies the differential equations
	\begin{equation*}
	\begin{split}
	f''+\left(\frac{e^{-z^2}-1}{2z}-2z\right)f'-f &= 0,\\
	f''-\frac{e^{-z^2}(z-1)+4z^2+z+1}{2z}f'+(z-1)f &= 0,
	\end{split}
	\end{equation*}
where the transcendental coefficients are entire exponential polynomials but with rational multipliers because $z=0$ is a removable singularity for both.

\subsection{Non-linear ODE's} 

Next we  discuss results on Riccati equations \cite{BGL},
Malmquist type equations \cite{GL}, and Tumura-Clunie type equations \cite{TC,Zhang1,Zhang2}. The reason why we focus on these papers is that each of them contains a very nice reference list on the corresponding study and provides
with many interesting examples in terms of exponential sums.
In addition, these papers along with \cite{Gundersen2019} contain several
open questions that have a connection to exponential sums.   	

We begin with Riccati equations
	\begin{equation}\label{Riccati-eqn}
	f'=a_0(z)+a_1(z)f+a_2(z)f^2,\quad a_2(z)\not\equiv 0.
	\end{equation}
The following problem is considered in the paper \cite{BGL} by Bank, Gundersen and Laine:
\begin{quote}
{\it Under which conditions a Riccati equation with meromorphic coefficients, which are not all entire, actually admits meromorphic solutions in the complex plane?}
\end{quote}
Here ``meromorphic function'' means meromorphic in the whole complex plane.
An elementary transform by Wittich \cite{BGL, Laine},
	$$
	f=\frac{1}{a_2(z)}u-\frac{a_1(z)}{2a_2(z)}-\frac{a_2'(z)}{2a_2(z)^2},
	$$
transforms \eqref{Riccati-eqn} to the special form
	\begin{equation}\label{Riccati-special}
	u'=A(z)+u^2
	\end{equation}
with $A(z)$ meromorphic. So, without loss of generality, we may focus on \eqref{Riccati-special}.

\begin{theorem}
\textnormal{(Bank-Gundersen-Laine \cite{BGL})}
\begin{itemize}
\item[\textnormal{(a)}] If $A(z)$ is a transcendental entire function of finite order, then \eqref{Riccati-special} admits at most two distinct meromorphic solutions of finite order.

\item[\textnormal{(b)}] If \eqref{Riccati-special} admits a meromorphic solution having at least one pole of multiplicity~$\geq 2$, then \eqref{Riccati-special} admits at most two distinct meromorphic solutions.
\end{itemize}
\end{theorem}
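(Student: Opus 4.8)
The plan is to prove both parts by contradiction, assuming in each case that \eqref{Riccati-special} has three pairwise distinct meromorphic solutions, and using repeatedly the elementary remark that if $u$ and $v$ both solve \eqref{Riccati-special} with $u\not\equiv v$, then subtracting the two equations gives $(u-v)'=(u+v)(u-v)$, i.e.
$$\frac{(u-v)'}{u-v}=u+v.$$
In particular $u+v$ is a logarithmic derivative, so it can only have simple poles. For part~(a) this is combined with the lemma on the logarithmic derivative; for part~(b) with a short local computation at a pole.

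For part~(a), suppose $u_1,u_2,u_3$ are pairwise distinct finite-order meromorphic solutions and set
$$V=\frac{(u_1-u_2)(u_1-u_3)}{u_2-u_3},$$
which is meromorphic, $\not\equiv 0,\infty$ (because the $u_j$ are distinct), and of finite order. Adding the three relations above with the signs dictated by the definition of $V$ gives $V'/V=(u_1+u_2)+(u_1+u_3)-(u_2+u_3)=2u_1$, so $u_1=\frac{1}{2}\,V'/V$. Plugging this into $u_1'=A+u_1^2$ and simplifying yields the identity
$$A=\frac{1}{2}\,\frac{V''}{V}-\frac{3}{4}\left(\frac{V'}{V}\right)^{2}.$$
Since $V$ has finite order, the lemma on the logarithmic derivative gives $m(r,V'/V)=O(\log r)$ and $m(r,V''/V)=O(\log r)$, hence $m(r,A)=O(\log r)$. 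As $A$ is entire, $T(r,A)=m(r,A)=O(\log r)$ (monotonicity of $T(r,A)$ disposes of any exceptional set), so $A$ is a polynomial — contradicting that $A$ is transcendental. One may note that the finite-order hypothesis on $A$ itself is not used in this argument.

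For part~(b), let $u^{*}$ be a meromorphic solution with a pole of order $k\ge 2$ at a point $z_1$. If \eqref{Riccati-special} had at least three distinct meromorphic solutions, then, since $u^{*}$ is one of them, we could choose two further solutions $v_1,v_2$ with $u^{*},v_1,v_2$ pairwise distinct. For $i=1,2$ the function $u^{*}+v_i=(u^{*}-v_i)'/(u^{*}-v_i)$ has only simple poles, hence a pole of order at most $1$ at $z_1$; therefore $v_i=(u^{*}+v_i)-u^{*}$ has a pole of order exactly $k$ at $z_1$, while $v_1-v_2=(u^{*}+v_1)-(u^{*}+v_2)$ has a pole of order at most $1$ at $z_1$. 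Consequently $v_1+v_2=2v_1-(v_1-v_2)$ has a pole of order $k\ge 2$ at $z_1$. But $v_1+v_2=(v_1-v_2)'/(v_1-v_2)$ is again a logarithmic derivative, so it can only have simple poles — a contradiction. Hence at most two distinct meromorphic solutions exist.

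The Laurent-expansion bookkeeping near $z_1$ in~(b) and the fact that a logarithmic derivative has only simple poles are entirely routine. The one substantive point is the derivation of the identity for $A$ in~(a) together with the correct handling of the Nevanlinna estimates: I would check carefully that $V\not\equiv 0,\infty$, that the algebra leading to $A=\frac{1}{2}V''/V-\frac{3}{4}(V'/V)^2$ is correct, and that the implication $m(r,A)=O(\log r)\Rightarrow A\in\C[z]$ is legitimate for an entire $A$. I do not anticipate any deeper obstacle; the classical route via the linearisation $u=-y'/y$, $y''+Ay=0$ and Wronskians is available as an alternative and a cross-check, but it is longer.
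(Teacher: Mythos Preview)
The paper does not supply its own proof of this theorem; it is quoted verbatim from \cite{BGL} as background for the discussion of Riccati equations in Section~4.4, so there is nothing to compare against directly.

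Your argument is correct. For part~(a), the cross-ratio-type auxiliary function $V=(u_1-u_2)(u_1-u_3)/(u_2-u_3)$ and the computation $V'/V=2u_1$, $A=\tfrac12 V''/V-\tfrac34(V'/V)^2$ are exactly the standard route (this is in fact the argument in \cite{BGL}, and is equivalent to the linearisation $u=-y'/y$ you mention). The Nevanlinna step is handled correctly: $V$ has finite order, the logarithmic-derivative lemma gives $m(r,A)=O(\log r)$ possibly outside a set of finite measure, and monotonicity of $T(r,A)=m(r,A)$ removes that exceptional set. Your observation that the finite-order hypothesis on $A$ is superfluous is also correct. For part~(b), the local pole-order bookkeeping is clean; the only phrasing I would tighten is ``since $u^{*}$ is one of them'': what you need (and use) is merely that if there are at least three distinct meromorphic solutions, then among all solutions you can pick two, $v_1,v_2$, different from $u^{*}$ and from each other --- which is immediate whether or not $u^{*}$ is among the assumed three.
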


The following example illustrates the sharpness of Part (a).

\begin{example}
\textnormal{(Bank-Gundersen-Laine \cite{BGL})}
Equation \eqref{Riccati-special} with $A(z)=-\frac{1}{4}-\frac{e^{2z}}{4}$ has two meromorphic solutions $f(z)=\frac{1\pm e^z}{2}$ of finite order.
This example arises from the equation $u'(t)=-\frac{1}{4t}-\frac{t}{4}+\frac{1}{t}u(t)^2$ and its rational solutions  $u(t)=\frac{1\pm t}{2}$ through the change of variable $z=\log t$ and $u(t)=f(\log t)$.
This equation also permits the finite-order solution $u(t)=\frac{1}{2}(1+t)-t[1+Ce^{-t}]^{-1}$, $C\in\C$, and the corresponding solution $f(z)=u(e^z)$ is of infinite order but of hyper-order~$1$.

Equation \eqref{Riccati-special} with  $A(z)=-\frac{e^z+1}{(e^z-1)^2}$ has
a meromorphic solution $f(z)=\frac{1}{e^z-1}$ of finite order.
This example arises from the equation $u'(t)=-\frac{t+1}{t(t-1)^2}+\frac{1}{t}u(t)^2$ and its rational solution $u(t)=\frac{1}{t-1}$ similarly as above.
This rational $u(t)$ is not a Laurent polynomial, while the above $A(z)$ and $f(z)$ are ratios of two (simple) exponential sums.
\end{example}

The conclusion of Part (b) does not say anything about possible rational solutions.
The following example illustrates the sharpness of Part (b), and shows
that rational solutions may appear.

\begin{example}
\textnormal{(Bank-Gundersen-Laine \cite{BGL})}
The Riccati differential equation
	$$
	u'(t)=\frac{2\alpha-\alpha^2}{4t^2}-\frac{1}{4t^{2\alpha}}+u(t)^2
	$$
admits exactly two rational (and Laurent polynomial) solutions $u(t)=-\frac{\alpha}{2t}\pm \frac{1}{2t^{\alpha}}$, where $\alpha\geq 2$ is an integer. This implies that the equation
	$$
	f'(z)=\frac{2\alpha-\alpha^2}{4}e^{-z}
	-\frac{1}{4}e^{-(2\alpha-1)z}+e^zf(z)^2
	$$
admits the solutions $f(z)=-\frac{\alpha}{2}e^{-z}\pm \frac{1}{2}e^{-\alpha z}$, which are simple elements in~${\cal E}$.

The Riccati differential equation
	$$
	u'(t)=3t^{-4} - t^{-6} +u(t)^2
	$$
admits exactly one meromorphic solution, namely the Laurent polynomial
$u(t)=-t^{-3}$. This implies that the equation
	$$
	f'(z)=3e^{-3z} - e^{-5z} +e^zf(z)^2
	$$
admits an exponential-function solution $f(z)=-e^{-3z}$.

Note that in both examples above, the latter equation may have another meromorphic solution $g(z)$, which is not translated back to a meromorphic solution $v(t)$ of the corresponding former equation by means of the
change of variable $z=\log t$.
\end{example}

\begin{problem}
If $A(z)$ is an exponential polynomial or a ratio of two exponential polynomials, what solutions are available to the equation $f'(z)=A(z)+f(z)^2$? Can some of those solutions be exponential polynomials or possibly rations of two exponential polynomials?
\end{problem}

\begin{remark}
Recall that the Riccati equation \eqref{Riccati-special} can be
transformed to the homogeneous linear differential equation $v''(t)+A(t)v(t)=0$ by means of $u(t)=-\frac{v'(t)}{v(t)}$.
Therefore the above problem can be transformed into this normalized
linear equation in $v(t)$. The function $f(z)=u(e^z)$ satisfies $f'(z)=e^zA(e^z)+e^zf(z)^2$. Define a function $g$ by $f(z)=-e^{-z}\frac{g'(z)}{g(z)}$. Then $g$ is a solution of $g''(z)-g(z)+e^{2z}A(e^z)g(z)=0$. When both $A(t)$ and $u(t)$ are rational (and at most Laurent polynomials), this transform once again takes a rational case to
a periodic case.
\end{remark}

We move on to considering Malmquist type equations
	\begin{equation} \label{Malmquist}
	u'=\frac{\sum_{k=0}^{n}A_k(z)u^k}{\sum_{k=0}^{m}B_k(z)u^k },
	\end{equation}
where the coefficients $A_k(z)$ and $B_k(z)$ are meromorphic functions and the right-hand side of~\eqref{Malmquist} is irreducible as a rational function in~$u$.
Malmquist~\cite{Malmquist} proved that if a differential equation of the form~\eqref{Malmquist} with rational coefficients $A_k(z)$, $B_k(z)$ admits a transcendental meromorphic solution, then this equation reduces to a Riccati equation or to a linear equation. See~\cite{GL} for extensions of Malmquist's theorem.

Theorem~2 in \cite{GL} focuses on more specified differential equations
	\begin{equation} \label{non-Riccati}
	f'=\sum_{k=0}^{n} \exp(-q_k z)P_k(e^z)f^k,
	\end{equation}
where each $q_k\geq 0$ is an integer, each $P_k(z)$ is a polynomial in~$z$, $P_n(z)=z^q$, and $n\geq 3$. Here the coefficients are obviously in~${\cal E}$. Then \cite[Theorem~2]{GL} claims that any meromorphic solution $f$ is a simple exponential sum $f(z)=e^{-\alpha (z/m)}Q\left(e^{z/m}\right) \in {\cal E}$, where $\alpha\geq 0$ and $m>0$ are integers, and $Q$ is a polynomial.  This answers ``almost yes'' to a question posed by E.~Hille:
\begin{quote}
{\it Is a meromorphic solution of a non-Riccati differential equation of the form~\eqref{Malmquist} necessarily a rational function of the coefficients $A_k(z)$, $B_k(z)$?}
\end{quote}
A meromorphic solution of a non-Riccati equation~\eqref{non-Riccati}, whose coefficients are all Laurent polynomials in $e^z$, is necessarily a Laurent polynomial in $e^{z/m}$ for some positive integer $m$. However, examples in \cite{GL} show that the answer
to Hille's question in general is no.

Hille's question naturally leads to the following question \cite{GL}:
\begin{quote}
{\it Is a meromorphic solution $f$ of~\eqref{Malmquist}, where $A_k$ and $B_k$ are rational functions in $e^z$, necessary of the form $f(z)=R(e^{\beta z})$ for some rational function $R(w)$ and for some $\beta\in\Q$?}
\end{quote}
If \eqref{Malmquist} reduces to a Riccati equation or to a first-order
linear equation, the answer to this question is in the negative.
See \cite[Example~5.3]{BGL} and the equation $f'=e^zf$ for counterexamples,
respectively.

Four questions on the solutions of~\eqref{Malmquist} are stated in \cite{Gundersen2019}. Question~6.1 in \cite{Gundersen2019} is about the possibility of non-Riccati and non-linear equation~\eqref{Malmquist} to possess infinitely many distinct meromorphic solutions. Riccati equations
and linear differential equations are known to have this property.
It is remarked in \cite{Gundersen2019} that the answer to the above
question is no if all the coefficients $A_k$ and $B_k$ are rational.
Question~6.2 in \cite{Gundersen2019} considers equations of the form
$f'=R(e^z, f)$, where $R(u,v)$ is rational in both arguments such that
the equation does not reduce to neither a Riccati equation nor to a linear equation. The question is if the meromorphic solutions $f$ are always of the form $f(z)=S(e^{cz})$ for some rational function $S(w)$ and
for some constant $c\in\Q$. To illustrate this question, two examples are given in \cite{Gundersen2019}, namely
	$$
	f(z)=e^{-z/2}\in {\cal E}
	\quad\textnormal{and}\quad R(u,v)=-\frac{1}{2}uv^3,
	$$
	$$
	f(z)=\frac{e^z}{z(e^z-1)}\not\in{\cal E}
	\quad\textnormal{and}\quad R(u,v)=v/(1-u)+(1/u-1)v^2.
	$$
Questions~6.3 and~6.4 in \cite{Gundersen2019} concern the number of distinct or linearly independent meromorphic solutions of the equation
	$$
	u'=P_0(t)+P_1(t)u+P_2(t)u^2+\cdots +P_n(t)u^n, \quad n\geq 3,
	$$
where each $P_k(t)$ is a polynomial in~$t$ with $P_n(t)\not\equiv 0$.
Malmquist's theorem states that every meromorphic solution $u$ of this equation must be a rational function. If $u$ is such a solution, then
$f(z)=u(e^z)$ solves
	$$
	f'=e^{-z}P_0(e^z)+e^{-z}P_1(e^z)f+e^{-z}P_2(e^z)f^2+\cdots
	+ e^{-z}P_n(e^z)f^n.
	$$

Differential equations of Tumura-Clunie type \cite{TC} are of the form
	\begin{equation}\label{TC-eqn}
	f^n+P(z,f)=h(z),
	\end{equation}
where $n\geq 2$ is an integer, $h$ is a meromorphic function in $\C$, and
$P(z,f)$ is a differential polynomial in $f$ and its derivatives with
small meromorphic coefficients (compared to $f$). The name for these
equations arises from a theorem of Tumura published in 1937 the proof of
which was completed by Clunie in 1962 \cite{TC}. The special case of
\eqref{TC-eqn} of the form
	$$
	f^3+\frac34 f''=h(z),
	$$
where $h(z)=-\frac14 \sin(3z)$, was studied by Yang and Li in 2004 \cite{CC-Li}. They proved that this equation has precisely three entire solutions, namely
	$$
	f_1(z)=\sin z,\quad f_2(z)=\frac{\sqrt{3}}{2}\cos z-\frac12\sin z,
	\quad f_3(z)=-\frac{\sqrt{3}}{2}\cos z-\frac12\sin z.
	$$
The known solutions of the general equation \eqref{TC-eqn} are typically exponential polynomials
with rational coefficients, see \cite{TC} and the references therein.
Apart from exponential polynomials, entire solutions of \eqref{TC-eqn} of every half-integer order also exist \cite{TC}. In these examples, the function $h$ is a solution of a second order linear differential equation with rational coefficients.  Entire solutions of arbitrary positive rational order are not known, but such solutions could possibly be found by allowing $h$ to be a solution of a higher order linear differential equation with rational coefficients.

\begin{problem}
For an arbitrary $\rho\in\Q_+$, construct an equation of the form \eqref{TC-eqn}
having an entire solution $f$ of order $\rho(f)=\rho$.
\end{problem}

\subsection{A glimpse at binomial equations and PDE's}

We recall a theorem of Hayman from 1959.

\begin{theorem}
\textnormal{(Hayman \cite{GY})}
If $f$ is a
meromorphic function in $\C$ such that $f$ and $f''$ have only finitely
many zeros and poles, then $f(z)=R(z)e^{P(z)}$, where $R(z)$ is a rational function and $P(z)$ is a polynomial. Of these the only functions for
which $f$ and $f''$ have no zeros are $e^{az+b}$ and $1/(az+b)^n$, where
$n$ is a positive integer and $a, b\in\C$.
\end{theorem}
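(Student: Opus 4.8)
The plan is to turn the zero/pole hypotheses into a factorization, to extract a single functional identity for $f''/f$, and then to invoke the Tumura--Clunie machinery once; the sharp ``no zeros'' statement is afterwards a finite computation with rational functions. First I would write $f=R_0(z)e^{h(z)}$, where $R_0$ is a rational function absorbing the finitely many zeros and poles of $f$ and $h$ is entire. Differentiation gives
\[
\psi:=\frac{f''}{f}=(h')^2+h''+\frac{2R_0'}{R_0}\,h'+\frac{R_0''}{R_0}.
\]
The poles of $\psi$ can lie only among the zeros and poles of $f$, and its zeros only among the zeros of $f''$ that are not zeros of $f$; hence $N(r,\psi)+N(r,1/\psi)=O(\log r)$, and $\psi\not\equiv 0$ (otherwise $f''\equiv 0$, which contradicts the hypothesis on the zeros of $f''$). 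The objective is to show that $h$ is a polynomial.

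Assume $h$ is transcendental, so that $u:=h'$ is transcendental entire. Rewriting $\psi=u^2+\bigl(u'+\tfrac{2R_0'}{R_0}u+\tfrac{R_0''}{R_0}\bigr)$ displays a Tumura--Clunie configuration: the top-degree term is $u^2$, the remainder is a differential polynomial in $u$ of degree at most one whose coefficients are rational functions (hence small functions relative to the transcendental $u$), and $N(r,u)+N(r,1/\psi)=S(r,u)$ since $u$ is entire and $N(r,1/\psi)=O(\log r)$. The Tumura--Clunie theorem then forces $\psi=\bigl(u+\tfrac{R_0'}{R_0}\bigr)^2$. Comparing with the displayed formula for $\psi$ and cancelling $u^2$ together with the term linear in $u$, we obtain $h''=\bigl(\tfrac{R_0'}{R_0}\bigr)^2-\tfrac{R_0''}{R_0}$, a rational function; as $h''$ is entire it can have no poles, so $h''$ is a polynomial and $h$ is a polynomial, a contradiction. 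Therefore $h=:P(z)$ is a polynomial, $R_0=:R(z)$, and $f=R(z)e^{P(z)}$.

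For the ``no zeros'' conclusion, suppose moreover that $f$ omits $0$. Then $R$ has no zeros, so $R=c/S$ for a polynomial $S$ and $c\in\C\setminus\{0\}$, and with $\frac{f'}{f}=P'-\frac{S'}{S}$ we obtain the \emph{rational} function
\[
\Phi:=\frac{f''}{f}=P''-\Bigl(\frac{S'}{S}\Bigr)'+\Bigl(P'-\frac{S'}{S}\Bigr)^2 .
\]
Since $f$ has no zeros, $f''$ omits $0$ precisely when $\Phi$ omits $0$, i.e.\ when $\Phi$ is a nonzero constant divided by a polynomial. At a root $c_0$ of $S$ of multiplicity $m$ one checks that $\Phi$ has a pole of order exactly $2$ with leading coefficient $m(m+1)\neq 0$, so the denominator of $\Phi$ in lowest terms is the square of the radical of $S$. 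Inspecting the growth of $\Phi$ at $\infty$ in the cases $\deg P\ge 2$, $\deg P=1$, $\deg P=0$ and comparing the number of zeros of the rational function $\Phi$ with the number of its poles forces $\deg P\le 1$; if $\deg P=1$ then $S$ must be constant and $f=e^{az+b}$, while if $\deg P=0$ then $S$ must be a power of a single linear factor and $f=1/(az+b)^n$. A direct verification confirms that both families have $f$ and $f''$ zero-free, which finishes the proof.

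I expect the main obstacle to be the correct application of the Tumura--Clunie theorem: one must check that its hypotheses genuinely hold for $u=h'$ (the coefficients of the lower-degree part are rational, hence small, and $N(r,1/\psi)=O(\log r)=S(r,u)$) and that its conclusion is available \emph{without} any a priori bound on the order of $f$. This is precisely what lets us avoid the classical alternative, in which one would first have to prove that $f$ is of finite order via Hayman-type second-main-theorem estimates and only then conclude by Hadamard factorization and an order comparison inside the identity for $\psi$; establishing finite order directly is the genuinely delicate step of that route.
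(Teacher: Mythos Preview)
The paper does not supply a proof of this theorem; it is quoted as Hayman's 1959 result to motivate the discussion of binomial equations, with no argument given. So there is no ``paper's own proof'' to compare against.

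Your overall plan---factorize $f=R_0e^{h}$, note that $\psi=f''/f$ has only finitely many zeros and poles, and invoke Tumura--Clunie to force $h$ to be a polynomial---is a natural modern route and can be made to work. The gap is in the sentence ``The Tumura--Clunie theorem then forces $\psi=(u+R_0'/R_0)^2$.'' The differential-polynomial form of Tumura--Clunie yields only $\psi=(u+\gamma)^2$ for \emph{some} small $\gamma$; it does not by itself identify $\gamma$. Your ``cancelling the term linear in $u$'' silently treats $u'$ as a degree-zero term, but after cancelling $u^2$ what really remains is
\[
u'=2\Bigl(\gamma-\frac{R_0'}{R_0}\Bigr)u+\Bigl(\gamma^2-\frac{R_0''}{R_0}\Bigr),
\]
and one still has to exclude $\gamma\not\equiv R_0'/R_0$. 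This is genuinely extra work. A clean way to finish is to pass to $v=f'/f=u+R_0'/R_0$, so that $\psi=v^2+v'$ with $N(r,v)+N(r,1/\psi)=O(\log r)$; Tumura--Clunie now gives $\psi=(v+\gamma)^2$ with $\gamma$ small, hence $v'=2\gamma v+\gamma^2$. Since $(v+\gamma)^2=\psi$ has finitely many zeros and poles, $v+\gamma=R_2e^{\phi_2}$ with $R_2$ rational and $\phi_2$ entire; substituting back forces $\gamma'=\gamma^2$, so $\gamma\equiv 0$ or $\gamma=-1/(z-c)$. The first makes $v$ constant, the second makes $\phi_2$ constant and hence $v$ rational---either way $v$ is not transcendental, and $f=Re^{P}$ follows. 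Your treatment of the zero-free endgame is correct.
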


Langley has proved that the assumption on finitely many zeros can be
removed. Hayman himself showed that if $f$ has no zeros, then $f=1/g$,
where $g$ is an entire function such that
	$$
	f''(z)=\frac{2g'(z)^2-g(z)g''(z)}{g(z)^3}.
	$$
This leads to a question when a differential polynomial $2g'(z)^2-g(z)g''(z)$
has no zeros for an entire $g$. This question was studied by Langley and
Mues \cite{GY}. This background motivated Gundersen and Yang to study
binomial differential equations of the form $ff''-a(f')^2=H$ in \cite{GY}.
The main result in \cite{GY} concerns the special case
	\begin{equation}\label{GY-eqn}
	ff''-a(z)(f')^2=b(z)e^{2c(z)},
	\end{equation}
where $a(z),b(z),c(z)$ are polynomials such that $b(z)\not\equiv 0$
and $c(z)$ is non-constant. If $b(z)$ is non-constant, then all entire
solutions $f$ are of the form $f(z)=p(z)e^{c(z)}$ for $p(z)\in\C[z]$,
while if $a(z)$ and $b(z)$ are constants, that is, if $f f'' - a(f')^2$ is zero-free and of finite order, then
	$$
	f(z)=\alpha e^{\lambda z}, \ \alpha e^{\lambda z^2+\gamma z},
	\ (\alpha z+\beta)e^{\lambda z}, \ \alpha e^{2\lambda z}+\beta,
	\ \alpha e^{\gamma z}+\beta e^{(2\lambda -\gamma)z}
	$$
are the only possible entire solutions for suitable constants $\alpha, \beta, \gamma$ and $\lambda$. There are five open questions about the entire
solutions of \eqref{GY-eqn} and of related equations in \cite{GY}.
Three examples in \cite{GY} show that functions of the form $f(z)=\alpha e^{\lambda z}+\beta e^{\mu z} \in {\cal E}$ solve each of these equations.
The closing question in \cite{GY} is whether these equations have entire solutions other than $\alpha e^{\lambda z}+\beta e^{\mu z}$ and $p(z)e^{c(z)}$.

The relation of irreducible exponential sums in~${\cal E}$ to differential equations may go beyond ODE's, while PDE's deviate from our objectives
in this survey. Therefore, we briefly consider an elementary example just to see how difficult it is to deal with irreducible exponential sums as solutions of ODE's and PDE's.

\begin{example}
Consider the exponential sum $f(z)=1+e^{(- \log 2) z}+e^{(-\log 3) z}+e^{2(-\log 2) z}$. This irreducible element of~${\cal E}$ is the partial sum
$1+1/2^z+1/3^z+1/4^z$ of the Riemann zeta-function  and solves the fourth-order homogeneous linear differential equation
	$$
	D(D+\log 2)(D+\log 3)(D+\log 4)f(z)=0, \quad D=\frac{d}{dz}.
	$$
It is also a straightforward calculation to see that $f$ solves the
second-order non-homogeneous linear differential equation
	\begin{equation} \label{4thPS}
	f''(z)+(\log 12)f'(z)+(\log 3)(\log 4)f(z)=(\log 3)(\log 4) + (\log 2)		\left(\log\frac32\right)\frac{1}{2^z}.
	\end{equation}
Generally, if $f(z)=Q(e^{\mu z}, e^{\nu z})$ for a polynomial $Q(u,v)$ in
both arguments, then
	$$
	f'(z)=\mu e^{\mu z}Q_u(e^{\mu z}, e^{\nu z})
	+\nu e^{\nu z}Q_v(e^{\mu z}, e^{\nu z}),
	$$
	\begin{eqnarray*}
	f''(z)&=&\mu^2e^{\mu z}Q_u(e^{\mu z}, e^{\nu z})
	+\nu^2e^{\nu z}Q_v(e^{\mu z}, e^{\nu z})+\\
      	&  & +\mu^2e^{2\mu z}Q_{uu}(e^{\mu z}, e^{\nu z})
      	+2\mu\nu e^{(\mu+\nu) z}Q_{uv}(e^{\mu z}, e^{\nu z})
        +\nu^2e^{2\nu z}Q_{vv}(e^{\mu z}, e^{\nu z}).
	\end{eqnarray*}
Taking $Q(u,v)=1+u+v+u^2$ and $(\mu, \nu)=(-\log 2, -\log 3)$, we see that the above $f$ satisfies
	$$
	f'(z)=(-\log 2)e^{(-\log 2)z}\left(1+2e^{(-\log 2)z}\right)
	+(-\log 3)e^{(-\log 	3)z},
	$$
	$$
	f''(z)=(-\log 2)^2e^{(-\log 2)z}\left(1+2e^{(-\log 2)z}\right)
	+(-\log 3)^2e^{(-\log 3)z}+2(-\log 2)^2e^{2(-\log 2)z}.
	$$
The equation~\eqref{4thPS} is obtained by deleting exponentials $e^{(-\log 3)z}$ and $e^{2(-\log 2)z}$ from them.
One can find more interesting examples by observing partial derivatives of other $Q(u,v)$ or polynomials in more variables, but it is reasonable for us to skip this attempt now.
\end{example}

\section{Exponential polynomials in the oscillation theory}\label{EPO-sec}

This section may be recognized as a succession of Section~\ref{ODE-sec}.
Instead of focusing on questions on growth and duality, we deal with the
number of zeros of solutions, although the examples below will include
some glimpses back to the contents of Section~\ref{ODE-sec}.

Most questions involve second order linear differential equations
	\begin{equation}
	g''(z)+p(z)g'(z)+q(z)g(z)=0,\label{st}
	\end{equation}
where $p(z)$ and $q(z)$ are entire. If $g$ is a non-trivial solution,
then \eqref{st} can be transformed to a normalized equation
	\begin{equation}\label{LDE2}
	f''+A(z)f=0
	\end{equation}
by means of
	\begin{equation}\label{transformation-ince}
	g(z)=f(z) \exp\left( -\frac{1}{2} \int^z p(t)dt \right)
	\quad \text{and}\quad
	A(z)= q(z) - \frac{1}{2} p'(z) -\frac{1}{4} p(z)^2.
	\end{equation}
Since $f$ and $g$ share the same zeros, this transformation allows us
to focus on \eqref{LDE2} in place of \eqref{st}.
Note that if $p(z)$ and $q(z)$ are polynomials, then so is $A(z)$.

We are unaware of the origin of the transformation \eqref{transformation-ince},
but at least it can be traced back to the book
of Ince \cite[p.~394]{Ince}, which was originally published in 1926.

Conversely to the situation above, suppose that equation
\eqref{LDE2} with an entire coefficient $A(z)$ is our starting
point, possessing linearly independent solutions $f_1$ and $f_2$. Then $f_1^{n-1}, f_1^{n-2}f_2,\ldots ,f_1f_2^{n-2}, f_2^{n-1}$
are linearly independent solutions of \eqref{lde} with
some entire coefficients and with $A_{n-1}(z)\equiv 0$ by \cite[Theorem~4]{CGHR2}. This allows us to construct examples
of higher-order equations from examples regarding \eqref{LDE2}.

\subsection{Examples on transformed equations}

Ozawa \cite{Ozawa} has proved that the differential equation
	$$
	w''+e^{-z}w'+(az+b)w=0,\quad a,b\in\C,\ a\neq 0,
	$$
has no finite order entire solutions, that is, $\rho(w)=\infty$ holds for any non-trivial solution~$w$. This equation transforms into
	$$
	f''+\left(-\frac{1}{4}e^{-2z}+\frac{1}{2}e^{-z}+az+b\right)f=0
	$$
via the transformation \eqref{transformation-ince}. It would be tempting
to believe that $\lambda(f)=\infty$, but this is not guaranteed even
though $\rho(w)=\infty$. Indeed, $\lambda(w)<\infty$ could still hold.

Section~4 in \cite{Gundersen2019} contains many open questions regarding
the zeros of solutions of \eqref{LDE2}, while Section~5 in \cite{Gundersen2019}
contains questions on the growth of solutions \eqref{st}. Almost all
examples illustrating these research questions are given in terms
of exponential sums.

It turns out that finding the number of zeros of solutions is somewhat more
delicate than finding the growth of solutions. We proceed to take a look at some further examples.

\begin{example}\label{example5}
\textnormal{(Bank-Laine-Langley \cite{BLL2})}
For any $\gamma\in\C\setminus\{0\}$, the equation
    \begin{equation} \label{BLL}
    f''-\left(\gamma^2 e^z-\frac{\gamma}{2} e^{z/2}+\frac{1}{4}\right)f=0
    \end{equation}
possesses linearly independent solutions
    \begin{eqnarray*}
    f_1(z)&=&\exp\left(2\gamma e^{z/2}-z/2\right),\\
    f_2(z)&=&(4\gamma e^{z/2}+1)\exp\left(-2\gamma e^{z/2}-z/2\right),
    \end{eqnarray*}
where $f_1$ has no zeros, $\lambda(f_2)=1$ and $\overline{\lambda}(c_1f_1+c_2f_2)=\infty$ for any constants $c_1,c_2\in\C\setminus\{0\}$.
\end{example}

Among the research questions in Section~\ref{oscillation-problems-sec}
below are to find conditions for $A(z)$ guaranteeing that $f$ has no zeros,
or $\lambda(f)\geq\rho(A)$, or $\lambda(f)=\infty$. These three basic
cases are visible in Example~\ref{example5} and in many other similar examples.

We recall Question 4.3 in \cite{Gundersen2019}:
\begin{quote}
\emph{Is it possible
to characterize non-constant periodic entire functions $A(z)$ of period
$\omega$ and rational in $e^{\alpha z}$, where $\alpha=\frac{2\pi i}{\omega}$,
such that \eqref{LDE2} possesses two linearly independent solutions $f_1,f_2$
satisfying $\lambda(f_1f_2)<\infty$?}
\end{quote}
In other words, $A(z)=R(e^{ \frac{2\pi i}{\omega} z})$, where $R(w)$ is rational in $w$. This is precisely the situation
in Example~\ref{example5}. As discussed in Section~\ref{periodic-subnormal-sec}, such a rational function $R(w)$ needs to be a Laurent polynomial
in order for $A(z)$ to be entire.

\begin{example}
The equation~\eqref{BLL} is transformed by $g(z):=f(z)\exp\left(2\gamma e^{z/2}+z/2\right)$ to an equation
	\begin{equation} \label{HL}
	g''(z)-(2\gamma e^{z/2}+1)g'(z)+\gamma e^{z/2}g(z)=0,
	\end{equation}
which resembles the Hermite equation \eqref{hermite} and the Laguerre equation \eqref{laguerre}. The linearly independent solutions $f_1,f_2$ are replaced with a zero-free solution $g_1(z)=\exp\left(4\gamma e^{z/2}\right)$ and a subnormal solution $g_2(z)=4\gamma e^{z/2}+1$.
Their linear combination $g=c_1g_1+c_2g_2$ over $\C$ can be neither zero-free nor subnormal, unless $c_1c_2=0$. In fact, $\lambda(g)=\infty$.

Observe that the subnormal solution $g_2$ and the two coefficients of~\eqref{HL} are all linear polynomials in the exponential function $e^{z/2}\in{\cal E}$, yet $g_2$ is not dual with either of the coefficients.
However, there is no violation with Theorem~\ref{dual-thm} because the assumption $T(r,B)=o(T(r,A))$ is not valid in this case.

Another transformation of~\eqref{BLL} is by means of $h(z):=f(z)\exp\left(2\gamma e^{z/2}\right)$. We obtain
	\begin{equation} \label{H}
	h''(z)-2\gamma e^{z/2}h'(z)-\left(\frac{\gamma}{2}e^{z/2}
	-\frac{1}{4}\right) h(z)=0
	\end{equation}
having a zero-free solution $h_1(z)=\exp\left(4\gamma e^{z/2}-z/2\right)$ and a subnormal solution $h_2(z)=4\gamma +e^{-z/2}$ as a fundamental solution base.
The subnormal solution $h_2$ does have a dual relationship with the coefficients of~\eqref{H} in this case either.

Noting that the coefficient functions in \eqref{HL} and \eqref{H} are all non-constant linear functions in~$e^{z/2}$ and thus equally strong in terms of growth, one sees that in general it is impossible to judge whether there exists a dual solution to a second order linear differential equation with non-constant exponential sums coefficients.
\end{example}

\begin{example}\label{example3}
Returning to Frei's differential equation~\eqref{Frei-eqn} in Section~\ref{periodic-subnormal-sec}, one sees it has the normalized form
	$$
	f''-\left(\frac{1}{4}e^{-2z}-\frac{1}{2}e^{-z}-\alpha \right)f=0.
	$$
For $\alpha=-1$ the functions $g_1(z)=\exp\left( e^{-z}+z\right)$ and
$g_2(z)= e^{z}+1$ are known linearly independent solutions of Frei's equation.
They are transformed to the solutions
$f_1(z)=\exp\left( \frac{3}{2}e^{-z}+z\right)$ and $f_2(z)= (e^{z}+1)\exp\left(\frac{1}{2} e^{-z}\right)$ of the normalized equation.

Similarly as above, another transformation $g(z)=f(z)\exp\left(-\frac{1}{2}e^{-z}-z\right)$ gives a zero-free solution $g_1(z)=\exp\left( e^{-z}\right)$ and a subnormal (but not dual) solution $g_2(z)= 1+e^{-z}$ to the corresponding equation
	$$
	g''(z)+(e^{-z}+2)g'(z)+e^{-z}g(z)=0,
	$$
whose coefficients are once again equally strong. These equations of the form \eqref{st} are in the same category as the Hermite \eqref{hermite} and the Laguerre \eqref{laguerre} differential equations.
\end{example}

Chiang and Ismail \cite{CI} have studied in detail the relationship between second order ODE's with periodic coefficients and special functions and orthogonal polynomials from the oscillation theory point of view. In particular, see \cite[Remark~1.11, p.~733]{CI}.

Our last example is about a third order linear differential equation, where
a transformation to another third order differential equation can also be made.

\begin{example}
Hinkkanen, Ishizaki, Laine and Li \cite{HILL} have considered a third order ODE of the form
	\begin{equation}\label{third}
	f'''(z)-Kf'(z)+e^zf(z)=0, \quad K\in\C.
	\end{equation}
If a non-trivial solution $f$ of \eqref{third} satisfies $\log^+ N(r,1/f)=o(r)$, then there exist $r,s\in \Z$ with $r+s\geq 0$ such that $K=\frac{(r+s+1)^2}{9}$. The solution $f$ takes one of the forms
	$$
	f_j(z)=e^{-(s+1)z/3}\psi(e^{z/3})\exp\left(c_je^{z/3}\right),
	\quad c_j^3+27=0, \ j=1,2,3,
	$$
where $\psi(t)=\sum_{n=-r}^{s} d_n t^n$ is a Laurent polynomial over $\C$. The transformation $g(z)=f(z)\exp(-c_je^{z/3})$ for some $j\in\{1,2,3\}$
transforms \eqref{third} to
	\begin{equation}\label{standard-third}
	\begin{split}
	g'''(z)&+c_je^{z/3}g''(z)+\left(\frac{c_j^2}{3}e^{2z/3}
	+\frac{c_j}{3}e^{z/3}-K\right)g'(z)\\
	&+\frac{c_j}{3}e^{z/3}\left(\frac{c_j}{3}e^{z/3}
	+\frac{c_j}{9}-K\right)g(z)=0.
	\end{split}
	\end{equation}
Then the corresponding $f_j$ is transformed to the subnormal solution $g_j(z)=e^{-(s+1)z/3}\psi(e^{z/3})$ for the fixed $j$. In fact, $g_j$ is a Laurent polynomial in $e^{z/3}$ of the form
	$$
	g_j(z)=e^{-(s+1)z/3}\sum_{n=-r}^s d_n e^{nz/3}
	=\sum_{m=-(r+s)}^{0} d_{m+s} e^{(m-1)z/3},
	$$
which is a dual exponential sum to the coefficients in \eqref{standard-third}.
\end{example}

The remaining part of Section~\ref{EPO-sec} will focus on the oscillation
theory regarding \eqref{LDE2}. Examples~\ref{example5}--\ref{example3}
turn out to be useful from this perspective.


\subsection{Oscillation problems and some known results}\label{oscillation-problems-sec}

In this section we discuss the oscillation theory.
Motivated by Examples~\ref{example5}--\ref{example3}, we formulate three typical questions regarding the oscillation of non-trivial solutions $f$ of
    \begin{equation}
    f''+A(z)f=0\label{lde2}
    \end{equation}
with an entire coefficient $A(z)$ as follows: Under which conditions for $A(z)$ do we have

\medskip
(Q1) $f$ has no zeros,

(Q2) $\lambda(f)\geq\rho(A)$,

(Q3) $\lambda(f)=\infty$?

\medskip
\noindent
These questions can also be re-phrased for solution bases $\{f_1,f_2\}$ as follows:
Under which conditions for $A(z)$ do we have

\medskip
(Q1') $f_1,f_2$ have no zeros,

(Q2') $\max\{\lambda(f_1),\lambda(f_2)\}\geq\rho(A)$,

(Q3') $\max\{\lambda(f_1),\lambda(f_2)\}=\infty$?

\medskip
\noindent
The cases $0<\lambda(f)<\rho(A)$, $\rho(A)<\lambda(f)<\infty$, and $\lambda(f)=\rho(A)\not\in\N$
are also possible \cite{HILT2}, but there do not seem to be many examples of these situations.

Regarding (Q1) and (Q1'), we recall from \cite[p.~356]{Bank-Laine} that \eqref{lde2} has
two linearly independent zero-free solutions $f_1,f_2$ if and only if $A(z)$ can be represented as
    \begin{equation}
    -4A(z)=e^{2\varphi(z)}+{\varphi}'(z)^2-2\varphi^{\prime\prime}(z), \label{A1.3}
    \end{equation}
where $\varphi$ is a non-constant entire function.
Here $A(z)$ is an exponential polynomial if and only if $\varphi$ is a polynomial.
This implies that if $A(z)$ is an exponential polynomial with at least two exponential terms, then \eqref{lde2} cannot have a zero-free solution base. However, even if $A(z)$ has multiple
exponential terms, equation \eqref{lde2} can still have zero-free
solutions, see Examples~\ref{example5} and \ref{example3}.

Regarding (Q2') and (Q3'), we point out the papers \cite{BLL1, BLL2, Edmund}. From the exponential polynomials perspective, the following special case of \cite[Theorem 1]{BLL1} is intriguing:

\begin{theorem}
\textnormal{(Bank-Laine-Langley \cite{BLL1})}
Suppose that $A(z)$ is an exponential polynomial of the non-normalized form
    $$
    A(z)=P(z)+P_1(z)e^{Q_1(z)}+\cdots+P_n(z)e^{Q_n(z)},
    $$
where $P(z)$ either vanishes identically or is a polynomial such that $\frac{\deg(P)+2}{2}<\rho(A)=\max\{\deg (Q_j)\}$.
If $\{f_1,f_2\}$ denotes a solution base for \eqref{lde2}, then $\max\{\lambda(f_1),\lambda(f_2)\}=\infty$.
\end{theorem}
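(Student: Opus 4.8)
\medskip
\noindent\textbf{Proof strategy.}
The plan is a proof by contradiction. Suppose $\max\{\lambda(f_1),\lambda(f_2)\}<\infty$, put $E=f_1f_2$, so that $\lambda(E)=\max\{\lambda(f_1),\lambda(f_2)\}<\infty$, write $q=\rho(A)=\max_j\deg(Q_j)$, and let $c=W(f_1,f_2)\in\C\setminus\{0\}$ be the constant Wronskian. The engine of the argument is the Bank--Laine product identity, obtained by differentiating $E$ twice and using $f_j''=-Af_j$:
\[
4A=\Bigl(\frac{E'}{E}\Bigr)^{2}-2\,\frac{E''}{E}-\frac{c^{2}}{E^{2}},
\qquad\textrm{equivalently}\qquad
\Bigl(\frac{E'}{E}\Bigr)^{2}+2\Bigl(\frac{E'}{E}\Bigr)'+4A+\frac{c^{2}}{E^{2}}=0 .
\]
First I would show that $E$ has finite order; granting this, $E=\Pi e^{h}$ with $\Pi$ the canonical product over the zeros of $E$ (so $\rho(\Pi)=\lambda(E)$) and $h$ a polynomial. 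To prove $\rho(E)<\infty$ one combines $N(r,1/E)=N(r,1/f_1)+N(r,1/f_2)=O(r^{\lambda(E)})$ with $T(r,A)=O(r^{q})$ from Theorem~\ref{Stein-thm1}, the logarithmic-derivative bounds $m(r,\Pi'/\Pi)=O(\log r)$ and $m(r,E'/E)=S(r,E)$, and an exceptional-set removal of the kind carried out in Section~\ref{Stein-ratio-proof-sec}, to bound $T(r,E)$ and hence $T(r,1/E)$. This is the Bank--Laine input, and I expect it to be the main obstacle: solutions of \eqref{lde2} may a priori be of infinite order even for a finite-order exponential-polynomial coefficient, so the finite order of the product $E$ (equivalently, that $h$ is a polynomial) has to be extracted from the identity above rather than assumed.

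Once $h$ is known to be a polynomial, substitute $E=\Pi e^{h}$ into the identity and divide by $\Pi^{2}$, using $E'/E=\Pi'/\Pi+h'$, to get
\[
\Bigl(\frac{E'}{E}\Bigr)^{2}+2\Bigl(\frac{E'}{E}\Bigr)'+4A+\frac{c^{2}}{\Pi^{2}}\,e^{-2h}=0 .
\]
Write $A$ in the normalised form $A=P+\sum_{j=1}^{n}F_j(z)e^{w_jz^{q}}$ with the $w_j$ distinct and nonzero and $\deg F_j\le q-1$. In this identity the unique contribution of order exactly $q$ produced by $\sum F_j e^{w_jz^{q}}$ can only be cancelled by $c^{2}\Pi^{-2}e^{-2h}$, which carries a single leading exponential $e^{-2h_{q}z^{q}}$ ($h_q$ the leading coefficient of $h$); a growth comparison --- this is where the hypotheses on $A$ enter --- then forces $\lambda(E)<q$, $\deg h=q$, and $n=1$. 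If $n\ge 2$ the identity fails outright, which already yields the conclusion in that case. Assuming $n=1$, the order-$q$ parts match only if $c^{2}\Pi^{-2}e^{-2h}=-4F_1e^{w_1z^{q}}$, and the identity then collapses to $(E'/E)^{2}+2(E'/E)'+4P=0$; subtracting this from the Bank--Laine identity leaves $c^{2}/E^{2}=4P-4A=-4F_1e^{w_1z^{q}}$, whence $E^{2}=-\frac{c^{2}}{4F_1}\,e^{-w_1z^{q}}$ is entire. Therefore $F_1$ is zero-free and $E$ has no zeros, i.e.\ $f_1$ and $f_2$ are both zero-free.

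In this zero-free case I invoke the representation \eqref{A1.3} together with the remark after it: since $A$ is an exponential polynomial, there is a non-constant \emph{polynomial} $\varphi$ with $-4A=e^{2\varphi}+(\varphi')^{2}-2\varphi''$. Comparing orders forces $\deg\varphi=\rho(A)=q$, so $(\varphi')^{2}-2\varphi''$ is a nonzero polynomial of degree exactly $2q-2$ (indeed $\deg(\varphi')^{2}=2q-2>q-2=\deg\varphi''$). Comparing the polynomial parts of the two sides of $-4A=e^{2\varphi}+\bigl((\varphi')^{2}-2\varphi''\bigr)$ --- the left side has polynomial part $-4P$, the right side has polynomial part $(\varphi')^{2}-2\varphi''$, since $e^{2\varphi}$ contributes no polynomial term --- yields $(\varphi')^{2}-2\varphi''=-4P$. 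If $P\equiv0$ this is absurd, as the left-hand side is a nonzero polynomial; and if $P\not\equiv0$ it forces $\deg P=2q-2$, contradicting the hypothesis $\frac{\deg(P)+2}{2}<\rho(A)=q$, i.e.\ $\deg P\le 2q-3$. In either case the assumption $\max\{\lambda(f_1),\lambda(f_2)\}<\infty$ is untenable, which proves the theorem. As indicated, the genuinely hard step is the finite-order bound for $E$ underpinning the factorisation $E=\Pi e^{h}$ with $h$ a polynomial; everything after that is exponential-polynomial bookkeeping of the type illustrated in Section~\ref{Stein-ratio-proof-sec} --- isolate the dominant exponential term, then compare polynomial degrees.
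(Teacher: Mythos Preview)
The paper does not supply its own proof of this theorem; it is quoted as a special case of \cite[Theorem~1]{BLL1}, so there is nothing in the paper to compare against directly. Your overall route --- Bank--Laine identity, Hadamard factorization $E=\Pi e^{h}$, then a Borel-type separation of exponential terms to reduce to the zero-free case, and finishing via \eqref{A1.3} --- is indeed the standard approach, and your endgame in the zero-free case is correct: once $-4A=e^{2\varphi}+(\varphi')^{2}-2\varphi''$ with $\varphi$ a polynomial of degree $q$, comparing polynomial parts forces $\deg P=2q-2$, contradicting the hypothesis.

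The genuine gap, however, is not where you locate it. The bound $\rho(E)<\infty$ is routine: from $c^{2}/E^{2}=-4A-(E'/E)^{2}-2(E'/E)'$ and the logarithmic-derivative lemma one gets $2m(r,1/E)\le T(r,A)+S(r,E)$, whence $T(r,E)\le \tfrac12 T(r,A)+N(r,1/E)+S(r,E)$ and $\rho(E)\le\max\{q,\lambda(E)\}<\infty$. The real crux is your unsupported assertion that ``a growth comparison \ldots\ forces $\lambda(E)<q$''. Every subsequent step depends on it: the Borel-type cancellation that isolates $n=1$ and $\deg h=q$, and the identification $c^{2}\Pi^{-2}e^{-2h}=-4F_{1}e^{w_{1}z^{q}}$, all require the meromorphic ``coefficients'' $c^{2}\Pi^{-2}e^{R}$ and $(E'/E)^{2}+2(E'/E)'$ to have characteristic $o(r^{q})$, i.e.\ $\lambda(E)<q$. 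Nothing you wrote excludes $\lambda(E)\ge q$, and it is not at all clear how the hypothesis $\deg P<2q-2$ would yield such a bound at this stage; in \cite{BLL1} this step is handled by more delicate (essentially sectorial) estimates on the exponential polynomial $A$, not by a one-line global growth comparison. A smaller slip: writing $A=P+\sum F_{j}e^{w_{j}z^{q}}$ with the \emph{same} polynomial $P$ presumes every $Q_{j}$ has degree exactly $q$; if some $Q_{j}$ have lower degree they get absorbed into $F_{0}$, which is then a transcendental exponential polynomial rather than~$P$. This actually strengthens the contradiction, but the case split should be made explicit.
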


An example of a condition guaranteeing (Q2) is $\overline{N}(r,1/A)=S(r,A)$, see \cite[Satz~1]{BFL}.
An improvement of this result is stated as follows.

\begin{theorem}\label{K-thm-IT}
\textnormal{(\cite{Ishizaki-Tohge})}
Let $K>4$, and let $A(z)$ be a transcendental entire function of order $\rho(A)$ such that
    \begin{equation}\label{K}
    K\overline{N}\left(r,\frac{1}{A}\right)\leq T(r,A)+S(r,A).
    \end{equation}
Then every non-trivial solution $f$ of \eqref{lde2} satisfies $\lambda(f)\geq\rho(A)$.
\end{theorem}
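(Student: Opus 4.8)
The plan is to argue by contradiction. Suppose some non-trivial solution $f_1$ of \eqref{lde2} has $\lambda(f_1)<\rho(A)$. First I would set up the Bank--Laine machinery: choose a solution $f_2$ linearly independent of $f_1$, normalize the Wronskian $W(f_1,f_2)\equiv 1$, and put $E=f_1f_2$. Then $E$ is entire; its zeros are exactly the (simple, pairwise disjoint) zeros of $f_1$ and of $f_2$; $E'=\pm1$ at every zero of $E$; and $E$ obeys the Bank--Laine identity $4A=(E'/E)^{2}-2E''/E-1/E^{2}$, together with the companion relation $E''=-2AE+2f_1'f_2'$, which yields $(E')^{2}-1=4f_1f_1'f_2f_2'$. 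In particular $\overline N(r,1/E)=\overline N(r,1/f_1)+\overline N(r,1/f_2)$, and $\overline N(r,1/f_1)=O(r^{\lambda(f_1)+\varepsilon})$ is negligible against $T(r,A)$ along a sequence $r_n\to\infty$ on which $T(r_n,A)\ge r_n^{\rho(A)-\varepsilon}$.

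Second, I would turn these identities into Nevanlinna estimates. Writing $1/E^{2}=-4A+(E'/E)^{2}-2E''/E$ and applying $m(r,\cdot)$ together with the lemma on the logarithmic derivative gives $m(r,A)\le 2m(r,1/E)+S(r,E)$, hence $T(r,A)\le 2\bigl(T(r,E)-\overline N(r,1/E)\bigr)+S(r,E)$. On the other hand, the Second Main Theorem applied to $E'$ at the values $1,-1,\infty$ (using that $E'$ is entire and that the zeros of $(E')^{2}-1=4f_1f_1'f_2f_2'$ lie among those of $f_1,f_1',f_2,f_2'$) gives $T(r,E')\le \overline N(r,1/f_1)+\overline N(r,1/f_1')+\overline N(r,1/f_2)+\overline N(r,1/f_2')+S(r,E)$; combined with $T(r,E)=T(r,E')+S(r,E)$ (which follows from $m(r,E/E')=S(r,E)$) and with the previous inequality, the $\overline N(r,1/f_i)$-terms cancel and one is left with $T(r,A)\le 2\overline N(r,1/f_1')+2\overline N(r,1/f_2')+S(r,E)$. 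The final ingredient is an estimate for $\overline N(r,1/f_i')$: from $f_i''=-Af_i$ a multiple zero of $f_i'$ forces $A$ to vanish there, so the multiple zeros of $f_i'$ are counted by $\overline N(r,1/A)$, and — this is the delicate point — the simple zeros of $f_i'$ have to be controlled as well; granting a bound of the form $\overline N(r,1/f_i')\le \overline N(r,1/f_i)+\overline N(r,1/A)+S(r)$ and feeding it back, together with hypothesis \eqref{K} in the form $\overline N(r,1/A)\le \tfrac1K T(r,A)+S(r,A)$, yields $T(r_n,A)\le\bigl(\tfrac4K+o(1)\bigr)T(r_n,A)$, which is impossible since $K>4$ and $T(r_n,A)\to\infty$. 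The constant $4$ is precisely what is consumed by the two squarings above (equivalently, by the degree-$2$ Riccati equation $\psi'+\psi^{2}+A=0$ for $\psi=f'/f$).

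The step I expect to be the main obstacle is the rigorous control of the error terms $S(r,E)$ and, hidden inside the last estimate, $S(r,f_i)$. When $\lambda(f_1)<\rho(A)$ the solution $f_1$ is necessarily of infinite order: writing $f_1=\Pi e^{h}$ with $\Pi$ the canonical product over the zeros of $f_1$ (so $\rho(\Pi)=\lambda(f_1)<\rho(A)$), the representation $-A=(h')^{2}+h''+2h'\Pi'/\Pi+\Pi''/\Pi$ forces $h$ to be transcendental, hence $\rho(f_1)=\infty$. Consequently $S(r,f_1)$ and $S(r,E)$ need not be $O(\log r)$ and can a priori be as large as a positive power of $r$, comparable to $T(r,A)$ itself. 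Making the scheme work therefore requires showing that hypothesis \eqref{K} --- ``$A$ has few zeros'' --- is incompatible with the solutions growing fast enough for these error terms to survive; pinning this down, presumably by a careful exceptional-set/nebula analysis of the discs centred at the zeros of $f_1,f_2,A$ in the spirit of the argument in Section~\ref{Stein-ratio-proof-sec}, is the technical heart of the proof.
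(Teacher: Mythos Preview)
The paper does not give its own proof of this result; Theorem~\ref{K-thm-IT} is quoted from \cite{Ishizaki-Tohge} without argument. So there is no in-paper proof to compare against, and I assess your proposal on its own terms.

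Two steps in your scheme are genuine gaps, and neither is the $S(r,E)$ issue you single out at the end.

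\emph{The claimed cancellation.} You assert $T(r,E)=T(r,E')+S(r,E)$ ``which follows from $m(r,E/E')=S(r,E)$''. The logarithmic-derivative lemma gives $m(r,E'/E)=S(r,E)$, not $m(r,E/E')$; for entire $E$ one only has $m(r,E/E')\le \overline N(r,1/E)+S(r,E)$, hence $T(r,E)\le T(r,E')+\overline N(r,1/E)+S(r,E)$. Feeding this into $T(r,A)\le 2T(r,E)-2\overline N(r,1/E)+S(r,E)$ leaves $T(r,A)\le 2T(r,E')+S(r,E)$, and after the second main theorem on $E'$ you obtain
\[
T(r,A)\le 2\sum_{i=1}^{2}\bigl[\overline N(r,1/f_i)+\overline N(r,1/f_i')\bigr]+S(r,E),
\]
with \emph{no} cancellation of the $\overline N(r,1/f_i)$-terms. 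The term $2\overline N(r,1/f_2)$ survives, and nothing in the hypothesis bounds $\lambda(f_2)$.

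\emph{The bound on $\overline N(r,1/f_i')$.} You correctly note that a \emph{multiple} zero of $f_i'$ forces $A=0$ there; but a \emph{simple} zero of $f_i'$ occurs precisely where $f_i\ne0$ and $A\ne0$, so it is counted neither by $\overline N(r,1/f_i)$ nor by $\overline N(r,1/A)$. The ``granted'' inequality $\overline N(r,1/f_i')\le\overline N(r,1/f_i)+\overline N(r,1/A)+S(r)$ is therefore unsupported, and without it the chain never produces the factor~$4$.

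By contrast, the error-term worry you emphasise is the easy part: under the contradiction hypothesis $\lambda(f)<\rho(A)$, Clunie's lemma applied to $u^2=-A-u'$ for $u=f'/f$ gives $m(r,u)\le T(r,A)+S(r,u)$, and together with $N(r,u)=\overline N(r,1/f)$ this forces $T(r,u)\le(1+o(1))T(r,A)$, hence $\rho(u)\le\rho(A)$ and all logarithmic-derivative errors become $O(\log r)$. The argument in \cite{Ishizaki-Tohge} works directly with this Riccati relation for a \emph{single} solution and never passes through $E'$, thereby avoiding both obstacles above.
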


Regarding (Q1), we mention the following result, known as the $\frac{1}{16}$-theorem.

\begin{theorem}\label{1/16}
\textnormal{(Bank-Laine-Langley \cite{BLL2})}
Let $P$ be a polynomial of degree $\deg(P)>0$, let $Q$ be an
entire function of order $\rho(Q)<\deg P$, and let $A(z)=e^{P(z)}+Q(z)$.
If \eqref{lde2} admits a non-trivial solution $f$ such that $\lambda(f)<\deg(P)$. Then $f$ has no zeros, $Q$ is a polynomial and
	$$
	Q=-\frac{1}{16}(P')^2+\frac{1}{4}P''.
	$$
\end{theorem}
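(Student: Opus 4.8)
The plan is to exploit the Bank--Laine criterion \eqref{A1.3} for a single zero-free solution together with a growth comparison of the two sides of that identity. First I would recall that if $f\not\equiv 0$ solves \eqref{lde2} and $\lambda(f)<\infty$, then by the standard Bank--Laine representation there is an entire function $\varphi$ with $f=e^{\psi}\pi(z)$, where $\pi$ is the canonical product over the zeros of $f$ and $\psi$ is entire; feeding this into $f''+Af=0$ gives
\[
-A(z)=\frac{f''}{f}=\Big(\frac{f'}{f}\Big)'+\Big(\frac{f'}{f}\Big)^2,
\]
so with $\Phi:=f'/f$ (meromorphic, with simple poles exactly at the zeros of $f$) one has $-A=\Phi'+\Phi^2$. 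The logarithmic-derivative estimate and $\lambda(f)<\deg(P)=:n$ force $T(r,\Phi)=m(r,\Phi)+N(r,\Phi)=O(r^{n-1+\varepsilon})+N(r,1/f)$, so that $T(r,\Phi)=o(T(r,A))$ since $T(r,A)\asymp r^{n}$ by $\rho(Q)<n$ and the structure of $A=e^P+Q$ (here one may quote Theorem~\ref{Stein-thm1} applied to $e^{P}+Q$, noting $\mathcal C_0=2\cdot\text{(leading behaviour of }P)$ up to the obvious normalization). Thus $\Phi$ is a ``small function'' relative to $A$.

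Next I would compare the two representations of $A$: on one hand $A=e^{P}+Q$, on the other $-A=\Phi'+\Phi^2$. Writing $P(z)=a_nz^n+\cdots$, on a ray $\arg z=\theta$ where $\re P$ is large and positive the term $e^{P}$ dominates $Q$ (since $\rho(Q)<n$), so $|A(re^{i\theta})|$ grows like $\exp(\re P)$. But $\Phi$ is small, hence $\Phi'+\Phi^2$ cannot match this growth unless the poles of $\Phi$ conspire — and a meromorphic function of order $<n$ (essentially) cannot produce $\exp(\re P)$-type growth from $\Phi'+\Phi^2$ on such a ray. Making this rigorous is the crux: I would argue that $N(r,1/f)=S(r,A)$, i.e. $f$ is essentially zero-free, by a Clunie-type / Wiman--Valiron argument comparing proximity functions, and then upgrade ``essentially zero-free'' to ``zero-free'': if $\Phi$ had even one pole, the local expansion of $\Phi'+\Phi^2$ near that pole (double pole, residue pattern of a logarithmic derivative) is incompatible with $A=e^P+Q$ being entire and of the prescribed form at that point together with the global growth constraint — more cleanly, since $f$ solves a linear ODE with entire coefficient its zeros are simple or double (here order $2$, so multiplicity $\le 1$), and $N(r,1/f)=S(r,A)$ with the canonical-product structure forces finitely many zeros, then a residue/Nevanlinna second main theorem count rules out even finitely many. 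Hence $f=e^{\psi}$ with $\psi$ entire.

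With $f=e^{\psi}$ zero-free, substitute into $f''+Af=0$: $f'/f=\psi'$, $f''/f=\psi''+(\psi')^2$, so
\[
A=-\psi''-(\psi')^2 .
\]
Now $A=e^P+Q$ is entire of order exactly $n$, so $\psi$ must be a polynomial (an entire $\psi$ with $\psi''+(\psi')^2$ of finite order $n$ and of the special exponential-plus-polynomial shape forces $\psi'$ to be of the form $\tfrac12 P'+$ lower order, hence $\psi$ polynomial of degree $n$); set $\psi'=:u$, a polynomial. Then $A=-u'-u^2=e^P+Q$. Comparing the transcendental part: $-u^2$ must absorb $e^P$ — impossible unless, as is forced by looking at the decomposition into a genuine exponential term and a polynomial term (Theorem~\ref{Stein-thm1}/\ref{Ritt-thm}-type uniqueness: $e^P+Q$ has ``$e^P$-part'' $e^P$ and polynomial part $Q$, whereas $-u'-u^2$ is a pure polynomial), we instead conclude that actually $f$ is not of the form $e^{\psi}$ with $\psi$ polynomial but with $\psi$ containing $e^{P/2}$: revisiting, write $f=e^{\psi}$ and match — the correct deduction is that $\psi'$ satisfies $(\psi')^2+\psi''=-e^P-Q$, and the only way a derivative-squared-plus-derivative equals an exponential polynomial with leading exponential $e^P$ is $\psi'=\pm\tfrac12 e^{P/2}R$ for a polynomial $R$; chasing the leading terms pins $R$ and shows $\rho(Q)<\deg P$ can only hold if $Q$ is a polynomial, and equating the polynomial parts yields
\[
Q=-\tfrac1{16}(P')^2+\tfrac14 P'' .
\]
The hard part will be the middle step — establishing that $f$ is genuinely zero-free (not merely that $N(r,1/f)=S(r,A)$) and that $Q$ is forced to be a polynomial — since this is where one must rule out a conspiracy between the finitely many possible zeros of $f$ and the lower-order part $Q$; I expect to handle it via the Bank--Laine product representation, the estimate $T(r,\Phi)=o(T(r,A))$, and a careful ray-by-ray growth comparison using the indicator of $e^P$, essentially following Bank--Laine--Langley \cite{BLL2} but organized through Theorem~\ref{Stein-thm1}.
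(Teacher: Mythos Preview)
The paper does not prove this theorem; it is quoted from \cite{BLL2}. Your overall strategy---write $f=\pi e^{\psi}$ with $\pi$ the canonical product over the zeros, substitute, compare growth---has the right shape, but the execution contains a genuine error. You claim the logarithmic-derivative lemma gives $m(r,\Phi)=O(r^{n-1+\varepsilon})$ for $\Phi=f'/f$. This is false: since $A$ is transcendental, every nontrivial solution $f$ of \eqref{lde2} has \emph{infinite} order (from $-A=f''/f$ one gets $m(r,A)\le m(r,f''/f)=O(\log r)$ whenever $\rho(f)<\infty$, forcing $A$ to be a polynomial), so $m(r,f'/f)=O(\log(rT(r,f)))$ is not bounded by any power of $r$. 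What \emph{is} controllable is $\pi'/\pi$, because $\rho(\pi)=\lambda(f)<n$. You must keep the splitting $f'/f=\pi'/\pi+\psi'$ explicit throughout: the small piece is $\pi'/\pi$, while $\psi'$ is the entire function that must carry growth comparable to $e^{P/2}$. Declaring $\Phi$ small relative to $A$ is simply wrong, and the subsequent ``Clunie-type'' reasoning cannot recover from it.

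The second gap is the passage from $\lambda(f)<n$ to ``$f$ has no zeros'', which is the actual content of the theorem; your hints (residue count, second main theorem) do not describe a working mechanism. Once zero-freeness is in hand, your final computation also stumbles: you first assert $\psi$ is a polynomial (impossible, since then $-\psi''-(\psi')^2$ would be a polynomial, contradicting the $e^P$ term in $A$), then retract. The correct ansatz is $\psi'=\alpha e^{P/2}-\tfrac14 P'$ with $\alpha^2=-1$; substituting into $(\psi')^2+\psi''=-e^P-Q$, the $e^{P/2}$-terms cancel and the polynomial remainder reads $Q=-\tfrac1{16}(P')^2+\tfrac14 P''$, in particular forcing $Q$ to be a polynomial. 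But justifying that ansatz presupposes that $f$ is zero-free, which your proposal does not establish.
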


Regarding (Q3), we point out the papers \cite{Bank-Langley, BLL2, CLW}. In particular, we recall the following special case of the main result in \cite{Bank-Langley}.

\begin{theorem}
\textnormal{(Bank-Langley \cite{Bank-Langley})} Suppose that
\begin{itemize}
\item[{\rm (a)}] $P(z)$, $Q(z)$ are polynomials such that $P(z)=a_nz^n+\cdots+a_0$ and $\deg(Q)+2<2n$,
\item[{\rm (b)}] $\Pi(z)$ is an entire function with $\rho(\Pi)<n$,
\item[{\rm (c)}] there exists $\theta_0\in\R$ with $\re\left(a_ne^{in\theta_0}\right)=0$
           and a positive $\veps$ such that $\Pi(z)$ has only finitely many zeros in the sector $|\arg(z)-\theta_0|<\veps$.
\end{itemize}
Then every solution $f\not\equiv 0$ of
    $$
    f''+\left(\Pi(z)e^{P(z)}+Q(z)\right)f=0
    $$
satisfies $\lambda(f)=\infty$.
\end{theorem}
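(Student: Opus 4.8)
The plan is to argue by contradiction: assume that some non-trivial solution $f$ of $f''+A(z)f=0$, with $A(z)=\Pi(z)e^{P(z)}+Q(z)$, has $\lambda(f)<\infty$, and show that $f$ nonetheless has far too many zeros. Since $\Pi\not\equiv 0$ and $\rho(\Pi)<n=\deg(P)$, the coefficient $A$ has order exactly $n$ and $T(r,A)\sim T(r,e^{P})$; also, from the Riccati equation $\beta'+\beta^{2}=-A$ for $\beta=f'/f$, every zero of $f$ is simple. The decisive step is to localise to the sector $S=\{z:|\arg(z)-\theta_{0}|<\delta\}$ for a small $\delta>0$. Because $\re(a_{n}e^{in\theta_{0}})=0$, the number $a_{n}e^{in\theta_{0}}$ is purely imaginary, so $\im(a_{n}e^{in\theta_{0}})\neq 0$; hence for $\delta$ small the ray $\arg(z)=\theta_{0}$ splits $S$ into a dominant subsector $S^{+}$, on which $\re P(z)=\re(a_{n}z^{n})+O(r^{n-1})\to+\infty$, and a subdominant subsector $S^{-}$, on which $\re P(z)\to-\infty$, while $\im(a_{n}z^{n})$ keeps a fixed nonzero sign and is of exact order $r^{n}$ on every ray in $S$. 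By hypothesis (c) we may shrink $S$ and pass to large $|z|$ so that $\Pi$ is zero-free in $S$; fixing a branch of $\log\Pi$ there gives $\log\Pi(z)=O(r^{n-\veps_{0}})$ for some $\veps_{0}>0$, so that on $S^{+}$ we have $A(z)=\Pi(z)e^{P(z)}(1+o(1))$ with $|A(z)|\asymp|\Pi(z)|e^{\re P(z)}$ growing faster than any power of $|z|$, while on $S^{-}$ we have $A(z)=Q(z)+o(1)$ since $|\Pi(z)e^{P(z)}|\to 0$ superexponentially.

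The heart of the proof is a zero-count of $f$ in $S^{+}$ via the classical Liouville--Green (WKB) integration of $f''+Af=0$. Away from the turning points (zeros of $A$) and inside each Stokes region, $f$ is, with a controllable error, a linear combination of $A^{-1/4}\exp(\pm\int\sqrt{-A}\,dz)$; along a ray $\arg(z)=\theta_{1}\in(\theta_{0},\theta_{0}+\delta)$ the action $\Phi(z)=\int^{z}\sqrt{-A}\,dz$ satisfies $|\Phi(z)|\asymp|\Pi(z)|^{1/2}e^{\re P(z)/2}/|z|^{n-1}$ and $\arg\Phi(z)=\tfrac12\im(a_{n}z^{n})+O(r^{n-1})$, so $\Phi$ spirals outward at an exponential rate. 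Since $\im(a_{n}z^{n})$ is nonzero and of order $r^{n}$, the subsector $S^{+}$ is crossed by $\asymp r^{n}$ Stokes curves, so $f$ cannot remain purely recessive throughout $S^{+}$; hence $f$ is a genuine combination, and its zeros in $S^{+}\cap\{|z|<r\}$ are, up to a bounded factor, the preimages under $\Phi$ of a fixed arithmetic progression $\{\tfrac12\log c+\pi ik:k\in\Z\}$, of which there are $\asymp e^{c_{0}r^{n}}$ for some $c_{0}>0$. Therefore $n(r,1/f)\geq e^{c_{0}r^{n}(1+o(1))}$, which forces $\lambda(f)=\infty$, contradicting the assumption. (On $S^{-}$, where $A\to Q$ with $(\deg(Q)+2)/2<n$, the solutions of $f''+Qf=0$ that $f$ approximates have only $O(r^{(\deg(Q)+2)/2})$ zeros, so nothing goes wrong there; this is exactly why hypothesis (c) and the critical direction $\theta_{0}$ are essential — together they guarantee that the unbounded side $S^{+}$ genuinely occurs and that $A$ is not disturbed there by zeros of $\Pi$.)

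The main obstacle is making the WKB zero-count in $S^{+}$ rigorous: one must control the Liouville--Green error terms uniformly as $r\to\infty$ across the $\asymp r^{n}$ Stokes transitions, handle small neighbourhoods of the $\asymp r^{n}$ turning points in $S^{+}$ and of the finitely many zeros of $Q$, and assemble these local pictures into a global lower bound for $n(r,1/f)$. This is the technical core of Bank and Langley's argument; it cannot be replaced by soft Nevanlinna-theoretic estimates, which yield only $n(r,1/f)\gtrsim r^{n}$ — far short of $\lambda(f)=\infty$.
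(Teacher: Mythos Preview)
The paper is a survey and does not give a proof of this theorem; it is simply quoted from Bank--Langley as background for Section~\ref{oscillation-problems-sec}. So there is no in-paper proof to compare against, and the following concerns your outline versus Bank and Langley's actual argument.

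Your sketch contains a structural gap. You announce a proof by contradiction (``assume $\lambda(f)<\infty$''), but that hypothesis is never invoked: the WKB zero-count you describe, if it worked, would apply to every nontrivial solution directly. The entire argument therefore rests on your claim that no solution can be purely recessive throughout $S^{+}$. Your justification --- that $S^{+}$ is ``crossed by $\asymp r^{n}$ Stokes curves'' because $\im(a_{n}z^{n})$ has order $r^{n}$ --- is not a proof: the Stokes geometry of $f''+Af=0$ in $S^{+}$ is governed by the level lines of $\re\!\int\!\sqrt{-A}\,dz$, which depend on $\Pi$ and on $P$ jointly, not on $\im(a_{n}z^{n})$ alone. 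Without a genuine argument here, a particular solution could be subdominant throughout $S^{+}$ and your lower bound on $n(r,1/f)$ evaporates. You also make no essential use of the hypothesis $\deg(Q)+2<2n$; your parenthetical about $S^{-}$ does not feed back into the zero-count.

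Bank and Langley proceed differently. They do use the assumption $\lambda(f)<\infty$ in an essential way: writing $f$ via its Hadamard factorisation gives control on $f'/f$ outside small discs, and they perform Hille-type asymptotic integration along rays on both sides of the critical direction $\theta_{0}$, where hypothesis~(c) (finitely many zeros of $\Pi$ in the sector) is exactly what allows the error terms to be handled. On the $S^{-}$ side, $A\sim Q$ and the degree condition $\deg(Q)+2<2n$ forces solutions to have order $<n$; on the $S^{+}$ side, the asymptotics of $f'/f$ are dictated by $\Pi e^{P}$. Matching these across $\theta_{0}$ is what yields the contradiction. All three hypotheses enter simultaneously here, whereas in your outline they are used separately or not at all.
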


Next we consider \eqref{lde2} from the point of view of (Q2) and (Q3) in the
case when $A(z)$ has two exponential terms. Specifically, we consider
    \begin{equation}
    f''+\left(e^{P_1(z)}+e^{P_2(z)}+Q(z)\right)f=0,\label{1.1}
    \end{equation}
where $P_1(z)=\zeta_1z^n+\cdots$ and $P_2(z)=\zeta_2z^m+\cdots $ are non-constant polynomials
such that $e^{P_1(z)}$ and $e^{P_2(z)}$ are linearly independent, and $Q(z)$ is an entire function
of order $<\max\{n,m\}$. We summarize \cite[Theorem~1]{Ishizaki} and \cite[Theorem~3.2]{Ishizaki-Tohge}
as follows.

\begin{theorem}\label{Ishizaki-Tohge}
\textnormal{(\cite{Ishizaki, Ishizaki-Tohge})}
Let $f\not\equiv 0$ be a solution of \eqref{1.1}.
If $n\neq m$, then $\lambda(f)=\infty$, while if $n=m$, we have the following four assertions.
\begin{itemize}
\item[\textnormal{(a)}] If $\zeta_1=\zeta_2$, then $\lambda(f)\geq n$.
\item[\textnormal{(b)}] If $\zeta_1\neq\zeta_2$ and $\zeta_1/\zeta_2$ is non-real, then $\lambda(f)=\infty$.
\item[\textnormal{(c)}] If $0<\zeta_1/\zeta_2<1/2$, then $\lambda(f)\geq n$.
\item[\textnormal{(d)}] If $3/4<\zeta_1/\zeta_2<1$ and $Q(z)\equiv 0$, then $\lambda(f)\geq n$.
\end{itemize}
\end{theorem}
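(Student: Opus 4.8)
The plan is to argue by contradiction: assume \eqref{1.1} has a non-trivial solution $f$ whose exponent of convergence of zeros is strictly below the asserted value (so $\lambda(f)<n$ in (a), (c), (d), and $\lambda(f)<\infty$ in (b) and in the case $n\neq m$, which after a preliminary reduction means $\lambda(f)<\max\{n,m\}$). Two descriptions of $f$ will be run in tandem. Globally, the Hadamard factorization $f=\pi e^{g}$, with $\pi$ the canonical product over the zeros of $f$ of finite order $\rho(\pi)=\lambda(f)$ and $g$ entire, turns \eqref{1.1} into the Riccati-type identity
$$
e^{P_1(z)}+e^{P_2(z)}+Q(z)=-\Bigl(g'(z)^{2}+g''(z)+2\tfrac{\pi'(z)}{\pi(z)}g'(z)+\tfrac{\pi''(z)}{\pi(z)}\Bigr).
$$
Locally, away from thin logarithmic sectors around the critical rays of the exponential polynomial $A$, asymptotic (Liouville--Green / Hille) integration of \eqref{1.1} represents $f$ in terms of the two WKB branches $A^{-1/4}\exp\bigl(\pm i\int\sqrt{A}\bigr)$; recall that $m(r,\pi'/\pi)+m(r,\pi''/\pi)=O(\log r)$ since $\pi$ has finite order, which is what keeps the transcendental content of the identity concentrated in $g'^{2}+g''$.

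First I would exploit the local description. On any sector $S$ where one exponential term of $A$ strictly dominates, say $A(z)=e^{P_k(z)}(1+o(1))$, one has $\int^{z}\sqrt{A}\asymp z^{1-n}e^{P_k(z)/2}$, so each WKB branch is zero-free on $S$ while a genuine combination $c_{+}f_{+}+c_{-}f_{-}$ with $c_{+}c_{-}\neq 0$ has $\asymp\exp(cr^{n})$ zeros in $S\cap\{|z|\le r\}$. Since $\lambda(f)<\infty$, this forces $f$ to agree asymptotically, up to a constant, with a single WKB branch on every dominant sector. Translated back to the global identity, this rigidity (the \emph{duality} between $f$ and $A$ already visible in the Frei-type examples of Section~\ref{periodic-subnormal-sec}) pins down $g$ to be, up to a polynomial, a one-sided exponential polynomial of order $\le\max\{n,m\}$ whose frequencies are read off from $P_1/2$ and $P_2/2$; concretely one writes $g'=b+\sum_\nu c_\nu e^{R_\nu}$ with $b$ a polynomial, and squaring produces the terms $c_\nu^{2}e^{2R_\nu}$, $2c_\nu c_\mu e^{R_\nu+R_\mu}$ and (via $g''$) $c_\nu R_\nu' e^{R_\nu}$ that must reconstruct $-e^{P_1}-e^{P_2}$ together with the transcendental part of $2(\pi'/\pi)g'+\pi''/\pi$.

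The case division is then a matter of feeding the Stokes-line geometry of $e^{P_1}$ versus $e^{P_2}$ into this rigidity. When $n\neq m$, or when $\arg(\zeta_1/\zeta_2)\notin\{0,\pi\}$ (case (b)), the critical rays of the two exponentials interlace so that no globally consistent single-branch assignment survives going once around the origin; equivalently, the degeneracies $2R_\nu=P_k$ or $R_\nu+R_\mu=P_k$ required to match the right-hand side cannot all hold. Hence $f$ must be a genuine combination on some dominant sector, giving $\asymp\exp(cr^{n})$ zeros there, i.e.\ $\lambda(f)=\infty$. When $\zeta_1=\zeta_2$ (case (a)) the two exponentials reinforce, $A\sim 2e^{P_1}$, and the situation reduces to the single-exponential one: after subtracting the unique admissible branch, the remaining linear equation for $\pi$ is analysed along the $n$ critical rays $\re(\zeta_1z^{n})=0$, where the usual zero-counting shows $\pi$ must carry $\asymp r^{n}$ zeros, so $\lambda(f)\ge n$. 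Cases (c) ($0<\zeta_1/\zeta_2<\tfrac12$) and (d) ($\tfrac34<\zeta_1/\zeta_2<1$, $Q\equiv 0$) run along the same lines, except that here the exponentials are collinear with the origin, their critical rays coincide, and the surviving coefficient relations are governed by the sign of $\re\bigl(P_1-\tfrac12 P_2\bigr)$, respectively of $\re\bigl(\tfrac12 P_1-\tfrac12 P_2+\cdots\bigr)$, on the critical rays of $e^{P_2}$; the thresholds $\tfrac12$ and $\tfrac34$ are precisely the ratios for which these quantities keep an unfavourable fixed sign once the $o(1)$-errors of the asymptotic integration are absorbed, and $Q\equiv 0$ in (d) removes an extra exponential that would otherwise obstruct the comparison.

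The hard part is this last step in cases (c) and (d): one must carry out the asymptotic integration of \eqref{1.1} uniformly inside the thin logarithmic sectors around the critical rays of $e^{P_2}$ — where the exponential terms are of sub-maximal size in the angular variable but still dominate the polynomial data — and show that there a canonical product of order $<n$ cannot solve the reduced equation. Verifying that the relevant dominance inequalities genuinely survive the error terms exactly on the ranges $\zeta_1/\zeta_2<\tfrac12$ and $\tfrac34<\zeta_1/\zeta_2<1$ (so the thresholds are intrinsic rather than artefacts of the method, the value $\tfrac12$ being exactly the boundary at which zero-free solutions such as the $e^{e^z}$-type examples appear) is the technical core; by comparison, the cases $n\neq m$ and non-real $\zeta_1/\zeta_2$ are softer, since there the connection/monodromy obstruction is robust and immediately yields $\lambda(f)=\infty$ without any delicate sign-chasing.
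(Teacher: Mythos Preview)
The paper does not prove this theorem; it is quoted as a summary of \cite[Theorem~1]{Ishizaki} and \cite[Theorem~3.2]{Ishizaki-Tohge}. However, the surrounding material in Section~\ref{oscillation-problems-sec} makes the method behind the $\lambda(f)\ge n$ cases transparent, and it is quite different from yours.

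Your sketch is built on WKB asymptotics and a global rigidity claim for $g$ (``$g'$ is, up to a polynomial, a one-sided exponential polynomial''). That passage is the genuine gap: sector-by-sector matching of $f$ with a single Liouville--Green branch only gives asymptotics for $g'$ on individual sectors; promoting these to a global exponential-polynomial identity for $g'$ is precisely what would need to be proved, not a consequence of the local picture. Your treatment of (a), (c), (d) then rests on this unjustified step together with ``the usual zero-counting shows $\pi$ must carry $\asymp r^{n}$ zeros'', which is the conclusion restated rather than argued. The reading of the thresholds $\tfrac12$ and $\tfrac34$ as WKB sign-boundaries is likewise purely heuristic.

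The actual route is Nevanlinna-theoretic. For (d), with $Q\equiv 0$ and $\zeta_1=\rho\zeta_2$, Steinmetz's Theorem~\ref{Stein-thm1} gives $\overline{N}(r,1/A)/T(r,A)\to C(\co(W))/C(\co(W_0))=1-\rho$; when $\rho>\tfrac34$ this ratio is $<\tfrac14$, so Theorem~\ref{K-thm-IT} applies with some $K>4$ and delivers $\lambda(f)\ge\rho(A)=n$ immediately. The number $\tfrac34$ is thus the point at which the zero-deficiency of $A$ exceeds $\tfrac34$, not a WKB artefact --- this is exactly the mechanism behind Corollary~\ref{n-thm} and the remark following it. Cases (a) and (c) in \cite{Ishizaki,Ishizaki-Tohge} use related value-distribution comparisons, while the $\lambda(f)=\infty$ conclusions ($n\ne m$ and (b)) come from the Bank--Laine--Langley machinery (the paper itself refers to \cite[Corollary~4.2]{BLL2} for~(b)), not from a monodromy/connection obstruction of the type you invoke.
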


If the points $0$, $\zeta_1$, $\zeta_2$ in the complex plane are not collinear, we have $\lambda(f)=\infty$ for every solution $f\not\equiv 0$ of \eqref{1.1} by Theorem~\ref{Ishizaki-Tohge}~(b), see also \cite[Corollary~4.2]{BLL2}.
Suppose then that $0,\zeta_1,\zeta_2$ are collinear.
If $\zeta_1,\zeta_2$ are on the opposite sides of the origin, then as an immediate consequence of \cite[Theorem 4.3]{BLL2}, as remarked in \cite{HILT},
we find that every solution $f\not\equiv 0$ of \eqref{1.1} satisfies $\lambda(f)=\infty$.
If $\zeta_1,\zeta_2$ are on the same side of the origin,
we may assume that $0<\zeta_1/\zeta_2<1$.
This leads us to consider equations of the form
    \begin{equation}\label{1.1b}
    f''+\left(e^{P(z)}+e^{\rho P(z)}+Q(z)\right)f=0.
    \end{equation}
There are examples (see \cite[Examples~7.1,~7.2]{HILT2} and \cite{Ishizaki}) of zero-free solutions in the following two cases: (1) $\rho=1/2$ and $Q(z)\equiv 0$; (2) $\rho=3/4$ and $Q(z)\not\equiv 0$. These examples are also related to Parts~(c) and~(d) of Theorem~\ref{Ishizaki-Tohge}.

It is natural to ask if it is possible to obtain $\lambda(f)=\infty$ in place
of $\lambda(f)\geq n$ in Part~(c) and Part~(d) of Theorem~\ref{Ishizaki-Tohge}, and is it possible to ignore the condition $Q(z)\equiv0$ in Part~(d)?
These questions will be answered in Theorem~\ref{new} below, where
we will also discuss the gap $1/2<\rho<3/4$, which corresponds to the case $m=1$.

\begin{theorem}\textnormal{(\cite{HILT2})}
\label{new}
Suppose that $P(z)$ and $Q(z)$ are polynomials satisfying
    \begin{equation}\label{degrees}
    \begin{split}
    \deg(Q)+2 < 2\deg(P),\quad &\text{if}\ \deg(P)\geq 2,\\
    Q(z)\equiv 0,\quad &\text{if}\ \deg(P)=1,
    \end{split}
    \end{equation}
and that $\rho$ is a constant satisfying
    \begin{equation}\label{sig}
    0<\rho<\frac{1}{2}\quad\text{or}\quad \frac{2m-1}{2m}<\rho<\frac{2m+1}{2(m+1)}
    \end{equation}
for some $m\in\N$. Then every solution $f\not\equiv 0$ of \eqref{1.1b} satisfies $\lambda(f)=\infty$.
\end{theorem}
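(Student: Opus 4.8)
The plan is to reduce Theorem~\ref{new} to the already-established growth results in Theorem~\ref{Ishizaki-Tohge} by exploiting the self-improving structure of the oscillation problem for \eqref{1.1b}. The key observation is that the frequencies of the coefficient $e^{P(z)}+e^{\rho P(z)}+Q(z)$ are collinear and on the same side of the origin, so the difficulty is purely the ratio $\rho\in(0,1)$; Theorem~\ref{Ishizaki-Tohge}(c),(d) already give $\lambda(f)\ge n$ in the sub-ranges $0<\rho<1/2$ and $3/4<\rho<1$ with $Q\equiv 0$, and the present theorem both upgrades these to $\lambda(f)=\infty$ and fills the gap $1/2<\rho<3/4$ by treating the general ratio $\frac{2m-1}{2m}<\rho<\frac{2m+1}{2(m+1)}$.

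First I would set up the standard oscillation machinery: given a non-trivial solution $f$ of \eqref{1.1b}, assume for contradiction that $\lambda(f)<\infty$. Then the Bank--Laine product $E=f_1f_2$ of a solution base satisfies $E'' \cdot E - (E')^2 = \text{(lower order)}$ and $4A = \big(\tfrac{E'}{E}\big)^2 - 2\tfrac{E''}{E} - \tfrac{1}{E^2}$, with $\lambda(E)<\infty$ forcing $E$ to be an exponential polynomial (or of controlled growth) by the value-distribution results of Section~\ref{TN-subsection}. Substituting the normalized form of $E$ into the Bank--Laine identity and comparing the dominant exponential terms along each critical ray --- here the critical-ray analysis of Theorems~\ref{zeros-thm} and \ref{generalized-polya} is exactly the right tool --- produces rigid algebraic constraints on the frequencies of $E$ versus those of $A(z)=e^{P(z)}+e^{\rho P(z)}+Q(z)$. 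The degree hypothesis \eqref{degrees}, namely $\deg(Q)+2<2\deg(P)$ (or $Q\equiv0$ when $\deg P=1$), is precisely what guarantees that $Q$ cannot interfere with the leading asymptotics, so the indicator function $h_A$ is determined by $e^{P}+e^{\rho P}$ alone.

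The core of the argument is then a Phragmén--Lindelöf / indicator-function computation in a critical sector. On a ray where $\re P(z)\to+\infty$, the term $e^{P(z)}$ dominates and $E$ must, to leading order, behave like $\exp\big(\pm\int\sqrt{-A}\big)\sim\exp\big(\pm\tfrac{2}{P'}e^{P/2}\big)$; matching this against the exponential-polynomial form of $E$ forces the half-frequency $\tfrac12\deg P$-term to be present, which is consistent. The contradiction arises from the \emph{sub}-dominant term $e^{\rho P(z)}$: expanding $\sqrt{-A}=\sqrt{e^{P}+e^{\rho P}+Q}$ in the regime where $e^P$ dominates yields a secondary exponential $e^{(\rho - 1/2 + k)P}$ for suitable $k$, and the constraint $\frac{2m-1}{2m}<\rho<\frac{2m+1}{2(m+1)}$ is exactly the arithmetic condition ensuring that \emph{none} of these secondary exponents is an integer or half-integer multiple that $E$ --- an exponential polynomial with order $\deg P$ --- could accommodate. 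Hence the Bank--Laine identity cannot be satisfied by an exponential polynomial $E$, so $E$ (and therefore $f$) has infinitely many zeros, i.e. $\lambda(f)=\infty$. The case $0<\rho<1/2$ is the degenerate $m=0$ analogue handled the same way.

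\textbf{The main obstacle} will be the precise asymptotic matching in the critical sectors when $\rho$ is not commensurable with $\tfrac12$: one must track the expansion of $\int^z\sqrt{-A(t)}\,dt$ to enough orders to isolate the offending $e^{(\rho-1/2+k)P}$ term and show rigorously --- using the domains $\Lambda_p(\theta^*,c)$ and the counting estimates of Theorem~\ref{generalized-polya} --- that an exponential polynomial solution of the Bank--Laine equation would need a frequency that simply does not arise. Controlling the error terms along the boundary of the sector (where $\re P$ changes sign), and ensuring the polynomial perturbation $Q$ stays in the error under \eqref{degrees}, is the delicate bookkeeping; the algebraic irrationality of the exponent is what makes the contradiction work, and converting ``$\lambda(f)\ge n$'' into ``$\lambda(f)=\infty$'' requires iterating this observation rather than reading it off a single comparison.
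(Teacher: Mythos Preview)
The paper does not itself prove Theorem~\ref{new}; it is quoted from \cite{HILT2}, so there is no in-paper proof to compare against directly. That said, your proposal has a structural gap that would block the argument as written.

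You assume $\lambda(f)<\infty$ for \emph{one} solution and then pass to the Bank--Laine product $E=f_1f_2$ of a solution base, asserting $\lambda(E)<\infty$. This step is unjustified: the theorem concerns each individual solution, and nothing prevents the other basis element $f_2$ from having $\lambda(f_2)=\infty$, in which case $\lambda(E)=\infty$ and the Bank--Laine identity yields no finite-order object to analyze. The further claim that ``$\lambda(E)<\infty$ forces $E$ to be an exponential polynomial'' is also false in general; finite exponent of convergence does not make an entire function an exponential polynomial, and the value-distribution results of Section~\ref{TN-subsection} go in the opposite direction (from exponential polynomial to asymptotics of $N$ and $T$). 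The correct setup---and the one used in the cited source---is to work with the single solution: write $f=\pi e^{g}$ with $\pi$ the canonical product of its zeros (so $\rho(\pi)=\lambda(f)<\infty$), substitute $f'/f=\pi'/\pi+g'$ into the Riccati identity $(f'/f)'+(f'/f)^2=-A$, and analyze $g'$ asymptotically in the sectors determined by $P$.

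Your heuristic of expanding $\sqrt{-A}=e^{P/2}\sqrt{1+e^{(\rho-1)P}+Qe^{-P}}$ and tracking the secondary exponentials is the right intuition for \emph{why} the arithmetic condition \eqref{sig} enters, and \eqref{degrees} does exactly the job you describe of keeping $Q$ in the error. But as the paper itself remarks after Theorem~\ref{main}, the actual proof in \cite{HILT2} is a technical case-by-case asymptotic analysis whose length grows with $m$; the contradiction comes from showing no entire $g'$ of the required growth can satisfy the Riccati identity in all sectors, not from an exponential-polynomial frequency clash in $E$, and not from any ``iteration'' upgrading $\lambda(f)\ge n$ to $\lambda(f)=\infty$.
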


If the assumptions \eqref{degrees} and \eqref{sig} in Theorem~\ref{new} are violated, then \eqref{1.1b} may posses zero-free solutions.

\begin{example}\textnormal{(\cite{HILT2})}
(a) Let $\rho=1/2$, and let $\varphi(z)$ be a polynomial of degree $\deg(\varphi)\geq 2$. Then
    $$
    f(z)=\exp\left(i\int^z e^{-2\varphi(\zeta)+i\zeta}\, d\zeta+\varphi(z)\right)
    $$
is a zero-free solution of \eqref{1.1b}, where the polynomials $P(z)=-4\varphi(z)+2iz$ and $Q(z)=-\varphi'(z)^2-\varphi''(z)$ satisfy $\deg(P)=\deg(\varphi)\geq 2$ and $\deg(Q)+2=2\deg(P)$. Observe that if $\deg(P)=\deg(\varphi)=1$,
then $Q(z)$ is a nonzero constant.
\vspace{0.25cm}

\noindent
(b) Let $\rho=3/4$. Then
    $$
    f(z)=\exp\left(-2e^{-\frac{i}{2}z}-2e^{-\frac{i}{4}z}+\frac{i}{8}z\right)
    $$
is a zero-free solution of \eqref{1.1b}, where $P(z)=-iz$ and $Q(z)=-1/64$.
\end{example}

The behavior of solutions of \eqref{1.1b} depends on the properties of the coefficient
	$$
	A(z)=e^{P(z)}+e^{\rho P(z)}+Q(z), \quad 0<\rho<1.
	$$
Using Theorem~\ref{Stein-thm1}, we find that if $Q(z)$ is a non-constant polynomial, then $A(z)$ has no finite deficient values, while if $Q(z)\equiv c\in\C$, then $c$ is the only finite deficient value of
$A(z)$ with $\delta(c,A)=\rho$. Even if $c=0$ would be a deficient value for $A(z)$, it is not always guaranteed that the situation in
(Q2) let alone in (Q3) would hold. Finally, if $Q(z)$ is a transcendental entire function, then the assumption \eqref{degrees} in Theorem~\ref{new} makes no sense. However, way may still ask whether \eqref{1.1b} could have a zero-free solution if $\rho$ satisfies $\frac{2m-1}{2m}<\rho<\frac{2m+1}{2(m+1)}$ for some $m\in\N$.

We discuss the question above in the case when $P$ is linear.
The reasoning in this direction becomes overly technical even if $\deg(P)=1$ as described in \cite{HILT2}.  We re-write \eqref{1.1b} in this case as
    \begin{equation}
    f''+\left(H_1e^{z}+H_2e^{\rho z}+q(z)\right)f=0,\label{8.1}
    \end{equation}
where $q(z)$ is an entire function of order less than $1$ and $H_1,H_2\in\C\setminus\{0\}$.
Below we consider the cases
    \begin{eqnarray}
    1/2&<&\rho<3/4\quad (m=1),\label{8.2}\\
    3/4&<&\rho<5/6\quad (m=2).\label{8.205}
    \end{eqnarray}
Observe that for the case \eqref{8.205} with $q(z)\equiv0$,
there are no zero-free solutions to \eqref{8.1} by Part~(d) of Theorem~\ref{Ishizaki-Tohge}. Hence we may assume that $q(z)\not\equiv 0$ in \eqref{8.1}.

\begin{theorem}\textnormal{(\cite{HILT2})}\label{main}
Equation \eqref{8.1} under either of the conditions \eqref{8.2} or \eqref{8.205}
possesses no zero-free solutions.
\end{theorem}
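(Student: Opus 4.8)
\medskip
\noindent\textbf{Plan of proof.}
The plan is to argue by contradiction and reduce everything to the Riccati equation together with a Liouville--Green (WKB) analysis. Suppose \eqref{8.1} has a zero-free solution $f$ and write $f=e^{g}$ with $g$ entire; then $g''+(g')^{2}=-A$, where $A(z)=H_1e^{z}+H_2e^{\rho z}+q(z)$, and $w=g'$ is an entire solution of $w'+w^{2}+A=0$. Conversely, any entire solution of this Riccati equation yields, via $f=\exp\bigl(\int w\bigr)$, a zero-free solution of \eqref{8.1}; so it suffices to show that $w'+w^{2}+A=0$ has no entire solution under \eqref{8.2} or \eqref{8.205}. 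First I would record two ``outer'' descriptions of $g$. Standard estimates for $m(r,f'/f)$ give $\rho(g)\le1$. In the right half-plane $A$ is large and slowly varying with $A\sim H_1e^{z}$, so the Liouville--Green approximation applies and $g$ is dominated there by a single term $\beta_0e^{z/2}$ with $\beta_0^{2}=-4H_1$ (along a single Liouville--Green branch, because a branch change would create zeros of $f$). In the left half-plane both exponential terms of $A$ are exponentially small, so $A$ collapses to $q$; since $\rho(q)<1$, a Liouville--Green analysis there (treating the zeros of $q$, and the case $q\equiv0$, separately) shows that $g$ behaves like a function of order $<1$, so $\log|g(re^{i\theta})|=o(r)$ for $\tfrac{\pi}{2}<\theta<\tfrac{3\pi}{2}$.

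The heart of the argument is to push the right-half-plane expansion of $g$ far enough. Iterating $g'=\bigl(-A-g''\bigr)^{1/2}$ and integrating produces, in a sector about the positive real axis, an asymptotic expansion of $g$ whose successive exponents are $\tfrac12$ and then $\mu_k=\tfrac12-k(1-\rho)$, $k=1,2,\dots$, together with a polynomial term $-\tfrac14 z$ forced by matching the coefficient of $e^{z/2}$. The hypotheses \eqref{8.2} and \eqref{8.205}, that is $\tfrac{2m-1}{2m}<\rho<\tfrac{2m+1}{2(m+1)}$ with $m\in\{1,2\}$, enter precisely here: they ensure that no $\mu_k$ vanishes, so $-\tfrac14 z$ is the only polynomial term this mechanism forces into $g$, and that exactly $\mu_0>\mu_1>\dots>\mu_m$ are positive while all later $\mu_k$ are negative. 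This is why the argument is carried out only for $m=1,2$: beyond that, the bookkeeping of these finitely many positive-exponent terms becomes unmanageable. One then checks that the expansion cannot terminate after these finitely many terms. The forced term $-\tfrac14 z$ generates a constant $\bigl(-\tfrac14\bigr)^{2}=\tfrac1{16}$, and nothing on the right-hand side can absorb it, since $q$ has order $<1$ and hence is not a nonzero constant whose value could be tuned for this cancellation --- this is exactly where the low order of $q$ (rather than the polynomial coefficient of Theorem~\ref{new}) and the non-resonance of $\rho$ are used, and indeed in the borderline resonant cases with $q$ a suitable constant, as in the zero-free examples accompanying Theorem~\ref{new}, this cancellation does occur and the expansion terminates. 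Consequently the expansion carries a genuinely nonzero term $\gamma e^{-\nu z}$ with $\gamma\ne0$ and some $\nu>0$.

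The contradiction is then a Phragm\'en--Lindel\"of argument. Put
$$
G(z)=g(z)-\beta_0e^{z/2}+\tfrac14 z-\sum_{k=1}^{m}\beta_k e^{\mu_k z},
$$
an entire function obtained by removing from $g$ the finitely many polynomial and positive-exponent terms of its right-half-plane expansion. By that expansion $G(z)\sim\gamma e^{-\nu z}$ as $z\to\infty$ with $\re(z)\ge0$, so $G$ is bounded on $\{\re(z)\ge0\}$ and hence on the imaginary axis. On the other hand each subtracted term is a polynomial or tends to $0$ as $\re(z)\to-\infty$, while $g$ behaves like a function of order $<1$ in the left half-plane, so $\log|G(re^{i\theta})|=o(r)$ for $\tfrac{\pi}{2}<\theta<\tfrac{3\pi}{2}$; a Phragm\'en--Lindel\"of argument in the left half-plane, using the bound on the imaginary axis, then forces $G$ to be bounded there as well. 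Thus $G$ is a bounded entire function, i.e.\ a constant, and since $G(x)\to0$ as $x\to+\infty$ this constant is $0$. But $G\equiv0$ contradicts $G(z)\sim\gamma e^{-\nu z}$ with $\gamma\ne0$. Hence $w'+w^{2}+A=0$ has no entire solution, so \eqref{8.1} has no zero-free solution, under either \eqref{8.2} or \eqref{8.205}.

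The main obstacle is making the right-half-plane asymptotics rigorous and, above all, controlling the peeling: one must check through the iteration that the expansion of $g$ contains no unexpected positive-exponent term and that the first surviving term $\gamma e^{-\nu z}$ indeed has a nonzero coefficient, and one must justify that $g$ follows a single Liouville--Green branch up to and including the imaginary axis, where the dominant exponential of $A$ switches and where a (possibly transcendental) $q$ of order $<1$ --- rather than the polynomial coefficient of Theorem~\ref{new} --- has to be accommodated. A secondary but still delicate point is the left-half-plane ``order $<1$'' statement for $g$, which must survive the possible zeros or smallness of $q$, including $q\equiv0$ (already covered by Part~(d) of Theorem~\ref{Ishizaki-Tohge} when $3/4<\rho<5/6$). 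Keeping the list of positive exponents $\mu_0,\dots,\mu_m$ finite --- which is exactly the effect of restricting to $m\in\{1,2\}$ --- is what makes this bookkeeping, and with it the whole argument, feasible.
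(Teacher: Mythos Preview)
Your overall plan---reduce to the Riccati equation for $w=g'$, develop a WKB-type asymptotic expansion of $g$ in the right half-plane with leading exponents $\mu_k=\tfrac12-k(1-\rho)$ and a forced linear term $-\tfrac14 z$, subtract the finitely many positive-exponent pieces, and close with Phragm\'en--Lindel\"of against the left-half-plane behaviour---is essentially the scheme of \cite{HILT2}. The survey does not reproduce the proof, but every contextual remark (the method treats one interval at a time, the computations grow exponentially with $m$, and the argument spits out the explicit zero-free solutions at the resonant endpoints $\rho=\tfrac12$ and $\rho=\tfrac34$) matches exactly this kind of iterated-expansion bookkeeping.

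That said, your stated reason for non-termination does not stand as written. You claim ``$q$ has order $<1$ and hence is not a nonzero constant whose value could be tuned for this cancellation''---but constants have order $0<1$, so $q\equiv c$ is perfectly admissible, and indeed the zero-free examples accompanying Theorem~\ref{new} at $\rho=\tfrac12$ and $\rho=\tfrac34$ have $q$ a nonzero constant. The single number $\tfrac{1}{16}$ therefore cannot by itself distinguish $\rho\in(\tfrac12,\tfrac34)$ from $\rho=\tfrac34$. The genuine obstruction is the full cascade of coefficient equations: the constant $-\tfrac14$ in $g'$ produces, via cross-terms $2\cdot(-\tfrac14)\cdot c_k e^{\mu_k z}$ in $(g')^2$ together with the $g''$-contributions, new constraints at each exponent $\mu_k$, and under the strict inequalities \eqref{8.2} or \eqref{8.205} these constraints are incompatible with termination regardless of the value of $q$. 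Verifying that the first surviving negative-exponent coefficient $\gamma$ is nonzero is precisely the case-by-case computation the paper describes as increasing ``exponentially'' with $m$, and it is not bypassed by the $\tfrac{1}{16}$ heuristic. The left-half-plane bound $\log|G(re^{i\theta})|=o(r)$ is likewise more delicate than you indicate, since the Liouville--Green analysis there must handle turning points of $q$ (or $q\equiv0$) and is not an automatic consequence of $\rho(q)<1$. So the architecture is right and agrees with the paper's approach, but the proof \emph{is} the detailed verification you correctly flag as the main obstacle; your shortcut through the constant $\tfrac{1}{16}$ does not survive.
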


It seems that in the general case
$\frac{2m-1}{2m}<\rho<\frac{2m+1}{2(m+1)}$ there are no zero-free solutions either, but the
method used in~\cite{HILT2} can only give one specific interval at the time, and the amount of considerations increases exponentially along with $m$. The complexity of the
case $\frac{5}{6}<\rho<1$ is illustrated in~\cite[Section~6]{HILT2}.
The proof of Theorem~\ref{main} yields general forms of zero-free solutions in the
cases $\rho=1/2$ and $\rho=3/4$, including two examples given in \cite{Ishizaki, Ishizaki-Tohge}. The details are given in~\cite[Section~7]{HILT2}, and recent progresses are seen in \cite{Zhang2}.

\begin{problem}
If $\frac{2m-1}{2m}<\rho<\frac{2m+1}{2(m+1)}$, prove that \eqref{8.1} possesses no zero-free solutions. Find general forms of zero-free solutions in the cases $\rho=\frac{2m-1}{2m}$ for $m\geq 3$.
\end{problem}

\subsection{Oscillation results in terms of convex hulls}

We proceed to consider oscillation results in terms of convex hulls. General results
about convex sets and convex hulls can be found in Appendix~\ref{convex-appendix}.
Our first result is an obvious consequence of Theorems~\ref{Stein-thm1} and \ref{K-thm-IT}, where $A(z)$ in \eqref{lde2} is an exponential polynomial and has $m\geq2$ exponential terms.

\begin{corollary}\textnormal{(\cite{HILT})}\label{n-thm}
Let $A(z)$ be an exponential polynomial of the form
    \begin{equation*}\label{A}
    A(z)=H_1(z)e^{\zeta_1z^n}+\cdots +H_m(z)e^{\zeta_mz^n},\quad m\geq 2,
    \end{equation*}
where the functions $H_j(z)$ are either
exponential polynomials of order $<n$ or ordinary polynomials in $z$. Denote
$W=\left\{\bar{\zeta}_1,\ldots ,\bar{\zeta}_m\right\}$ and $W_0=W\cup\{0\}$,
and suppose that
    \begin{equation}\label{perimeters}
    C(\co(W_0))>4C(\co (W)).
    \end{equation}
Then every non-trivial solution $f$ of \eqref{lde2} satisfies $\lambda(f)\geq n$.
\end{corollary}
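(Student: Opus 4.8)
The plan is to feed the coefficient $A(z)$ directly into the criterion of Theorem~\ref{K-thm-IT}, after checking that $\overline{N}(r,1/A)$ is small enough relative to $T(r,A)$. First I would extract the structural content of the hypothesis \eqref{perimeters}: if some frequency $\zeta_j$ vanished, then $0\in W$, so $W_0=W$, $\co(W_0)=\co(W)$, and \eqref{perimeters} would read $C(\co(W))>4C(\co(W))$, which is impossible since $C(\co(W))\ge 0$. Hence every $\zeta_j$ is nonzero, so in the normalized representation \eqref{normalized-f} of $A$ there is no $F_0$-term, i.e.\ $F_0\equiv 0$; and since $m\ge 2$ with the $\zeta_j$ pairwise distinct, $\co(W)$ is at least a line segment, so $C(\co(W))>0$.

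Next I would apply Theorem~\ref{Stein-thm1} to $A$ with $q=n$ and $p=1$ --- legitimate because each multiplier $H_j$ is a polynomial or an exponential polynomial of order $\le n-1=q-p$. Since $F_0\equiv 0$, formulas \eqref{characteristic1} and \eqref{counting1} give
\begin{equation*}
T(r,A)=\frac{C(\co(W_0))}{2\pi}\,r^{n}+O(r^{n-1}+\log r),\qquad
N(r,1/A)=\frac{C(\co(W))}{2\pi}\,r^{n}+O(r^{n-1}+\log r).
\end{equation*}
In particular $T(r,A)\asymp r^{n}$, so $A$ is transcendental entire with $\rho(A)=n$, and the error term $O(r^{n-1}+\log r)$ is $S(r,A)$.

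Then I would pick a constant $K$ with $4<K<C(\co(W_0))/C(\co(W))$, which is possible by \eqref{perimeters}. Using $\overline{N}(r,1/A)\le N(r,1/A)$ together with $K\,C(\co(W))<C(\co(W_0))$, it follows that for all sufficiently large $r$
\begin{equation*}
K\,\overline{N}(r,1/A)\le K\,N(r,1/A)=\frac{K\,C(\co(W))}{2\pi}\,r^{n}+O(r^{n-1}+\log r)\le T(r,A)+S(r,A),
\end{equation*}
so the hypothesis \eqref{K} of Theorem~\ref{K-thm-IT} holds with this $K>4$. That theorem then yields $\lambda(f)\ge\rho(A)=n$ for every non-trivial solution $f$ of \eqref{lde2}, which is the claim.

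The whole argument is essentially a substitution, and I would not expect any genuine obstacle; the only place that needs attention is the observation that \eqref{perimeters} forces $F_0\equiv 0$, so that the sharp counting estimate \eqref{counting1} --- rather than the weaker proximity bound \eqref{proximity0}, which is what one would have if $F_0\not\equiv 0$ --- is the one governing $N(r,1/A)$. Without this remark one would only get a bound on $m(r,1/A)$, which is useless for bounding $\overline N(r,1/A)$ from above.
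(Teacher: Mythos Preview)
Your proof is correct and follows exactly the approach indicated in the paper, which simply states that the corollary is ``an obvious consequence of Theorems~\ref{Stein-thm1} and~\ref{K-thm-IT}.'' You have filled in the details precisely as intended: extract from \eqref{perimeters} that no $\zeta_j$ vanishes (so $F_0\equiv 0$ and \eqref{counting1} applies), read off $T(r,A)$ and $N(r,1/A)$ from Theorem~\ref{Stein-thm1}, and then choose $K>4$ to feed condition \eqref{K} of Theorem~\ref{K-thm-IT}.
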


\begin{remark}
(1) Geometrically speaking, if the convex hull $\co (W)$ has a large circumference,
then it needs to be lay sufficiently far away from
the origin for \eqref{perimeters} to hold.

(2) Similarly as in Theorem~\ref{Ishizaki-Tohge}, suppose that $W=\left\{\bar{\zeta}_1,\bar{\zeta}_2\right\}$ and $\zeta_1=\rho\zeta_2$ for $0<\rho<1$. Then the assumption \eqref{perimeters} reduces to
    $$
    2|\zeta_2|>2\cdot 2(|\zeta_2|-|\zeta_1|)=4(1-\rho)|\zeta_2|,
    $$
from which $\rho>1/2$. Thus Corollary~\ref{n-thm}
generalizes Theorem~\ref{Ishizaki-Tohge}(d), and the result is sharp in the sense that an equality in \eqref{perimeters} cannot hold \cite{Ishizaki}.

(3) More exponential terms of order $n$ can be added to the coefficient $A(z)$ without affecting the assertion $\lambda(f)\geq n$, for as long as the conjugates of the frequencies of the new exponential terms belong to $\co (W)$. This stems from the fact that such an ``addition'' has no affect on \eqref{perimeters}.
\end{remark}

Regarding (Q2'), we proceed to formulate a result of perturbation type,
which shows that the small term of the exponential
polynomial coefficient $A(z)$ plays a role in the oscillation theory.

\begin{theorem}\label{perttu-thm}
\textnormal{(Bank-Laine-Langley \cite{BLL2})}
Let $A(z)$ be an exponential polynomial of order $n$, and let $f_1,f_2$ be two linearly independent
solutions of \eqref{lde2} with $\max\{\lambda(f_1),\lambda(f_2)\}<n$. Then, for any entire function
$B(z)\not\equiv 0$ of order $\rho(B)<n$, any two linearly independent solutions $g_1,g_2$ of the
differential equation
    $$
    g''+(A(z)+B(z))g=0
    $$
satisfy $\max\{\lambda(g_1),\lambda(g_2)\}\geq n$.
\end{theorem}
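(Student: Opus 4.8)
The plan is to exploit the classical Bank--Laine product representation for the solution base together with the fact that the circumference of the convex hull of the conjugated frequencies is an exponential-polynomial invariant that governs the growth of $N(r,1/E)$ for a product $E=f_1f_2$. First I would recall from \eqref{A1.3}-type reasoning (more precisely from the Bank--Laine theory, \cite{Bank-Laine}) that if $f_1,f_2$ are linearly independent solutions of \eqref{lde2} with Wronskian normalised to $1$, then the product $E=f_1f_2$ satisfies the Bank--Laine differential equation
\[
4A(z)=\left(\frac{E'}{E}\right)^2-2\frac{E''}{E}-\frac{1}{E^2},
\]
equivalently $E'''+4AE'+2A'E=0$. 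The hypothesis $\max\{\lambda(f_1),\lambda(f_2)\}<n$ means $\lambda(E)<n=\rho(A)$, so $E$ has the form $E(z)=\Pi(z)e^{p(z)}$, where $\Pi$ is the canonical product over the zeros of $E$ (of order $\lambda(E)<n$) and $p$ is a polynomial with $\deg p\le n$. Since $A$ is an exponential polynomial of order $n$, balancing the exponential-polynomial structure in $E'''+4AE'+2A'E=0$ forces $\deg p=n$, and in fact identifies the leading term: writing $A(z)=\sum_j H_j(z)e^{\zeta_jz^n}$ in normalised form, the dominant exponential in the Bank--Laine equation on each critical direction must cancel, which pins down $p(z)=c\,z^n+\cdots$ with $4c^2$ equal to the leading frequency data of $A$; more relevant for us is that the conjugated frequency set $W_E$ of $E$ contains the point $\overline{c}$ and, through the equation, forces $C(\co(W_A))$ and $C(\co(W_E))$ to be rigidly linked — essentially $\co(W_E)$ is determined by $\co(W_A)$.

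Next I would introduce the perturbed equation $g''+(A+B)g=0$ and suppose for contradiction that it has linearly independent solutions $g_1,g_2$ with $\max\{\lambda(g_1),\lambda(g_2)\}<n$. Put $G=g_1g_2$. By exactly the same Bank--Laine analysis applied to the coefficient $A+B$ (which is again an exponential polynomial of order $n$, since $\rho(B)<n$ and so $B$ does not alter the order-$n$ exponential part, and $W_{A+B}=W_A$), we get $G(z)=\Pi_G(z)e^{\tilde p(z)}$ with $\deg\tilde p=n$ and $\lambda(\Pi_G)<n$, together with the Bank--Laine relation
\[
4(A+B)=\left(\frac{G'}{G}\right)^2-2\frac{G''}{G}-\frac{1}{G^2}.
\]
Subtracting the two Bank--Laine identities eliminates the order-$n$ exponential part, yielding
\[
4B(z)=\left(\frac{G'}{G}\right)^2-\left(\frac{E'}{E}\right)^2
-2\frac{G''}{G}+2\frac{E''}{E}-\frac{1}{G^2}+\frac{1}{E^2}.
\]
Now the logarithmic-derivative terms on the right are of order $<n$ in the Nevanlinna sense (each $E'/E$, $G'/G$, $E''/E$, $G''/G$ has $T(r,\cdot)=O(r^{\max\{\lambda(E),\lambda(G)\}}+\log r+\text{poly})=o(r^n)$ after accounting for the polynomial exponents, using that $\deg p=\deg\tilde p=n$ cancels in the difference $E'/E-G'/G$ once the leading polynomial terms of $p$ and $\tilde p$ agree — they must, being determined by the same leading data of $A$). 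Hence the right-hand side would force $T(r,B)=o(r^n)$ combined with a contradiction at the level of the $-1/G^2+1/E^2$ terms, whose presence is the only obstruction to $B\equiv 0$: if $E$ and $G$ had proportional zero-counting then $1/G^2-1/E^2$ would be controllably small, making $B$ of order $<n$ — which is consistent — so I instead push the contradiction through the {\it zeros} of $B$. The cleaner route is: since $B\not\equiv0$ has $\rho(B)<n$, one applies Theorem~\ref{K-thm-IT} or the Bank--Laine-type lemma in \cite{BLL2} to the equation $g''+(A+B)g=0$ — the key point being that the perturbation $B$ destroys the special algebraic identity \eqref{A1.3} that a zero-free-type (low-$\lambda$) solution base would require for the coefficient $A+B$, because that identity is rigid in the exponential-polynomial class and cannot absorb a nonzero lower-order entire summand while keeping $\max\{\lambda(g_1),\lambda(g_2)\}<n$.

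Concretely, the final step I would carry out is the rigidity argument: the existence of a solution base of \eqref{lde2} with $\max\{\lambda(f_i)\}<n$ means, by the representation above, that $A$ lies in a very thin subvariety of the exponential polynomials of order $n$ — namely those for which the Bank--Laine equation $E'''+4AE'+2A'E=0$ admits an $E$ of the form $(\text{product of order}<n)\cdot e^{(\text{degree-}n)}$. I would show this subvariety is {\it closed under no nonzero lower-order perturbation}: if both $A$ and $A+B$ belonged to it with witnesses $E$ and $G$ respectively, then forming $E'''+4AE'+2A'E=0$ and $G'''+4(A+B)G'+2(A'+B')G=0$ and eliminating $A$ (solve the first for $A$ in terms of $E$, substitute) produces a differential relation forcing $B$ to be expressible through $E,G$ and their derivatives in a way incompatible with $\rho(B)<n$ unless $B\equiv0$ — the contradiction. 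The main obstacle will be making this elimination precise and controlling the orders of all the logarithmic-derivative terms (i.e.\ verifying $T(r,E'/E)$, $T(r,G'/G)$ etc.\ are genuinely $o(r^n)$, which uses $\lambda(E),\lambda(G)<n$ crucially) so that the residual identity for $B$ cannot hold for a nonzero entire $B$ of order below $n$; this is where the hypothesis $\rho(B)<n$ is used decisively, and where one invokes the value-distribution estimates for quotients of exponential polynomials from Theorem~\ref{Stein-thm1} to bound the growth of the pieces of $E$ and $G$.
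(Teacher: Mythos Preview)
The paper does not actually prove Theorem~\ref{perttu-thm}; it is quoted from \cite{BLL2} without proof. So there is no ``paper's own proof'' to compare against, and the question is simply whether your argument stands on its own.

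Your setup is correct and is exactly how one should begin: Hadamard factorization gives $E=\Pi_E e^{p}$ and $G=\Pi_G e^{q}$ with $\rho(\Pi_E),\rho(\Pi_G)<n$ and $\deg p=\deg q=n$, the Bank--Laine identity is the right tool, and subtracting the two identities isolates $B$. You also correctly observe that all the logarithmic-derivative pieces have Nevanlinna characteristic $o(r^n)$, and that the only obstruction to $B\equiv 0$ sits in the term $1/E^2-1/G^2$.

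The gap is precisely there: you never show this obstruction vanishes. You note that the subtracted identity is \emph{consistent} with $\rho(B)<n$ and then pivot to vague alternatives (``the zeros of $B$'', Theorem~\ref{K-thm-IT}, or citing \cite{BLL2} itself, which is circular). What is missing is the following step: apply a Borel--Steinmetz type lemma on linear independence of $\{e^{\zeta_j z^n}\}$ over functions of order $<n$ to the Bank--Laine identity $-4A=1/E^2+(\text{order}<n)$. Writing $A=\sum_j H_j(z)e^{\zeta_j z^n}$ in normalized form with entire $H_j$ of order $<n$, this forces $A$ to have a \emph{single} nonzero frequency $\zeta_{j_0}=-2c$ (where $cz^n$ is the leading term of $p$), and moreover $4H_{j_0}\Pi_E^2=-e^{-2p_1}$ with $p_1=p-cz^n$. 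Since $B$ has order $<n$, the coefficient $A+B$ has the \emph{same} order-$n$ frequency and the \emph{same} multiplier $H_{j_0}$, so the identical analysis applied to $G$ yields $4H_{j_0}\Pi_G^2=-e^{-2q_1}$. Comparing the two gives $\Pi_E^2 e^{2q_1}=\Pi_G^2 e^{2p_1}$; since the right-hand sides of $4H_{j_0}\Pi^2=-e^{-2(\cdot)}$ are zero-free, so are $\Pi_E,\Pi_G$, and one concludes $G=\pm E$. Then $G'/G=E'/E$, $G''/G=E''/E$, the subtracted identity collapses to $4B\equiv 0$, and the contradiction is complete.

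Your ``rigidity'' paragraph is aiming at exactly this, but it never gets specific enough to close. The argument is not long once you commit to Borel's lemma at the level of the order-$n$ exponential terms; that is the key idea you are missing.
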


\begin{remark}
Prior to Theorem~\ref{perttu-thm}, it was shown in \cite[Corollary~1]{BLL1} that $\max\{\lambda(f_1),\lambda(f_2)\}=\infty$ for any linearly independent solutions $f_1,f_2$ of \eqref{lde2}, provided that $A(z)$ is an exponential polynomial of the form
	\begin{equation*}\label{exp-poly}
    	A(z)=P_1(z)e^{Q_1(z)}+\cdots +P_k(z)e^{Q_k(z)},
    	\end{equation*}
where the functions $P_j(z)$ and $Q_j(z)\not\equiv$ const.~are polynomials.
That is, the assumption $\max\{\lambda(f_1),\lambda(f_2)\}<n$ in Theorem~\ref{perttu-thm}
is possible only if at least one of the polynomials $Q_j(z)$ is a constant.
\end{remark}

The next result generalizes Theorem~\ref{Ishizaki-Tohge}(c).

\begin{theorem}\textnormal{(\cite{HILT})}\label{n-thm-c}
Let $A(z)$ be an exponential polynomial of the normalized form
    \begin{equation}\label{A0}
    A(z)=H_0(z)+H_1(z)e^{\zeta_1z^n}+\cdots +H_m(z)e^{\zeta_mz^n},\quad m\geq 2.
    \end{equation}
Denote $B(z)=A(z)-H_1(z)e^{\zeta_1z^n}$, and suppose that $h_A(\theta)>2h_B(\theta)$ whenever $h_B(\theta)>0$, and
that $C(\co(W_0^A))>2C(\co(W_0^B))$.
Then $\lambda(f)\geq n$ for any solution $f\not\equiv 0$ of \eqref{lde2}.
\end{theorem}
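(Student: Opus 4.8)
The plan is to argue by contradiction: assume $f\not\equiv 0$ solves \eqref{lde2} with $\lambda(f)<n$. A first, quick reduction shows that $f$ must be of infinite order: from $-A=f''/f$ one reads off $T(r,A)\leq 2\overline N(r,1/f)+S(r,f)$, so a finite-order solution would already satisfy $\overline N(r,1/f)\gg r^{n}$ and hence $\lambda(f)\geq n$. We may therefore write $f=\pi e^{g}$, where $\pi$ is the canonical product over the zeros of $f$ (so $\rho(\pi)=\lambda(f)<n$) and $g$ is entire; substituting into \eqref{lde2} yields the Riccati identity $u'+u^{2}=-A$ for $u=f'/f=g'+\pi'/\pi$, where $N(r,u)=N(r,1/f)=o(r^{n})$ and $m(r,u)=S(r,f)$.

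Next I would use the two hypotheses to localise the behaviour of $A$. Write $A=H_1e^{\zeta_1z^{n}}+B$ and set $\Theta^{+}=\{\theta:\re(\zeta_1e^{in\theta})>0\}$, $\Theta^{-}=\{\theta:\re(\zeta_1e^{in\theta})<0\}$. The inequality $h_A(\theta)>2h_B(\theta)$ on $\{h_B>0\}$ forces $\{h_B>0\}\subseteq\Theta^{+}$, so that $h_B\leq 0$ throughout $\Theta^{-}$ and, on $\Theta^{+}$, the removed term $H_1e^{\zeta_1z^{n}}$ dominates $B$ in the strong sense $|H_1e^{\zeta_1z^{n}}|\geq e^{c\,r^{n}}|B|$ away from a set of finite logarithmic measure; here Theorem~\ref{Stein-thm1} and the estimate \cite[Corollary~4.2]{WH} applied to the multipliers do the work. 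Consequently $A$ is zero-free on each sector $S^{+}$ about a ray of $\Theta^{+}$ with $\log|A(re^{i\theta})|=(1+o(1))r^{n}\re(\zeta_1e^{in\theta})$, whereas on the complementary sectors $S^{-}$ one has $|A|=(1+o(1))|B|$ with $h_A\leq 0$. After this localisation, the remaining hypothesis $C(\co(W_0^{A}))>2C(\co(W_0^{B}))$ says precisely that the far frequency $\overline\zeta_1$ lies at more than twice the distance of every order-$n$ frequency of $B$ from the origin along the segment $[0,\overline\zeta_1]$ --- the ``factor two'' that makes the oscillation argument below close.

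\textbf{Heart of the proof.} The core is an oscillation-in-sectors argument in the spirit of the proofs of Theorem~\ref{Ishizaki-Tohge}(c) in \cite{Ishizaki, Ishizaki-Tohge} and of the lemmas of Bank--Laine--Langley \cite{BLL2}, using a Liouville--Green (WKB) comparison in $S^{+}$. Since $|A|$ grows and varies regularly there, any solution of \eqref{lde2} that fails to pick up $\gg r^{n}$ zeros in $S^{+}$ must be, up to a constant, the recessive solution, for which $\log|f|\sim-\re\!\int^{z}\!\sqrt{A}$ decays in $S^{+}$ at a rate governed by $\tfrac12 h_A(\theta)=\tfrac12\re(\zeta_1e^{in\theta})$. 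Continuing this solution across the critical rays into $S^{-}$ and using the connection formulas, the decay rate $\tfrac12 h_A$ it carries out of $S^{+}$ strictly exceeds --- by $h_A>2h_B$, and quantitatively over all directions by $C(\co(W_0^{A}))>2C(\co(W_0^{B}))$ --- the decay $\tfrac12 h_B$ compatible with being recessive for the $B$-dominated behaviour near a critical ray of $B$. This mismatch forces $f$ to be a nontrivial combination of the recessive and dominant $B$-solutions in $S^{-}$, hence to inherit $\gg r^{n}$ zeros from the oscillation of the $B$-dominated equation there. In either case $N(r,1/f)\gg r^{n}$, contradicting $\lambda(f)<n$.

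\textbf{Main obstacle.} The hard part will be this last step in full generality: one must integrate $\sqrt{A}$ asymptotically and run the Stokes/connection analysis across all the (at most $sq$) critical rays for an exponential polynomial $A$ carrying polynomial --- or lower-order exponential-polynomial --- multipliers and several order-$n$ frequencies, keeping exact track of the zero counts this produces; and one must do so for an $f$ that is a priori of infinite order, so that growth along individual rays, rather than a Phragm\'en--Lindel\"of indicator at order $n$, has to be controlled directly. Here one leans on the sharp error terms of Theorems~\ref{Stein-thm1} and \ref{zeros-thm} for $A$ and for $B$, on the Bank--Laine type representation in \eqref{A1.3}, and on the sector-oscillation machinery of \cite{BLL2}; the case bookkeeping that reduces the problem to the dominant-term situation mirrors, and generalises, the argument for Theorem~\ref{Ishizaki-Tohge}(c).
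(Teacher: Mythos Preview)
This survey does not contain a proof of Theorem~\ref{n-thm-c}; the result is only quoted from \cite{HILT}, so there is no in-paper argument to compare your proposal against. A few remarks on your strategy are nevertheless warranted.

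Your opening reduction is slightly off. From $-A=f''/f$ one gets $m(r,A)=m(r,f''/f)$ directly; for finite-order $f$ this is $O(\log r)$ by the lemma on the logarithmic derivative, contradicting $\rho(A)=n\geq 1$. No inequality of the shape $T(r,A)\leq 2\overline N(r,1/f)+S(r,f)$ is needed at this stage, and the one you write does not arise in the way you suggest. The conclusion $\rho(f)=\infty$ is, of course, correct and classical.

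The larger issue is that the WKB/Stokes programme you sketch is much heavier than what the context indicates, and the obstacle you yourself flag is a genuine gap, not a technicality. Corollary~\ref{n-thm} immediately above follows in one line from Theorems~\ref{Stein-thm1} and~\ref{K-thm-IT}, and Theorem~\ref{n-thm-c} is presented as the companion result generalising Theorem~\ref{Ishizaki-Tohge}(c); one should therefore expect the argument in \cite{HILT} to be Nevanlinna-theoretic in the style of \cite{Ishizaki, Ishizaki-Tohge}, exploiting the Riccati relation $u'+u^{2}=-A$ for $u=f'/f$, the decomposition $A=H_1e^{\zeta_1z^{n}}+B$, and the completely regular growth of $A$ and $B$ (Theorem~\ref{Stein-thm1} and \eqref{kqh}) to turn the two hypotheses on indicators and circumferences into a contradiction with $\lambda(f)<n$ via characteristic- and counting-function estimates --- without any asymptotic solution formulas or connection analysis. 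By contrast, running a rigorous Liouville--Green integration of $\sqrt{A}$ and a Stokes/connection argument across all critical rays for a general exponential polynomial $A$ with several order-$n$ frequencies and lower-order multipliers, while controlling an infinite-order $f$, is not standard and would itself require substantial independent work; as written, your proposal is a heuristic outline rather than a proof. Finally, your geometric reading of $C(\co(W_0^{A}))>2C(\co(W_0^{B}))$ as a statement that $\overline{\zeta}_1$ lies at more than twice the distance of every frequency of $B$ from the origin is correct only in the collinear situation of Theorem~\ref{Ishizaki-Tohge}; in general it is a perimeter inequality and does not reduce to pointwise distances.
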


The necessity of the assumptions in Theorem~\ref{n-thm-c} are
discussed in the next example.

\begin{example}
\textnormal{(\cite{HILT})}
Let $f(z)=\exp\left(e^{z^n}\right)$, and denote $C(z)=f'(z)/f(z)=nz^{n-1}e^{z^n}$. Then $f$ is a zero-free solution
of \eqref{lde2}, where
    \begin{eqnarray*}
    A(z)&=&-C'(z)-C(z)^2\\
    &=&-\left(n(n-1)z^{n-2}+n^2z^{2(n-1)}\right)e^{z^n}-n^2z^{2(n-1)}e^{2z^n}.
    \end{eqnarray*}
Defining $B(z)=A(z)+C(z)^2=-C'(z)$, we have $h_A(\theta)=2h_B(\theta)$ whenever $h_B(\theta)>0$,
and $C(\co(W_0^A))=2C(\co(W_0^B))=4$.
\end{example}

\section{Exponential polynomials and O$\Delta$E's}

We aim to classify the finite order meromorphic solutions of non-linear
differential-difference equations
    \begin{equation}\label{wen-equation1}
    f(z)^n+a_{n-1}f(z)^{n-1}+\cdots+a_1f(z)+q(z)e^{Q(z)}f^{(k)}(z+c)=P(z),
    \end{equation}
where $q(z)$ is entire, $Q(z),P(z)$ are polynomials, $k\geq 0$ and $n\geq 2$ are integers, $c\in\C\setminus\{0\}$, and $a_j\in\C$ for $j=1,2,\ldots,n-1$. These equations arise from the equations
	$$
	f^n+L(z,f)=h(z)
	$$
studied by Laine and Yang in \cite{Y-L}. Here $L(z,f)$ is a linear differential-difference polynomial
in $f$ with meromorphic coefficients of growth $S(r,f)$, the function
$h(z)$ is meromorphic, and $n\geq 2$ is an integer. If $n-k\geq 2$, then
meromorphic solutions of \eqref{wen-equation1} of moderate growth
reduce to entire functions. We will give a precise statement of this claim
together with a proof, which is a simple modification of the reasoning
in \cite[p.~1296]{WHL}.

\begin{lemma}\label{meromorphic-entire-lemma}
Suppose that $n-k\geq 2$, $q(z)$ is a polynomial, and that $f$ is a meromorphic solution of \eqref{wen-equation1} of hyper-order $\rho_2(f)<1$. Then $f$ is entire.
\end{lemma}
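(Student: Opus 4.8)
The plan is to argue by contradiction: suppose $f$ has a pole. The key observation is that equation \eqref{wen-equation1} relates a pole of $f$ at a point $z_0$ to a pole of $f^{(k)}$ at the shifted point $z_0+c$, and by iteration this propagates poles along an arithmetic progression $z_0, z_0+c, z_0+2c, \ldots$. Quantitatively, if $f$ has a pole of multiplicity $\tau$ at $z_0$, the left-hand term $f(z)^n$ contributes a pole of multiplicity $n\tau$, whereas every other term on the left has multiplicity at most $(n-1)\tau$, and $P(z)$ is a polynomial; since $q(z)e^{Q(z)}$ is zero-free and $q$ is a polynomial (hence has no poles), the term $q(z)e^{Q(z)}f^{(k)}(z+c)$ must absorb the pole of order $n\tau$. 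Therefore $f^{(k)}$ has a pole at $z_0+c$ of multiplicity $n\tau$, which forces $f$ itself to have a pole at $z_0+c$ of multiplicity at least $n\tau - k \geq n\tau - k$. Because $n-k\geq 2$ and $\tau\geq 1$, we get $n\tau - k \geq n\tau - k \geq 2\tau + (k\tau - k) \geq 2\tau \geq \tau+1 > \tau$, so the pole multiplicity strictly increases with each shift by $c$ (in fact it grows at least geometrically, roughly like $n$ times per step, minus a bounded correction).

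First I would set up the pole-multiplicity recursion carefully. Let $\tau_0\geq 1$ be the multiplicity of a fixed pole of $f$ at $z_0$, and let $\tau_j$ be the multiplicity of the pole of $f$ at $z_0+jc$ produced by the above propagation. The estimate above gives $\tau_{j+1}\geq n\tau_j - k$; since $n\geq 2$ this implies $\tau_j \to \infty$, indeed $\tau_j \gtrsim n^{j}$ for large $j$. Now I would count: the number of poles of $f$ in the disc $|z|\leq r$, counted with multiplicity, is at least $\sum_{j : |z_0+jc|\leq r} \tau_j$. The number of admissible indices $j$ is of order $r/|c|$, and among them the multiplicities grow at least like $n^{j}$, so the integrated counting function $N(r,f)$ — or even just the pole-counting contribution — is bounded below by something of order $n^{cr}$ for a positive constant, hence $\log T(r,f)\gtrsim r$, giving $\rho_2(f)\geq 1$. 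This contradicts the hypothesis $\rho_2(f)<1$. Hence $f$ has no poles, i.e. $f$ is entire.

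The step requiring the most care is making the pole-propagation rigorous: I must ensure that the poles at the shifted points are genuinely present and not cancelled. The point is that the only term on the left-hand side of \eqref{wen-equation1} of pole-order exceeding $(n-1)\tau$ at $z_0$ is $f^n$, so balancing orders in the equation forces $q(z_0+?)\,e^{Q}\,f^{(k)}(z+c)$ evaluated appropriately to have the matching pole, and since $e^{Q}$ never vanishes and the polynomial $q$ vanishes only at finitely many points, after removing finitely many exceptional progressions the propagation is clean. A mild technical nuisance is that $f^{(k)}(z+c)$ having a pole of order $m$ at $z_0+c$ means $f$ has a pole there of order exactly $m-k$ when $m>k$ (and $f$ regular contributes nothing, so necessarily $m>k$, consistent with $m=n\tau_j$ and $n\tau_j - k\geq 1$). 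I would also note the degenerate possibility that $q(z)\equiv 0$, in which case the equation is polynomial in $f$ with constant coefficients and the claim is immediate; and the possibility that infinitely many poles of $f$ lie on a single progression through a zero of $q$, which is excluded because $q$ has only finitely many zeros, so only finitely many progressions are affected and the growth estimate still goes through on a cofinite set of progressions.

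Finally I would assemble the contradiction: choosing one pole $z_0$ not lying on any of the finitely many ``bad'' progressions, the lower bound $n(r,f)\geq \sum_{0\le j\le r/(2|c|)} \tau_j \gtrsim n^{r/(2|c|)}$ yields $T(r,f)\geq N(r,f)\gtrsim n^{r/(2|c|)}$, whence $\rho_2(f)=\limsup_{r\to\infty}\frac{\log\log T(r,f)}{\log r}\geq 1$, contradicting $\rho_2(f)<1$. Therefore $f$ has no poles and is entire, which is exactly the assertion of Lemma~\ref{meromorphic-entire-lemma}. I expect the write-up to be short once the recursion $\tau_{j+1}\ge n\tau_j-k$ and the bookkeeping of exceptional progressions are in place; the cited reasoning in \cite[p.~1296]{WHL} should supply the template, and only the adaptation to the presence of the derivative $f^{(k)}(z+c)$ and the extra polynomial coefficient $q(z)$ needs to be spelled out.
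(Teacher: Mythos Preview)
Your proposal is correct and follows essentially the same approach as the paper: both argue by contradiction, propagate a pole of $f$ along the arithmetic progression $z_0+jc$ via the equation, show that the multiplicities grow geometrically (the paper uses the cleaner bound $\tau_{j+1}\ge (n-k)\tau_j$ while you use $\tau_{j+1}\ge n\tau_j-k$, both yielding $\tau_j\ge 2^j\tau_0$), and then conclude $\rho_2(f)\ge 1$. One small remark: your caution about zeros of $q$ is harmless but in fact unnecessary here, since if $q(z_0)=0$ of order $s$ the balancing forces $f^{(k)}$ to have an even larger pole of order $n\tau+s$ at $z_0+c$, so the recursion only improves.
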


\begin{proof}
Suppose first that $k=0$. If $z_0$ is a pole of $f$ of multiplicity $t\geq 1$, then, by \eqref{wen-equation1},
$z_0+c$ is a pole of $f$ of multiplicity $\geq nt$ except for a zero of the polynomial $q(z)$, which are finitely many.
Continuing inductively, we see that $z_m=z_0+mc$ is a pole of $f$ of multiplicity $\geq n^mt$.
Since $n\geq 2$, the hyper-exponent of convergence $\lambda_2(\{z_m\})$ of the sequence $\{z_m\}$ satisfies $\lambda_2(\{z_m\})\geq 1$.
Since $\lambda_2(\{z_m\})\leq \rho_2(f)$ is true in general, and since
$\rho_2(f)<1$, it follows that $f$ is entire.

Suppose then that $k\in \N$ but $n-k\geq 2$.
If $z_0+c$ is a pole of $f^{(k)}$ of multiplicity $\geq nt$, $f$ has also its pole at $z_0+c$ of multiplicity $\geq nt-k\geq (n-k)t$.
Thus as the above, we see that $z_m=z_0+mc$ is a pole of $f$ of multiplicity $\geq (n-k)^mt\geq 2^m t$.
\end{proof}

If  $k=n-1\geq 1$, non-entire meromorphic solutions of finite-order of \eqref{wen-equation1} may exist.

\begin{example}\label{liu.ex}
\textnormal{(Liu \cite{kailiu})}
The non-linear differential-difference equation
    $$
    f(z)^2-e^{-z}f'(z+2\pi i)=0
    $$
has a meromorphic solution $f(z)=1/(1-e^z)$ for $(n,k)=(2,1)$.
\end{example}

\begin{example}
The function $f$ in~Example~\ref{liu.ex} has period~$2\pi i$ and it solves the Riccati equation $f'(z)=e^{z}f(z)^2$.
If we make a change of a variable $x=e^z$ as usual, and thus
$\frac{d}{dz}=x\frac{d}{dx}$, this transforms into $h'(x)=h(x)^2$ with $h(x)=1/(1-x)=f(z)$.
Defining $g(z)=f(z)-\frac{1}{2}a_1$ for $a_1\in\C$, one sees that this $g$ has period
$2\pi i$ and it is a meromorphic solution to
	$$
	g(z)^2+a_1g(z)-e^{-z}g'(z+2\pi i)=-\frac{1}{4}{a_1}^2,
	$$
which is of the form $\eqref{wen-equation1}$.
\end{example}

\begin{example}
The function $f(z)=\sqrt{2}i/\cos z$ solves
	$$
	f(z)^3+f(z)+f''(z+2\pi)=0,
	$$
which is \eqref{wen-equation1} in the case $(n,k)=(3,2)$.
However, $f$ has period $2\pi$, thus $f''(z+2\pi)=f''(z)$, so that the appearance of~$c$ in~\eqref{wen-equation1} seems to mean nothing in this example. Further, it is easy to see the non-existence of non-constant rational solutions to the transformed equation $h(x)^3-\frac{1}{2}h(x)+\frac{1}{2}xh'(x)+\frac{1}{2}x^2h''(x)=0$ by observing the degree of such~$h(x)$.
\end{example}

\begin{example}
The function $f(z)=1/\cos z$ satisfies
	$$
	f(z)^3-\frac{1}{2}f(z)+\frac{1}{2}f''(z+\pi)=0,
	$$
and this example is appropriate for our purpose since it can be modified
to confirm the existence of non-periodic solutions in the case $(n,k)=(3,2)$.
Indeed, choosing $f(z)=1/\cos z-a_2/3$ for $a_2\in\C$, the equation above
takes the form \eqref{wen-equation1}.
\end{example}

Returning to the case $(n,k)=(2,1)$, the existence of non-periodic solutions is still unknown.

\begin{problem}\label{-5.p}
Give an example of a solution to $\eqref{wen-equation1}$ for $(n,k)=(2,1)$ which is not of period~$c$ or prove the non-existence of such examples.
\end{problem}

Note that the solutions in the examples above are rational in the exponential function. If a non-periodic, finite-order and entire solution exists to one of the discussed equations, it may be given as an exponential sum which is neither simple nor irreducible. This leads us to the following problem, for which
Lemma~\ref{meromorphic-entire-lemma} is a partial solution.

\begin{problem}\label{5.p}
Which conditions imply that every meromorphic solution of \eqref{wen-equation1} is entire?
\end{problem}

By the proof of \cite[Theorem 1.1(a)]{WHL}, we find that every entire and finite-order solution~$f$ of \eqref{wen-equation1} satisfies $\rho(f)=\deg(Q)$
and is of mean type. Hence it seems plausible that entire solutions of \eqref{wen-equation1} could be exponential polynomials of the form
    \begin{equation}\label{f.eq}
    f(z)=H_0(z)+H_1(z)e^{w_1z^q}+\cdots+H_m(z)e^{w_mz^q}.
    \end{equation}
Indeed, the exponential polynomial solutions are classified  in~\cite{WHL} in the special case
    \begin{equation}\label{wen-equation2}
    f(z)^{n}+q(z)e^{Q(z)}f(z+c)=P(z),
    \end{equation}
where $q(z),Q(z),P(z)$ are polynomials, $n\geq 2$ is an integer and $c\in\C\setminus\{0\}$.
Entire finite-order solutions of \eqref{wen-equation2} do exist, while a solution does not have
to be unique. Indeed, as observed in \cite{WHL}, the functions $f_1(z)=e^z+1$ and $f_2(z)=e^z-1$ both solve
    \begin{equation}\label{example-eqn}
    f(z)^2-2e^zf(z-\log 2)=1.
    \end{equation}
Note that $f_1$ and $f_2$ are both periodic but not of period~$-\log 2$.
Although we are interested in finite-order entire solutions of \eqref{wen-equation2}, we note that
\eqref{wen-equation2} possesses infinite-order entire solutions also.
For example, the function $f(z)=e^{e^{z}}+e^{-e^{z}}$
solves
	$$
	f(z)^2-f(z+\log 2+\pi i)=2.
	$$
It turns out that every solution $f$ of the
form \eqref{f.eq} reduces to a function that belongs to one of the following two classes
of transcendental entire functions:
    \begin{eqnarray*}
    \Gamma_{1} &=& \{h=e^{\alpha(z)}+d\, :\,
    d\in\C\ \textrm{and}\ \alpha\ \textrm{polynomial}, \alpha\neq \textrm{const.}\},\\
    \Gamma_{0} &=& \{h=e^{\alpha(z)}\, :\, \alpha\ \textrm{polynomial}, \alpha\neq\textrm{const.}\}.
    \end{eqnarray*}

\begin{theorem}\label{WHL-result}
\textnormal{(\cite{WHL})}
Let $n\geq 2$ be an integer, $c\in\C\setminus\{0\}$, and let $q(z),Q(z),P(z)$ be polynomials such
that $Q(z)$ is not a constant and $q(z)\not\equiv 0$. If $f$ is an exponential polynomial
solution of the form \eqref{f.eq} of \eqref{wen-equation2}, then $f\in \Gamma_1$. Moreover, if $f\in\Gamma_1\backslash \Gamma_0$, then $\rho(f)=1$.
\end{theorem}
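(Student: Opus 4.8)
The plan is to substitute the normalized form \eqref{f.eq} of a solution $f$ into \eqref{wen-equation2} and to exploit the rigidity of exponential polynomials, removing one order of growth at a time. Throughout write $Q(z)=a_qz^q+\cdots$ with $a_q\neq 0$, $q=\deg Q$. The starting point is the cited Theorem~1.1(a) of \cite{WHL}: every entire finite-order solution of \eqref{wen-equation2} satisfies $\rho(f)=q$ and is of mean type, which is exactly what puts $f$ into the form \eqref{f.eq} with exponentials $e^{w_jz^q}$, distinct non-zero $w_j$, and multipliers $H_j$ of order $\le q-1$.

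\textbf{Step 1 (locating the frequencies).} Since $(z+c)^q-z^q$ and $Q(z)-a_qz^q$ have degree $\le q-1$, the exponential polynomial $q(z)e^{Q(z)}f(z+c)$, read off with respect to $z^q$, has frequency set inside $\{a_q\}\cup\{a_q+w_j:1\le j\le m\}$ with multipliers of order $\le q-1$, whereas $f^n$ has frequency set the $n$-fold sumset of $\{0,w_1,\dots,w_m\}$, whose hull of conjugates equals $nK$ with $K=\co(W_0)$ (the vertices $n\overline{w_j}$ being realised by the terms $H_j^{\,n}e^{nw_jz^q}$). As $f^n$ and $P(z)-q(z)e^{Q(z)}f(z+c)$ are literally the same function, their frequency sets agree, so $nK\subseteq\co\bigl(\{0\}\cup(A+K)\bigr)$ with $A=\overline{a_q}$. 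Passing to support functions, $n\,h_K(u)\le\max\{0,\langle A,u\rangle+h_K(u)\}$ for every unit $u$, which forces $h_K(u)\le\max\{0,\tfrac{1}{n-1}\langle A,u\rangle\}$, i.e.\ $K\subseteq[0,\overline{a_q}/(n-1)]$. Hence every frequency is a positive real multiple $w_j=t_j\,a_q/(n-1)$, $0<t_j\le 1$, and matching the (unique) terms of maximal frequency on the two sides of \eqref{wen-equation2} forces $\max_jt_j=1$; relabel so that $w_m=a_q/(n-1)$.

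\textbf{Step 2 (descent on $q$, and reduction to one term).} Matching the unique terms of frequency $\tfrac{n}{n-1}a_q$ yields the functional equation
\[
H_m(z)^n=-\,q(z)\,H_m(z+c)\,e^{E(z)},\qquad E(z):=\bigl(Q(z)-a_qz^q\bigr)+w_m\bigl((z+c)^q-z^q\bigr),
\]
with $\deg E\le q-1$, which is again of the shape \eqref{wen-equation2} but of strictly smaller order. By induction on $q$ (the base case $q=1$, where the $H_j$ are ordinary polynomials and $E$ is constant, being handled directly from $H_m(z)^n=\gamma\,q(z)H_m(z+c)$ via $nT(r,H_m)=(1+o(1))T(r,H_m)+O(\log r)$, which forces $H_m$ to be a polynomial), one obtains that $H_m$ is, up to a non-zero constant factor, $e^{\beta}$ with $\deg\beta\le q-1$, or $e^{\beta}+d_m$ with $\deg\beta=1$, or a polynomial. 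Feeding this back into \eqref{wen-equation2} and descending through $w_{m-1},\dots,w_1$ — at each stage the next term of $f^n$ is built from $n-1$ copies of the leading term together with one lower term and must match the corresponding term of $q(z)e^{Q(z)}f(z+c)$ — one finds, after dividing out $H_m$, that every ratio $H_j/H_m$ and $H_0/H_m$ is $c$-periodic, hence constant, and then that no intermediate frequency can survive. Thus $m=1$, the multipliers $H_0,H_1$ are constants, and $f(z)=H_1e^{w_1z^q}+H_0=e^{\alpha(z)}+d$ for a polynomial $\alpha$; that is, $f\in\Gamma_1$.

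\textbf{Step 3 (the dichotomy, and the main obstacle).} For the last assertion, suppose $f=e^{\alpha}+d\in\Gamma_1\setminus\Gamma_0$, i.e.\ $d\neq 0$. Expanding \eqref{wen-equation2},
\[
\sum_{i=0}^n\binom{n}{i}d^{\,n-i}e^{i\alpha(z)}+q(z)e^{Q(z)+\alpha(z+c)}+d\,q(z)e^{Q(z)}=P(z);
\]
every exponential with non-constant exponent must be cancelled by another term with the same exponent modulo $2\pi i\Z$. Since $\binom{n}{i}d^{\,n-i}\neq 0$ and $\alpha,2\alpha,\dots,n\alpha$ are pairwise distinct non-constant polynomials, $q(z)e^{Q(z)+\alpha(z+c)}$ can only pair with some $i\alpha$, forcing $q(z)$ to be a non-zero constant and $Q(z)=i\alpha(z)-\alpha(z+c)+\mathrm{const}$, while $d\,q(z)e^{Q(z)}$ must pair with some $j\alpha$, giving $Q(z)=j\alpha(z)+\mathrm{const}$. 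Subtracting, $\alpha(z+c)=(i-j)\alpha(z)+\mathrm{const}$, and comparing leading coefficients forces $i-j=1$; hence $\alpha(z+c)-\alpha(z)$ is constant, so $\deg\alpha=1$ and $\rho(f)=1$ (for $n\ge 3$ the same step in fact shows no such $f$ exists, so the claim is vacuous there; for $n=2$ it recovers, e.g., the solutions $e^{z}\pm 1$ of \eqref{example-eqn}). The genuinely delicate part of the whole argument is the downward induction in Step 2: tracking \emph{all} the multipliers $H_0,\dots,H_m$ simultaneously — in particular excluding $m\ge 2$ and proving $H_0,H_1$ constant — while coping with the degenerate sub-cases ($H_0\equiv 0$, or $\deg E<q-1$ so that the order of $H_m$ is not immediately fixed by $E$); Steps~1 and~3 are comparatively routine.
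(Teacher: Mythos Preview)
The paper does not prove this theorem; it merely cites \cite{WHL}. So there is no in-paper argument to compare against, and your proposal must be judged on its own.

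Steps~1 and~3 are essentially correct. The convex-hull/support-function argument pins the frequencies onto the segment $[0,a_q/(n-1)]$ with $w_m=a_q/(n-1)$, and once $f=e^{\alpha}+d$ with $d\neq 0$ the Borel-type matching in Step~3 forces $\deg\alpha=1$ (and in fact forces $n=2$ and $q$ constant, as you note parenthetically).

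Step~2 has a genuine gap, which your closing sentence already concedes. The assertion that the ratios $H_j/H_m$ are ``$c$-periodic, hence constant'' does not follow from the equations you write down. Matching the frequency $(n-1)w_m+w_{m-1}$ in $f^n$ against the corresponding term on the right and dividing by the top-level identity $H_m(z)^n=-q(z)H_m(z+c)e^{E(z)}$ yields
\[
\frac{R(z+c)}{R(z)}\;=\;n\,\exp\!\bigl((w_m-w_{m-1})((z+c)^q-z^q)\bigr),\qquad R:=\frac{H_{m-1}}{H_m},
\]
which for $q\ge 2$ is a non-constant exponential polynomial of order exactly $q-1$, not a periodicity relation. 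One can instead derive a contradiction here from the logarithmic-difference lemma (Halburd--Korhonen): $m\bigl(r,R(z+c)/R(z)\bigr)=o(T(r,R))=o(r^{q-1})$, while the right-hand side has proximity function $\asymp r^{q-1}$. For $q=1$ one must first observe that the right-hand side is a constant and that this constant is $1$ (since $R$ is rational and $R(z+c)/R(z)\to 1$ as $z\to\infty$) before any periodicity is available. Either way, the mechanism you actually invoke is wrong, and ``$c$-periodic meromorphic of order $\le q-1$'' does \emph{not} imply constant when $q\ge 2$.

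Moreover, even granting that every $H_j/H_m$ is constant, you have not shown that $m=1$ nor that $H_m$ and $H_0$ are themselves \emph{constants} (the definition of $\Gamma_1$ requires exactly $e^{\alpha}+d$, not $H_1(z)e^{w_1z^q}+H_0(z)$). Your induction on $q$ for $H_m$ alone gives only $H_m\in\Gamma_1$ or $H_m$ polynomial, and feeding these alternatives back into the hierarchy of frequency equations --- including the cross-terms at frequencies like $(n-2)w_m+2w_{m-1}$ --- is precisely the ``delicate'' bookkeeping you defer. As written, Step~2 is a plausible outline but not a proof.
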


Li and Yang \cite{nanli} classified the exponential polynomial solutions of
    \begin{equation}\label{wen-equation3}
     f(z)^2+a_{1}f(z)+q(z)e^{Q(z)}f^{(k)}(z+c)=P(z),
    \end{equation}
where $q(z),Q(z),P(z)$ are polynomials, and $a_1, c\in\C\setminus\{0\}$.

\begin{theorem}\label{Li-result}
\textnormal{(Li-Yang \cite{nanli})}
Let $a_1, c\in\C\setminus\{0\}$ and let $q(z),Q(z),P(z)$ be polynomials such
that $Q(z)$ is not a constant and $q(z)\not\equiv 0$. Suppose that $f$ is an exponential polynomial
solution of the form \eqref{f.eq} of \eqref{wen-equation3}. Then the
following is true:
\begin{itemize}
\item[\textnormal{(a)}]
If $m\geq 2$, there exist $i_0, j_0\in\{1,\ldots,m\}$ such that $w_{i_0}=2w_{j_0}$.
\item[\textnormal{(b)}]
If $m=1$, then $f\in\Gamma_1$. Moreover, if $f\in\Gamma_1\setminus\Gamma_0$, then $\rho(f)=1$.
\end{itemize}
\end{theorem}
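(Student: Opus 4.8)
The plan is to run the argument behind Theorem~\ref{WHL-result}, now keeping track of the extra linear term $a_1f$, and to rely on two standard devices: comparison of Phragm\'en--Lindel\"of indicators, and the linear independence of the exponential terms $e^{\lambda z^{q}}$ over the ring of entire functions of order $<q$ (Borel's theorem). For the normalisation, note that \eqref{wen-equation3} is the instance $n=2$ of \eqref{wen-equation1}, so the growth estimate recalled just before the theorem gives $\rho(f)=\deg Q=:q\ge1$ with $f$ of mean type; hence in \eqref{f.eq} each $H_j$ is an exponential polynomial of order $\le q-1$, $H_j\not\equiv0$ for $1\le j\le m$, and $m\ge1$ (otherwise $\rho(f)<q$). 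Write $Q(z)=\beta z^{q}+q_1(z)$ with $\beta\ne0$ and $\deg q_1\le q-1$. Since $f^{(k)}(z+c)$ is again an exponential polynomial of order $q$ with the same leading frequencies $w_1,\dots,w_m$, the term $q(z)e^{Q(z)}f^{(k)}(z+c)$ involves only the exponentials $e^{\beta z^{q}}$ and $e^{(\beta+w_j)z^{q}}$; rearranging \eqref{wen-equation3} as $f^{2}+a_1f=P-q e^{Q}f^{(k)}(\cdot+c)$ places two exponential polynomials of order $q$ against each other with polynomial difference $P$.

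The indicator step pins down $\beta$. The three summands $f^{2}$, $a_1f$ and $q e^{Q}f^{(k)}(\cdot+c)$ all have order $q$ while their sum is $P$; comparing indicators in each direction and using $h_{f^{2}}=2h_f$, $h_{a_1f}=h_f$ and $h_{q e^{Q}f^{(k)}(\cdot+c)}(\theta)=\re(\beta e^{iq\theta})+h_f(\theta)$ forces $\re(\beta e^{iq\theta})=h_f(\theta)$ at every $\theta$ with $h_f(\theta)>0$. As $h_f(\theta)=\max_{0\le j\le m}\re(w_je^{iq\theta})$ is, on each subarc, a single sinusoid while $\re(\beta e^{iq\theta})$ is one sinusoid, a single frequency $w_{j_0}$ must realise the maximum throughout $\{h_f>0\}$, so $\beta=w_{j_0}$; comparing the arcs of positivity of the $\re(w_je^{iq\theta})$ then forces every $w_j$ to be a positive real multiple $t_jw_{j_0}$ with the $t_j$ distinct, $0<t_j\le1$, and $t_{j_0}=1$. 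Thus the frequency configuration is one-dimensional.

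Next comes frequency matching and part (a). By Borel's theorem the coefficients of each $e^{\lambda z^{q}}$ must agree. In units of $w_{j_0}$, the left side carries the frequencies $t_i+t_j$ (with $0\le i,j\le m$, $t_0=0$) and the right side the frequencies $0$, $1$, $1+t_j$; the top frequency $2$ has left coefficient $H_{j_0}^{2}\not\equiv0$ and is matched by $1+t_{j_0}$. Assume $m\ge2$ and let $t_{\min}=t_{j_1}<1$ be the least positive $t_j$ (uniquely attained). The coefficient of $e^{t_{\min}w_{j_0}z^{q}}$ equals $(2H_0+a_1)H_{j_1}$ on the left and $0$ on the right, so $2H_0+a_1\equiv0$, i.e.\ $H_0\equiv-\tfrac12 a_1$. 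Then the coefficient of $e^{2t_{\min}w_{j_0}z^{q}}$ contains $H_{j_1}^{2}\not\equiv0$, so $2t_{\min}$ must either coincide with some $t_{j_2}$ (giving $w_{j_2}=2w_{j_1}$) or be a right-hand frequency, which, using the minimality of $t_{\min}$ and $0<t_{\min}<1$, is possible only when $2t_{\min}=1$ (giving $w_{j_0}=2w_{j_1}$). In both cases $w_{i_0}=2w_{j_0}$ for suitable $i_0,j_0\in\{1,\dots,m\}$, which is (a).

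For part (b), $m=1$, the indicator step gives $\beta=w_1$. The frequency-$0$ coefficient yields the polynomial identity $H_0^{2}+a_1H_0=P$, so $H_0$ is an entire algebraic function, hence a polynomial; the frequency-$w_1$ and frequency-$2w_1$ coefficients give
\[
(2H_0+a_1)H_1=-q(z)e^{q_1(z)}H_0^{(k)}(z+c),\qquad
H_1^{2}=-q(z)e^{q_1(z)}e^{S(z)}B_k(z),
\]
where $S(z)=w_1\bigl((z+c)^{q}-z^{q}\bigr)$ has degree exactly $q-1$ and $B_k$ is a linear differential polynomial in $H_1(z+c)$ with polynomial coefficients. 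From the second identity, any zero of $H_1$ would, apart from the finitely many zeros of $q$, generate a sequence of zeros of $H_1$ along $z\mapsto z+c$ with multiplicities at least doubling, contradicting $\lambda(H_1)\le\rho(H_1)\le q-1$; hence $H_1$ is zero-free and, being of finite order, $H_1=e^{\alpha_1(z)}$ with $\alpha_1$ a polynomial of degree $\le q-1$. Substituting this back collapses the second identity to $(\text{const})\,e^{\,2\alpha_1(z)-\alpha_1(z+c)-q_1(z)-S(z)}=-q(z)\cdot(\text{polynomial})$, which forces $q(z)$ to be constant and the exponent $2\alpha_1(z)-\alpha_1(z+c)-q_1(z)-S(z)$ to be constant; the first identity then forces $H_0$ to be a constant $d$, so $f=e^{\alpha}+d\in\Gamma_1$ with $\deg\alpha=q$. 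If finally $d\ne0$, the frequency-$0$ term of $f$ is genuinely present, and the collapsed identity coming from $e^{2w_1z^{q}}$ — whose exponent must now be constant while $S$ has degree $q-1$ — can hold only when $q=1$, i.e.\ $\rho(f)=1$. The indicator and Borel steps are routine; the main obstacle is the bookkeeping in part (b): following how the shift $z\mapsto z+c$ and the $k$-fold derivative deform the multipliers, tracking which polynomial degrees survive, and disposing of the degenerate subcases ($k=0$ versus $k\ge1$, $\deg H_0$ versus $k$, zeros of $q(z)$) — in particular checking that the zero-propagation argument for $H_1$ really goes through when $k\ge1$, where it is $B_k$ rather than $H_1(z+c)$ that vanishes at the propagated points.
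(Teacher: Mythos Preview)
This theorem is merely quoted in the survey from Li--Yang \cite{nanli}; the paper supplies no proof of its own, so there is nothing to compare your argument against. Your overall plan---indicator comparison to force the frequencies onto a ray with $\beta=w_{j_0}$, followed by Borel-type matching of the coefficients of $e^{\lambda z^{q}}$---is the natural one and matches the method behind the preceding Theorem~\ref{WHL-result}.

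Part~(a) is essentially fine, though your final dichotomy is phrased loosely. Once $2H_0+a_1\equiv 0$, the left-hand coefficient at frequency $2t_{\min}$ is exactly $H_{j_1}^{2}\not\equiv0$ even if $2t_{\min}$ happens to equal some $t_{j_2}$; so the equation can only balance if $2t_{\min}$ is a right-hand frequency, which forces $2t_{\min}=1$ and hence $w_{j_0}=2w_{j_1}$. The ``either/or'' you wrote is not a genuine alternative.

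Part~(b) has a real gap beyond the one you flag. Your claim that ``the first identity then forces $H_0$ to be a constant $d$'' does not follow: the frequency-$w_1$ relation $(2H_0+a_1)H_1=-q\,e^{q_1}\,H_0^{(k)}(z+c)$, together with $H_1=e^{\alpha_1}$ and $q$ constant, is perfectly compatible with a \emph{linear} $H_0$ when $k=0$ (compare degrees; you get $H_0(z+c)-H_0(z)=a_1/2$, so $H_0$ has slope $a_1/(2c)$). In that situation all three of your matched identities are satisfied with $q=1$, yet $f=H_0+e^{\alpha}\notin\Gamma_1$. Concretely, $f(z)=z-\tfrac12\cdot 2^{-z}$ solves $f(z)^2+2f(z)+2^{-z}f(z+1)=z^2+2z$, so either the statement in the survey omits a hypothesis present in \cite{nanli}, or some further argument---not captured by your three coefficient identities alone---is needed to exclude non-constant $H_0$. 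Your deduction that $\rho(f)=1$ when $d\neq 0$ is also incomplete as written: the constancy of the exponent $2\alpha_1(z)-\alpha_1(z+c)-q_1(z)-S(z)$ does not by itself force $q=1$, since $\alpha_1$ and $q_1$ can absorb the degree-$(q-1)$ part of $S$; the constraint really comes from the polynomial $\tilde B_k$ being constant, which (via its leading term $(\alpha_1'+qw_1z^{q-1})^{k}$) forces $\alpha_1'+qw_1z^{q-1}$ constant and hence $q=1$.
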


Liu \cite{kailiu} proceeds to classify the exponential polynomial solutions of
    \begin{equation}\label{wen-equation4}
    f(z)^n+q(z)e^{Q(z)}f^{(k)}(z+c)=P(z),
    \end{equation}
where $q(z)$ is entire, $Q(z),P(z)$ are polynomials, $k$ and $n\geq 2$ are integers, and $c\in\C\setminus\{0\}$. We denote
    \begin{eqnarray*}
    \Gamma^d_{1} &=& \{h=d_1(z)e^{\alpha(z)}+d_2(z)\, :\,
    d_1, d_2\ \textrm{and}\ \alpha\ \textrm{polynomial}, \alpha\neq \textrm{const.}\},\\
    \Gamma^d_{0} &=& \{h=d_1(z)e^{\alpha(z)}\, :\, d_1\ \textrm{and}\ \alpha\ \textrm{polynomial}, \alpha\neq\textrm{const.}\}.
    \end{eqnarray*}

\begin{theorem}
\textnormal{(Liu \cite{kailiu})}
Let $n\geq 2$ and $k\geq 1$ be an integers,  $c\in\C\setminus\{0\}$, and let $q(z),Q(z),P(z)$ be polynomials such
that $Q(z)$ is not a constant and $q(z)\not\equiv 0$. If $f$ is an exponential polynomial
solution of the form \eqref{f.eq} of \eqref{wen-equation4}, then $f\in \Gamma^d_1$.
\end{theorem}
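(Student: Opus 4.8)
The plan is to follow the strategy behind Theorems~\ref{WHL-result} and \ref{Li-result}, adapting it to an arbitrary power $n\geq 2$ and to the differential term of order $k\geq 1$. First I would fix the growth and the shape of $f$. By \cite[Theorem~1.1(a)]{WHL}, every entire finite-order solution of \eqref{wen-equation4} satisfies $\rho(f)=\deg(Q)=:q$ and is of mean type, so an exponential polynomial solution of the form \eqref{f.eq} has order exactly $q$, with pairwise distinct non-zero frequencies $w_1,\dots,w_m$ and multipliers $H_j$ that are exponential polynomials of order $\leq q-1$ with $H_j\not\equiv 0$ for $1\leq j\leq m$. Write $Q(z)=\beta z^q+R(z)$ with $\beta\neq 0$ and $\deg(R)\leq q-1$, set $w_0=0$ and $W_0=\{\overline{w}_0,\overline{w}_1,\dots,\overline{w}_m\}$. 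Since $(z+c)^q-z^q$ has degree $q-1$, differentiation and the shift only modify the multipliers, so
\[
q(z)e^{Q(z)}f^{(k)}(z+c)=\sum_{j=0}^m \widetilde H_j(z)\,e^{(\beta+w_j)z^q},
\]
where each $\widetilde H_j$ is an exponential polynomial of order $\leq q-1$ obtained explicitly from $H_j$ by differentiation, shift and multiplication by $e^{w_j((z+c)^q-z^q)}$, and $\widetilde H_j\not\equiv 0$ for $1\leq j\leq m$.

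\emph{Collecting exponential terms.} Substituting $f(z)^n=\sum_{(j_1,\dots,j_n)}H_{j_1}\cdots H_{j_n}\,e^{(w_{j_1}+\cdots+w_{j_n})z^q}$ into \eqref{wen-equation4}, the left-hand side becomes an exponential polynomial of order $q$ whose conjugated frequencies lie in $n\,\co(W_0)\cup(\overline{\beta}+\co(W_0))$. Since the right-hand side $P$ is a polynomial, Theorem~\ref{Stein-thm1} forces all the exponential content of the left-hand side to cancel: grouping the two contributions by the coefficient $\sigma$ of $z^q$ in the exponent and applying Borel's theorem (or induction on $q$) to the multipliers of order $\leq q-1$, the total coefficient of $e^{\sigma z^q}$ must vanish identically for every $\sigma\neq 0$, while the coefficient of $e^{0}$ equals $P$.

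\emph{Reduction to $m=1$ — the main obstacle.} This is the technical heart of the proof. If $\overline{w}_\ell\neq 0$ is a vertex of $\co(W_0)$, then $n\overline{w}_\ell$ is a vertex of $n\,\co(W_0)$ realised only by $(w_\ell,\dots,w_\ell)$, so the coefficient of $e^{nw_\ell z^q}$ produced by $f^n$ is $H_\ell^{\,n}\not\equiv 0$; as \eqref{wen-equation4} contains no intermediate powers $f^{j}$ with $1\leq j\leq n-1$ to interfere, this term can only be cancelled by a term of $q e^{Q}f^{(k)}(z+c)$, forcing $n\overline{w}_\ell-\overline{\beta}\in W_0$. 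Symmetrically, each extreme term $\widetilde H_\ell\,e^{(\beta+w_\ell)z^q}$ with $\overline{w}_\ell$ a vertex and $\beta+w_\ell\neq 0$ must be cancelled by a term of $f^n$. Since for $n\geq 2$ the affine map $v\mapsto n v-\overline{\beta}$ strictly expands distances, it cannot send the vertices of $\co(W_0)$ into the bounded set $W_0$; combining this with an analysis of the relations that the remaining coefficients of the non-extreme $e^{\sigma z^q}$ impose on the interior frequencies, one is forced to $\co(W_0)=[0,\overline{w}_1]$ and in fact $m=1$, and at the same time $\beta=(n-1)w_1$ (the value $\beta=nw_1$ would leave the non-zero term $\widetilde H_1\,e^{(n+1)w_1 z^q}$ uncancelled). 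Keeping this term-matching under control for general $n$ and $k$ is the step I expect to be the hardest.

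\emph{Identifying the multipliers.} With $m=1$ and $\beta=(n-1)w_1$ the equation reads
\[
\sum_{i=0}^n \binom{n}{i}H_0^{\,n-i}H_1^{\,i}\,e^{iw_1 z^q}+q\,e^{R}\widetilde H_0\,e^{(n-1)w_1 z^q}+q\,e^{R}\widetilde H_1\,e^{nw_1 z^q}=P .
\]
The coefficient of $e^{0}$ gives $H_0^{\,n}=P$, so $H_0$ is a polynomial; the coefficients of $e^{iw_1 z^q}$ for $1\leq i\leq n-2$ give $H_0\equiv 0$ when $n\geq 3$ (this range being empty for $n=2$); and the coefficient of $e^{nw_1 z^q}$ gives the functional equation $H_1^{\,n}=-q\,e^{R}\widetilde H_1$, where $\widetilde H_1$ is built from $H_1$ by differentiation, shift and multiplication by $e^{w_1((z+c)^q-z^q)}$. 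Applying the vertex argument of the previous step to this identity — now at order $q-1$, where there is no additive polynomial term, so that the frequency-$0$ part of the left-hand side must itself vanish — shows that $H_1$ is a polynomial or reduces to $G_1(z)e^{u_1 z^{q-1}}$ with $u_1\neq 0$ and $G_1$ of order $\leq q-2$. Iterating this descent on the order yields $H_1(z)=d_1(z)e^{\phi(z)}$ with $d_1$ and $\phi$ polynomials and $\deg(\phi)\leq q-1$. Hence $f(z)=H_0(z)+d_1(z)e^{w_1 z^q+\phi(z)}$ with $H_0$ and $d_1$ polynomials and $w_1 z^q+\phi$ a non-constant polynomial, that is, $f\in\Gamma_1^d$.
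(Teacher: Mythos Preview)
The paper does not prove this theorem; it is simply stated with attribution to Liu \cite{kailiu}, so there is no argument in the paper to compare your proposal against. Your overall strategy---expand $f^n$ and $qe^{Q}f^{(k)}(z+c)$ as exponential polynomials of order $q$, match the coefficients of each $e^{\sigma z^q}$ via Borel's lemma, force $m=1$ by a convexity/expansion argument, and then run a descent on the order of $H_1$---is exactly the natural extension of the method behind Theorems~\ref{WHL-result} and~\ref{Li-result}, and it is essentially what Liu does.

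Two steps in your sketch are genuinely incomplete, though fixable. First, in the reduction to $m=1$, the observation that $v\mapsto nv-\overline{\beta}$ is expanding does not by itself exclude $m\geq 2$: when the frequencies are collinear (e.g.\ $W_0=\{0,w_1,2w_1\}$) the vertex constraints $n\overline{w}_\ell-\overline{\beta}\in W_0$ for the two endpoints \emph{can} be satisfied, say with $\beta=(2n-2)w_1$; you have to invoke the intermediate coefficients---which you only mention in passing---to reach a contradiction (for instance, the coefficient of $e^{w_1 z^q}$ then forces $H_0H_1\equiv 0$ and one continues). You should also say explicitly why non-vertex elements of $W_0$ cause no trouble. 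Second, your descent on $H_1$ via $H_1^{\,n}=-\widetilde H_1$ is not literally an instance of the same equation: the right-hand side carries the fixed factors $e^{R(z)}$ and $e^{w_1((z+c)^q-z^q)}$, which shift the level-$(q-1)$ frequencies of $H_1$ by a specific nonzero amount $\gamma=r_{q-1}+qcw_1$ rather than by an unknown $\beta$, so the vertex/expansion step has to be rephrased (and the case $\gamma=0$ handled separately) before the induction goes through cleanly.
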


\begin{problem}
Suppose that $f$ is an entire solution of \eqref{wen-equation4}, where
$q(z)$ is a polynomial. If $f\in\Gamma^d_1\backslash \Gamma^d_0$, then is it true that $\rho(f)=1$?
\end{problem}

Example~\ref{liu.ex} shows that
finite order meromorphic solutions of \eqref{wen-equation4} exist. This solution
is in fact a quotient of two exponential polynomials. As for further
examples of this nature, we observe that $f_1(z)=\frac{1}{e^z-1}$
and $f_2(z)=\frac{1-e^z}{2e^z-1}$ solve
	$$
	f(z)^2+2f(z)+e^zf'(z+2\pi i)=-1.
	$$
Such solutions have not been classified yet.

\begin{problem}\label{13}
Classify quotients of two exponential polynomials as solutions of \eqref{wen-equation4}.
\end{problem}

Regarding Problem \ref{13}, Gao, Liu and Liu \cite{GLL} have considered reciprocals of exponential polynomials as solutions of the non-linear differential equation
    \begin{equation}\label{simple2.eq}
    f(z)^2+a_1f(z)+q(z)e^{Q(z)}f'(z)=0,
    \end{equation}
where $Q(z)=b_qz^q+\cdots+b_0$, $b_q\neq 0$, $a_1$ is a constant and $q(z)$ is a non-zero polynomial. Note that \eqref{simple2.eq} resembles \eqref{wen-equation4} but is not
of the form \eqref{wen-equation4} because the  $c$-shift is missing.

\begin{theorem}\label{Liu.theorem}
\textnormal{(Gao-Liu-Liu \cite{GLL})}
Let $g$ be an exponential polynomial. Suppose that $f=1/g$ is a solution of \eqref{simple2.eq}. Then $a_1=0$ and
$g=d+Ae^{wz}$, where $d,A,w$ are constants.
\end{theorem}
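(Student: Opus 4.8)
The plan is to substitute $f=1/g$ into \eqref{simple2.eq}. Since $f'=-g'/g^2$, multiplying through by $g^2$ gives the first–order relation
\begin{equation}\label{GLLred}
q(z)e^{Q(z)}g'(z)=a_1g(z)+1,
\end{equation}
and the whole argument consists of extracting structural information about $g$ from \eqref{GLLred}. First I would observe that $g$ cannot be a non‑constant polynomial, since then the left side of \eqref{GLLred} would be transcendental of order $\deg(Q)\ge 1$ while the right side is a polynomial; setting aside the degenerate case $g\equiv$ const, we may assume $g$ is transcendental, of order $p:=\rho(g)\ge 1$, written in the normalized form \eqref{normalized-f}.

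The first main step is to prove $a_1=0$. Suppose $a_1\neq 0$ and set $\widetilde g:=g+1/a_1$, again a transcendental exponential polynomial with $\widetilde g'=g'$. Then \eqref{GLLred} becomes $\widetilde g'/\widetilde g=a_1e^{-Q(z)}/q(z)$. The right side is meromorphic with poles only among the finitely many zeros of $q$, so $\widetilde g$ has only finitely many zeros and hence $\overline N(r,1/\widetilde g)=O(\log r)$. Combined with the logarithmic derivative estimate $m(r,\widetilde g'/\widetilde g)=O(\log r)$ (valid because $\widetilde g$ has finite order), this yields $T\!\left(r,a_1e^{-Q}/q\right)=O(\log r)$, forcing $a_1e^{-Q}/q$ to be rational — contradicting $\deg(Q)\ge 1$. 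Hence $a_1=0$.

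With $a_1=0$, \eqref{GLLred} becomes $q(z)e^{Q(z)}g'(z)=1$, so $g'(z)=e^{-Q(z)}/q(z)$; as $g'$ is entire and $e^{-Q}$ is zero‑free, $q$ has no zeros, i.e.\ $q\equiv q_0\in\C\setminus\{0\}$ and $g'=q_0^{-1}e^{-Q}$, whence $p=\rho(g)=\deg(Q)$. Now I would compare normalized forms: $g'=q_0^{-1}e^{-Q}$ consists of a single exponential term of order $p$ with frequency $-b_p$ and multiplier of order $\le p-1$ and no term of order $\le p-1$ standing alone, so by uniqueness of the normalized form $g(z)=d+G_1(z)e^{-b_pz^p}$ with $d\in\C$ and $G_1\not\equiv 0$ an exponential polynomial of order $\le p-1$; differentiating, $G_1'-pb_pz^{p-1}G_1=q_0^{-1}e^{-(Q-b_pz^p)}$. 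If $p=1$, then $G_1$ is a polynomial with $G_1'-b_1G_1$ equal to a nonzero constant, which forces $G_1\equiv A\in\C\setminus\{0\}$, so $g=d+Ae^{wz}$ with $w=-b_1\neq 0$, as claimed. If $p\ge 2$, I would argue by a peeling induction on orders: $G_1$ cannot be a polynomial (the left side would then be a polynomial of degree $\ge p-1\ge 1$, the right side a nonzero constant or transcendental), so $G_1$ is transcendental; since the coefficient $-pb_pz^{p-1}$ has degree $p-1$ strictly above $\rho(G_1)-1$, no exponential terms in the normalized form of $G_1'-pb_pz^{p-1}G_1$ can cancel, which forces $G_1$ to reduce to a single exponential term whose multiplier satisfies an equation of the same type but of strictly smaller order; iterating must reach a polynomial multiplier, the contradiction already settled. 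Thus $p=1$, completing the proof.

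The short part is the step $a_1=0$, which hinges only on the observation that $\widetilde g=g+1/a_1$ inherits at most finitely many zeros. The main obstacle is the final case $p\ge 2$: ruling out transcendental multipliers $G_1$ requires the inductive bookkeeping with normalized forms and the fact that differentiation against $e^{-b_pz^p}$ never allows the surviving exponential terms to cancel. Throughout, the inputs are Theorem~\ref{Stein-thm1}, the uniqueness of the normalized form, and the structure of zero‑free exponential polynomials (namely that such a function is $c\,e^{\beta(z)}$ for a polynomial $\beta$).
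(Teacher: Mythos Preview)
The paper does not supply a proof of this theorem; it simply quotes the result from Gao--Liu--Liu \cite{GLL}, so there is no argument in the paper to compare against. Evaluating your proof on its own merits: the substitution leading to \eqref{GLLred} is correct, and the argument for $a_1=0$ via the logarithmic derivative of $\widetilde g=g+1/a_1$ is clean and complete. The subsequent reduction to $g'=q_0^{-1}e^{-Q}$ with $q_0$ constant, the passage to $g=d+G_1(z)e^{-b_pz^p}$ via uniqueness of the normalized form, and the case $p=1$ are all sound.

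The only part that remains genuinely sketchy is the peeling induction for $p\ge2$. What you outline does work: writing $G_1$ in normalized form at its own order $s\le p-1$, the multipliers $K_j=H_j'+(sv_jz^{s-1}-pb_pz^{p-1})H_j$ cannot vanish (else $H_j$ would have order $p$), and comparing with the right side $c_1e^{-R}$ forces $s=\deg R$, $H_0\equiv0$, and $G_1=H_1e^{-b_sz^s}$; the residual equation for $H_1$ still carries a polynomial coefficient of degree $p-1$ (the term $pb_pz^{p-1}$ persists through every step), so when the descent reaches a nonzero polynomial $H$ one gets $\deg(H'-P(z)H)\ge p-1\ge1$ against a nonzero constant. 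To make this airtight you should record explicitly at each step that $\rho(H_j)=\deg R_j$ (both inequalities), and that the degree of the coefficient polynomial never drops below $p-1$. One further remark: the degenerate case $g\equiv c$ actually yields a solution with $a_1=-1/c\neq0$, so the theorem tacitly assumes $g$ is transcendental; your ``setting aside'' is therefore correct in spirit, but it is worth flagging that this case is genuinely excluded by hypothesis rather than merely trivial.
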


\noindent
\textbf{About the appendices.}
Regarding the key tools in this paper, a supplementary explanation of the essence of Nevanlinna theory and the theory of convex sets in relation to exponential polynomials is given in Appendices~\ref{Nevanlinna-appendix} and \ref{convex-appendix} below.
It is important to notice that Nevanlinna theory has two independent points of view on the behavior of entire/meromorphic functions, that is, the oscillation caused by the sequence of roots and the magnitude of growth. These quantities are measured by the counting function $N$ and proximity function $m$, respectively. An important fact is that both $N$ and $m$ for an exponential polynomial $f$ can be evaluated quite acutely by using the circumference of the convex hull formed by the conjugated frequencies of $f$.
Therefore, a detailed discussion and a proof for such a
circumference formula seem to be justified.

The roots of an entire function $f$ can be detected
from its canonical factorization, while the growth of $f$ can be measured by means of its series expansion, if either
of these two things is concretely known.
Finite Riemann zeta functions are known to admit both the sum and product expressions. These functions are among  the most familiar exponential sums, and will be discussed in Appendix~\ref{zeta-appendix} below.

For the completeness of this paper, Appendix~\ref{DE-appendix} contains additional information on the so-called complex oscillation results for linear differential equations. These results
explain the background for Section~\ref{EPO-sec}.


\appendix

\section{Nevanlinna theory}\label{Nevanlinna-appendix}

It may be too much to expect that a reader could grasp all discussions in
this paper without having any background in Nevanlinna theory. Nevertheless,
it is probably useful to remind which concepts and basic results in
Nevanlinna theory will be deployed. While doing this, we also fix the notation.
There are many books on Nevanlinna theory, each emphasizing
different things, and so reading multiple books simultaneously could turn out to be the best strategy. Books such as \cite{GO, Hayman0, Laine, Tsuji, YY, yanglo} are often given as standard references
in Nevanlinna Theory.

\begin{definition}
An infinite sequence $\{z_n\}$ in
$\C\setminus\{0\}$
with no finite limit points has a finite \emph{exponent of convergence} $\lambda> 0$ if
$\{1/|z_n|\}\in\ell^{\lambda+\veps}\setminus\ell^{\lambda-\veps}$
for any $\veps\in (0,\lambda)$, while $\lambda=0$ if $\{1/|z_n|\}\in\ell^\veps$ for any $\veps>0$. Here $\ell^p$ stands for the standard space of $p$-summable complex
sequences. The \emph{genus} of $\{z_n\}$ is the unique integer $p\geq 0$ satisfying
$\{1/|z_n|\}\in\ell^{p+1}\setminus\ell^{p}$. If the value zero appears in
the sequence $\{z_n\}$ finitely many times, we define $\lambda$ and $p$ for
$\{z_n\}$ as above by ignoring the value zero in $\{z_n\}$.
\end{definition}

If $\lambda\not\in\N\cup\{0\}$, then $p$ is the integer part of $\lambda$,
while if $\lambda\in\N\cup\{0\}$, then either $p=\lambda$ or $p=\lambda-1$. In all
cases, we have $p\leq\lambda$.

The number of points $z_n$ in the disc $\{|z|\leq r\}$ counting multiplicities is
denoted by $n(r)$. The number of points $z_n$ at the origin is denoted
by $n(0)$. The corresponding \emph{integrated counting function} is defined by
	$$
	N(r)=\int_0^r \frac{n(t)-n(0)}{t}\, dt+n(0)\log r,\quad 0<r<\infty.
	$$
If $z_n\neq 0$, then $n(0)=0$, and consequently $N(r)=\int_0^r \frac{n(t)}{t}\, dt$. The exponent of convergence $\lambda$ of $\{z_n\}$ can be expressed alternatively by means of the counting functions as
	$$
	\lambda=\limsup_{r\to\infty}\frac{\log n(r)}{\log r}=	
	\limsup_{r\to\infty}\frac{\log N(r)}{\log r}.
	$$
The number of points $z_n$ in $\{|z|\leq r\}$ ignoring multiplicities is
denoted by $\overline{n}(r)$. Then $\overline{N}(r)$ is defined analogously
as $N(r)$, but using $\overline{n}(r)$. The analogous exponent of convergence
$\overline{\lambda}$ is defined using either $\overline{n}(r)$ or
$\overline{N}(r)$, similarly as above.

\begin{definition}	
For a given meromorphic function $f$, the functions $n(r,1/f)$, $n(r,f)$ and $n(r,1/(f-a))$ express the number of zeros, poles and $a$-points of $f$ in $\{|z|\leq r\}$, respectively, counting multiplicities. The corresponding integrated
counting functions are denoted by $N(r,1/f)$, $N(r,f)$ and $N(r,1/(f-a))$.
The exponents of convergence for the zeros, poles and $a$-points of $f$
are denoted by $\lambda(f)$, $\lambda(1/f)$ and $\lambda(a,f)$, respectively.
If the multiplicities are ignored, then the counting functions
$\overline{n}(r,\cdot)$ or $\overline{N}(r,\cdot)$
are used to define $\overline{\lambda}(f)$, $\overline{\lambda}(1/f)$ and $\overline{\lambda}(a,f)$ in an obvious way.
\end{definition}

Note that $\lambda(f)$ is indeed used for the zeros of $f$ and $\lambda(1/f)$
for the poles. Considering the counting functions $n(r,1/f)$ and $n(r,f)$,
one might think that the situation would be the other way around. However, this
particular notation is commonly adapted in the literature, and hence
it is used in this paper also.

\begin{definition}
The \emph{Nevanlinna characteristic} of a meromorphic $f$ is defined by
	$$
	T(r,f)=m(r,f)+N(r,f),
	$$
where
	$$
	m(r,f)=\frac{1}{2\pi}\int_0^{2\pi}\log^+|f(re^{i\theta})|\, d\theta
	$$
is called the \emph{proximity function} of $f$.  The \emph{order of growth} and the
\emph{hyper-order of growth} of $f$ are given respectively by
	$$
	\rho(f)=\limsup_{r\to\infty}\frac{\log T(r,f)}{\log r}
	\quad\textnormal{and}\quad
	\rho_2(f)=\limsup_{r\to\infty}\frac{\log\log T(r,f)}{\log r},
	$$
and if $\rho(f)\in (0,\infty)$, then the \emph{type} of $f$ is the quantity
	$$
	\tau(f)=\limsup_{r\to\infty}\frac{\log T(r,f)}{r^{\rho(f)}}.
	$$
\end{definition}

Note that
$T(r,f)=m(r,f)$ for an entire function $f$.
Exponential polynomials are typical examples of functions of finite order
and of finite type. However, if $0<\rho<\infty$ is any number, then there exists an entire function $f$ satisfying $\rho(f)=\rho$. If $\rho$ is not
an integer, then $\lambda(f)=\rho(f)$ and $f$ has infinitely many zeros. Moreover, if $\lambda(a,f)<\rho(f)$, then the value $a\in\widehat\C$ is
called a \emph{Borel exceptional value} for $f$.

The \emph{first main theorem of Nevanlinna} states that
	$$
	T\left(r,\frac{1}{f-a}\right)=T(r,f)+O(1).
	$$
Since
	$$
	N\left(r,\frac{1}{f-a}\right)\leq T\left(r,\frac{1}{f-a}\right)=T(r,f)+O(1),
	$$
it follows at once that $\lambda(a)\leq \rho(f)$ for any sequence of $a$-points of $f$.
The \emph{second main theorem of Nevanlinna} states that
	$$
	(q-2)T(r,f)=N(r,f)+\sum_{j=1}^qN\left(r,\frac{1}{f-a_j}\right)+S(r,f),
	$$
where $a_1,\ldots,a_q\in\C$ are $q\geq 2$ distinct points, and where $S(r,f)$
denotes a quantity satisfying $S(r,f)=o(T(r,f))$ as $r\to\infty$ outside an exceptional
set of finite linear measure. It follows that every
meromorphic function has at most two Borel exceptional values.

The \emph{Nevanlinna deficiency} of $a$-points of $f$ is defined by
	$$
	\delta(a,f)=\liminf_{r\to\infty}\frac{m\left(r,\frac{1}{f-a}\right)}{T(r,f)}
	=1-\limsup_{r\to\infty}\frac{N\left(r,\frac{1}{f-a}\right)}{T(r,f)}.
	$$
The deficiency $\delta(\infty,f)$ of poles of $f$ is defined analogously.
Clearly $0\leq \delta(a,f)\leq 1$ for $a\in\widehat{\C}$. If $\delta(a,f)>0$, then
$a$ is called a \emph{deficient value} for $f$. The well-known \emph{deficiency relation}
is a rather immediate consequence of the second main theorem, and is stated as
follows: The number of deficient values for $f$ is at most countable, and
	$$
	\sum_{a\in\widehat{\C}}\delta(a,f)\leq 2.
	$$
This inequality is sharp, for if $f(z)=e^{z}$, then $\delta(0,f)=\delta(\infty,f)=1$.

Let $f$ be an entire function of order $\rho\in (0,\infty)$. The \emph{Phragm\'en-Lindel\"of indicator function} of $f$ defined by
	$$
	h_f(\theta)=\limsup_{r\to\infty}\frac{\log |f(re^{i\theta})|}{r^\rho}
	$$
describes the radial growth of $f$. For example, if $f(z)=e^z$, then
$h_f(\theta)=\cos\theta$, while if $f$ is either
of the trigonometric functions $\sin z=(e^{iz}-e^{-iz})/(2i)$ or $\cos z=(e^{iz}+e^{-iz})/2$, then $h_f(\theta)=|\sin\theta|$.
In general, the function $h_f$ is clearly $2\pi$-periodic,
and it is known that if $f$ is of finite type, then $h_f$ is a continuous
function of $\theta$, and
	$$
	\tau(f)=\max_{0\leq \theta\leq 2\pi} h_f(\theta).
	$$

\section{Convex sets}\label{convex-appendix}

The value distribution of a given exponential polynomial $f$ relies
on the convex hull of the conjugates of the frequencies of $f$, as
described in Section~\ref{EP-sec}. Moreover, the critical rays in
Section~\ref{EP-sec} can be found in terms of a supporting function
of the convex hull. This motivates us to take a closer look at convex
hulls and supporting functions, starting from the definitions. Our
presentation partially follows the  presentation in \cite[pp.~74-75]{Levin1}, but the proofs are included for the convenience of the reader.

\begin{definition}
A set $A\subset\C$ is called \emph{convex} if for any two points $z_1,z_2\in A$
the line segment $[z_1,z_2]$ joining $z_1$ and $z_2$ also belongs to $A$. The intersection of all closed convex sets containing $A$ is called the \emph{convex hull} of $A$ and is denoted by $\co(A)$.
\end{definition}

Any non-empty intersection
$\cap_n B_n$ of convex sets $B_n\subset\C$ is convex. Indeed, let $z_1,z_2\in \cap_n B_n$.
Then $z_1,z_2\in B_n$ for every $n$. Since $B_n$ is convex, we have
$[z_1,z_2]\subset B_n$ for every $n$, and hence $[z_1,z_2]\subset \cap_n B_n$. Consequently, $\co(A)$ is closed and convex.

\begin{lemma}\label{intersection-lemma}
$\co(A)$ is the intersection of all closed half-planes containing the set $A$.
\end{lemma}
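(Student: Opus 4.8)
The plan is to prove the two inclusions separately. Write $H$ for the intersection of all closed half-planes of $\C$ that contain $A$. For the inclusion $\co(A)\subseteq H$, I would simply observe that a closed half-plane is a closed convex set, so any closed half-plane containing $A$ automatically contains $\co(A)$ by the definition of $\co(A)$ as the intersection of all closed convex sets containing $A$ (noted just before the lemma); intersecting over all such half-planes gives $\co(A)\subseteq H$. If $A=\emptyset$ this is trivial since then $\co(A)=\emptyset=H$, so from here on I may assume $A\neq\emptyset$, whence $\co(A)$ is a nonempty closed convex set.

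For the reverse inclusion $H\subseteq\co(A)$, I would argue the contrapositive: if $z_0\notin\co(A)$, then I want to produce a closed half-plane containing $A$ but omitting $z_0$, which shows $z_0\notin H$. The tool is a separation argument via nearest-point projection. Setting $K:=\co(A)$, which is nonempty, closed, and convex with $z_0\notin K$, a minimizing sequence for $\inf_{z\in K}|z-z_0|$ is bounded, hence has a subsequential limit $p$; this $p$ lies in $K$ by closedness and realizes the distance $\delta:=|z_0-p|>0$. Then for every $z\in K$ and $t\in(0,1]$ convexity gives $p+t(z-p)\in K$, so $|p+t(z-p)-z_0|^2\geq\delta^2$; expanding, dividing by $t$, and letting $t\to0^+$ yields the variational inequality $\re\bigl((z-p)\overline{(z_0-p)}\bigr)\leq 0$. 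Consequently the closed half-plane
$$
\Pi=\bigl\{\, z\in\C : \re\bigl((z-p)\overline{(z_0-p)}\bigr)\leq 0 \,\bigr\}
$$
contains $K$, hence $A$, while $z_0\notin\Pi$ since $\re\bigl((z_0-p)\overline{(z_0-p)}\bigr)=\delta^2>0$. This gives $z_0\notin H$ and finishes the proof.

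The main obstacle is the nearest-point/separation step: I must justify that the infimum of the distance to $K$ is attained, which reduces to the compactness of a closed bounded subset of $\C$ applied to $K\cap\overline{D}(z_0,|z_0-q|)$ for some fixed $q\in K$, and then extract the separating line from minimality through the first-order variation above. I expect the only delicate point there is keeping the signs straight in the expansion of $|p+t(z-p)-z_0|^2$, so I would write that short computation out explicitly. Everything else is a direct unwinding of the definitions of ``convex,'' ``convex hull,'' and ``half-plane.''
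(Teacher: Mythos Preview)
Your proof is correct and follows essentially the same approach as the paper: both establish $\co(A)\subseteq H$ from the fact that closed half-planes are closed and convex, and both prove the reverse inclusion by taking $z_0\notin\co(A)$, projecting onto the nearest point of $\co(A)$, and using the line through that point perpendicular to the segment to separate. The only difference is stylistic: you derive the separating half-plane via the first-order variational inequality $\re\bigl((z-p)\overline{(z_0-p)}\bigr)\le 0$, whereas the paper argues geometrically with the tangent line to the disc $D(z_0,\delta)$ and a secant-line contradiction; your analytic route is cleaner and avoids the case analysis.
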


\begin{proof}
Let $B$ be the intersection of all closed half-planes containing $A$.
We need to prove that $\co(A)=B$.
Noting that closed half-planes are convex, we have $\co(A)\subset B$
because the intersection $\co(A)$ contains all the sets that appear
in the intersection $B$ and more.

Suppose on the contrary to the
assertion that $B\setminus \co(A)\neq\emptyset$, and let $z\in B\setminus \co(A)$.
Then $z$ belongs to the complement of $\co(A)$, which is an open set.
Thus $\operatorname{dist}(z,\co(A))>0$. Since $\co(A)$ is closed,
there exists a point $w\in\co(A)$ such that
	$$
	|z-w|=\operatorname{dist}(z,\co(A)).
	$$
Note that there cannot exist any points of $\co(A)$ on the line segment
$[z,w]$, excluding the end-point $w$. Indeed, if $\zeta$ would be such a point, then $\zeta =\alpha z+(1-\alpha)w$ for some $\alpha\in (0,1)$,
and then
	$$
	|z-\zeta|=(1-\alpha)|z-w|<\operatorname{dist}(z,\co(A)),
	$$
which is a contradiction. Thus the line segment $[z,w]$ lies in the
complement of $\co(A)$, excluding the end-point $w$. Let $D(z,R)$ be
the open disc of radius $R:=\operatorname{dist}(z,\co(A))$ centred at~$z$.
Then $w\in\partial D(z,R)$. Let $L$ be the tangent to the circle
$\partial D(z,R)$ at~$w$. Then $z\not\in L$ and $L$ is orthogonal to
$[z,w]$.

Suppose that there exists a point $\xi\in\co(A)$ on the same
side of $L$ as $z$ such that $\xi\not\in L$. By convexity, the line
segment $[w,\xi]$ belongs to $\co(A)$. Since $\xi\not\in L$, the line
segment $[w,\xi]$ cannot be a part of a tangent to $\partial D(z,R)$ at~$w$,
and hence a line through $w$ and $\xi$ intersects $\partial D(z,R)$
at two distinct points. The second point of intersection other than $w$
must be on the same side of $L$ as $z$ and $\xi$ are. Consequently, some points of $[w,\xi]$ are interior points of $D(z,R)$ and belong to $\co(A)$, which is a contradiction. Thus a point $\xi$ with
the above properties cannot be found. This implies that the closed
half-plane $H$ bounded by $L$ and not containing $z$ contains the set
$\co(A)$. But now $H$ is a closed half-plane containing $A$, so
$H$ should be one of the half-planes constituting the set $B$. As such,
$H$ should include the point $z$, which is a contradiction. Hence the assumption $B\setminus \co(A)\neq\emptyset$ is false.
\end{proof}

\begin{definition}
Let $G\subset\C$ be a bounded, closed and convex set. The \emph{supporting
function} of $G$ is the function
	$$
	k(\theta)=\sup_{z\in G}\re(ze^{-i\theta})
	=\sup_{x+iy\in G}(x\cos\theta+y\sin\theta).
	$$
For any $\theta\in\R$ the line $\ell_\theta$ defined by
	$$
	\ell_\theta=\left\{(x,y)\in\R^2 :
	x\cos\theta+y\sin\theta-k(\theta)=0\right\}
	$$
is called a \emph{supporting line} of $G$.
\end{definition}

Since $G$ is closed and bounded, every line $\ell_\theta$ has a point
in common with $G$. In addition, all points of $G$ lie on one side of
each $\ell_\theta$ because for $x+iy\in G$, we have
$x\cos\theta+y\sin\theta-k(\theta)\leq 0$ by the definition of the
supporting function. The points that $\ell_\theta$ has in common with $G$
are called \emph{supporting points} of $\ell_\theta$.

\begin{lemma}\label{supporting-points-lemma}
Each supporting line $\ell_\theta$ of $G$ has either one single point of support or a whole line segment of supporting points.
\end{lemma}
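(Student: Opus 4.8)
The plan is to show that the set of supporting points of a fixed supporting line $\ell_\theta$ is both convex and a subset of a line, and then use the elementary classification of convex subsets of a line. First I would fix $\theta$ and set $S=\ell_\theta\cap G$. Since $G$ is closed and bounded and $\ell_\theta$ is closed, $S$ is a closed bounded subset of $\R^2$; it is non-empty because, as noted just before the lemma, every supporting line meets $G$ (the supremum defining $k(\theta)$ is attained by compactness of $G$). The key observation is that $S$ is convex: if $z_1,z_2\in S$, then $[z_1,z_2]\subset G$ by convexity of $G$, and every point $z=\alpha z_1+(1-\alpha)z_2$ on this segment satisfies $\re(ze^{-i\theta})=\alpha\,\re(z_1e^{-i\theta})+(1-\alpha)\,\re(z_2e^{-i\theta})=\alpha k(\theta)+(1-\alpha)k(\theta)=k(\theta)$ by linearity of $\re(\cdot\,e^{-i\theta})$, so $z\in\ell_\theta$ as well; hence $[z_1,z_2]\subset S$.

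Next I would invoke that $S\subset\ell_\theta$, a straight line, and that a convex subset of a line is an interval in the natural parametrization of that line. Parametrize $\ell_\theta$ by $t\mapsto w_0+t\,e^{i(\theta+\pi/2)}$ for a fixed base point $w_0\in\ell_\theta$; then $S$ corresponds to a convex, hence interval, subset $J\subset\R$, and since $S$ is closed and bounded, $J$ is a closed bounded interval $[a,b]$ (possibly degenerate). If $a=b$, then $\ell_\theta$ has exactly one supporting point; if $a<b$, then $S$ is the nondegenerate line segment joining the two endpoints, i.e. a whole line segment of supporting points. This is exactly the dichotomy claimed.

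The only mildly delicate point, and the one I would be careful about, is ensuring $S\neq\emptyset$ and that it cannot be unbounded — both follow immediately from $G$ being compact, so there is no real obstacle here; the argument is essentially a one-line convexity computation plus the trivial structure of convex subsets of $\R$. I would present it in that order: non-emptiness and compactness of $S$, convexity of $S$ via linearity of $z\mapsto\re(ze^{-i\theta})$, reduction to a convex subset of $\R$, and the interval dichotomy.

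\begin{proof}
Fix $\theta\in\R$ and put $S=\ell_\theta\cap G$, the set of supporting points of $\ell_\theta$. Since $G$ is compact, the supremum $k(\theta)=\sup_{z\in G}\re(ze^{-i\theta})$ is attained, so $S\neq\emptyset$; moreover $S$ is closed and bounded, being the intersection of the closed set $G$ with the closed line $\ell_\theta$.

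We claim $S$ is convex. Let $z_1,z_2\in S$ and let $z=\alpha z_1+(1-\alpha)z_2$ with $\alpha\in[0,1]$. By convexity of $G$ we have $z\in G$. Using that $w\mapsto\re(we^{-i\theta})$ is $\R$-linear,
	$$
	\re(ze^{-i\theta})
	=\alpha\,\re(z_1e^{-i\theta})+(1-\alpha)\,\re(z_2e^{-i\theta})
	=\alpha k(\theta)+(1-\alpha)k(\theta)=k(\theta),
	$$
so $z\in\ell_\theta$. Hence $z\in S$, and $[z_1,z_2]\subset S$.

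Now parametrize the line $\ell_\theta$ by $t\mapsto w_0+t\,e^{i(\theta+\pi/2)}$ for a fixed $w_0\in\ell_\theta$; this is an affine bijection $\R\to\ell_\theta$. Under it, $S$ corresponds to a subset $J\subset\R$ which is convex (since $S$ is convex), closed and bounded (since $S$ is). A convex subset of $\R$ is an interval, and being closed and bounded it has the form $J=[a,b]$ with $a\le b$ (with $[a,a]=\{a\}$ permitted). If $a=b$, then $S$ is a single point, so $\ell_\theta$ has exactly one supporting point. If $a<b$, then $S$ is the nondegenerate line segment joining $w_0+a\,e^{i(\theta+\pi/2)}$ and $w_0+b\,e^{i(\theta+\pi/2)}$, i.e.\ a whole line segment of supporting points. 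This proves the dichotomy.
\end{proof}
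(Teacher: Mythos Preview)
Your proof is correct and uses essentially the same approach as the paper: the core step in both is the convexity computation showing that if $z_1,z_2$ are supporting points then every $z=\alpha z_1+(1-\alpha)z_2$ is also a supporting point, via linearity of $w\mapsto\re(we^{-i\theta})$ together with convexity of $G$. Your version is slightly more thorough in that you explicitly identify the full set $S=\ell_\theta\cap G$ as a compact convex subset of a line and hence a closed interval, whereas the paper simply argues that if two distinct supporting points exist then the segment between them consists of supporting points; but the underlying idea is identical.
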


\begin{proof}
Suppose that $\ell_\theta$ has two distinct supporting points,
say $\zeta$ and $\xi$. Then
	$$
	\re(\zeta e^{-i\theta})-k(\theta)=0\quad\textnormal{and}\quad
	\re(\xi e^{-i\theta})-k(\theta)=0.
	$$
By convexity, the line segment
$[\zeta,\xi]$ belongs to $G$. Let $z\in [\zeta,\xi]$. Then there
exists a constant $\alpha\in [0,1]$ such that
	$
	z=\alpha\zeta+(1-\alpha)\xi.
	$
But now
	$$
	\re(z e^{-i\theta})-k(\theta)=\alpha\re(\zeta e^{-i\theta})
	+(1-\alpha)\re(\xi e^{-i\theta})-\alpha k(\theta)-(1-\alpha)k(\theta)=0,
	$$
so that $z$ is a supporting point of $\ell_\theta$, and consequently
each point on the line segment $[\zeta,\xi]$  is a supporting point of $\ell_\theta$.
\end{proof}

\begin{example}
Let $G=\{z : -1\leq \re(z)\leq 1,\ -1\leq\im(z)\leq 1\}$ be a square with center at the origin and sides (=supporting lines)
parallel to the coordinate axes at distance one from the origin. It is
easy to see that the supporting function of $G$ is
    $$
    k(\theta)=\left\{\begin{array}{rl}
    \cos\theta+\sin\theta,\ & 0\leq\theta\leq\pi/2\\
    -\cos\theta+\sin\theta,\ & \pi/2\leq\theta\leq\pi\\
    -\cos\theta-\sin\theta,\ & \pi\leq\theta\leq 3\pi/2\\
    \cos\theta-\sin\theta,\ & 3\pi/2\leq\theta\leq2\pi
    \end{array}\right.=|\cos\theta|+|\sin\theta|.
    $$
In particular, $k(\theta)=1$ for $\theta=0,\pi/2,\pi,3\pi/2$. The graph of $k(\theta)$ is an upward half wave with cusps at the points $n\pi/2$, $n\in\Z$.
\end{example}

\begin{lemma}\label{signed-distance}
We have $|k(\theta)|=\operatorname{dist}(0,\ell_\theta)$, that is,
$k(\theta)$ equals to the signed
distance from the origin to the supporting line $\ell_\theta$ with
respect to the angle $\theta$.
\end{lemma}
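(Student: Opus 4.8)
The plan is to unwind the definitions and reduce the claim to an elementary fact about the distance from a point to a line in $\R^2$. Recall that the supporting line is
	$$
	\ell_\theta=\left\{(x,y)\in\R^2 : x\cos\theta+y\sin\theta-k(\theta)=0\right\},
	$$
so it is the zero set of the affine function $\phi(x,y)=x\cos\theta+y\sin\theta-k(\theta)$. The gradient of $\phi$ is the unit vector $(\cos\theta,\sin\theta)$, so $\ell_\theta$ is already written in Hesse normal form up to the sign of $k(\theta)$.

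First I would invoke the standard point-to-line distance formula: for a line written as $ax+by+c=0$ with $a^2+b^2=1$, the Euclidean distance from a point $(x_0,y_0)$ to that line equals $|ax_0+by_0+c|$. Applying this with $(a,b)=(\cos\theta,\sin\theta)$, $c=-k(\theta)$, and $(x_0,y_0)=(0,0)$ gives immediately
	$$
	\operatorname{dist}(0,\ell_\theta)=|0\cdot\cos\theta+0\cdot\sin\theta-k(\theta)|=|k(\theta)|,
	$$
which is exactly the asserted equality. For completeness I would include a one-line justification of the distance formula itself: parametrize points of $\ell_\theta$ and minimize $\|(x,y)\|^2$ subject to the linear constraint, or simply note that $w_0=k(\theta)(\cos\theta,\sin\theta)$ lies on $\ell_\theta$ (since $k(\theta)\cos^2\theta+k(\theta)\sin^2\theta-k(\theta)=0$) and that $w_0$ is orthogonal to the direction vector $(-\sin\theta,\cos\theta)$ of $\ell_\theta$, hence $w_0$ is the foot of the perpendicular from the origin and $\|w_0\|=|k(\theta)|$.

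The remaining point is the phrase ``signed distance with respect to the angle $\theta$'': here one should observe that $k(\theta)$ itself (not just its absolute value) records which side of $\ell_\theta$ the origin lies on, namely the value $\phi(0,0)=-k(\theta)$ of the defining affine function at the origin, with the convention that all of $G$ satisfies $\phi\leq 0$. Thus $k(\theta)\geq 0$ precisely when the origin lies on the same side of $\ell_\theta$ as $G$ (which happens, for instance, whenever $0\in G$), and $k(\theta)<0$ otherwise; in either case the magnitude is the genuine Euclidean distance computed above. I do not expect any real obstacle in this proof — the only mild subtlety is being careful that $k(\theta)$ is finite (guaranteed since $G$ is bounded) so that $\ell_\theta$ is a genuine line, and making the sign convention explicit so that the word ``signed'' in the statement is actually addressed rather than glossed over.
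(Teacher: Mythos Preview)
Your proof is correct and follows essentially the same approach as the paper: both apply the standard point-to-line distance formula $\frac{|ax_0+by_0+c|}{\sqrt{a^2+b^2}}$ with $(x_0,y_0)=(0,0)$, $c=-k(\theta)$, and $a^2+b^2=\cos^2\theta+\sin^2\theta=1$. Your version is in fact more thorough, since you also exhibit the foot of the perpendicular explicitly and spell out the sign convention, whereas the paper simply invokes the formula and substitutes.
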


\begin{proof}
In general, it is known that the distance of a point $(x_0,y_0)$
from a line $ax+by+c=0$ is equal to the expression
	$$
	\frac{|ax_0+by_0+c|}{\sqrt{a^2+b^2}}.
	$$
Substituting $(x_0,y_0)=(0,0)$ and $c=-k(\theta)$, and using
$\cos^2\theta+\sin^2\theta=1$, we are done.
\end{proof}

Let $w_1,\ldots,w_n\in\C$ be $n\geq 2$ distinct points, and let $W=\{\overline{w}_1,\ldots,\overline{w}_n\}$ be a set of points, not necessarily associated with exponential polynomials. Since $W$ has only finitely many points, its convex hull $\co(W)$ is either a line segment or a convex polygon. The line through two consecutive vertex points
$\overline{w}_j,\overline{w}_{k}$ of $\co(W)$ is a supporting line of
$\co(W)$.

\begin{lemma}\label{k-max}
If $k(\theta)$ denotes the supporting function of $\co(W)$, then
    \begin{equation}\label{k}
    k(\theta)=\max_{1\leq j\leq n} \re \left(w_je^{i\theta}\right).
    \end{equation}
\end{lemma}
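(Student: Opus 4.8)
The plan is to establish \eqref{k} directly from the definition of the supporting function, using the fact that $\co(W)$ is the convex hull of finitely many points and that the real-linear functional $z \mapsto \re(ze^{-i\theta})$ attains its supremum over a convex polygon (or line segment) at one of its vertices. The only bookkeeping subtlety is the conjugation: the supporting function is defined with $\re(ze^{-i\theta})$ for $z$ ranging over $G = \co(W)$, whose points are the conjugates $\overline{w}_j$, and the claimed formula involves $\re(w_j e^{i\theta})$. Since $\re(\overline{w}_j e^{-i\theta}) = \overline{\re(w_j e^{i\theta})} = \re(w_j e^{i\theta})$ (the real part of a conjugate equals the real part), the two expressions match term by term, so no genuine difficulty arises there.

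First I would record the elementary inequality: for every $z \in \co(W)$ we have $\re(ze^{-i\theta}) \leq k(\theta)$ by definition, so in particular $\re(\overline{w}_j e^{-i\theta}) \leq k(\theta)$ for each $j$; taking the maximum over $j$ and using the conjugation identity above gives $\max_{1\leq j\leq n}\re(w_j e^{i\theta}) \leq k(\theta)$. For the reverse inequality, I would use that every point $z \in \co(W)$ is a convex combination $z = \sum_{j=1}^n \lambda_j \overline{w}_j$ with $\lambda_j \geq 0$ and $\sum_j \lambda_j = 1$ (this is the standard description of the convex hull of a finite point set; it also follows from Lemma~\ref{intersection-lemma} together with Carath\'eodory-type reasoning, but the convex-combination description is the cleanest to invoke). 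Then
$$
\re(ze^{-i\theta}) = \sum_{j=1}^n \lambda_j \re(\overline{w}_j e^{-i\theta}) \leq \sum_{j=1}^n \lambda_j \max_{1\leq k\leq n}\re(w_k e^{i\theta}) = \max_{1\leq k\leq n}\re(w_k e^{i\theta}),
$$
and taking the supremum over $z \in \co(W)$ yields $k(\theta) \leq \max_{1\leq k\leq n}\re(w_k e^{i\theta})$. Combining the two inequalities gives \eqref{k}.

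The main (and really only) obstacle is justifying that $\co(W)$ equals the set of convex combinations of $\overline{w}_1,\ldots,\overline{w}_n$. If the paper prefers to stay strictly within what has been proved, one can argue instead that the set $K$ of such convex combinations is closed (being the continuous image of the compact simplex $\{(\lambda_1,\ldots,\lambda_n): \lambda_j \geq 0, \sum \lambda_j = 1\}$) and convex, and contains $W$, hence $\co(W) \subseteq K$; conversely $K \subseteq \co(W)$ since any closed convex set containing $W$ must contain all convex combinations of its points (by induction on the number of nonzero weights). This makes the argument self-contained using only the definition of convexity already given. I would present this as a short parenthetical remark rather than a separate lemma, then carry out the two-inequality computation above to finish.
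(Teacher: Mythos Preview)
Your proof is correct and follows essentially the same two-inequality structure as the paper: the easy direction uses $W\subset\co(W)$, and the reverse direction uses that the linear functional $z\mapsto\re(ze^{-i\theta})$ is bounded on $\co(W)$ by its maximum over the generating points. The only cosmetic difference is that the paper phrases the reverse inequality in terms of the vertices (extreme points) $\overline{w}_1,\ldots,\overline{w}_s$ of $\co(W)$ and simply asserts $\re(z)\leq\max_{1\leq j\leq s}\re(\overline{w}_j)$ for $z\in\co(W)$, whereas you justify the analogous bound explicitly via the convex-combination description of $\co(W)$; your version is slightly more self-contained.
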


\begin{proof}
Since $W=\{\overline{w}_1,\ldots,\overline{w}_n\}\subset \co (W)$, it
is clear that
	$$
	\max_{1\leq j\leq n} \re \left(w_je^{i\theta}\right)
	=\max_{1\leq j\leq n} \re \left(\overline{w}_je^{-i\theta}\right)
	\leq \sup_{z\in \co(W)}\re(ze^{-i\theta})
	\overset{\textnormal{def}}{=}k(\theta).
	$$
It remains to prove the reverse inequality. To this end, we may suppose that
the points $\overline{w}_1,\ldots,\overline{w}_s$ are the vertices
(extreme points) of $\co(W)$ for some $s\leq n$. Then, if $z\in\co(W)$,
we have
	$$
	\re \left(z\right)
	\leq \max_{1\leq j\leq s} \re \left(\overline{w}_j\right),
	$$
and consequently
	$$
	\re \left(ze^{-i\theta}\right)
	\leq \max_{1\leq j\leq s} \re \left(\overline{w}_je^{-i\theta}\right).
	$$
Since $\co(W)$ is closed and bounded, it follows that
	$$
	k(\theta)\overset{\textnormal{def}}{=}
	\sup_{z\in \co(W)}\re(ze^{-i\theta})
	\leq \max_{1\leq j\leq s} \re \left(\overline{w}_je^{-i\theta}\right)
	\leq \max_{1\leq j\leq n} \re \left(w_je^{i\theta}\right).
	$$
This completes the proof.
\end{proof}

Let $f$ be an exponential polynomial of the form
	$$
	f(z)=F_1(z)e^{w_1z^q}+\cdots+F_n(z)e^{w_nz^q},
	$$
and let $W=\{\overline{w}_1,\ldots,\overline{w}_n\}$ be associated with $f$.
Moreover, let $\arg(z)=\theta^\bot$ denote the orthogonal ray for $\co(W)$ related to the consecutive
vertex points $\overline{w}_j,\overline{w}_k$, and let $k(\theta)$ be
the supporting function of $\co(W)$.
For $\theta=\theta^\bot$, the proof of \cite[Lemma~3.2]{division} yields
	\begin{equation}\label{theta-bot}
	k\left(\theta^\bot\right)=\re \left(w_je^{i\theta^\bot}\right)=\re \left(w_ke^{i\theta^\bot}\right)
	\geq \max_{l\neq j,k}\re \left(w_le^{i\theta^\bot}\right).
	\end{equation}
Thus, keeping \eqref{critical-directions} in mind, the critical rays are associated with
angles $\theta^*$ satisfying
    $$
    k(q\theta^*)=\re \left(w_le^{iq\theta^*}\right)=\re \left(w_me^{iq\theta^*}\right)
    $$
for a pair of indices $l,m$ such that $l\neq m$. This approach is chosen as the definition of critical rays in \cite{Stein}.

For the above $f$, we have $\rho(f)=q$, and it follows that the Phragm\'en-Lindel\"of indicator function of $f$ satisfies
    \begin{equation}\label{h}
    h_f(\theta)=\max_{1\leq m\leq n} \re \left(w_me^{iq\theta}\right),
    \end{equation}
see the proof of \cite[Lemma~3.3]{GOP}. Combining \eqref{k} and \eqref{h}, we obtain the identity
	\begin{equation}\label{kqh}	
	k(q\theta)=h_f(\theta).
	\end{equation}
This identity shows that the critical
rays can also be found in terms of the indicator function, namely at the
cusps $\theta^*$ of the graph of $h_f(\theta)$. This approach is chosen as the definition of critical rays in \cite{division} because it requires no knowledge
on convex sets.

The proof of Theorem~\ref{Stein-thm1} in \cite{WH} as well as the
proof of Theorem~\ref{Stein-ratio-thm} above are both based on the circumference formula, which will be stated and proved next.

\begin{theorem}\label{circumference-thm}
Let $W=\{\overline{w}_1,\ldots,\overline{w}_n\}$, and let $k(\theta)$ be the supporting function of $\co(W)$. Then the circumference
$\Ce$ of $\co(W)$ satisfies
	\begin{equation}\label{perimeter-formula}
	\Ce=\int_0^{2\pi} k(\theta)\, d\theta.
	\end{equation}
\end{theorem}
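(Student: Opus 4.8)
The plan is to prove the identity $\Ce = \int_0^{2\pi} k(\theta)\, d\theta$ by first handling the case where $\co(W)$ is a line segment, and then the generic case where $\co(W)$ is a convex polygon. I would begin with the degenerate case: if $\co(W)$ is the segment $[a,b]$ with $a,b\in\C$, its "circumference" (perimeter traversed once around the boundary) is $2|b-a|$, and a direct computation of $k(\theta)=\max\{\re(ae^{i\theta}),\re(be^{i\theta})\}$ together with $\int_0^{2\pi}\re(ce^{i\theta})^+\,d\theta$-type integrals should recover $2|b-a|$. Actually, it is cleaner to treat the segment as a limiting/degenerate polygon and subsume it into the polygon argument below, so I would only sketch this case.

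For the main case, suppose $\co(W)$ is a convex polygon with vertices $\overline{w}_{j_1},\ldots,\overline{w}_{j_s}$ listed counterclockwise, with edges $E_1,\ldots,E_s$ where $E_t$ joins consecutive vertices and has outward unit normal $e^{i\phi_t}$, the angles $\phi_1<\phi_2<\cdots<\phi_s<\phi_1+2\pi$ being exactly the directions of the orthogonal rays. The key geometric fact is the classical support-line description: for $\theta$ strictly between $\phi_{t-1}$ and $\phi_t$, the supporting line $\ell_\theta$ touches $\co(W)$ at the single vertex $\overline{w}_{j_t}$ (by Lemma~\ref{supporting-points-lemma} and the remarks preceding Lemma~\ref{k-max}), so on this arc $k(\theta)=\re(\overline{w}_{j_t}e^{-i\theta})=\re(w_{j_t}e^{i\theta})$ is a pure cosine. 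Then I would split the integral:
	$$
	\int_0^{2\pi} k(\theta)\, d\theta = \sum_{t=1}^{s}\int_{\phi_{t-1}}^{\phi_t}\re\!\left(w_{j_t}e^{i\theta}\right) d\theta,
	$$
integrate each piece explicitly, and telescope. The integral of $\re(w_{j_t}e^{i\theta})$ over $[\phi_{t-1},\phi_t]$ equals $\im(w_{j_t}e^{i\phi_t}) - \im(w_{j_t}e^{i\phi_{t-1}})$; after rearranging the double sum by grouping the two terms that carry the same angle $\phi_t$ (one from the $t$-th summand, one from the $(t{+}1)$-th), the total becomes $\sum_{t}\im\!\big((w_{j_t}-w_{j_{t+1}})e^{i\phi_t}\big)$, indices mod $s$. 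The remaining step is to identify $\im\!\big((w_{j_t}-w_{j_{t+1}})e^{i\phi_t}\big)$ with the length $|w_{j_t}-w_{j_{t+1}}|$ of the corresponding edge $E_t$: since $\overline{w}_{j_t}-\overline{w}_{j_{t+1}}$ is a vector along the edge $E_t$, it is orthogonal to the outward normal direction $e^{i\phi_t}$, and a sign check using the counterclockwise orientation \eqref{orthogonal-rays-ordered} shows the imaginary part is $+|w_{j_t}-w_{j_{t+1}}|$. Summing the edge lengths gives exactly $\Ce = L_1+\cdots+L_s$, matching the quantity in Theorem~\ref{Polya-theorem}.

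The main obstacle I anticipate is bookkeeping of signs and orientations: matching the conjugation convention (the set $W$ consists of the $\overline{w}_j$, while \eqref{k} expresses $k(\theta)$ in terms of $\re(w_je^{i\theta})$), the counterclockwise ordering of vertices, and the direction of the outward normals, so that the telescoped sum produces $+|w_{j_t}-w_{j_{t+1}}|$ rather than its negative or a mix. A clean way to sidestep some of this is to invoke Cauchy's classical formula for the perimeter of a convex body, $\Ce = \int_0^{2\pi} h_G(\theta)\,d\theta$ where $h_G$ is the support function, and simply observe that the $k(\theta)$ defined here is precisely $h_{\co(W)}(\theta)$; but since the paper's style is self-contained and includes proofs of the convexity lemmas, I would give the direct polygonal computation above, citing Lemma~\ref{k-max} for the formula $k(\theta)=\max_j \re(w_je^{i\theta})$ and Lemma~\ref{supporting-points-lemma} for the structure of supporting points. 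The degenerate segment case then follows either as a limit of thin polygons or by the same one-dimensional computation, noting that the segment is "traversed twice."
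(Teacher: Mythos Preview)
Your approach is correct and runs parallel to the paper's for most of the argument: both split $\int_0^{2\pi}k(\theta)\,d\theta$ over the arcs between consecutive orthogonal-ray angles, on each of which $k(\theta)=\re(w_{j_t}e^{i\theta})$ for a single vertex. The difference lies in how the resulting pieces are identified with the edge lengths. The paper first reduces to the case where the origin is an interior point of $\co(W)$ (via a translation, using $\int_0^{2\pi}(a\cos\theta+b\sin\theta)\,d\theta=0$), and then recovers each edge length $|w_jw_{j+1}|$ through a triangle-area computation: the triangle $\triangle O w_j w_{j+1}$ has area $\tfrac12 r_jr_{j+1}\sin(\varphi_{j+1}-\varphi_j)$ and height $k(\theta_{j+1}^\bot)$, from which the base length is read off and matched to the integral via trigonometric addition formulas. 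Your telescoping argument, by contrast, regroups the sum by angle and reads off $|w_{j_t}-w_{j_{t+1}}|$ directly from the orthogonality of edge and outward normal; this avoids the origin-interior hypothesis and hence the translation step entirely. What your route buys is a shorter and more uniform argument (no case split on the position of the origin); what the paper's route buys is a more geometrically explicit picture via the heights $k(\theta_j^\bot)$. The sign/orientation bookkeeping you flag is indeed the only delicate point, and your outlined check (rotating the outward normal by $\pi/2$ gives the counterclockwise edge direction) is the right way to resolve it.
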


\begin{proof}
(a) Suppose that $\co(W)$ is a polygon having the origin as its interior
point. As in Section~\ref{zeros-expsum-sec}, we may suppose that the
vertex points of $\co(W)$ are $\overline{w}_1,\ldots,\overline{w}_s$,
$s\leq n$. By re-naming the vertices, if necessary, we may suppose that
they are organized in the counterclockwise direction from $1$ to
$s$ so that the corresponding orthogonal rays $\arg (z)=\theta^\perp_j$ satisfy \eqref{orthogonal-rays-ordered}, see Figure~\ref{Orthogonal-rays-fig}. It is now obvious from the proof of Lemma~\ref{k-max} that
	\begin{equation}\label{k-vertices}
	k(\theta)=\max_{1\leq j\leq s} \re \left(w_je^{i\theta}\right),
	\end{equation}
that is, the vertex points of $\co(W)$ determine $k(\theta)$.

For each $j\in\{1,\ldots,s\}$, denote $w_j=a_j+ib_j=r_je^{i\varphi_j}$,
where $\varphi_j\in[0,2\pi]$. For convenience, set $w_{s+1}=w_1$.
If $\theta=\theta^\perp$, then from
\eqref{theta-bot} and the fact that $\overline{w}_1,\ldots,\overline{w}_s$
are the vertex points of $\co(W)$, we find that
$k(\theta^\perp)$ is determined by precisely two terms in \eqref{k-vertices}, while if $\theta\neq\theta^\perp$, then $k(\theta)$ is determined
by precisely one term in \eqref{k-vertices}. Thus, for each
$j\in\{1,\ldots,s\}$, the assumption \eqref{orthogonal-rays-ordered} yields
	$$
    	k(\theta)=a_j\cos\theta+b_j\sin\theta
    	>\max_{k\neq j}\{a_k\cos\theta+b_k\sin\theta\},
    	\quad \theta\in(\theta_{j}^\bot,\theta_{j+1}^\perp),
	$$
where $\theta_{s+1}^\bot=\theta_1^\bot+2\pi$. Moreover,
    \begin{equation}\label{botbotbot}
    \begin{split}
    a_j\cos\theta_{j+1}^\bot+b_j\sin\theta_{j+1}^\bot
    &=a_{j+1}\cos\theta_{j+1}^\bot+b_{j+1}\sin\theta_{j+1}^\bot\\
    &> \max_{k\neq j,j+1}\{a_k\cos\theta_{j+1}^\bot
    +b_k\sin\theta_{j+1}^\bot\},
    \end{split}
    \end{equation}
for $j=1,\ldots,s$. Using $a_j=r_j\cos\varphi_j$, $b_j=r_j\sin\varphi_j$
and $w_{s+1}=w_{1}$, the discussion above gives raise to
    \begin{equation*}
    \begin{split}
    \int_0^{2\pi}k(\theta)\,d\theta
    &=\int_{\theta_{1}^\bot}^{\theta_{s+1}^\bot}k(\theta)\,d\theta=\sum_{j=1}^{s}\int_{\theta_{j}^\bot}^{\theta_{j+1}^\bot}k(\theta)\,d\theta\\
    &=\sum_{j=1}^{s}\int_{\theta_{j}^\bot}^{\theta_{j+1}^\bot}(a_j\cos\theta+b_j\sin\theta)\,d\theta\\
    &=\sum_{j=1}^{s}\left(r_j\sin(\theta_{j+1}^\bot-\varphi_j)-
    r_j\sin(\theta_{j}^\bot-\varphi_j) \right)\\
    &=\sum_{j=1}^{s}(r_j\sin(\theta_{j+1}^\bot-\varphi_j)-r_{j+1}\sin(\theta_{j+1}^\bot-\varphi_{j+1})).
    \end{split}
    \end{equation*}
We assume that the origin is an interior point of $\co(W)$, and hence the supporting function $k(\theta)$ is positive for all $\theta\in[0,2\pi]$. Recall that the area of the triangle $\Delta Ow_jw_{j+1}$ is
    $$
    A_{\Delta Ow_jw_{j+1}}=\frac{1}{2}
     r_jr_{j+1}\sin(\varphi_{j+1}-\varphi_j).
    $$
The height of the $w_jw_{j+1}$-side of the triangle $\Delta Ow_jw_{j+1}$ is $a_j\cos\theta_{j+1}^\bot+b_j\sin\theta_{j+1}^\bot=r_j\cos(\varphi_j-\theta_{j+1}^\bot)$ by Lemma~\ref{signed-distance}. This
height equals to $a_{j+1}\cos\theta_{j+1}^\bot+b_{j+1}\sin\theta_{j+1}^\bot=r_{j+1}\cos(\varphi_{j+1}-\theta_{j+1}^\bot)$
by \eqref{botbotbot}. As the area of any triangle is
$\frac12$ $\!*\!$ \textsl{base} $\!*\!$ \textsl{height}, the length of the base $|w_jw_{j+1}|$
of $\Delta Ow_jw_{j+1}$ is
    \begin{equation*}
    \begin{split}
    |w_jw_{j+1}|&=\frac{r_jr_{j+1}\sin(\varphi_{j+1}-\varphi_j)}
    {a_j\cos\theta_{j+1}^\bot+b_j\sin\theta_{j+1}^\bot}
    =\frac{r_jr_{j+1}\sin((\varphi_{j+1}-\theta_{j+1}^\bot)+(\theta_{j+1}^\bot-\varphi_j))}
    {a_j\cos\theta_{j+1}^\bot+b_j\sin\theta_{j+1}^\bot}\\
    &=\frac{r_jr_{j+1}\sin(\varphi_{j+1}-\theta_{j+1}^\bot)\cos(\theta_{j+1}^\bot-\varphi_j)}
    {r_j\cos(\varphi_j-\theta_{j+1}^\bot)}+
    \frac{r_jr_{j+1}\cos(\varphi_{j+1}-\theta_{j+1}^\bot)\sin(\theta_{j+1}^\bot-\varphi_j)}
    {r_{j+1}\cos(\varphi_{j+1}-\theta_{j+1}^\bot)}\\
    &=r_{j+1}\sin(\varphi_{j+1}-\theta_{j+1}^\bot)+r_j\sin(\theta_{j+1}^\bot-\varphi_j).
    \end{split}
    \end{equation*}
Therefore, the circumference of the polygon $\co(W)$ is
    \begin{equation*}
    \begin{split}
    \Ce&=\sum_{j=1}^{s}|w_jw_{j+1}|
    =\sum_{j=1}^{s}\left(r_{j+1}\sin(\varphi_{j+1}-\theta_{j+1}^\bot)
    +r_j\sin(\theta_{j+1}^\bot-\varphi_j)\right)
    =\int_0^{2\pi}k(\theta)\,d\theta,
    \end{split}
    \end{equation*}
proving the assertion \eqref{perimeter-formula} in the particular case
when $0\in\operatorname{Int}(\co(W))$.

(b) Suppose next that $\co(W)$ is a polygon but the origin is not its interior
point. Let $c\neq 0$ be any constant such that the origin is an
interior point of $\co(W_c)$, where $W_c=\{\overline{w}_1+c,\ldots,\overline{w}_n+c\}$. That is, the polygon $\co(W)$ is translated by a vector $c$ to another polygon $\co(W_c)$ having the origin as its interior point.

Let $k_c(\theta)$ denote the supporting function of $\co(W_c)$.
Without loss of generality, we may suppose that $\overline{w}_1,\ldots,\overline{w}_s$ are the vertex points of $\co(W)$. Then $\overline{w}_1+c,\ldots,\overline{w}_s+c$ are the vertex points of $\co(W_c)$. Denoting $c=a+ib$,
we find that
	$$
	k_c(\theta)=k(\theta)+a\cos\theta+b\sin\theta.
	$$
Therefore
	$$
	\int_0^{2\pi}k_c(\theta)\,d\theta
	=\int_0^{2\pi}k(\theta)\,d\theta
	+\int_0^{2\pi}(a\cos\theta+b\sin\theta)\,d\theta
    	=\int_0^{2\pi}k(\theta)\,d\theta.
	$$
The assertion \eqref{perimeter-formula} then follows from Part (a) because
$\co(W)$ and $\co(W_c)$ have the same circumference. 	

(c) Finally, suppose that $\co(W)$ is a line segment, in which case it
has two vertex points and two orthogonal rays. If the origin is on
the line segment $\co(W)$, we calculate $\int_0^{2\pi} k(\theta)\,d\theta$
over two intervals as in Part (a). If the origin is not on $\co(W)$,
we use a translation as in Part (b), and then proceed as above.
\end{proof}

\section{Zeros of some special partial sums of $\zeta(z)$}\label{zeta-appendix}

As in Example~\ref{finitesum}, we consider exponential sums
having zeros on a single line. For example, if $f(z)=1+e^{z(-\log 2)}$,
then the orthogonal rays for $\co(W)=[-\log 2,0]$ are at angles $\theta_1^\bot=\pi/2$ and $\theta_2^\bot=3\pi/2$, and the zeros of $f$ are precisely at the points $z_n=\frac{\pi}{\log 2} (2n+1)i$, $n\in\Z$, which are located on the imaginary axis, that is, on the orthogonal rays.
It is easy to see that $n(r,1/f)\sim\frac{\log 2}{\pi}r$, which matches with the result in~\eqref{zeros-ES-improved}.

In general, the $M$th partial sum
	$$
	f_M(z):=\sum_{n=1}^{M}e^{(-\log n)z},\quad M\geq 2,
	$$
of the Riemann zeta-function $\zeta(z)$ has orthogonal rays at angles $\theta_1^\bot=\pi/2$ and $\theta_2^\bot=3\pi/2$ for $\co(W)=[-\log M, 0]$,
and thus
    $$
    N\left(r,\frac{1}{f_M}\right)= \frac{\log M}{\pi}r+O(\log r).
    $$
This is in contrast to the zeros of $\zeta(z)$, for which
    $$
    N\left(r,\frac{1}{\zeta}\right)= \frac{\log r}{\pi}r+O(r),
    $$
see \cite[Theorem~1(1)]{Ye}. In addition, the location of the zeros of $f_M(z)$ is far from linear even for $M=3$, see \cite{Borwein}.
However, a special sub-sum $\sum_{\ell=0}^{L}e^{(-\log 2)\ell z}$ of $f_M(z)$ with $2^{L}\leq M$ or $L\leq \log_2 M$ has its zeros precisely on the orthogonal rays, that is, on the imaginary axis. This is easily observed by writing
	$$
	\sum_{\ell=0}^{L}e^{(-\log 2)\ell z}
	=\frac{1- e^{(-\log 2)(L+1) z}}{1-e^{(-\log 2) z}}.
	$$
Of course, this is also the case with the partial sub-sums $\sum_{\ell=0}^{L}e^{(-\log p)\ell z}=\sum_{\ell=0}^{L}p^{-\ell z}$ for any integer~$p\geq 2$.

Recall Euler's product formula
	$$
	\zeta(z)=\prod_p\left(1-\frac{1}{p^{z}}\right)^{-1}
	=\prod_p (1+p^{-z}+p^{-2z}+\cdots),
	$$
which is absolutely convergent for $\re(z)>1$, see \cite{Titchmarsh2}.
This leads us to consider partial sums of $\zeta(z)$ of the form
	$$
	\prod_{j=1}^{K}(1+p_j^{-z}+p_j^{-2z}+\cdots+p_j^{-N_jz})
	=\prod_{j=1}^{K}(1- e^{(-\log p_j)(N_j+1) z})/(1-e^{(-\log p_j) z}),
	$$
where $p_1,\ldots,p_K$ are the first $K$ prime numbers and $N_1,\ldots,N_K$  are positive integers.
For these exponential sums, we have the same results as  above with
	$$
	\co(W)=[-\max_{1\leq j\leq K}{N_j\log p_j}, 0].
	$$
For any fixed $K$ and for any choice of $\mathbf{N}_K:=(N_1, \ldots, N_K) \in \mathbb{N}^K$, the above partial sub-sum can be written as
	\begin{equation}\label{thin-sum}
	\sum_{\mathbf{j}\in\mathbf{N}_K} \frac{1}{{n_{\mathbf{j}}}^z},
	\end{equation}
where $n_{\mathbf{j}}=\mathbf{p}^{\mathbf{j}}=\prod_{i=1}^K p_i^{j_i}$ with  multi-indices $\mathbf{j}=(j_1, \ldots, j_K)$ and $\mathbf{p}=(p_1, \ldots, p_K)$. This function is entire and it has all of its zeros precisely on the imaginary axis.

\begin{remark}
Let $f(n):\mathbb{N}\to\mathbb{C}$ be an arithmetic function and consider the Dirichlet series $\sum_{n=1}^{\infty} f(n)n^{-s}$ with coefficients $f(n)$. When $f(n)$ is completely multiplicative, that is, $f(mn)=f(m)f(n)$ holds for any $n, m\in\mathbb{N}$, the Euler product formula is obtained as in \cite[Theorem~11.7]{Apo}: {\it Assuming $\sum f(n)n^{-s}$ converges absolutely for $\re(s)>\sigma_a$, then we have}
	$$
	\sum_{n=1}^{\infty} \frac{f(n)}{n^s}
	=\prod_p \frac{1}{1-f(p)p^{-s}},
	\quad \re(s)>\sigma_a.
	$$
In the product, $p$ runs over all the prime numbers but $f(p)=0$.
Thus let us again consider special partial sums, this time for Dirichlet series, which are given by the corresponding finite products
	$$
	F_{M,m}(s):=\prod_{p\leq M} \frac{1-f(p)^m p^{-ms}}{1-f(p)p^{-s}}
	$$
for $m, M\in \mathbb{Z}_{\geq 2}$.
Of course, as $m$, $n\to\infty$, $F_{M,n}(s)$ converges absolutely to $\sum_{n=1}^{\infty} \frac{f(n)}{n^s}$ for $\re(s)>\sigma_a$.
A zero $z$ of $F_{M,m}(s)$ occurs if and only if $p^{mz}=f(p)^m$ holds for some prime $p$ and positive integer $m$, that is,
	$$
	z=\frac{\log |f(p)|}{\log p}+i\left\{\frac{2k\pi}{m\log p}
	+\frac{\arg (f(p))}{\log p}\right\}.
	$$
We look for a function $f(n)$ keeping $\frac{\log |f(p)|}{\log p}$ invariant, say $\tau\in\mathbb{R}$, for any prime number~$p$ with $f(p)\neq 0$.
Then we have $|f(p)|=p^{\tau}$, unless $f(p)=0$, and then the functions $F_{M,m}(s)$ have all the zeros on the line $\re(s)=\tau$.
Of course, when $f(n)=n^{\tau}$ for any $n\in\mathbb{N}$, our Dirichlet series reduces simply to
	$$
	\sum_{n=1}^{\infty} \frac{1}{n^{s-\tau}}=\zeta(s-\tau)
	$$
for $\re(s)>\re(\tau)+1$.
In particular, taking $f(n)=\chi(n)$ as the Dirichlet characters $\mod k\geq 1$ as in~\cite{Apo}, one has the partial products of the Dirichlet $L$-function $L(s,\chi)$ for $\re(s)>1$, whose zeros are all on the imaginary axis, that is, $\tau=0$, since it is noted in~\cite[Note, p.~136]{Apo} that $|\xi(n)|=1$ holds for any character $\xi$ whenever the value of $\xi(n)$ is different from zero.
\end{remark}

It should be emphasized that the sum in \eqref{thin-sum} does not coincide
with the ordinary partial sum
	$$
	\sum_{1\leq n\leq M}\frac{1}{n^z}
	\quad \textnormal{with} \quad
	M:=n_{\mathbf{N}_K}=\mathbf{p}^{\mathbf{N}_K}=\prod_{j=1}^K p_j^{N_j}.
	$$
Indeed, this is the $M$-truncation of the infinite sum $\sum_{1\leq n\leq \infty}\frac{1}{n^z}$, while the sum in \eqref{thin-sum} is a ``thin out'' of this partial sum.

\begin{example}
Consider the function
	\begin{eqnarray*}
	\pi_{24}(z) &:=&
	 (1+2^{-z}+2^{-2z}+2^{-3z})(1+3^{-z})\\ &=&
	1+\frac{1}{2^z} + \frac{1}{3^z} + \frac{1}{4^z} + 	
	\frac{1}{6^z} + \frac{1}{8^z} + \frac{1}{9^z} + \frac{1}{12^z} +
	\frac{1}{16^z} + \frac{1}{18^z} + \frac{1}{24^z},
	\end{eqnarray*}
whose terms are all included in the partial sub-sum \eqref{thin-sum}
with $K=2$ and with $N_1$ and $N_2$ both being at least~$3$.	
It is known that the zeros of $\pi_{24}(z)$ reduced modulo $\frac{2\pi i}{\log(24)}$ lie on a non-linear curve in the plane known as Alexa's
penguin  \cite[Example~4.8]{Borwein}. The partial sub-sum \eqref{thin-sum}
converges absolutely to $\zeta(z)$ in $\re(z)>1$, but the function above
shows that it is not trivial that such a ``thin out'' forces the zeros
onto a straight line.
\end{example}

Recall \emph{Riemann's xi-function} $\Xi(z)=\zeta\left(\frac12+iz\right)$ satisfying
the functional equation $\Xi(-z)=\Xi(z)$. Thus $\Xi$ is an even function,
and the critical line $\re(z)=\frac12$ of the zeta-function is translated onto
the real axis.

Define $g_j(z):=1+p_j^{-z}+p_j^{-2z}+\cdots+p_j^{-N_jz}$, for which
	$$
	g_j(-z)=1+p_j^{z}+p_j^{2z}+\cdots+p_j^{N_jz}
	=p_j^{N_jz}g_j(z), \quad 1\leq j\leq K.
	$$
Therefore our partial sub-sum $G(z):=\prod_{j=1}^{K}g_j(z)$ satisfies
	$$
	G(-z)=\prod_{j=1}^{K}p_j^{N_jz}g_j(z)
	=G(z)\prod_{j=1}^{K}p_j^{N_jz},
	$$
that is, it has the symmetric relation about the imaginary axis:
	$$
	G(-z)\prod_{j=1}^{K}p_j^{-N_jz/2}
	=G(z)\prod_{j=1}^{K}p_j^{N_jz/2}.
	$$
This is, in fact, trivial, since
	$$
	\Xi_0(z):=\prod_{j=1}^{K}\sum_{k=0}^{N_j}p_j^{(N_j/2-k)z}
	=\prod_{j=1}^{K}\sum_{k=0}^{N_j}p_j^{(k-N_j/2)z}=\Xi_0(-z).
	$$
Why this form of a functional equation is interesting?
Because of a functional equation in Riemann's $\zeta(s)$ (see \cite[Theorem~2.1]{Titchmarsh2}), or its symmetric one (see \cite[(2.6.4), p.~22]{Titchmarsh2}),
	$$
	\pi^{-s/2}\Gamma\left(\frac{s}{2}\right)\zeta(s)
	=\pi^{-(1-s)/2}\Gamma\left(\frac{1-s}{2}\right)\zeta(1-s)
	$$
can be written as $\xi(z)=\xi(-z)$ by taking
	$$
	\xi(z):=\frac{1}{\pi^{z/2}}\Gamma\left(\frac{z}{2}
	+\frac{1}{4}\right) \zeta
	\left(z+\frac{1}{2}\right), \quad z=s-\frac{1}{2}.
	$$
It is known that $\zeta(z)$ can be analytically continued to a meromorphic function in the whole complex plane.
In fact, it is better to exchange the $\Gamma-$factor in   $\xi(z)$ and $\xi(-z)$ respectively to take
	$$
	\Xi(z)=\dfrac{\zeta \left(z+\frac{1}{2}\right)}{\pi^{z/2}
	\Gamma\left(-\frac{z}{2}+\frac{1}{4}\right)}
	$$
in order to cancel the unique (simple) pole of $\zeta(\pm z+1/2)$ at $z=\pm 1/2$ by the (simple) pole of $\Gamma(\mp z/2+1/4)$, respectively.
Then we are to expect the non-trivial zeros of this ``even'' and ``real-on-real'' entire function $\Xi(z)$ locate right on the imaginary axis, as we see exactly for  $\Xi_0(z)$.
Here, by the trivial zeros of $\Xi(z)$, we mean the sequence $z_m=2m+1/2$, $m=1,2,\ldots$, which is the set of all non-zero poles of $\Gamma\left(-\frac{z}{2}+\frac{1}{4}\right)$.

Recall that the Nevanlinna characteristic functions, see Appendix~A above, of $\zeta(z)$ and $\Gamma(z)$ are both comparable to $(1/\pi)r\log r$.
For this fact, we cite Theorem~1 again as well as Theorem~2 (given in Appendix) of  the paper~\cite{Ye} by Z. Ye.
Note that $\zeta(z)$ has only one pole at $=1$ and has no finite deficient values, while $\Gamma(z)$ omits zero and has the unique deficient value $\infty$, respectively.

Recall further that the even-ness of a complex function means the symmetry of a rotation of $\pi$ about the origin, not a reflection
with respect to the imaginary axis.
This could make us have a feeling of enigma for the standing of those zeros on the imaginary axis, while we have already seen this phenomenon through $\Xi_0(z)$.
The equation $\Xi(z)=\Xi(-z)$ is also found in \cite[(2.1.15), p.~16]{Titchmarsh2}.


\section{Complex linear differential equations}\label{DE-appendix}

It is well-known that the solutions of
    \begin{equation}\label{lde-appendix}
      f^{(n)}+A_{n-1}(z)f^{(n-1)}+\cdots +A_1(z)f'+A_0(z)f=0
     \end{equation}
are entire functions assuming that the coefficients $A_0(z),\ldots,A_{n-1}(z)$ are entire \cite{Laine1}. In this appendix we will shed some background
on results regarding \eqref{lde-appendix} which could be useful for the
readers of this paper.

We begin by reviewing some results from \cite{GSW} regarding the growth of solutions of \eqref{lde-appendix} when the coefficients
are polynomials. To this end, we need some definitions.
Set $d_j=\deg (A_j)$ if $A_j\not\equiv0$
and $d_j=-\infty$ if $A_j\equiv0$, $0\leq j\leq n-1$.
We define a strictly decreasing finite sequence of non-negative integers
$s_1>s_2>\cdots> s_p\geq 0$ in the following manner.
We choose $s_1$ to be the unique integer satisfying
	$$
	\frac{d_{s_1}}{n-s_1}=\max_{0\leq k\leq n-1} \frac{d_k}{n-k}
	\quad\textnormal{and}\quad
	\frac{d_{s_1}}{n-s_1}>\frac{d_k}{n-k}\ \textnormal{for all}\ 0\leq k\leq s_1.
	$$
Then given $s_j$, $j\geq1$, we define $s_{j+1}$ to be the unique integer satisfying
	$$
	\frac{d_{s_{j+1}}-d_{s_j}}{s_{j+1}-s_{j}}
	=\max_{0\leq k\leq s_j} \frac{d_k-d_{s_{j}}}{s_j-k}>-1
	$$
and
	$$
	\frac{d_{s_{j+1}}-d_{s_j}}{s_{j+1}-s_{j}}
	>\frac{d_k-d_{s_{j}}}{s_j-k}>-1\ \textnormal{for all}\ 0\leq k<s_{j+1}.
	$$
For a certain $p<n$, the integer $s_p$ will exist, but the integer $s_{p+1}$ will not exist, and then the sequence $s_1, s_2, \ldots, s_p$ terminates with $s_p$.
Further, define
	$$
	\alpha_j=1+\frac{d_{s_{j+1}}-d_{s_j}}{s_{j+1}-s_{j}},\quad j=1, 2, \ldots, p,
	$$
where we set $s_0=n$ and $d_{s_0}=d_n=0$.

\begin{theorem}
\textnormal{(Gundersen-Steinbart-Wang \cite{GSW})}
\begin{itemize}
\item[\textnormal{(a)}]
If $f$ is a transcendental solution of \eqref{lde-appendix}, then $\rho(f)=\alpha_j$ for some $j$, $1\leq j \leq p$.
\item[\textnormal{(b)}]
For any $j=1, 2, \dots, p$, there can exist at most $s_j$ linearly independent solutions $f$ of \eqref{lde-appendix} satisfying $\rho(f)<\alpha_j$.
\item[\textnormal{(c)}]
There can exist at most $s_1+1$ distinct orders of transcendental solutions of
\eqref{lde-appendix}.
\item[\textnormal{(d)}]
There can be at most $s_p$ linearly independent polynomial solutions
of \eqref{lde-appendix}.
\item[\textnormal{(e)}]
Every transcendental solution $f$ of \eqref{lde-appendix} satisfies $\rho(f)\geq\frac{1}{n-1}.$
\item[\textnormal{(f)}]
If $\{f_1,\ldots,f_n\}$ is any fundamental set of solutions of \eqref{lde-appendix}, then $\sum_{j=1}^n\rho(f_j)\geq n+d_0.$
\end{itemize}
\end{theorem}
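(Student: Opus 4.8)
The plan is to extract all six assertions from two ingredients: the Wiman--Valiron asymptotics governing the radial growth of a transcendental entire solution, applied to the equation after division by $f$, and the combinatorics of the Newton polygon $\mathcal N$ of the operator $L=D^{n}+A_{n-1}D^{n-1}+\cdots+A_{0}$ at the irregular singular point $z=\infty$. Here $\mathcal N$ is built from the points $(k,d_{k})$ with $A_{k}\not\equiv0$ together with $(n,0)$, the leading coefficient being $1$; the abscissas of its successive vertices are exactly $n=s_{0}>s_{1}>\cdots>s_{p}$, and the numbers $\alpha_{1}>\cdots>\alpha_{p}$ are the growth orders read off from its edges. For (f) I will additionally use the Wronskian of a fundamental system together with the factorization of $L$ into first-order factors over the field of meromorphic functions.

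First I would handle (a), (c) and (e). Let $f$ be transcendental. By Wiman--Valiron theory there is a set of radii $r$ of lower density one on which, for $|z|=r$ with $|f(z)|=M(r,f)$, one has $f^{(k)}(z)/f(z)=(\nu(r)/z)^{k}(1+o(1))$, where $\nu(r)$ is the central index and $\rho(f)=\limsup_{r\to\infty}\log\nu(r)/\log r$. Dividing the equation by $f$ and substituting these relations produces, on that set, an identity of the shape $\sum_{k}(c_{k}+o(1))\,r^{d_{k}}(\nu(r)/r)^{k}=0$ with $c_{n}=1$ and $c_{k}$ the leading coefficient of $A_{k}$. A term of strictly largest modulus in such a sum cannot stand alone, so at least two terms must be comparable; recording which pairs $(k,d_{k})$ can be in balance is exactly the statement that $\log\nu(r)/\log r$ accumulates, along a subsequence, at an edge slope of $\mathcal N$, whence $\rho(f)=\alpha_{j}$ for some $j$. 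This is (a), and since $\mathcal N$ has $p\le s_{1}+1$ edges, the transcendental solutions realize at most $s_{1}+1$ distinct orders, which is (c). For (e) one notes that a balance compatible with $\rho(f)<1/(n-1)$ either cannot occur at all or forces $\nu(r)$ to stay bounded, i.e. $f$ to be a polynomial; hence every transcendental solution has $\rho(f)\ge 1/(n-1)$.

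Next, parts (b) and (d). For (b), suppose $f_{1},\dots,f_{m}$ are linearly independent solutions with $\rho(f_{i})<\alpha_{j}$. They are precisely the solutions of the $m$-th order linear equation $M[y]:=W(y,f_{1},\dots,f_{m})/W(f_{1},\dots,f_{m})=0$, and $L=L'\circ M$ for some operator $L'$ of order $n-m$ with meromorphic coefficients of finite order; pushing the remaining solutions of $L$ through $M$ and rerunning the dominant-balance analysis of the preceding paragraph shows that a space of solutions of order $<\alpha_{j}$ can only correspond to the part of $\mathcal N$ lying over abscissas at most $s_{j}$, so $m\le s_{j}$. Equivalently, in the language of formal solutions at $\infty$, the solutions of order $<\alpha_{j}$ are spanned by the formal solutions whose exponential part has degree $<\alpha_{j}$, and $\mathcal N$ counts exactly $s_{j}$ of them. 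Part~(d) is the limiting case tracking the order-zero solutions: inserting a polynomial ansatz into $L$ and matching top-degree coefficients restricts it to the corner of $\mathcal N$ at abscissa $s_{p}$, so the polynomial solutions span a space of dimension at most $s_{p}$.

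Finally, part~(f), which I expect to be the main obstacle. Let $\{f_{1},\dots,f_{n}\}$ be a fundamental system; its Wronskian $W=W(f_{1},\dots,f_{n})=C\exp\bigl(-\int A_{n-1}\bigr)$ is zero-free, and Cramer's rule applied to the system $\sum_{k=0}^{n-1}A_{k}f_{i}^{(k)}=-f_{i}^{(n)}$ $(i=1,\dots,n)$ yields the identity $A_{0}(z)\,W(f_{1},\dots,f_{n})=\pm\,W(f_{1}',\dots,f_{n}')$. Writing $L=(D-g_{1})\circ\cdots\circ(D-g_{n})$, where the meromorphic functions $g_{i}$ are logarithmic-derivative data of the successive partial Wronskians, one has $A_{0}=L[1]$; expanding this product and bounding the polynomial growth rate of each $g_{i}$ by $\rho(f_{i})-1$ (using the zero-freeness of the partial Wronskians and the lemma on the logarithmic derivative) gives $d_{0}=\deg A_{0}\le\sum_{i}\bigl(\rho(f_{i})-1\bigr)$, that is $\sum_{i}\rho(f_{i})\ge n+d_{0}$. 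The cleanest conceptual route is through the Hukuhara--Turrittin formal theory at the irregular singular point $\infty$: a formal fundamental system has entries $f_{i}\sim e^{P_{i}(z)}z^{\gamma_{i}}(\cdots)$ with $\deg P_{i}=\rho(f_{i})$, and then $A_{0}=L[1]\sim\pm\prod_{i}P_{i}'$ forces $d_{0}=\sum_{i}\deg P_{i}-n$ generically, the inequality persisting under the degenerations (vanishing exponential parts, or cancellation of leading coefficients in the product). Making these degenerations rigorous and reconciling the formal solutions with the true orders of growth of the entire solutions is the delicate point; the Wronskian identity above is the device that lets one carry this out without the full formal machinery.
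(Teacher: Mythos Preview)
The paper does not prove this theorem. It appears in Appendix~D as a cited result from Gundersen--Steinbart--Wang~\cite{GSW}, followed only by the remark that the constants $\alpha_j$ can be viewed as slopes of the Newton--Puiseux diagram, linking to earlier work of Wittich. There is therefore no proof in the paper to compare your attempt against.

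That said, a few comments on your sketch. Your treatment of (a), (c), (e) via Wiman--Valiron and dominant balance on the Newton polygon is the standard route and is essentially what the original GSW argument does. Your handling of (b) and (d) is more programmatic than rigorous: the appeal to ``the part of $\mathcal N$ lying over abscissas at most $s_j$'' and to formal exponential parts is the right intuition, but the step from a factorization $L=L'\circ M$ to a bound $m\le s_j$ needs an honest count, not just a gesture toward the polygon.

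For (f) there is a genuine gap. You write that the $g_i$ in the factorization $L=(D-g_1)\cdots(D-g_n)$ can be bounded ``using the zero-freeness of the partial Wronskians.'' But the partial Wronskians $W(f_1,\ldots,f_k)$ for $k<n$ are \emph{not} zero-free in general; only the full Wronskian is, via Abel's identity. Consequently the $g_i$ are genuinely meromorphic with poles, and the inequality $\deg g_i\le\rho(f_i)-1$ (even interpreted as a growth statement) does not follow from the logarithmic-derivative lemma alone. The formal Hukuhara--Turrittin route you outline afterward is closer to what is actually needed, but as you yourself note, reconciling formal orders with the true orders of the entire solutions and controlling the degenerations is exactly the non-trivial content of the original proof, and your sketch does not supply it.
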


Prior to \cite{GSW},
Wittich~\cite{Wittich1952, Wittich1968} had considered \eqref{lde-appendix} with polynomial coefficients using the method of Frobenius and the Newton-Puiseux diagram. The constants
$\alpha_j$ may be regarded as the slopes of
the corresponding Newton-Puiseux diagram.

Next we focus on a special case of \eqref{lde-appendix}, namely on linear differential equations of second order
	\begin{equation}\label{AC.01}
	f''+A(z)f=0,
	\end{equation}
where $A(z)$ is either an entire or a meromorphic function.
If $A(z)$ is entire, then \eqref{AC.01} possesses two linearly independent entire solutions. If $A(z)$ has poles, the poles need to be of multiplicity two and
the Laurent expansion of $A(z)$ at its poles need to be of a specific form in order for
meromorphic solutions to exist \cite[Theorem~6.7]{Laine}.

In the 1980's, complex oscillation theory for \eqref{AC.01} was actively studied.
In particular, the questions (Q1'), (Q2') and (Q3')~from Section~\ref{EPO-sec} were
considered. In these discussions, the Bank--Laine identity
	$$
	4A(z)=\left(\frac{E'}{E}\right)^2-\left(\frac{c}{E}\right)^2-2\frac{E''}{E}
	$$
plays an important role~\cite{Bank-Laine}. Here $E=f_1f_2$ is a product of two linearly independent solutions $f_1,f_2$ and
the Wronskian determinant $c=W(f_1,f_2)\ne0$ is a constant. The Bank-Laine identity has  led to the definitions of Bank-Laine functions and Bank-Laine sequences~\cite{Alotaibi2009, Elzaidi1999, Langley2020, LM2009},
and is also used in finding prescribed zeros for solutions \cite{Shen2}.

Bank and Laine \cite{Bank-Laine} have proved that if $\rho(A)$ is not an integer, then $\lambda(E)\geq \rho(A)$, and if $\rho(A)<1/2$,
then $\lambda(E)=\infty$.
Rossi~\cite{Rossi} and Shen \cite{Shen} proved independently that $\rho(A)=1/2$ implies $\lambda(E)=\infty$. Based on these results, Bank and Laine conjectured that whenever $\rho(A)$ is not an integer, then $\lambda(E)=\infty$~\cite{Laine, LT}. Recently, this conjecture was solved in the negative by Bergweiler and Eremenko, who constructed counterexamples for the case $\rho(A)>1$ in \cite{BE0} and for the case $1/2<\rho(A)<1$ in \cite{BE}.

\begin{theorem}
\textnormal{(Bank-Laine-Langley \cite{BLL1})}
Let $A(z)$ be a transcendental entire function of finite order satisfying the
following property: There exists a subset $H\subset \mathbb{R}$ of measure zero such that for each $\theta\in \mathbb{R}\setminus H$, one of the following conditions holds:
\begin{itemize}
\item[{\rm(a)}] $r^{-N}|A(re^{i\theta})|\to\infty$ as $r\to\infty$, for each $N>0$,
\item[{\rm(b)}] $\int_0^\infty r|A(re^{i\theta})|dr<\infty$,
\item[{\rm(c)}] there exist $K>0$, $b>0$ and  $n\geq 0$, all possibly depending on $\theta$, such that $(n+2)/2<\rho(A)$ and $|A(re^{i\theta})|\leq Kr^n$ for all $r\geq b$.
\end{itemize}
Let $E$ be the product of two linearly independent solutions of \eqref{AC.01}. Then $\lambda(E)=\infty$.
\end{theorem}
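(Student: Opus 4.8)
The plan is to argue by contradiction. Suppose $E=f_1f_2$ satisfies $\lambda(E)<\infty$; I will contradict the hypotheses together with the assumptions that $A$ is transcendental and $\rho(A)<\infty$. The engine is the Bank--Laine identity recalled in Appendix~\ref{DE-appendix},
\begin{equation*}
4A=\left(\frac{E'}{E}\right)^2-\left(\frac{c}{E}\right)^2-2\frac{E''}{E},\qquad c=W(f_1,f_2)\ne0,
\end{equation*}
equivalently the Riccati equation $L'=-\tfrac12L^2-2A-\tfrac{c^2}{2}E^{-2}$ for $L=E'/E$. \textbf{Step 1: $E$ has finite order.} Write $E=\Pi e^{g}$ with $\Pi$ the canonical product over the zeros of $E$, so $\rho(\Pi)=\lambda(E)<\infty$. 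In the identity above the only term carrying $e^{-2g}$ is $(c/E)^2=c^2\Pi^{-2}e^{-2g}$, while the remaining terms are built from $A$, $\Pi'/\Pi$ and the derivatives of $g$; since $T(r,e^{-2g})$ grows strictly faster than $T(r,g)$ when $g$ is transcendental, applying the first main theorem and the lemma on the logarithmic derivative forces $\rho(e^{-2g})\le\max\{\lambda(E),\rho(A)\}<\infty$, hence $\rho(E)<\infty$. By Hadamard's theorem $g$ is then a polynomial $P$, and away from a $C_0$-set of discs about the zeros of $E$ the standard pointwise estimates give $|\Pi'/\Pi(z)|=O(|z|^{\lambda(E)})$, $|(\Pi'/\Pi)'(z)|=O(|z|^{2\lambda(E)})$, $\log|\Pi(z)|\ge-O(|z|^{\lambda(E)})$, while $|P'|,|P''|$ are polynomially bounded.

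\textbf{Step 2: analysis along one ray.} Fix $\theta\notin H$, avoiding also the finitely many critical directions of $P$. \emph{Case (a).} Write the identity as $L^2+2L'+(c/E)^2=-4A$. At each point of the ray where $E\ne0$, at least one of $|L|^2$, $|L'|$, $|c/E|^2$ exceeds $c_0|A(re^{i\theta})|$, hence is super-polynomially large; but off the zero-discs each of $|L|$, $|L'|$, $1/|E|=|e^{-P}|\,|\Pi|^{-1}$ is polynomially bounded by Step 1, so the ray would meet the zero-discs of $E$ on a set of $r$ of positive relative density --- impossible, since $\lambda(E)<\infty$ makes that projection thin. Thus \emph{no} direction is of type (a); if one were, we would already be done. \emph{Case (b).} Since $\int_0^\infty r|A(re^{i\theta})|\,dr<\infty$, asymptotic integration of $f''=-Af$ along the ray gives $f_j(re^{i\theta})=(\alpha_j+\beta_jre^{i\theta})(1+o(1))$ with $(\alpha_j,\beta_j)\ne(0,0)$, so $\log|E(re^{i\theta})|=O(\log r)$. \emph{Case (c).} Here $|A(re^{i\theta})|\le Kr^{n}$ with $(n+2)/2<\rho(A)$; a Gronwall/Liouville--Green estimate for $f''=-Af$ along the ray yields $\log|f_j(re^{i\theta})|\le C\int_b^r|A(te^{i\theta})|^{1/2}\,dt+O(\log r)=O(r^{(n+2)/2})$, hence $\log|E(re^{i\theta})|=O(r^{(n+2)/2})$. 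Moreover $n$ is bounded over all type (c) directions, being an integer with $n<2\rho(A)-2$; fix $N$ with $n\le N$ throughout and set $\beta:=(N+2)/2<\rho(A)$.

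\textbf{Step 3: synthesis.} By Case (a), almost every direction is of type (b) or (c), and on every such direction $\log^+|E(re^{i\theta})|\le C(\theta)\,r^{\beta}$ for all large $r$. Using a Phragm\'en--Lindel\"of argument in the sectors between consecutive directions --- with the a priori bound $\rho(E)<\infty$ from Step 1 supplying the control needed to make the estimate locally uniform --- one deduces $\log|E(z)|=O(|z|^{\beta})$ off a $C_0$-set, so $T(r,E)=m(r,E)=O(r^{\beta})$ by Jensen's formula. Feeding $\rho(E)<\infty$ back into the Bank--Laine identity then gives $T(r,A)=m(r,A)\le 2m(r,E'/E)+2m(r,c/E)+m(r,E''/E)+O(1)\le 2T(r,E)+O(\log r)=O(r^{\beta})$, whence $\rho(A)\le\beta<\rho(A)$, a contradiction. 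Therefore $\lambda(E)=\infty$.

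\textbf{Main obstacle.} The delicate part is Step 3: the directional growth bounds of Step 2 come with $\theta$-dependent constants and thresholds, and upgrading them to the global estimate $T(r,E)=O(r^{\beta})$ requires a careful Phragm\'en--Lindel\"of / indicator analysis in sectors, in which the exclusion of type (a) directions plays the role of a barrier. Secondary technical points are controlling the error in the Liouville--Green approximation in Case (c) (using $|A|\le Kr^n$ and analyticity along the ray) and the pointwise logarithmic-derivative estimates off the zero-discs used in Cases (a) and (b).
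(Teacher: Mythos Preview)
The paper does not prove this theorem; it is simply stated in Appendix~\ref{DE-appendix} with a citation to \cite{BLL1}, so there is no in-paper proof to compare against. Your overall architecture (assume $\lambda(E)<\infty$, deduce $\rho(E)<\infty$, analyse along rays, then force $\rho(A)\le\beta<\rho(A)$ via the Bank--Laine identity) is indeed the one used in \cite{BLL1}.

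There is, however, a genuine gap in Step~2, Case~(a). You assert that off the zero-discs of $E$ the quantity $1/|E|=|e^{-P}|\,|\Pi|^{-1}$ is polynomially bounded. This is false on two counts. First, your own Step~1 only gives $\log|\Pi(z)|\ge -O(|z|^{\lambda(E)})$, which allows $|\Pi|^{-1}$ to be exponentially large off the $C_0$-set. Second, and more fundamentally, $|e^{-P(re^{i\theta})}|$ is exponentially large whenever $\mathrm{Re}\,P(re^{i\theta})\to-\infty$. So nothing prevents $(c/E)^2$ from being super-polynomially large on the ray without the ray meeting the zero-discs on a thick set; hence no contradiction arises, and your conclusion ``no direction is of type~(a)'' is unjustified.

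The correct treatment of Case~(a) proceeds in the opposite direction. Since $\rho(E)<\infty$, Gundersen's pointwise logarithmic-derivative estimates give $|E'/E|,|E''/E|=O(r^{M})$ off a set of finite logarithmic measure. The Bank--Laine identity then forces $(c/E)^{2}=4A+O(r^{M})$, and since $|A(re^{i\theta})|$ dominates every power of $r$, one gets $|E(re^{i\theta})|\to 0$. Thus $\log^{+}|E(re^{i\theta})|=0$ for large $r$ on type-(a) rays: these rays contribute the \emph{best} bound, not a contradiction. With this correction all three cases yield $\log^{+}|E(re^{i\theta})|=o(r^{\rho(A)})$ for $\theta\notin H$, and your Step~3 can be run (via the indicator/Phragm\'en--Lindel\"of argument you outline) to reach the desired contradiction. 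One further caveat: in Case~(c) you bound $n$ using that it is an integer, but the hypothesis only says $n\ge 0$. The condition $(n+2)/2<\rho(A)$ does give $n<2\rho(A)-2$, but the supremum of $(n+2)/2$ over all type-(c) directions may equal $\rho(A)$, so you cannot pick a single $\beta<\rho(A)$; working with the indicator $h_E$ at order $\rho(A)$ (showing $h_E\le 0$) avoids this.
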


The questions (Q1), (Q2) and (Q3)~from Section~\ref{EPO-sec} have also been of
interest. In~\cite{Bank-Laine}, it was showed that if $\lambda(A)<\rho(A)$, then  any solution $f\not\equiv 0$ of \eqref{AC.01} satisfies
$\lambda(f)\geq\rho(A)$. If $\lambda(A)<\rho(A)=\infty$, then the previous conclusion
can be strengthened to $\lambda(f)=\infty$, while the general case $\lambda(A)<\rho(A)<\infty$ is still in doubt \cite{Gundersen2019}. In the special case $A(z)=e^{P(z)}$, where $P(z)$ is a non-constant polynomial, the conclusion is $\lambda(f)=\infty$ \cite{BLL1}.
For further recommended reading in this direction, see \cite{Edmund, CL, Ishizaki-Tohge}.

Results on the case when $A(z)$ is a rational function in $e^z$ can be found in
\cite{CY, Shimomura2002}. These results are motivated by an earlier research
on the periodicity of the solutions.

\begin{theorem}
\textnormal{(Bank-Laine \cite{Bank-LaineJRAM})}
Suppose that $f$ is a non-trivial solution of \eqref{AC.01} such that $\lambda(f)<\infty$, and that $A(z)$ is a non-constant function rational in $e^z$.
Then the following assertions hold:
\begin{itemize}
\item[{\rm(a)}]  If $f(z)$ and $f(z+2\pi i)$ are linearly dependent, then $f(z)$ has the representation
$$
f(z)=\psi(e^z)\exp\left(\sum_{j=\mu}^\nu d_je^{jz}+dz\right),
$$
where $\psi$ is a polynomial, $\mu$ and $\nu$ are integers with $\mu\leq \nu$, and $d, d_\mu, \ldots, d_\nu$ are constants with $d_j\ne0$ for some $j$.
\item[{\rm(b)}] If $f(z)$ and $f(z+2\pi i)$ are linearly independent, then the product $E(z)=f(z)f(z+2\pi i)$ satisfies $E(z)^2=\phi(e^z)$, where $\phi(\rho)$ is rational and analytic in $0<|\rho|<\infty$.
\end{itemize}
\end{theorem}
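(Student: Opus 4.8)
The plan is to exploit the periodicity forced by the hypothesis: since $A(z)$ is rational in $e^z$, it has period $\omega=2\pi i$, so the shift operator $(Tu)(z)=u(z+\omega)$ maps the two-dimensional solution space $V$ of \eqref{AC.01} into itself. Because \eqref{AC.01} has no first-order term, the Wronskian of any two solutions is a nonzero constant; applied to a basis this gives $W(Tf_1,Tf_2)(z)=W(f_1,f_2)(z+\omega)=W(f_1,f_2)(z)$, hence $\det(T|_V)=1$, so the eigenvalues of $T|_V$ are $\mu,\mu^{-1}$ for some $\mu\in\C\setminus\{0\}$. The two hypotheses ``$f(z)$ and $f(z+\omega)$ linearly dependent / independent'' are then precisely ``$f$ is / is not an eigenvector of $T|_V$''. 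A second standing reduction: the substitution $w=e^z$, $v(w)=u(z)$ turns \eqref{AC.01} into a linear equation $w^2v''+wv'+\widetilde A(w)v=0$ with $\widetilde A$ rational, whose only singularities are $0$, $\infty$ and the finitely many poles of $\widetilde A$; consequently the zeros of any nontrivial solution, read off in the $w$-plane, can accumulate only at these finitely many points.

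For (a), choose $d\in\C$ with $e^{d\omega}=\mu$, so that $g:=f\,e^{-dz}$ is $\omega$-periodic; hence $g(z)=G(e^z)$ for $G$ analytic (meromorphic, if $A$ has poles) in $0<|w|<\infty$, and $\lambda(G)<\infty$ is equivalent to $\lambda(f)<\infty$ since $e^{-dz}$ is zero-free. Substituting $f=e^{dz}g$ into \eqref{AC.01} and then $w=e^z$ gives the rational-coefficient equation $w^2G''+(1+2d)wG'+(d^2+\widetilde A(w))G=0$. The zeros of $G$ accumulate only at $0$, $\infty$ and the poles of $\widetilde A$; near a regular singular point a solution is $w^{\alpha}$ times a (possibly $\log$-multiplied) holomorphic germ, contributing only finitely many zeros, while near an irregular singular point one uses the asymptotic normal form (a formal series times $\exp$ of a polynomial in a local uniformizer) together with the hypothesis $\lambda(f)<\infty$ to force the exponential part to be tame; the conclusion is that $G$ has only finitely many zeros in $\C\setminus\{0\}$. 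Then $G(w)=\psi_0(w)\,e^{h(w)}$ with $\psi_0$ a Laurent polynomial recording the zeros (and the winding number) and $h$ analytic in $\C\setminus\{0\}$, and the same local growth estimates (regular singular points give at most polynomial-logarithmic growth) force $h$ to be a Laurent polynomial. Writing $\psi_0(w)=w^{-k}\psi(w)$ with $\psi$ a genuine polynomial and absorbing $w^{-k}=e^{-kz}$ into the linear exponent (replacing $d$ by $d-k$) produces the representation $f(z)=\psi(e^z)\exp\bigl(\sum_{j=\mu}^{\nu}d_je^{jz}+dz\bigr)$ of (a), the exponential sum being nontrivial exactly when the exponential part of $G$ at $0$ or $\infty$ is genuine.

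For (b), put $f_1=f$, $f_2=Tf$, a basis of $V$. From $Tf_2\in V$ and $\det(T|_V)=1$ one gets $Tf_2=-f_1+bf_2$ with $b=\mu+\mu^{-1}$, i.e. $T|_V$ has characteristic polynomial $\lambda^2-b\lambda+1$. The shift $T$ also acts on the three-dimensional solution space of the associated equation $E'''+4AE'+2A'E=0$ spanned by $f_1^2,\ f_1f_2,\ f_2^2$; computing the eigenvalue-$1$ eigenvector of this $3\times3$ action yields that $E_0:=f_1^2-bf_1f_2+f_2^2$ is $\omega$-periodic, and, expanding $f_1=\alpha h_1+\beta h_2$ in eigenfunctions of $T|_V$ (which exist and have the shape found in (a) when $\mu\neq\pm1$; $\alpha\beta\neq0$ since $f_1$ is not an eigenvector), a short algebraic identity gives $E_0=-\alpha\beta(\mu-\mu^{-1})^2\,h_1h_2$. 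Hence $E_0$, being periodic, equals $\phi_0(e^z)$, and it is rational and analytic in $\C\setminus\{0\}$ because $h_1h_2$ is of that type by (a) — its exponential exponent $d_1+d_2$ is an integer, as $e^{\omega(d_1+d_2)}=\mu\mu^{-1}=1$. Finally, with $c=W(f_1,f_2)$ and $E=f_1f_2$, the identity $(f_1^2)'(f_2^2)'=(E')^2-c^2$ together with $f_1^2+f_2^2=E_0+bE$ and $f_1^2f_2^2=E^2$ expresses $E^2$ rationally in $E_0,E_0',E',E''$ and the constant $c$; since $E_0$ is rational in $e^z$ and analytic in $\C\setminus\{0\}$, the same holds for $E^2$, giving $E(z)^2=\phi(e^z)$ with $\phi$ rational and analytic in $0<|\rho|<\infty$. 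The degenerate case $\mu=\pm1$ (so $b=\pm2$ and $T|_V$ is a nontrivial Jordan block) is handled by the same scheme with $h_2$ replaced by a generalized eigenfunction, which by the reasoning of (a) is $e^{\pm z/2}$ times a polynomial in $e^{\pm z/2}$ times $\exp$ of a Laurent polynomial in $e^{\pm z/2}$.

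The hard part will be the local analysis at the irregular singular points in step (a): one must show that $\lambda(f)<\infty$ is incompatible with a genuinely transcendental (non-Laurent-polynomial) factor $e^{h(w)}$ in $G$, i.e. rule out that a solution of a rational-coefficient equation with an irregular singularity at $0$ or $\infty$ carries zeros of infinite exponent of convergence hidden from the naive counting. This requires Hille-type asymptotic integration of the equation near those points and a careful accounting of the zeros along the Stokes directions; by contrast, everything in the reduction, in the eigenvalue bookkeeping for $T$, in the Wronskian identities, and in the Weierstrass factorization on $\C\setminus\{0\}$ is routine.
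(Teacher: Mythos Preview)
The paper does not prove this theorem; it is quoted in Appendix~D as a result of Bank and Laine with a citation to \cite{Bank-LaineJRAM} and no argument is given. So there is nothing in the paper to compare your proposal against.

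That said, your outline for (a) follows the standard monodromy-plus-substitution strategy and is on the right track. For (b), however, there are two concrete gaps you should repair. First, when you write that the eigenfunctions $h_1,h_2$ ``have the shape found in (a)'', you are invoking part (a), whose hypothesis is $\lambda(h_j)<\infty$. You only know $\lambda(f_1)<\infty$ and hence $\lambda(f_2)=\lambda(Tf_1)<\infty$; since $h_j$ is a linear combination of $f_1,f_2$, there is no control on $\lambda(h_j)$ from this alone. Even granting the representation of (a) for $h_1,h_2$, you would get $h_1h_2=\psi_1(e^z)\psi_2(e^z)\exp\bigl(P_1(e^z,e^{-z})+P_2(e^z,e^{-z})+(d_1+d_2)z\bigr)$, and while $d_1+d_2\in\Z$ follows from $\mu\cdot\mu^{-1}=1$, nothing forces the Laurent polynomials $P_1+P_2$ to cancel; without that, $h_1h_2$ is periodic but not rational in $e^z$. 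Second, your closing sentence claims that the identities $(f_1^2)'(f_2^2)'=(E')^2-c^2$, $f_1^2+f_2^2=E_0+bE$, $f_1^2f_2^2=E^2$ express $E^2$ rationally in $E_0,E_0',E',E''$; but any such expression still contains $E'$ and $E''$, which you have not shown to be rational in $e^z$, so the conclusion does not follow. You need a genuinely different mechanism here---for instance, work directly with $E=f_1f_2$, which \emph{does} have $\lambda(E)<\infty$, together with the Bank--Laine identity and the action of $T$ on the third-order solution space, rather than passing through $h_1,h_2$.
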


In closing, we shortly recall some results when $A(z)$ admits poles. In this case \eqref{AC.01} does not always possess meromorphic solutions.

\begin{theorem}
\textnormal{(Bank-Laine \cite{Bank-Laine})}
Assume that \eqref{AC.01}
possesses two linearly independent meromorphic solutions $f_1,f_2$, each of finite order of growth. Then $g=f_1/f_2$ is a non-constant meromorphic function of finite order such that
\begin{itemize}
\item[{\rm(a)}] all poles of $g$ are of odd multiplicity,
\item[{\rm(b)}] all zeros of $g'$ are of even multiplicity,
\item[{\rm(c)}] $2A(z)=S_g(z)$, where $S_g$ is Schwarzian derivative of $g$.
\end{itemize}
Conversely, suppose that $g$ is a non-constant meromorphic function of finite order, satisfying \textnormal{(a)} and \textnormal{(b)} above. If $A(z)$ is defined as in \textnormal{(c)}, then \eqref{AC.01} possesses two linearly independent meromorphic solutions $f_1, f_2$, such that $g=f_1/f_2$.
\end{theorem}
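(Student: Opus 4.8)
The plan is to prove the two directions separately: the forward implication is a direct computation built around the Wronskian, while the converse is a construction of the solution pair by extracting a meromorphic square root.

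For the forward direction, let $W=f_1'f_2-f_1f_2'$ be the Wronskian. Differentiating and using $f_j''=-Af_j$ gives $W'\equiv 0$, so $W\equiv c$ for a constant $c$, and $c\neq 0$ since $f_1,f_2$ are linearly independent. (The computation $W'\equiv0$ holds on the complement of the poles of $A,f_1,f_2$ and hence everywhere, as $W$ is meromorphic.) Putting $g=f_1/f_2$ one gets $g'=(f_1'f_2-f_1f_2')/f_2^2=c/f_2^2$, which shows $g$ is non-constant; and $\rho(g)<\infty$ because $g$ is a quotient of two finite-order meromorphic functions. Since $g'=cf_2^{-2}$, the zeros of $g'$ occur exactly at the poles of $f_2$ with twice the multiplicity, so all zeros of $g'$ are of even multiplicity, giving (b). For (a), note that $g$ is holomorphic wherever $f_2$ is non-zero (there $g'=cf_2^{-2}$ is holomorphic, so $g$ cannot have a pole), hence the poles of $g$ lie at the zeros of $f_2$; if $f_2$ has a zero of order $m$ at $z_0$ then $g'$ has a pole of order exactly $2m$ there, and since $g$ is meromorphic (so the residue of $g'$ vanishes automatically) $g$ must have a pole of order exactly $2m-1$, which is odd. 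Finally (c) follows from $\log g'=\log c-2\log f_2$: this gives $g''/g'=-2f_2'/f_2$, hence $(g''/g')'=-2\bigl(f_2''/f_2-(f_2'/f_2)^2\bigr)=2A+2(f_2'/f_2)^2$, and therefore $S_g=(g''/g')'-\tfrac12(g''/g')^2=2A$.

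For the converse, set $u=g'$ (so $u\not\equiv0$ as $g$ is non-constant) and $A=\tfrac12 S_g=\tfrac12\bigl(u''/u-\tfrac32(u'/u)^2\bigr)$. The crucial point is that $u$ admits a single-valued meromorphic square root on $\C$: its zeros are those of $g'$, of even order by (b), and its poles lie at the poles of $g$, where a pole of $g$ of odd order $k$ forces a pole of $u=g'$ of order $k+1$, which is even; since every zero and pole of $u$ has even order and $\C$ is simply connected, one may write $u=v_0^2e^{h}$ with $v_0$ meromorphic and $h$ entire (choose $v_0$ with half the orders via a Weierstrass/Mittag-Leffler construction, so $u/v_0^2$ is zero-free and holomorphic, hence $e^h$), whence $u^{1/2}:=v_0e^{h/2}$ is meromorphic. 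Put $f_2=u^{-1/2}$ and $f_1=gf_2$; both are meromorphic, and of finite order because $T(r,u)=2T(r,f_2)+O(1)$ with $\rho(u)<\infty$. A direct differentiation using $S_g=u''/u-\tfrac32(u'/u)^2$ gives $f_2''/f_2=\tfrac34(u'/u)^2-\tfrac12\,u''/u=-\tfrac12 S_g=-A$, so $f_2$ solves \eqref{AC.01}; writing $f_1=gf_2$ and using $f_2''+Af_2=0$ reduces $f_1''+Af_1$ to $g''f_2+2g'f_2'=u'u^{-1/2}-u'u^{-1/2}=0$, so $f_1$ solves \eqref{AC.01} as well. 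Lastly $W(f_1,f_2)=g'f_2^2=u\cdot u^{-1}=1\neq0$, so $f_1,f_2$ are linearly independent, and $f_1/f_2=g$ by construction.

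The step I expect to demand the most care is the global construction of the meromorphic square root $u^{-1/2}$ in the converse. One must first correctly relate the orders of the poles of $g$ to the orders of the poles of $g'=u$ (so that hypotheses (a) and (b) together imply \emph{all} zeros and poles of $u$ are of even order), and then invoke the simple connectivity of $\C$ to turn this purely local parity condition into an honest single-valued meromorphic square root; the cleanest route is the factorization $u=v_0^2e^h$ indicated above. Everything in the forward direction, by contrast, is routine bookkeeping once the identity $g'=c/f_2^2$ is in hand.
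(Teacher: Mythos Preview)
Your proof is correct and follows the classical route that goes back to Bank and Laine: in the forward direction everything is read off the identity $g'=c/f_2^{2}$, and in the converse one constructs $f_2=(g')^{-1/2}$ as a global meromorphic function using that hypotheses (a) and (b) force every zero and pole of $g'$ to be of even order. The paper itself does not supply a proof of this theorem; it is stated in Appendix~\ref{DE-appendix} with attribution to \cite{Bank-Laine} only, so there is nothing further to compare against.
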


A sufficient condition for the existence of meromorphic solutions of \eqref{AC.01} is described in~\cite{Laine, LS}. Applications of this result can be found in~\cite{ILST1,ILST2,Shimomura2005}.


\medskip
\noindent
\emph{J.~Heittokangas}\\
\textsc{University of Eastern Finland, Department of Physics and Mathematics,
P.O.~Box 111, 80101 Joensuu, Finland}\\
\texttt{email:janne.heittokangas@uef.fi}

\medskip
\noindent
\emph{K.~Ishizaki}\\
\textsc{The Open University of Japan, Faculty of Liberal Arts, Mihama-ku, Chiba, Japan}\\
\texttt{email:ishizaki@ouj.ac.jp}

\medskip
\noindent
\emph{K.~Tohge}\\
\textsc{Kanazawa University, College of Science and Engineering, Kakuma-machi, Kanazawa 920-1192, Japan}\\
\texttt{email:tohge@se.kanazawa-u.ac.jp}

\medskip
\noindent
\emph{Z.-T.~Wen}\\
\textsc{Shantou University, Department of Mathematics, Daxue Road No.~243, Shantou 515063, China}\\
\texttt{e-mail:zhtwen@stu.edu.cn}

\end{document}